\DeclarePairedDelimiter\ceil{\lceil}{\rceil}
\DeclarePairedDelimiter\floor{\lfloor}{\rfloor}
\def\Int{\int}
\def\dim{\operatorname{dim}}
\def\Fr{\operatorname{F}}
\def\sup{\operatorname{sup}}
\def\max{\operatorname{max}}
\def\min{\operatorname{min}}
\def\num{\operatorname{N}}
\def\mul{\operatorname{m}}
\def\Leb{\operatorname{Leb}}
\def\ent{\operatorname{h}}
\def\Ar{\operatorname{A}}
\def\cl{\operatorname{cl}}
\def\diam{\operatorname{diam}}
\def\Bow{\operatorname{B}}
\def\Par{\operatorname{P}}
\def\Lip{\operatorname{Lip}}
\def\Int{\operatorname{int}}
\newcommand{\ep}{\varepsilon}
\newtheorem{thm}{Theorem}[section]
\newtheorem{cor}[thm]{Corollary}
\newtheorem{lemma}[thm]{Lemma}
\newtheorem{prop}[thm]{Proposition}
\newtheorem*{thm*}{Theorem}
\theoremstyle{definition}
\newtheorem{definition}[thm]{Definition}
\theoremstyle{remark}
\newtheorem{remark}[thm]{Remark}
\newtheorem{example}[thm]{Example}
\newtheorem*{Acknowledgements}{Acknowledgements}
\numberwithin{equation}{section}
\tikzstyle{vertex}=[circle]
\tikzstyle{goto}=[->,shorten >=1pt,>=stealth,semithick]
\begin{document}
\setcounter{section}{0}
\title{Ahlfors regularity and fractal dimension of Smale spaces}
\author[Dimitris Michail Gerontogiannis]{Dimitris Michail Gerontogiannis}
\address{Dimitris Michail Gerontogiannis, School of Mathematics and Statistics, University of Glasgow, University Place, Glasgow Q12 8QQ, United Kingdom}
\email{d.gerontogiannis@hotmail.com}
\keywords{Hyperbolicity, Markov partition, Ahlfors regular, fractal dimension}
\thanks{This research was supported by EPSRC grants NS09668/1 and M5086056/1.}

\begin{abstract}
We prove that, up to topological conjugacy, every Smale space admits an Ahlfors regular Bowen measure. Bowen's construction of Markov partitions implies that Smale spaces are 
factors of topological Markov chains. The latter are equipped with Parry's measure which is Ahlfors regular. By extending Bowen's construction we create a tool for 
transferring the Ahlfors regularity of the Parry measure down to the Bowen measure of the Smale space. An essential part of our method uses a refined notion of approximation graphs over compact metric spaces. Moreover, we obtain new estimates for the Hausdorff, box-counting and Assouad dimensions of a large class of Smale spaces.
\end{abstract}

\maketitle

\section{Introduction}\label{sec:intro}
Ahlfors regular measures have been fundamental to the study of fractal structures. In particular, if an iterated function system has the open set condition then it admits an Ahlfors regular measure \cite{MT}. In the 1960's Parry \cite{Parry}, using the Perron-Frobenius Theorem, constructed an Ahlfors regular measure for topological Markov chains. In the 1970's Bowen \cite{Bowen2,Bowen3,Bowen4}, using Markov partitions, showed that topological Markov chains provide a combinatorial model of arbitrary precision for Smale spaces. However, Smale spaces themselves have not been shown to admit an Ahlfors regular measure in general, and the purpose of this paper is to bridge this gap.

The other known examples of Smale spaces with an Ahlfors regular measure are the (Euclidean) mixing parts of $C^{1+\varepsilon}$-conformal Axiom A systems \cite{Pesin}. Here we develop tools that can also be applied to the study of non-Euclidean Smale spaces, like Wieler solenoids \cite{Wieler}, since the existing techniques are restricted to the Euclidean setting. One of our main results regarding Ahlfors regularity is the following.

\begin{thm*}[see Corollary \ref{cor:TopconjAhlfors}]\label{cor:TopconjAhlforsIntro}
Any mixing Smale space is topologically conjugate to a mixing Smale space on which Bowen's measure of maximal entropy is Ahlfors regular.
\end{thm*}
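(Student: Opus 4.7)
The plan is to exploit Bowen's Markov partition construction, which realises a mixing Smale space $(X,\varphi)$ as a finite-to-one factor of a topological Markov chain $(\Sigma,\sigma)$ via a semiconjugacy $\pi:\Sigma\to X$, and to transfer the Ahlfors regularity of Parry's measure $\mu_P$ on $\Sigma$ down to Bowen's measure of maximal entropy $\mu_B$ on $X$. Since $\pi$ is finite-to-one and both systems are mixing, topological entropies coincide and $\mu_B=\pi_*\mu_P$ by uniqueness of the measure of maximal entropy. Because topological conjugacy of Smale spaces permits a free change of metric (within the H\"older class determined by the dynamics), the theorem reduces to finding a Smale metric $d'$ on $X$ for which $\pi_*\mu_P$ is Ahlfors regular.

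On $\Sigma$ I would fix the standard cylinder ultrametric $d_\Sigma(x,y)=\lambda^{n}$, where $n$ is the largest integer with $x_i=y_i$ for $|i|\le n$, choosing $\lambda\in(0,1)$ compatible with the contraction scale of $(X,\varphi)$. Parry's theorem then gives
\[
\mu_P(B_\Sigma(y,r))\ \asymp\ r^{s},\qquad s=\ent(\sigma)/(-\log\lambda).
\]
Since $\mu_B=\pi_*\mu_P$, the identity $\mu_B(B_{d'}(x,r))=\mu_P(\pi\inv(B_{d'}(x,r)))$ shows that Ahlfors regularity of $\mu_B$ on $(X,d')$ will follow as soon as $\pi\inv(B_{d'}(x,r))$ can be written, up to uniformly bounded multiplicity, as a union of Parry balls of $\Sigma$-radius comparable to $r$.

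The heart of the argument is therefore the construction of $d'$, and this is where the refined approximation graphs promised by the abstract enter. I would organise the projected cylinders $\pi(C_\mu)$, as $\mu$ ranges over admissible words of all lengths, into a multi-scale weighted graph on $X$ with edge weights of order $\lambda^{|\mu|}$, and define $d'$ as the resulting infimal-chain distance. The graph must be built so that (i) the topology induced by $d'$ agrees with that of $X$, and the local product (bracket) structure of the Smale dynamics acts by uniform contraction in $d'$, giving $(X,d',\varphi)$ the structure of a Smale space topologically conjugate to $(X,\varphi)$; and (ii) for each $x\in X$ and each small $r>0$, the preimage $\pi\inv(B_{d'}(x,r))$ is contained in the union of a uniformly bounded number of Parry balls of $\Sigma$-radius of order $r$.

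The principal obstacle is meeting (i) and (ii) simultaneously: the metric $d'$ must respect the hyperbolic product structure of $X$ while also encoding the fibre geometry of $\pi$ across all scales uniformly. The approximation graph framework is designed precisely for this, providing a combinatorial scaffold on $X$ that mimics the cylinder structure of $\Sigma$ yet remains compatible with the Smale dynamics; the verification of the Smale space axioms for $d'$, and of the ball-preimage comparison, is the technical crux. Once (i) and (ii) are in hand, combining bounded multiplicity of $\pi$ with the Ahlfors regularity of $\mu_P$ yields $\mu_B(B_{d'}(x,r))\asymp r^s$ for all small $r>0$, completing the proof.
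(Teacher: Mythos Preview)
Your proposal takes a genuinely different route from the paper, and the difference is instructive. The paper does \emph{not} construct the new metric $d'$ from the approximation-graph scaffold; instead it invokes Artigue's theorem (building on Fried and Fathi) as a black box: every Smale space is topologically conjugate to a \emph{self-similar} one, i.e.\ one where $\Lip(\varphi)=\Lip(\varphi^{-1})=\lambda_X$. This gives the metric $d'$ for free, with the Smale-space axioms already verified. The approximation graphs then serve a different purpose: on the self-similar Smale space, the refining sequence of Markov partitions $(\mathcal{R}_n)$ has diameters pinched between constant multiples of $\lambda_X^{-n}$, and the Neighbouring Rectangles Lemma (a diamond-trick argument) gives a uniform bound on $\#\{R\in\mathcal{R}_n:R\cap S\neq\varnothing\}$ for any $S\in\mathcal{R}_n$. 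Combining these with the $\delta$-fattened covers yields the uniform finite clustering property, which is exactly your condition (ii) on ball preimages. So the paper's logic is: Artigue $\Rightarrow$ self-similar metric $\Rightarrow$ diameter pinching $\Rightarrow$ UFCP $\Rightarrow$ Ahlfors regularity of $\mu_B=\pi_*\mu_P$.

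Your plan to build $d'$ directly as an infimal-chain distance on a weighted graph of projected cylinders is plausible in spirit, but you have left the hardest step entirely unaddressed: verifying that such a chain metric makes $(X,d',\varphi)$ a Smale space. You need the bracket map to remain continuous, the local product structure to persist, and the contraction axioms (C1)--(C2) to hold with a definite constant; none of this is automatic for a graph metric, and the literature on self-similar metrics for expansive systems (Fried, Fathi, Artigue) shows that even getting bi-Lipschitz dynamics requires real work. The paper's use of Artigue's theorem is precisely the shortcut that bypasses this difficulty. A minor point: your exponent $s=\ent(\sigma)/(-\log\lambda)$ is off by a factor of $2$, since symmetric cylinders of depth $n$ have Parry measure $\asymp e^{-2n\,\ent(\sigma)}$.
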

\enlargethispage{\baselineskip}
\enlargethispage{\baselineskip}

A Borel measure $\mu$ on a compact metric space $(X,d)$ is Ahlfors $s$-regular if it is of the order $r^s$ on every closed ball of radius $r$. In this case, the measure $\mu$ is comparable to the $s$-dimensional Hausdorff measure and the typically distinct Hausdorff, box-counting and Assouad dimensions of $(X,d)$ are equal to $s$, see Subsection \ref{sec:Premdimensiontheory}. Metric spaces equipped with Ahlfors regular measures provide an abstract framework for the tools of harmonic analysis to be applied since, in particular, they are uniformly perfect and doubling, two very useful properties in analysis (see \cite[Chapter 11]{Heinonen} and the numerous references therein). In addition, Lebesgue's 
Differentiation Theorem holds \cite[Theorem 5.2.6]{AT}. Moreover, Ahlfors regularity lies deep in the heart of fractional calculus \cite{Ahlfors,SKM} and opens a window to apply Connes' noncommutative machinery \cite{Connes_Book} in the study of metric spaces and dynamical systems. A major example comes from the recent work of Goffeng and Mesland \cite{GM2} where they used the theory of Riesz potentials \cite{Zahle} to study $C^*$-algebras build from full-shifts \cite{Cuntz}.

Smale spaces were defined by Ruelle \cite{Ruelle} as models for hyperbolic dynamical systems and provide a topological description of the typically fractal-like non-wandering parts of Smale's Axiom A systems \cite{Smale}. Roughly speaking, a Smale space is a compact metric space $(X,d)$ together with a homeomorphism $\varphi$ having exponential contraction and expansion behaviour. Basic examples are the non-wandering parts of Axiom A systems, hyperbolic toral automorphisms, subshifts of finite type and the solenoidal attractors defined by Williams \cite{Williams} that were later extended by Wieler \cite{Wieler}. In this paper we focus on non-wandering Smale spaces, which due to Smale's Decomposition Theorem \cite[Section~7.4]{Ruelle} can be studied through their irreducible or mixing parts. 

Mixing Smale spaces are equipped with Bowen's measure \cite{Bowen4}; the unique invariant probability measure that is ergodic and maximises the topological entropy. Bowen's measure defined on topological Markov chains coincides with the quite tractable Parry measure \cite{Parry}.
A deeper connection between these two measures comes from Bowen's construction of Markov partitions of arbitrarily small diameter \cite{Bowen2}. With this, given a mixing Smale space $(X,d,\varphi)$ one can build a topological Markov chain $(\Sigma,\rho,\sigma)$ and a factor map $\pi:(\Sigma,\rho,\sigma)\to (X,d,\varphi)$. Among many nice properties (see Theorem \ref{thm:Bowen factor map}), the map $\pi$ becomes a measure-theoretic isomorphism when both Smale spaces are equipped with Bowen's measure. Therefore, Parry's measure provides a combinatorial description of Bowen's measure.

Using the Perron-Frobenius Theorem one can show that Parry's measure on $(\Sigma,\rho,\sigma)$ is Ahlfors regular, see Subsection \ref{sec:TMC}. This fact is straightforward, mainly because $\Sigma$ is equipped with an ultrametric $\rho$. On the other hand, Bowen's measure on $(X,d,\varphi)$ is not necessarily Ahlfors regular, see Remark \ref{rem:non_Ahlfors_regular} about horseshoes in $\mathbb R^3$ and $\mathbb R^4$. However, if the metric $d$ is homogeneous enough, for instance exhibits self-similarity, in Theorem \ref{thm:AhflorsregularitySmalespaces} we show that it is possible to transfer the Ahlfors regularity of the Parry measure down to the Bowen measure using the factor map $\pi$.

The main tool for transferring Ahlfors regularity using factor maps is Theorem \ref{thm:theoremgraphSmalespaces} which essentially provides a way to approximate the metric structure of Smale spaces. This theorem extends the work of Bowen on Markov partitions. More precisely, given a Smale space $(X,d,\varphi)$ equipped with a Markov partition, using the dynamics, we build a refining sequence of open covers $(\mathcal{V}_n)_{n\geq 0}$ of $X$ with diameters converging to zero. The sequence $(\mathcal{V}_n)_{n\geq 0}$ encodes various topological properties of the dynamical system from which the most important is derived from the Neighbouring Rectangles Lemma \ref{lem:neighbours}. Without having any assumption on the metric $d$, this lemma implies that 
\begin{equation}\label{eq:NRL}
\sup\limits_n \max\limits_{V\in \mathcal{V}_n}\#\{W\in \mathcal{V}_n: W\cap V\neq \varnothing\}<\infty,
\end{equation}
which manifests that Smale spaces are quite homogeneous on a topological level. 

Moreover, the sequence $(\mathcal{V}_n)_{n\geq 0}$ encodes the metric $d$. Depending on the behaviour of $\varphi$, it is possible to estimate the rate of decay of the Lebesgue covering numbers and the diameters of $\mathcal{V}_n$ as $n$ goes to infinity. The best estimates can be obtained in the case where $(X,d,\varphi)$ has self-similar dynamics, which occurs when both Lipschitz constants of $\varphi$ and $\varphi^{-1}$ are equal to the contraction/expansion constant of $(X,d,\varphi)$. In this case, the uniform upper bound in (\ref{eq:NRL}) yields that for every $r\in (0,\diam(X))$ with $n_r=\min\{n\in \mathbb N: \diam(V) \leq r, \> \text{for every}\> V\in \mathcal{V}_n\}$ it holds that
\begin{equation}\label{eq:NRL2}
\sup\limits_{x\in X} \sup\limits_{r}\#\{W\in \mathcal{V}_{n_r}: W\cap \overline{B}(x,r)\neq \varnothing\}<\infty.
\end{equation} 
This uniformity should be interpreted as a homogeneity condition of the metric space at every scale.

In Theorem \ref{thm:AhflorsregularitySmalespaces} we prove that for every homogeneous enough Smale space (semi-conformal that satisfies  condition (\ref{eq:NRL2}), see Section \ref{sec:SemiconformalSmalespaces}) the Bowen measure is Ahlfors regular. As a consequence, the Hausdorff, box-counting and Assouad dimensions coincide and have a nice description. Moreover, this makes it possible to obtain new estimates for the Hausdorff and box-counting dimensions (see Corollary \ref{cor:Hausdorffdimension}) of Smale spaces, where $\varphi$ and $\varphi^{-1}$ are Lipschitz maps (with some restrictions).

At this point, a reasonable question to ask is whether there exist many homogeneous Smale spaces for which Theorem \ref{thm:AhflorsregularitySmalespaces} holds. The answer to this is positive and can be derived from the recent work of Artigue \cite{Artigue} who, based on the work of Fried  \cite{Fried} and Fathi \cite{Fathi}, constructed self-similar metrics for expansive dynamical systems. More precisely, any Smale space $(X,d,\varphi)$ admits a self-similar metric $d'$, which induces the same topology as $d$, so that $(X,d',\varphi)$ is again a Smale space with self-similar dynamics. 

There is a vast literature on Ahlfors regularity, and hyperbolicity plays a prominent role in its study. A basic example in the setting of Smale spaces is the work of Ma$\tilde{\text{n}}${\'e} \cite{Mane2} about regularity properties of the Hausdorff dimension of horseshoes of closed surface $C^2$-diffeomorphisms. In this work, Ma$\tilde{\text{n}}${\'e} calculates the Hausdorff dimension of a horseshoe $\Lambda$ by constructing an Ahlfors regular measure on it. First, he notes that the local product structure on $\Lambda$ is bi-Lipschitz, since the stable and unstable leaves in the surface are one-dimensional and the diffeomorphism is $C^2$, hence the stable and unstable foliations extend to $C^1$-foliations on a neighbourhood of $\Lambda$. Using the same philosophy and Theorem 22.1 of Pesin \cite{Pesin}, one can construct (product) Ahlfors regular measures on the mixing components of $C^{1+\varepsilon}$-conformal Axiom A systems. These are measures of maximal dimension associated to geometric H{\"o}lder potentials on stable and unstable sets, and do not necessarily maximise the topological entropy, see \cite[Section 5]{Barreira_DR} and \cite[Section 24]{Pesin}.

Although it is not proved by Pesin or, to our knowledge, anywhere else in the literature, it seems possible that arguments similar to \cite{Pesin} can be used to prove that the Bowen measure on the aforementioned hyperbolic sets is Ahlfors regular. However, these arguments are limited to Smale spaces that lie in an Euclidean space that have been constructed by smooth dynamics. Specifically, the Ahlfors regularity of the stable and unstable sets in \cite[Theorem 22.1]{Pesin} requires a crucial volume argument that makes sense in Euclidean spaces; for every $n\in \mathbb N$ and $c\in (0,1]$ there is $k\in \mathbb N$ such that every ball in $\mathbb R^n$ of radius $r>0$ can fit up to $k$ disjoint balls of radius $cr$. This is strongly related to Moran constructions in Euclidean spaces, see \cite[Section 13]{Pesin} and \cite[Section 2.1.2]{Barreira}. Moreover, this volume argument is made clear in equation (4.18) of \cite[Theorem 4.1.8]{Barreira_DR}. Further, without smoothness, the local product structure is not guaranteed to be bi-Lipschitz (see the proof in \cite[Theorem 6.2.9]{Barreira_TF} and the discussion after \cite[Theorem 6.3.2]{Barreira_TF}). 

In the present paper we study general Smale spaces which are typically non-Euclidean, like Wieler solenoids, or might not be derived from the smooth dynamics of Smale's Axiom A systems. Roughly speaking, conformality is replaced by self-similarity, and volume-type arguments (in possibly non-Euclidean Smale spaces) can be made using conditions (\ref{eq:NRL}) and (\ref{eq:NRL2}). While Markov partitions play a crucial role both in \cite{Pesin} and in the present paper, our work is not an extension of the aforementioned techniques in \cite{Pesin}. For instance, we do not consider Ahlfors regular measures on stable and unstable sets or bi-Lipschitz conditions on the local product structure of Smale spaces. Further, we obtain results for the Bowen measure in a direct way. 

Pesin and Weiss studied equilibrium measures for conformal continuous expanding maps. In \cite{PW} they proved that every equilibrium measure for a H{\"o}lder potential of a H{\"o}lder conformal expanding map $g:X\to X$, where $X$ is a compact metric space, has the doubling property, see Definition \ref{def:doublingmeasure}. Their main ingredient was a family of Markov partitions that was adapted to the points of $X$. More precisely, they showed that there are $c_1,c_2>0,\, k\in \mathbb N$ so that for any small enough $r>0$ and $x\in X$ there exists a Markov partition $\mathcal{R}_x=\{R_1,\ldots , R_m\}$ for the map $g^k$ with $\diam(R_i)\leq c_2r$, for $i=1,\ldots, m$, and the ball $B(x,c_1r)\subset R(x)$, where $R(x)$ denotes the unique $R(x)\in \mathcal{R}_x$ that contains $x$. In other words, for each $x\in X$ they were creating Markov partitions of arbitrarily small diameters centred around the point $x$. This remarkable doubling property holds also in the setting of $C^{1+\varepsilon}$-conformal Axiom A systems \cite[Prop. 24.1]{Pesin}. We should note again that our methods and techniques differ from theirs, since for a given Smale space we only require a single Markov partition instead of an adapted family of Markov partitions. The uniformity condition (\ref{eq:NRL2}) plays again an important role in this fact. Finally, an Ahlfors regular measure is doubling but the converse is not necessarily true.

Another major example of Ahlfors regularity in the general framework of hyperbolic dynamical systems is the work of Patterson \cite{Patterson}, where he constructed measures on the limit sets of finitely generated Fuchsian groups of the second kind. In many cases such measures are Ahlfors regular, for instance if there are no parabolic elements. However, in the presence of parabolic elements this is not necessarily true \cite[Corollary 2.5]{Fraser3}. His work was later extended by Sullivan \cite{Sullivan} in the framework of Kleinian groups and such measures are now known as Patterson-Sullivan measures. A few years later their results were extended by Coornaert \cite{Coornaert} in the framework of Gromov hyperbolic groups. Our result about Ahlfors regularity of Smale spaces is essentially complementary to the results of Patterson, Sullivan and Coornaert, when viewing Smale spaces as fractal-like structures in negatively curved spaces. In fact, Patterson-Sullivan measures are strongly related with Bowen's measure for a geodesic flow on the unit tangent bundle of a compact negatively curved Riemanian manifold \cite{Kaimanovich}. In this setting, Hamenst{\"a}dt \cite{Hamenstadt} showed that Bowen's measure defined on stable or unstable sets, when equipped with an appropriate metric, is Ahlfors regular. Hasselblatt \cite{Hasselblatt} generalised this result for Anosov flows. Nekrashevych \cite{Nekr} has obtained analogous results with Hasselblatt for hyperbolic groupoids.

In the context of fractal dimensions, the idea of using Markov partitions to calculate Hausdorff dimension can be traced back to the work of Bowen for quasi-circles \cite{Bowen5}. Ruelle extended the work of Bowen to mixing $C^{1+\varepsilon}$-conformal repellers \cite{Ruelle2}. In the same spirit, McCluskey and Manning \cite{MM} calculated Hausdorff dimensions of basic sets of surface $C^2$-diffeomorphisms satisfying Axiom A. Takens \cite{Takens} improved the latter result by showing that the Hausdorff dimension coincides with the box-counting dimension. Palis and Viana extended Takens' result to $C^1$-diffeomorphisms \cite{PViana}. 

A few years later, Barreira \cite{Barreira} studied the dimension theory of mixing components of $C^{1+\varepsilon}$-Axiom A systems. In the conformal case he showed that the Hausdorff and box-counting dimensions coincide \cite[Theorem 4.3.2]{Barreira_DR}, obtaining in this way Pesin's result \cite[Theorem 22.2]{Pesin}, but with an alternative proof. Nevertheless, he extended this result to (not necessarily Euclidean) Smale spaces with bi-Lipschitz local product structure, and with asymptotically conformal dynamics on stable and unstable sets \cite[Theorem 3.15]{Barreira}. While we do not use Barreira's results, the coincidence of the Hausdorff and box-counting dimensions for self-similar Smale spaces (Theorem \ref{thm:AhflorsregularitySmalespaces}) can be also obtained by \cite[Theorem 3.15]{Barreira}. This is proved in Remark \ref{rem:Hausdorff_equals_box}. However, our techniques are different since we do not consider the dimension on stable and unstable sets or use bi-Lipschitz local product structures. Further, Barreira's arguments are particularly designed to compute Hausdorff and box-counting dimensions, and cannot give Ahlfors regularity results, even for the Hausdorff measure of the corresponding dimension, see also the proof of \cite[Theorem 4.3.2]{Barreira_DR}.

To be precise, the purpose of Theorem \ref{thm:AhflorsregularitySmalespaces} is not primarily an attempt to compute fractal dimensions of Smale spaces, but rather to understand Ahlfors regularity itself. Ahlfors regularity is a very strong homogeneity condition that is reflected in the fact that the Assouad dimension is equal to the other two dimensions. In fact, there are fractals with equal Hausdorff and box-counting dimensions but strictly larger Assouad dimension \cite[Theorem 6.4.3 (ii)]{Fraser_book}. Also, Ahlfors regularity is such a fine condition that there are fractals with equal Hausdorff, box-counting and Assouad dimensions that do not have Ahlfors regular measures \cite[Theorem 6.4.3 (i)]{Fraser_book}. In addition, Theorem \ref{thm:AhflorsregularitySmalespaces} concerns a specific measure (Bowen measure) and not just the Hausdorff measure of the corresponding dimension. Finally, our Ahlfors regularity result might shed some light on embeddability problems for Smale spaces, for instance, characterising Smale spaces that admit bi-Lipschitz embeddings in Euclidean spaces, see Subsection \ref{sec:Premdimensiontheory}. Although this problem (for general metric spaces) is extremely difficult to solve \cite{Heinonen}, we would be very interested to know whether all self-similar Smale spaces admit such embeddings, and which of them admit Poincar{\'e} inequalities.

Let us present a variety of applications of our results. First, our main results will be used in forthcoming papers regarding the noncommutative topology of Smale space $C^*$-algebras \cite{KPW,Putnam_algebras,PS}. This work is related to the PhD project of the author. Second, from Proposition \ref{prop:mainresultMarkovpartitions} we obtain a refining sequence of Markov partitions $(\mathcal{R}_n)_{n\in \mathbb N}$ that encodes topological properties of the Smale space. Such refining sequences are related to the existence of projective covers by Markov partitions over Smale spaces, see \cite[Remark 4.2]{Valerio}. Such projective covers can be used in the study of Putnam's homology \cite{Putnam_Book} for Smale spaces, see Theorem 4.3 and Remark 4.4 in \cite{Valerio}. In a subsequent paper we intend to use the results of the present paper and study Smale spaces from a noncommutative point of view, using Riesz potentials and Laplace-Beltrami operators in the sense of Bellisard and Pearson \cite{BP}.

We now conclude the introduction by outlining the main results we obtain in each section. In Section \ref{sec:Approx_metric_spaces} we start with some preliminaries in topological dynamical systems and dimension theory. Then we introduce and study in detail the notion of an approximation graph which provides a convenient way to study refining sequences. Approximation graphs have also been considered in \cite{JKS,Palmer,BP} to some extent. Also, we investigate structural properties of approximation graphs and their behaviour under dynamics. Section \ref{sec:Geometric approximations and embeddings} introduces the concept of geometric approximation graphs and a sufficient condition for a compact metric space to have finite Assouad dimension. 

\enlargethispage{\baselineskip}

In Section \ref{sec:Smalespaces} we provide a basic introduction to Smale spaces and present a detailed proof that the Parry measure is Ahlfors regular. Moreover, we discuss the work of Fried, Fathi and Artigue on metrics of expansive dynamical systems and make an observation that leads to new dimension estimates for Smale spaces with Lipschitz homeomorphisms. In Section \ref{sec:Markovpartitions} we start by introducing the notion of a Markov partition and then we show how to construct an approximation graph from a given Markov partition. The structural properties of such an approximation graph are presented in Proposition \ref{prop:mainresultMarkovpartitions}. One of the key tools of the whole paper is the Neighbouring Rectangles Lemma \ref{lem:neighbours}. 

In Section \ref{sec:Geometric approximations of Smale spaces}, using Markov partitions, we build refining sequences of open covers that allow us to transfer the Ahlfors regularity of the shift space down to the Smale space. For such refining sequences we  study the multiplicities, cardinalities and rates of decay of Lebesgue covering numbers and diameters of the covers. All these results establish Theorem \ref{thm:theoremgraphSmalespaces}. Finally, in Section \ref{sec:SemiconformalSmalespaces} we study Smale spaces with some degree of homogeneity, namely, we introduce the concept of semi-conformal Smale spaces which include self-similar Smale spaces and Wieler solenoids. For semi-conformal Smale spaces satisfying (\ref{eq:NRL2}), we prove Theorem \ref{thm:AhflorsregularitySmalespaces} and obtain Corollary \ref{cor:TopconjAhlfors}. The section concludes with the dimension estimates of Corollary \ref{cor:Hausdorffdimension}.

\begin{Acknowledgements} I would like to express my sincere gratitude to my  supervisors, Mike Whittaker and Joachim Zacharias, for their constant support and the endless hours of fruitful discussions during my doctoral studies. I would also like to thank Ian Putnam for hosting me at the University of Victoria during the Autumn of 2019, where a great deal of this work was conceived. Finally, I am greatly indebted to the anonymous referee, who I would like to thank for many very helpful comments which improved the paper considerably.
\end{Acknowledgements}
 
\section{Approximations of compact metric spaces and dynamics}\label{sec:Approx_metric_spaces}

Let $Z$ be an infinite topological space, $\psi:Z\to Z$ be a continuous map and denote the corresponding dynamical system by ($Z,\psi$). If $Z$ is equipped with a metric $d$, the dynamical system will be denoted by $(Z,d,\psi)$. However, if there is no risk of confusion the notation will be reduced to $(Z,\psi)$.   
\subsection{Preliminaries on topological dynamics}\label{sec:TDS} If $Z$ is compact, the topological entropy of ($Z,\psi$) is defined using open covers in the following way. Let $\mathcal{U}$ be a finite open cover of $Z$ and $\num(\mathcal{U})$ denote the minimal cardinality of a subcover. By continuity of $\psi$ we have that $\psi^{-1}(\mathcal{U})=\{\psi^{-1}(U):U\in \mathcal{U}\}$ is also an open cover. Then using the  \textit{joint cover} notation 
\begin{equation}
\mathcal{W}\vee \mathcal{W}'=\{W\cap W':W\in \mathcal{W}, W'\in \mathcal{W}'\},
\end{equation}
for covers $\mathcal{W}$ and $\mathcal{W}'$ of $Z$, one can prove (see \cite{AKM}) that the following limit exists and is finite
\begin{equation}
\ent(\psi, \mathcal{U})= \lim_{n\to \infty}\frac{1}{n}\log \num( \bigvee_{i=0}^{n-1} \psi^{-i}(\mathcal{U})).
\end{equation}
\begin{definition}{(\cite{AKM})}\label{def:entropy defn}
The \textit{topological entropy} of ($Z,\psi$) is defined by $$\ent(\psi)= \sup\limits_{\mathcal{U}}\ent(\psi, \mathcal{U}),$$ where the supremum is taken over all open covers of $Z$.
\end{definition}

We will be interested in dynamical systems with topological recurrence conditions. 

\begin{definition}\label{def:Recurrence}
Let ($Z,\psi$) be a dynamical system. 
\begin{enumerate}[(1)]
\item A point $z\in Z$ is called \textit{non-wandering} if for every open neighbourhood $U$ of $z$ there is some $n\in \mathbb N$ such that $\psi^n(U)\cap U\neq \varnothing$. Moreover, we say that $(Z,\psi)$ is \textit{non-wandering} if every $z\in Z$ is non-wandering.
\item ($Z,\psi$) is called \textit{irreducible} if for every ordered pair of non-empty open sets $U,V\subset Z$, there is some $n\in \mathbb N$ such that $\psi^n(U)\cap V\neq \varnothing$.
\item ($Z,\psi$) is called \textit{mixing} if for every ordered pair of non-empty open sets $U,V\subset Z$, there is some $N\in \mathbb N$ such that $\psi^n(U)\cap V\neq \varnothing$, for every $n\geq N$.
\end{enumerate}
\end{definition}

The following is a simple consequence of irreducibility. Recall that $Z$ is infinite.

\begin{prop} \label{thm:no isolated points}
If $(Z,\psi)$ is irreducible and $Z$ is Hausdorff then $Z$ has no isolated points.
\end{prop}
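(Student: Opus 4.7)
The plan is to argue by contradiction. Suppose some $z \in Z$ is isolated, so that $\{z\}$ is an open subset of $Z$.

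First, I would apply the irreducibility condition of Definition \ref{def:Recurrence}(2) to the ordered pair $(\{z\},\{z\})$. This yields some $n \in \mathbb{N}$ with $\psi^n(\{z\}) \cap \{z\} \neq \varnothing$, which forces $\psi^n(z) = z$. Hence $z$ is $\psi$-periodic and its forward orbit $\mathcal{O}(z) := \{\psi^k(z) : k \geq 0\}$ is a finite set.

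Next, for an arbitrary nonempty open set $V \subset Z$, I would apply irreducibility to the pair $(\{z\}, V)$ to obtain some $m \in \mathbb{N}$ with $\psi^m(z) \in V$. Consequently, every nonempty open subset of $Z$ meets $\mathcal{O}(z)$, which means that $\mathcal{O}(z)$ is dense in $Z$.

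To finish, since $Z$ is Hausdorff, every finite subset is closed. Thus the finite set $\mathcal{O}(z)$ coincides with its own closure, which by density equals $Z$. This contradicts the standing assumption that $Z$ is infinite, so no point of $Z$ can be isolated. The whole argument is short and purely topological; the only nuance worth flagging is that Definition \ref{def:Recurrence} implicitly requires $n \geq 1$ (otherwise the irreducibility condition would be vacuous at $n=0$), which ensures that the periodicity exponent produced in the first step is genuinely positive and the resulting orbit is indeed finite.
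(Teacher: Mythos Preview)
Your proof is correct. The paper itself omits the proof entirely, stating only that the proposition is ``a simple consequence of irreducibility,'' so there is nothing to compare against; your contradiction argument via a periodic dense orbit is a clean and standard way to fill in the details.
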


We introduce some notation that will be used later. First let $\#S$ denote the cardinality of any finite set $S$. Suppose that $(Z,d)$ is a compact metric space. For a finite cover $\mathcal{U}$ of $Z$, for which $\varnothing, Z\not \in \mathcal{U}$, and $Y\subset Z$ we will be interested in the quantities 
\begin{align*}
\hypertarget{Q1}{\tag{Q1}} \overline{\text{diam}}(\mathcal{U})&=\max\limits_{U\in \, \mathcal{U}}\text{diam}(U)\\
\hypertarget{Q1}{\tag{Q2}}\underline{\text{diam}}(\mathcal{U})&=\min\limits_{U\in \, \mathcal{U}}\text{diam}(U)\\
\hypertarget{Q1}{\tag{Q3}}\text{N}_{\mathcal{U}}(Y)&=\{U\in \mathcal{U}: U\cap Y\neq \varnothing\}\\
\hypertarget{Q1}{\tag{Q4}}\mul(\mathcal{U})&= \max \{n: U_{i_1}\cap \ldots \cap U_{i_n}\neq \varnothing\}\\
\intertext{where $U_{i_1}, \ldots , U_{i_n}\in \mathcal{U}$ are different, and in the case where $\mathcal{U}$ is open we will also consider}
\hypertarget{Q1}{\tag{Q5}}\text{Leb}(\mathcal{U})&= \min\limits_{z\in Z} \max\limits_{U\in \mathcal{U}}d(z,Z\setminus U)>0.
\end{align*}
The last quantity is the Lebesgue covering number of $\mathcal{U}$ and it holds that for every $z\in Z$ there is some $U\in \mathcal{U}$ so that the ball $B(z,\ell)\subset U$, where $\ell=\text{Leb}(\mathcal{U})$.

\enlargethispage{\baselineskip}
\enlargethispage{\baselineskip}

\subsection{Preliminaries on dimension theory}\label{sec:Premdimensiontheory} We introduce several types of dimension and comment on their relationship with one another. For further details see \cite{Falconer, MT}. 
\begin{definition}{(\cite[Def. I.4]{Nagata})} We say that $Z$ has \textit{covering dimension} $\text{dim}Z\leq n$ if every finite open cover $\mathcal{U}$ has an open refinement $\mathcal{W}$ with $\mul(\mathcal{W})\leq n+1$. We say that $\text{dim}Z= n$ if it is true that $\text{dim}Z\leq n$ and it is false that $\text{dim}Z\leq n-1$.
\end{definition}
Suppose now that $Z$ has a metric $d$. If $\{U_i\}$ is a countable (or finite) cover of $Z$ with diameter at most $\delta$, we say that $\{U_i\}$ is a $\delta$-\textit{cover} of $Z$. The Hausdorff measure and Hausdorff dimension are defined as follows. Let $s\geq 0$ and for every $\delta >0$ define 
\begin{equation}\label{eq:Hausdorffmeasure1}
\mathcal{H}^s_{\delta}(Z)=\inf \{\sum_i (\diam (U_i))^s: \{U_i\}\thinspace \thinspace \text{is a}\thinspace \thinspace \delta \text{-cover of}\thinspace \thinspace Z\}.
\end{equation}
One can show that the limit 
\begin{equation}\label{eq:Hausdorffmeasure2}
\mathcal{H}^s(Z)=\lim\limits_{\delta \to 0}\mathcal{H}^s_{\delta}(Z)
\end{equation}
exists and that $\mathcal{H}^s$ defines a measure, see \cite[Section 2.1]{Falconer}.
\begin{definition}{(\cite[Section 2.1]{Falconer})}\label{def:Hausdorffmeasure3}
We call $\mathcal{H}^s$ the s-\textit{dimensional Hausdorff measure} on $(Z,d)$.
\end{definition}
\begin{definition}{(\cite[Section 2.2]{Falconer})}
The \textit{Hausdorff dimension} of $(Z,d)$ is defined as $$\dim_{H}Z=\inf \{ s\geq 0: \mathcal{H}^s(Z)=0\}.$$
\end{definition}
Let $\text{N}_{\delta}(Z)$ be the smallest number of sets of diameter at most $\delta>0$ which can cover $Z$. 
\begin{definition}{(\cite[Section 3.1]{Falconer})}
The \textit{lower} and \textit{upper box-counting dimensions} of $(Z,d)$ are defined as
\begin{align*}
\underline{\dim}_BZ&=\liminf\limits_{\delta \to 0}\frac{\log\text{N}_{\delta}(Z)}{-\log \delta}\\
\overline{\dim}_BZ&=\limsup\limits_{\delta \to 0}\frac{\log\text{N}_{\delta}(Z)}{-\log \delta}.
\end{align*}
If these are equal, their common value is called the \textit{box-counting dimension} and denoted by $\dim_BZ$. As we will shortly see, in many interesting cases the box-counting dimension coincides with the Hausdorff dimension.
\end{definition}
Another important metric dimension was introduced by Assouad in \cite{Assouad1, Assouad2, Assouad3} in the framework of bi-Lipschitz embeddability of metric spaces into Euclidean spaces. For a thorough exposition see \cite{Luk}. Moreover, there is a vast literature on Assouad dimension, see \cite{Fraser1, Mackay, Olsen}. We say that $(Z,d)$ is \textit{bi-Lipschitz embeddable} into some $\mathbb R^N$ if there exists a bi-Lipschitz map $f:(Z,d)\to \mathbb R^N$. Any such metric space should have the following doubling property \cite[Lemma 9.4]{Robinson}.
\begin{definition}\label{def:doublingdefinition}{(\cite[p.84]{Robinson})}
A metric space $(Z,d)$ is called $K$-\textit{doubling}, where $K\geq 1$, if every ball of radius $2r$ can be covered by $K$ balls of radius $r$, where $K$ is independent of $r$.
\end{definition}
Assouad, in an attempt to study the converse question, that is, whether every $K$-doubling metric space admits a bi-Lipschitz embedding into some $\mathbb R^N$, obtained the following.
\begin{thm}[{\cite[Proposition 2.6]{Assouad3}}]\label{thm:Assouad}
Let $(Z,d)$ be a $K$-doubling space. For every $\varepsilon \in (0,1)$ there is a bi-Lipschitz embedding $f:(Z,d^{\varepsilon})\to \mathbb R^N$, for some $N\in \mathbb N$ that depends on $K$ and $\varepsilon$.
\end{thm}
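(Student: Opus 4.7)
My plan follows the classical multi-scale \emph{net and bump} construction: build a doubly-indexed family of nets in $(Z,d)$ at geometric scales, use the doubling property to color each net with a uniformly bounded palette, attach Lipschitz bump functions to each color class, and assemble them into a map $f : Z \to \mathbb{R}^N$ whose bi-Lipschitz behaviour with respect to $d^{\varepsilon}$ hinges on the snowflake exponent $\varepsilon < 1$ forcing geometric decay across scales.

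Concretely, fix a parameter $q \in (0,1)$ (say $q=1/2$), and for each $n \in \mathbb{Z}$ pick a maximal $q^n$-separated set $N_n \subset Z$. Iterating Definition~\ref{def:doublingdefinition} shows that any $s$-separated subset of a ball of radius $r$ has cardinality at most some $M(K, r/s)$. Applied at scale $q^n$, this makes the graph on $N_n$ joining pairs at distance $\leq C q^n$ have degree bounded by some $J_0 = J_0(K,C)$, so a greedy vertex colouring yields a partition $N_n = \bigsqcup_{j=1}^{J} N_n^{(j)}$ with $J = J(K,C)$ classes and pairwise distances $> C q^n$ within each class. For each pair $(n,j)$ define the scalar bump
\[
\varphi_{n,j}(x) \;=\; \sum_{p \in N_n^{(j)}} \max\!\bigl(0,\, q^n - d(x,p)\bigr),
\]
whose summands have pairwise disjoint supports once $C>2$, so $\varphi_{n,j}$ is $1$-Lipschitz and takes values in $[0, q^n]$. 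To land in a finite-dimensional space, I would identify scales $n$ modulo an integer $L = L(\varepsilon)$ to be chosen later, placing the channel $(n,j)$ into the coordinate indexed by $((n \bmod L),j)$ of $\mathbb{R}^{LJ}$, with weight $q^{n(1-\varepsilon)}$, and set $f = \sum_{n \in \mathbb{Z}} q^{n(1-\varepsilon)} \, \varphi_{n,\cdot}\, e_{(n \bmod L),\cdot}$.

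The upper Lipschitz bound with respect to $d^{\varepsilon}$ is routine: for $d(x,y) \asymp q^{n_0}$, contributions from scales $n \geq n_0$ are controlled by $|\varphi_{n,j}(x)-\varphi_{n,j}(y)| \leq d(x,y)$ with weights summing as a geometric series with ratio $q^{1-\varepsilon}$, while contributions from scales $n < n_0$ use $|\varphi_{n,j}| \leq q^n$, which after weighting and summing are bounded by a constant multiple of $q^{n_0(2-\varepsilon)} \lesssim d(x,y)^{\varepsilon}$. The lower bound is the heart of the matter: at the scale $n_0$ with $q^{n_0+1} < d(x,y) \leq q^{n_0}$, I would locate $p \in N_{n_0}$ close to $x$ but at distance $\geq c\, q^{n_0}$ from $y$, producing a separation $|\varphi_{n_0,j}(x)-\varphi_{n_0,j}(y)| \gtrsim q^{n_0}$, which after the weight $q^{n_0(1-\varepsilon)}$ gives the required $\gtrsim d(x,y)^{\varepsilon}$. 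The \textbf{main obstacle} is then controlling interference from the other scales $n \equiv n_0 \pmod{L}$ sharing this coordinate: such interference is at worst a sum $\sum_{k\neq 0} q^{(n_0+kL)(1-\varepsilon)} \cdot q^{n_0+kL \wedge 0}$, which, because $\varepsilon < 1$, is a geometric series in $q^L$ and $q^{L(1-\varepsilon)}$ with arbitrarily small tail once $L = L(\varepsilon)$ is taken large enough. Tuning $q$, $C$, and $L$ simultaneously so that this interference is strictly dominated by the separation signal, and so that $N = LJ$ depends only on $K$ and $\varepsilon$, is the delicate balancing step.
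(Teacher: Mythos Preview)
The paper does not prove this theorem: it is stated as a cited result from Assouad \cite{Assouad3} and used as a black box in the discussion of Subsection~\ref{sec:Premdimensiontheory} and Remark~\ref{rem:Hausdorff_equals_box}. So there is no ``paper's own proof'' to compare against.

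Your sketch is the standard construction (essentially Assouad's original argument, as presented for instance in \cite{Heinonen}), and the overall architecture is correct. One point to tighten: in the lower bound you write that at scale $n_0$ you ``locate $p \in N_{n_0}$ close to $x$ but at distance $\geq c\,q^{n_0}$ from $y$''. Since $N_{n_0}$ is only a $q^{n_0}$-net, the nearest net point to $x$ may satisfy $d(x,p)$ comparable to $q^{n_0}$, and then $d(y,p)$ can also be comparable to $q^{n_0}$; the bump values $\varphi_{n_0,j}(x)$ and $\varphi_{n_0,j}(y)$ need not differ by $\gtrsim q^{n_0}$ at that exact scale. The usual remedy is to work a fixed number of scales below $n_0$ (say $n_0 + m$ for a constant $m$ depending on $q$), where the net is fine enough relative to $d(x,y)$ that some $p$ lies deep in the support near $x$ while $y$ falls entirely outside it. This is absorbed into the constants and does not affect the dependence $N = N(K,\varepsilon)$, but without it the separation step as written has a gap.
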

We note that $d^{\varepsilon}$ is the metric defined by $d^{\varepsilon}(z,y)=d(z,y)^{\varepsilon}$. Hence Assouad's Theorem does not offer a bi-Lipschitz embedding of the actual space but of a \textit{snowflaked} version of it. However, it turns out that there are doubling spaces which do not admit bi-Lipschitz embeddings \cite{LS,Semmes}. The dependence of $N$ on $\varepsilon$ has been studied in \cite{NN}.
\begin{definition}[\cite{Assouad3}]\label{def:Assouad}
Let $(Z,d)$ be a metric space. Suppose $s\geq 0$ and $C\geq 0$ are numbers such that 
\begin{equation*}\label{eq:Assouadcondition}
\# Y \leq C(b/a)^s
\end{equation*}
wherever $0<a\leq b$ and $Y\subset Z$ is a finite subset with $a\leq d(y,y')\leq b$ if $y,y' \in Y$ and $y\neq y'$. Then $Z$ is called $(C,s)$-\textit{homogeneous}. We say that $Z$ is $s$-\textit{homogeneous} if it is $(C,s)$-\textit{homogeneous} for some $C$. The \textit{Assouad dimension} is defined to be
\begin{equation*}\label{eq:Assouaddimension}
\text{dim}_{\text{A}}Z=\text{inf}\{s\in [0,\infty): Z \enspace \text{is s-homogeneous}\}.
\end{equation*}
\end{definition}
It is straightforward to show that $\text{dim}_{\text{A}}Z$ is finite if and only if $Z$ is $K$-doubling \cite[Lemma 9.4]{Robinson}. Specifically, if $Z$ is $(C,s)$-homogeneous then it is $C2^s$-doubling. Before passing to the interplay of measure theory and dimension theory let us summarise the known relations  between the dimensions discussed so far.
\begin{prop}[{\cite{Falconer, MT}}]\label{prop:alldimensions}
For a totally bounded metric space $(Z,d)$ it holds
\begin{align*}
\dim Z \leq \dim_H Z \leq \underline{\dim}_BZ\leq \overline{\dim}_BZ\leq \dim_A Z.
\end{align*}
\end{prop}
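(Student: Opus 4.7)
The plan is to establish the chain one inequality at a time, treating the middle ``trivial'' one in passing. Concretely, I will prove (i) $\dim Z \le \dim_H Z$, (ii) $\dim_H Z \le \underline{\dim}_B Z$, (iii) $\underline{\dim}_B Z \le \overline{\dim}_B Z$, and (iv) $\overline{\dim}_B Z \le \dim_A Z$. Throughout I will use that $(Z,d)$ totally bounded is, in particular, separable, so classical dimension-theoretic facts about separable metric spaces apply.

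For (iv) the argument is clean and self-contained, so I would do it first to fix ideas. Fix any $s > \dim_A Z$, so $Z$ is $(C,s)$-homogeneous for some $C > 0$. For $\delta \in (0, \diam(Z))$, let $Y_\delta \subset Z$ be a maximal $\delta$-separated subset; maximality forces $Y_\delta$ to be a $\delta$-net, whence the balls $\{B(y,\delta) : y \in Y_\delta\}$ cover $Z$, giving $N_{2\delta}(Z) \le \#Y_\delta$. Applying the $(C,s)$-homogeneity definition with $a = \delta$ and $b = \diam(Z)$ yields $\#Y_\delta \le C(\diam(Z)/\delta)^s$. Taking logs and letting $\delta \to 0$ gives $\overline{\dim}_B Z \le s$, and letting $s \downarrow \dim_A Z$ finishes (iv).

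For (ii), fix $s > \underline{\dim}_B Z$. By definition of $\liminf$, there is a sequence $\delta_n \to 0$ along which $N_{\delta_n}(Z) \le \delta_n^{-s}$. Covering $Z$ by $N_{\delta_n}(Z)$ sets of diameter at most $\delta_n$ gives, from (\ref{eq:Hausdorffmeasure1}),
\begin{equation*}
\mathcal{H}^s_{\delta_n}(Z) \le N_{\delta_n}(Z) \cdot \delta_n^s \le 1,
\end{equation*}
hence $\mathcal{H}^s(Z) \le 1 < \infty$ and $\dim_H Z \le s$. Letting $s \downarrow \underline{\dim}_B Z$ gives (ii). Part (iii) is immediate from $\liminf \le \limsup$.

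The main obstacle is (i), which is the classical Szpilrajn--Hurewicz estimate bounding covering dimension by Hausdorff dimension for separable metric spaces. The strategy is as follows: assume $\dim_H Z < s$ and let $\mathcal{U}$ be any finite open cover of $Z$, with Lebesgue covering number $\ell = \Leb(\mathcal{U}) > 0$. Pick $\delta \in (0, \ell/4)$; since $\mathcal{H}^s(Z) = 0$, I would extract a countable $\delta$-cover $\{V_i\}$ with $\sum_i \diam(V_i)^s$ as small as we like, and then replace each $V_i$ by a slightly larger open neighbourhood $W_i$ of diameter still below $\ell$, so each $W_i$ is contained in some member of $\mathcal{U}$. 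The refinement with controlled multiplicity is produced by a classical nerve/discretisation argument: one passes to a locally finite refinement indexed by a nerve whose combinatorial dimension is at most $\lceil s \rceil$, using that the total $s$-content bounds the overlap thickness. This yields an open refinement of $\mathcal{U}$ with $\mul \le \lceil s \rceil + 1$, so $\dim Z \le \lceil s \rceil$; letting $s \downarrow \dim_H Z$ gives (i). I expect the subtle point to be executing the nerve-refinement step rigorously (a standard fact from dimension theory, e.g.\ Hurewicz--Wallman or Nagata), rather than any of the metric-geometric steps (ii)--(iv).
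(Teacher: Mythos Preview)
The paper does not give its own proof of this proposition: it is stated with a citation to \cite{Falconer, MT} and no argument is supplied. So there is nothing to compare your approach against beyond noting that your outline follows the standard treatment found in those references.

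Your arguments for (ii), (iii) and (iv) are correct and complete. For (i) you have correctly identified that $\dim Z \le \dim_H Z$ is the substantial step (Szpilrajn's theorem), but your sketch of the nerve/discretisation argument is not a proof: the passage ``one passes to a locally finite refinement indexed by a nerve whose combinatorial dimension is at most $\lceil s \rceil$, using that the total $s$-content bounds the overlap thickness'' hides exactly the nontrivial construction. The actual argument (see e.g.\ Hurewicz--Wallman or \cite{Nagata}) proceeds differently: one shows that if $\mathcal{H}^{n+1}(Z)=0$ then $Z$ can be mapped by an $\varepsilon$-map into an $n$-dimensional polyhedron for every $\varepsilon>0$, or equivalently one builds the low-multiplicity refinement by an inductive coordinate-by-coordinate sieving rather than a single nerve step. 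Since the paper itself defers to the literature here, your acknowledgement that this step requires a standard dimension-theory reference is appropriate, but be aware that the heuristic you wrote would not convert into a proof without that input.
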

A class of measures with an important role in the study of metric spaces is the following.
\begin{definition}[{\cite[Def. 4.1.2]{MT}}]\label{def:doublingmeasure}
A Borel measure $\mu$ on $(Z,d)$ is called $D$-\textit{doubling}, where $D\geq 1$, if 
$$0< \mu(\overline{B}(z,2r))\leq D\mu(\overline{B}(z,r))<\infty$$
for every $z\in Z$ and $r\in [0,\diam Z)$.
\end{definition}
It turns out that for a complete metric space the doubling property is equivalent to the existence of a doubling measure, see \cite[Section 13]{Heinonen}. The doubling property is not uncommon, for instance if $Z$ is a separable metrizable space with $\dim Z<\infty$ then there is a totally bounded metric so that $\dim_A Z=\dim Z$ \cite[Theorem 4.3]{Luk}. This is also true to some extent for doubling measures \cite[Theorem 4.5]{Luk}. 

Significantly more regular measures can be constructed on spaces that exhibit self-similarity, like the middle-third Cantor set or, more generally, sets generated by iterated function systems satisfying the open-set condition, see \cite{MT}. A prominent case of measures with the doubling property are the Ahlfors regular measures.
\begin{definition}{(\cite[Def. 1.4.13]{MT})}
A metric space $(Z,d)$ is \textit{Ahlfors s-regular} for some $s>0$ if there exists a Borel measure $\mu$ on $Z$  and some $C>0$ so that $$C^{-1}r^s\leq \mu(\overline{B}(z,r))\leq Cr^s$$ for every $z\in Z$ and $r\in [0,\diam Z)$. Such measure $\mu$ is called \textit{Ahlfors s-regular} (or Ahlfors regular).
\end{definition}
\begin{remark}\label{rem:Ahlforsrem} If $\mu$ is an Ahlfors $s$-regular measure on $(Z,d)$ then it is comparable to the $s$-dimensional Hausdorff measure $\mathcal{H}^s$, in the sense that $\mu$ is within constant multiples of $\mathcal{H}^s$. Therefore, $\mathcal{H}^s$ is strictly positive. A typical example of an Ahlfors regular space is the classical Sierpinski gasket on which the $\log3/\log2$-dimensional Hausdorff measure is Ahlfors $\log3/\log2$-regular, see \cite[Example 8.3.4]{MT}.
\end{remark}
\begin{prop}[{\cite[Section 1.4]{MT}}]\label{prop:equalityofdimensions}
If the metric space $(Z,d)$ is Ahlfors s-regular then $\dim_HZ=\dim_AZ=s$. 
\end{prop}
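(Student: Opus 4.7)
The plan is to sandwich every dimension in Proposition \ref{prop:alldimensions} between $s$ from above and below, reducing the statement to proving $\dim_H Z \geq s$ and $\dim_A Z \leq s$. Then the chain $\dim_H Z \leq \underline{\dim}_B Z \leq \overline{\dim}_B Z \leq \dim_A Z$ forces equality throughout. Let $\mu$ and $C$ be the Ahlfors $s$-regular measure and constant witnessing regularity.

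For the lower bound $\dim_H Z \geq s$ I would run the standard mass distribution principle. Given any $\delta$-cover $\{U_i\}_{i}$ of $Z$, for each non-empty $U_i$ pick $z_i \in U_i$ and note that $U_i \subseteq \overline{B}(z_i, \diam(U_i))$. Whenever $\diam(U_i) < \diam(Z)$, Ahlfors regularity gives $\mu(U_i) \leq C\,\diam(U_i)^s$, and in the remaining case $U_i$ is handled by the trivial bound $\mu(U_i)\leq\mu(Z)\leq C\,\diam(Z)^s$. Summing over $i$ and noting that $\mu(Z)\geq C^{-1}r^s>0$ for any $r\in(0,\diam Z)$, one obtains $\sum_i \diam(U_i)^s \geq \mu(Z)/C'$ for a uniform $C'$. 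Hence $\mathcal{H}^s_\delta(Z)\geq \mu(Z)/C'$ for all small $\delta$, and passing to the limit in (\ref{eq:Hausdorffmeasure2}) gives $\mathcal{H}^s(Z)>0$, so $\dim_H Z \geq s$.

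For the upper bound $\dim_A Z \leq s$ I would use a volume-packing argument. Take any finite $Y\subset Z$ with $\#Y\geq 2$ and $a\leq d(y,y')\leq b$ for distinct $y,y'\in Y$. The open balls $B(y,a/2)$ for $y\in Y$ are pairwise disjoint, and $\overline{B}(y,a/3)\subseteq B(y,a/2)$ for each $y$, so
\begin{equation*}
\sum_{y\in Y}\mu(\overline{B}(y,a/3))\leq \mu(\overline{B}(y_0,b+a/2))
\end{equation*}
for any fixed $y_0\in Y$. Ahlfors regularity gives $\mu(\overline{B}(y,a/3))\geq C^{-1}(a/3)^s$, while the right-hand side is bounded by $C(b+a/2)^s$ (with the case $b+a/2\geq \diam(Z)$ reduced to the trivial bound by $\mu(Z)\leq C\,\diam(Z)^s$). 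Since $a\leq b$, this yields
\begin{equation*}
\#Y \;\leq\; C^2\,3^s\left(\frac{b+a/2}{a}\right)^s \;\leq\; C^2\,(9/2)^s(b/a)^s.
\end{equation*}
Absorbing the degenerate case $\#Y=1$ into the constant shows $Z$ is $s$-homogeneous, so $\dim_A Z \leq s$.

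The main obstacle is bookkeeping around the endpoint $r=\diam(Z)$ in the definition of Ahlfors regularity and the distinction between open and closed balls in the packing step; both are handled cleanly by passing to a slightly smaller closed ball (radius $a/3$ inside the open ball of radius $a/2$) and by reducing the $b+a/2\geq\diam(Z)$ case to the trivial upper bound using $\mu(Z)\leq C\,\diam(Z)^s$. Once these are in place the two inequalities combine with Proposition \ref{prop:alldimensions} to give $\dim_H Z = \underline{\dim}_B Z = \overline{\dim}_B Z = \dim_A Z = s$.
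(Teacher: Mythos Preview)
Your proof is correct and is the standard argument for this fact. Note, however, that the paper does not actually prove this proposition: it is stated with a citation to \cite[Section 1.4]{MT} and no proof is given in the paper itself, so there is no in-paper argument to compare against. Your mass-distribution argument for $\dim_H Z\geq s$ and your packing/volume argument for $\dim_A Z\leq s$ are exactly the kind of proof one finds in the cited reference, and your handling of the boundary cases (the endpoint $r=\diam Z$ and the open-versus-closed ball issue via the $a/3$ trick) is clean.
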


\subsection{Refining sequences}\label{sec:refining}
The next concept provides a way to study topological or dynamical properties on a compact metric space by means of finite approximations. It was first introduced in \cite[Cor. p.314]{AKM} as a way to study topological entropy. Here we adjust it to our needs.
\begin{definition}\label{def:refining sequence}
Let $(Z,d)$ be a compact metric space. A sequence $(\mathcal{V}_n)_{n\geq 0}$ of finite covers of $Z$, which are either all open or all closed with non-empty interiors, is called \textit{refining} if, $\mathcal{V}_0=\{Z\}$ and for every $n\geq 0$ any element of $\mathcal{V}_{n+1}$ lies inside some element of $\mathcal{V}_n$, so that $$\lim\limits_{n\to \infty}\overline{\text{diam}}(\mathcal{V}_n)=0.$$ 
\end{definition}

It is straightforward to check that, in the case of open covers, the collection $\bigcup_{n\in\mathbb N}\mathcal{V}_n$ forms a countable basis for the topology on $Z$ and that $\underline{\text{diam}}(\mathcal{V}_n)>0$, for every $n\in \mathbb N$, if and only if $Z$ does not have isolated-points. Interesting refining sequences exist over spaces $Z$ that admit an expansive homeomorphism $\psi$; that is, there is some $\varepsilon_Z>0$ so that if $d(\psi^n(x),\psi^n(y))\leq \varepsilon_Z$ for every $n\in \mathbb Z$ then $x=y$. 
\begin{definition}[{\cite[Def. 5.10]{Walters}}]\label{def:generator}
Let $(Z,d)$ be a compact metric space and $\psi$ a homeomorphism. A \textit{generator} for $(Z,\psi)$ is a finite open cover $\mathcal{V}_1$ of $Z$ such that for each bi-infinite sequence $\{V_i\}_{i\in \mathbb Z}$ of elements of $\mathcal{V}_1$, it holds that $\bigcap_{i\in \mathbb Z} \psi^{-i}(\cl({V_i}))$ is at most one point.
\end{definition}
\enlargethispage{\baselineskip}
\enlargethispage{\baselineskip}
It turns out that the existence of a generator is equivalent to the expansiveness of the system \cite[Theorem 5.22]{Walters}. Given a generator we obtain a refining sequence $(\mathcal{V}_n)_{n\geq 0}$ of open covers by setting $\mathcal{V}_0=\{Z\}$ and for $n\in \mathbb N$
\begin{equation}\label{eq: dynamic refining sequence}
\mathcal{V}_n=\bigvee_{i=1-n}^{n-1}\psi^{-i}(\mathcal{V}_1),
\end{equation}
since $\lim\limits_{n\to \infty}\overline{\text{diam}}(\mathcal{V}_n)=0$. Also, $\text{h}(\psi, \mathcal{V}_1)=\text{h}(\psi)$. For these we refer to \cite{Walters}.

The notion of refining sequences was also used to prove that any compact metrizable space $Z$ is the quotient of a Cantor space, basically, built from the non-isolated points of $Z$ \cite{Schoenfeld}. The dynamical analogue is that any expansive dynamical system $(Z,\psi)$ is a factor of some $(\Sigma, \sigma)$ where $\Sigma$ is a compact zero-dimensional space and $\sigma$ is a homeomorphism. If $Z$ has no isolated points then $\Sigma$ will be a Cantor space, see Corollary \ref{cor:quotientofCantor}. The basic idea is that $\Sigma$ corresponds to the path space of an infinite rooted graph induced by a refining sequence, as in (\ref{eq: dynamic refining sequence}).
\enlargethispage{\baselineskip}
\subsection{Approximation graphs}\label{sec:approximationgraphs} Given a refining sequence of open or closed covers for a compact metric space $(Z,d)$ we construct a rooted graph, with vertices the sets in the covers and edges defined by inclusion of the sets in preceding refinements. Such a graph will be called an approximation graph since its infinite path space will provide an approximation of $(Z,d)$. We study how precise this approximation can be and how it behaves in the dynamical context. 

This notion was previously used by Palmer \cite{Palmer} who proved the existence of an abstract refining sequence of open covers whose approximation graph can be used to obtain the Hausdorff measure and Hausdorff dimension of $(Z,d)$. However, Palmer did not study the structure of approximation graphs nor did he study dynamics on them. Here we extend Palmer's definition by including refining sequences of closed covers and make a few adjustments that suit our needs. A related but different concept known as approximating graphs has been used in \cite{BP} for ultrametric Cantor spaces and in \cite{JKS} for transversals of substitution tilings. 

\begin{definition}
Let $(Z,d)$ be a compact metric space and $(\mathcal{V}_n)_{n\geq 0}$ be a refining sequence of $Z$. The corresponding \textit{approximation graph} is the rooted graph $\Gamma=(\mathcal{V},\mathcal{E})$ where 
\begin{enumerate}[(1)]
\item the set of vertices $\mathcal{V}$ is given by the disjoint union $\coprod_{n\geq 0} \mathcal{V}_n$;
\item the set of edges $\mathcal{E}$ is given by the disjoint union $\coprod_{n\geq 0}\mathcal{E}_n$, where $$\mathcal{E}_n=\{(v_{n+1},v_n): v_{n+1}\in \mathcal{V}_{n+1},v_{n}\in \mathcal{V}_{n}, v_{n+1}\subset v_n\}$$ is the set of edges that have sources in $\mathcal{V}_n$ and ranges in $\mathcal{V}_{n+1}$. The source map $s$ is given by $s(v_{n+1},v_n)=v_n$, the range map $r$ is given by $r(v_{n+1},v_n)=v_{n+1}$;
\item the root is $Z$.
\end{enumerate}
\end{definition}
We consider approximation graphs that have \textit{no sinks}; for every $v\in \mathcal{V}$ it holds that $s^{-1}(v)\neq \varnothing$. Also the \textit{only source is the root}; for every $v\neq Z$ we have $r^{-1}(v)\neq \varnothing$. This is because $(\mathcal{V}_n)_{n\geq 0}$ is a refining sequence. An approximation graph with these two conditions is an example of a Bratteli diagram \cite[Definition 2.1]{HPS}. A symbolic description of $Z$ comes by considering the \textit{space of infinite paths}
\begin{equation}\label{eq:spaceofpaths}
\mathcal{P}_{\Gamma}=\{\widetilde{p}=(p_n)\in \prod_n \mathcal{E}_n: s(p_{n+1})=r(p_n)\}.
\end{equation}

For a finite path $\mu=\mu_0\mu_1\ldots \mu_{\ell}$ in $\Gamma$, where each $\mu_i \in \mathcal{E}_i$, denote by $C_{\mu}$ the \textit{cylinder set} 
\begin{equation}\label{eq:cylinders}
C_{\mu}=\{\widetilde{p}\in \mathcal{P}_{\Gamma}: p_i=\mu_i, \enspace \text{for} \enspace 0\leq i\leq \ell \}
\end{equation}
which is non-empty since there are no sinks. The collection of all these sets forms a clopen basis for a compact Hausdorff topology on $\mathcal{P}_{\Gamma}$ \cite[p.18]{AEG}. Therefore, $\mathcal{P}_{\Gamma}$ is a compact zero-dimensional space.
Moreover, for $v\in \mathcal{V}_n$ let $[v]$ denote the set of paths from the root $Z$ to $v$. These sets are non-empty since there are no sources. Let 
\begin{equation}
C_{[v]}=\bigcup_{\mu \in [v]}C_{\mu}
\end{equation}
and because $[v]$ is finite, $C_{[v]}$ is clopen. The collection of these sets forms a sub-basis for the cylinder set topology on $\mathcal{P}_{\Gamma}$. Finally, let $\text{C}(v)=\{v_{n+1}\in \mathcal{V}_{n+1}:v_{n+1}\subset v\}$ denote the \textit{descendants} of $v\in \mathcal{V}_{n}$ in $\mathcal{V}_{n+1}$.
\begin{remark}
$\mathcal{P}_{\Gamma}$ is a Cantor space if and only if for every $n\geq 0$ and every $v\in \mathcal{V}_n$, there is a path from $v$ to some $w\in \mathcal{V}_m$ where $m\geq n, \#(s^{-1}(w))\geq 2$ \cite[Lemma 6.4]{AEG}. It follows that $\mathcal{P}_{\Gamma}$ is a Cantor space if and only if $Z$ has no isolated points.
\end{remark}
Due to Cantor's intersection theorem we can define the map $\pi_{\Gamma}:\mathcal{P}_{\Gamma}\to Z$ given by 
\begin{equation}\label{eq: projection map}
\widetilde{p}\mapsto \bigcap_{n\geq 0} \cl(r(p_n)).
\end{equation}
\begin{prop}
The map $\pi_{\Gamma}$ is continuous and surjective. Consequently, it is a quotient map.
\end{prop}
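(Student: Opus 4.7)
The plan is to verify three things---well-definedness of $\pi_\Gamma$ as a single-valued map, continuity via the cylinder basis, and surjectivity through a K\"{o}nig's lemma argument---after which the quotient-map conclusion is immediate from compactness.

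First I would check that $\pi_\Gamma(\widetilde{p})$ is indeed a single point. Given $\widetilde{p}=(p_n)$, the edge relations $s(p_{n+1})=r(p_n)$ and $r(p_n)\subset s(p_n)$ yield the nesting $\cl(r(p_0))\supset \cl(r(p_1))\supset \cdots$. Cantor's intersection theorem then guarantees the intersection is non-empty, and since $\diam(r(p_n))\leq \overline{\diam}(\mathcal{V}_{n+1})\to 0$, it has at most one element.

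For continuity, fix $\widetilde{p}\in\mathcal{P}_\Gamma$ and $\varepsilon>0$. Choose $N$ with $\overline{\diam}(\mathcal{V}_{N+1})<\varepsilon$ and consider the basic open neighbourhood $C_{p_0 p_1\cdots p_N}$ of $\widetilde{p}$. Any $\widetilde{q}=(q_n)$ in this cylinder satisfies $r(q_N)=r(p_N)$, so both $\pi_\Gamma(\widetilde{p})$ and $\pi_\Gamma(\widetilde{q})$ lie in $\cl(r(p_N))$, giving $d(\pi_\Gamma(\widetilde{p}),\pi_\Gamma(\widetilde{q}))\leq \overline{\diam}(\mathcal{V}_{N+1})<\varepsilon$.

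Surjectivity is the step I expect to be the main obstacle. The naive approach of picking $v_n\in\mathcal{V}_n$ with $z\in\cl(v_n)$ at each level and hoping the choices are automatically nested fails, because a set in $\mathcal{V}_{n+1}$ containing $z$ need not sit inside any preselected $\mathcal{V}_n$-set containing $z$. I would sidestep this with a tree argument: let $T$ be the rooted tree whose level-$n$ vertices are the $v\in\mathcal{V}_n$ with $z\in\cl(v)$, and whose edges are $(v,v')$ with $v\in\mathcal{V}_n$, $v'\in\mathcal{V}_{n+1}$ and $v'\subset v$. To see $T$ has vertices at every level $n$, pick any $v_n\in\mathcal{V}_n$ with $z\in v_n$ (possible because $\mathcal{V}_n$ covers $Z$) and then select inductively $v_{k-1}\in\mathcal{V}_{k-1}$ with $v_k\subset v_{k-1}$ for $k=n,n-1,\ldots,1$ using the refinement property; at each step $z\in\cl(v_k)$ holds automatically since $v_k\supset v_n\ni z$. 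As each $\mathcal{V}_n$ is finite, $T$ is locally finite, so K\"{o}nig's lemma produces an infinite branch $(v_0,v_1,\ldots)$; setting $p_n=(v_{n+1},v_n)$ gives a path $\widetilde{p}\in\mathcal{P}_\Gamma$ with $z\in\bigcap_n\cl(v_{n+1})=\{\pi_\Gamma(\widetilde{p})\}$. The closed-cover case is handled identically.

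The last sentence of the proposition then comes for free: a continuous surjection from a compact space onto a Hausdorff space is a closed map (closed sets are compact, and compact subsets of Hausdorff spaces are closed), and a continuous closed surjection is a quotient map.
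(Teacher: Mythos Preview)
Your proof is correct and follows essentially the same approach as the paper: continuity via the cylinder basis and the decay of diameters, surjectivity via K\"{o}nig's lemma applied to the subgraph of $\Gamma$ lying over $z$. The only cosmetic difference is that the paper works with the subgraph $\Gamma_z$ on vertices $v$ with $z\in v$ and then passes to a spanning tree before invoking K\"{o}nig, whereas you take vertices with $z\in\cl(v)$ and call the resulting object a tree directly (strictly speaking it may have multiple parents, but this does not affect the argument).
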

\begin{proof}
Continuity of $\pi_{\Gamma}$ follows because if $U$ is an open neighbourhood of $\pi_{\Gamma}(\widetilde{p})$, since $\text{diam}(r(p_n))$ tends to zero, there is some $n_0\in \mathbb N$ such that $\cl(r(p_{n_0}))\subset U$ and hence $\pi_{\Gamma}(C_{p_0\ldots p_{n_0}})\subset \cl (r(p_{n_0})) \subset U$.

The surjectivity is more interesting and is basically a consequence of K{\"o}nig's Lemma \cite[Lemma 8.1.2]{Diestel}. Let $z\in Z$ and for each $n\geq 0$ consider the sets in $\mathcal{V}_n$ that contain $z$, that is $\num_{\mathcal{V}_n}(\{z\})$. Let $\Gamma_z$ be the sub-graph of $\Gamma$ restricted on the set of vertices $\coprod_{n\geq 0}\num_{\mathcal{V}_n}(\{z\})$. It is an infinite rooted graph with vertices of finite degree and contains some infinite rooted tree $\Gamma'_z$. From K{\"o}nig's Lemma, the tree $\Gamma'_z$ has an infinite path $\widetilde{p}\in \mathcal{P}_{\Gamma}$ and $\pi_{\Gamma}(\widetilde{p})=z$. 
\end{proof}

\subsection{Essentiality of approximation graphs}\label{sec:Essentialgraphs} We introduce some structural properties of approximation graphs that can be used to construct spectral triples over compact metric spaces in the sense of Christensen and Ivan \cite{CI}.  However, we do not deal with this here. First we define the \textit{overlapping set} of $\mathcal{V}_n$ to be
\begin{equation}\label{eq:overlappingset}
\mathcal{Y}_n=\{ \cl(v_n)\cap \cl(w_n): v_n\neq w_n\in \mathcal{V}_n\},
\end{equation}
and its \textit{essential part} to be
\begin{equation}\label{eq:essentialpartofset}
\mathcal{V}_n^{\text{ess}}=\{v_n^{\text{ess}}:v_n\in \mathcal{V}_n\} \enspace \text{where}\enspace v_n^{\text{ess}}=\text{int}(v_n)\setminus \bigcup\mathcal{Y}_n.
\end{equation}
\begin{definition}\label{def:essentialgraph}
An approximation graph $\Gamma$ is called \textit{regular} if for every $n\in \mathbb N$ and $v_n\in \mathcal{V}_n$ we have
\begin{equation}\label{eq:E1}
v_n = \bigcup \{v_{n+1} \in \mathcal{V}_{n+1}: v_{n+1}\subset v_n\}\tag{E1} \enspace \text{and}
\end{equation}
\begin{equation}\label{eq:E2}
v_n^{\text{ess}}\neq \varnothing.\tag{E2}
\end{equation}
If $\Gamma$ consists of closed covers and $\bigcup \mathcal{V}_n^{\text{ess}}$ is dense in $Z$, for every $n\in\mathbb N$, we will say that $\Gamma$ is \textit{essential}. 
\end{definition}
Any $\Gamma$ induced by a generator (see equation (\ref{eq: dynamic refining sequence})) satisfies condition (\ref{eq:E1}). Also, an arbitrary $\Gamma$ can always be modified to satisfy it, but at the cost of increasing the cardinality of the covers. Condition (\ref{eq:E2}) is a type of regularity assumption on the covers. More precisely, if $\cl(v_n)=\cl(w_n)$ then $v_n=w_n$, because otherwise $\cl(v_n)\in \mathcal{Y}_n$ and $v_n^{\text{ess}}=\varnothing.$ Moreover, for every $v_n\in \mathcal{V}_n$ we have 
\begin{equation}\label{eq: equation of quotient map}
\Int (\pi_{\Gamma}(C_{[v_n]}))\neq \varnothing
\end{equation}
since $\varnothing \neq \pi_{\Gamma}^{-1}(v_n^{\text{ess}})\subset C_{[v_n]}$. Indeed, if $z\in v_n^{\text{ess}}$ and $\widetilde{p}\in \mathcal{P}_{\Gamma}$ is such that $\pi_{\Gamma}(\widetilde{p})=z$, then $z\in \cl(r(p_n))\cap v_n^{\text{ess}}$ and hence $r(p_n)=v_n$. 
Finally, essentiality really means that for every $m\geq n$ and $v_m\in \mathcal{V}_m$ there is a unique $v_n\in \mathcal{V}_n$ such that $v_m\subset v_n$. Consequently, for every cylinder set it holds that
\begin{equation}\label{eq:stronglyessentialcylinder}
C_{\mu_0\ldots \mu_{\ell}}=C_{[r(\mu_{\ell})]}.
\end{equation}

To see how well $\mathcal{P}_{\Gamma}$ approximates $Z$ consider the set 
\begin{equation}\label{eq:injectivityset}
\mathcal{I}_{\Gamma}=\{\widetilde{p}\in \mathcal{P}_{\Gamma}: \# \pi_{\Gamma}^{-1}(\widetilde{p})=1\}
\end{equation}
on which $\pi_{\Gamma}$ is injective. 
\begin{prop}\label{prop:injectivityset}
For a regular approximation graph $\Gamma$ we have $$\mathcal{I}_{\Gamma}=\pi_{\Gamma}^{-1}( \bigcap_{n\in \mathbb N} \bigcup  \mathcal{V}_n^{\text{ess}}).$$
\end{prop}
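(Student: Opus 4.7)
The plan is to prove the two inclusions directly by converting the essential-part condition into a pointwise uniqueness statement for the level-$n$ cover element whose closure contains $z := \pi_\Gamma(\widetilde p)$. An observation I will use in both directions is that the essential parts $\{v^{\text{ess}} : v \in \mathcal V_n\}$ are pairwise disjoint: if $v \neq w$ in $\mathcal V_n$ then $v^{\text{ess}} \cap w^{\text{ess}} \subset \cl(v) \cap \cl(w) \in \mathcal Y_n$, while $v^{\text{ess}}$ is disjoint from $\bigcup \mathcal Y_n$ by the very definition of the essential part. In particular, $z \in v_n^{\text{ess}}$ forces $v_n$ to be the \emph{only} element of $\mathcal V_n$ with $z \in \cl(v_n)$, because any other $w$ with $z \in \cl(w)$ would place $z$ in $\cl(v_n) \cap \cl(w) \in \mathcal Y_n$, contradicting $z \in v_n^{\text{ess}}$.

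For the inclusion $\mathcal I_\Gamma \supseteq \pi_\Gamma^{-1}\!\left(\bigcap_n \bigcup \mathcal V_n^{\text{ess}}\right)$ I would take $\widetilde p$ with $z \in \bigcap_n \bigcup \mathcal V_n^{\text{ess}}$, pick for each $n$ the unique $v_n \in \mathcal V_n$ with $z \in v_n^{\text{ess}}$, and observe that any $\widetilde q \in \pi_\Gamma^{-1}(z)$ must satisfy $z \in \cl(r(q_{n-1}))$ and hence $r(q_{n-1}) = v_n$ by the uniqueness observation above. Since every edge $q_n$ is determined by its source and range, the sequence $(v_n)$ then determines $\widetilde q$ entirely, giving $\widetilde q = \widetilde p$ and so $\widetilde p \in \mathcal I_\Gamma$.

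For the reverse inclusion I argue by contrapositive: assume $z = \pi_\Gamma(\widetilde p)$ misses $\bigcup \mathcal V_n^{\text{ess}}$ at some level $n$, set $v_n := r(p_{n-1})$, and produce a second preimage of $z$. The first step is to find $w_n \in \mathcal V_n \setminus \{v_n\}$ with $z \in \cl(w_n)$. Since $z \in \cl(v_n)$ but $z \notin v_n^{\text{ess}} = \text{int}(v_n) \setminus \bigcup \mathcal Y_n$, either $z \in \bigcup \mathcal Y_n$, in which case a witnessing overlap pair immediately supplies $w_n$, or $z$ lies in $\cl(v_n) \setminus \text{int}(v_n)$, and a pigeonhole argument on a sequence in $Z \setminus v_n$ converging to $z$ (such a sequence exists because $z$ sits outside the open set $v_n$ in the open-cover case, and on the topological boundary of the closed set $v_n$ in the closed-cover case) picks out a constant subsequence lying in a single $w_n \neq v_n$, giving $z \in \cl(w_n)$. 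The second step is to extend through $w_n$ to a full infinite path: condition (\ref{eq:E1}) gives $v = \bigcup \text{C}(v)$ and hence $\cl(v) = \bigcup_{v' \in \text{C}(v)} \cl(v')$, so every vertex in the subgraph of $\Gamma$ on vertices whose closure contains $z$ has a descendant in the same subgraph. With finite branching at each level, König's Lemma, applied exactly as in the surjectivity proof of $\pi_\Gamma$, produces $\widetilde q \in \mathcal P_\Gamma$ passing through $w_n$ at level $n$ with $\pi_\Gamma(\widetilde q) = z$. Since $r(q_{n-1}) = w_n \neq v_n$, we have $\widetilde q \neq \widetilde p$, so $\widetilde p \notin \mathcal I_\Gamma$.

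I expect the main delicate point to be a clean unified treatment of the open and closed cover cases in the limit-point step above, since the geometry of $v_n$ relative to $z$ differs in the two regimes; the remainder of the argument is essentially forced by the definitions and by the König-type construction already deployed in the surjectivity proof of $\pi_\Gamma$.
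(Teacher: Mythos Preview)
Your proof is correct and follows essentially the same approach as the paper's: both directions hinge on the observation that $z\in v_n^{\text{ess}}$ forces $v_n$ to be the unique element of $\mathcal V_n$ whose closure contains $z$, and the reverse inclusion is obtained by producing a second preimage through a different $w_n$ via condition (\ref{eq:E1}). Your treatment is in fact slightly more thorough than the paper's, since you spell out the boundary/pigeonhole argument needed (in the closed-cover case) to pass from $z\notin\bigcup\mathcal V_n^{\text{ess}}$ to $z\in\bigcup\mathcal Y_n$, a step the paper simply asserts.
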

\begin{proof}
Let $z\in \bigcap_{n\in \mathbb N} \bigcup \mathcal{V}_n^{\text{ess}}$ and assume to the contrary that $\pi_{\Gamma}^{-1}(z) \not \subset \mathcal{I}_{\Gamma}$. Then there are two different $\widetilde{p},\widetilde{q} \in \mathcal{P}_{\Gamma}$ such that $\pi_{\Gamma}(\widetilde{p})=z=\pi_{\Gamma}(\widetilde{q}).$ Let $n_0$ be the first time when $(r(p_{n_0}),s(p_{n_0}))\neq (r(q_{n_0}),s(q_{n_0}))$, meaning $s(p_{n_0})=s(q_{n_0})$ and $r(p_{n_0})\neq r(q_{n_0})$. Then $z\in \cl({r(p_{n_0})})\cap \cl({r(q_{n_0})})\in \mathcal{Y}_{n_0}$ which is a contradiction. 

To prove that $\mathcal{I}_{\Gamma}\subset \pi_{\Gamma}^{-1}( \bigcap_{n\in \mathbb N}\bigcup  \mathcal{V}_n^{\text{ess}})$ let $\widetilde{p}\in \mathcal{I}_{\Gamma}$ and assume to the contrary that $\pi_{\Gamma}(\widetilde{p})\not \in \bigcap_{n\in \mathbb N}\bigcup  \mathcal{V}_n^{\text{ess}}.$ This means there is some $n_0\in \mathbb N$ such that $\pi_{\Gamma}(\widetilde{p})\in \bigcup \mathcal{Y}_{n_0}$. Consequently, for some $v_{n_0}\in \mathcal{V}_{n_0}$ that is different from $r(p_{n_0})$ we have $\pi_{\Gamma}(\widetilde{p})\in \cl({v_{n_0}})\cap \cl({r(p_{n_0})})$. Since $\pi_{\Gamma}$ is surjective, there is a path $\widetilde{q}\in \mathcal{P}_{\Gamma}$ such that $\pi_{\Gamma}(\widetilde{q})=\pi_{\Gamma}(\widetilde{p})$ and specifically due to condition (\ref{eq:E1}) we can arrange that $r(q_{n_0})=v_{n_0}$. This contradicts the fact that $\widetilde{p}\in \mathcal{I}_{\Gamma}$. 
\end{proof}
\begin{prop}\label{prop:stronginjectivityset}
Suppose $\Gamma$ is an essential approximation graph. Then $\mathcal{I}_{\Gamma}$ is a dense $G_{\delta}$-set.
\end{prop}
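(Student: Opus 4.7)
The plan is to combine Proposition \ref{prop:injectivityset}, which identifies $\mathcal{I}_\Gamma$ as the $\pi_\Gamma$-preimage of $\bigcap_{n\in\mathbb{N}} \bigcup \mathcal{V}_n^{\text{ess}}$, with the Baire Category Theorem applied in $Z$. The first step is to check that each $\bigcup \mathcal{V}_n^{\text{ess}}$ is an open subset of $Z$. This is where the closedness assumption on the covers enters: since each $v_n$ is closed, the essential piece $v_n^{\text{ess}} = \operatorname{int}(v_n) \setminus \bigcup \mathcal{Y}_n$ is the difference of an open set and a finite union of closed sets, hence open. By the essentiality hypothesis, $\bigcup \mathcal{V}_n^{\text{ess}}$ is also dense in $Z$.

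The $G_\delta$ part is then immediate: the compact metric space $Z$ is Baire, so $\bigcap_n \bigcup \mathcal{V}_n^{\text{ess}}$ is a $G_\delta$-subset of $Z$, and by continuity of $\pi_\Gamma$,
\[
\mathcal{I}_\Gamma \;=\; \pi_\Gamma^{-1}\Bigl(\bigcap_{n\in \mathbb N}\bigcup \mathcal{V}_n^{\text{ess}}\Bigr)\;=\;\bigcap_{n\in \mathbb N} \pi_\Gamma^{-1}\Bigl(\bigcup \mathcal{V}_n^{\text{ess}}\Bigr)
\]
is a $G_\delta$-subset of $\mathcal{P}_\Gamma$.

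The main work is density. It suffices to meet every basic clopen set $C_\mu$ with $\mu = \mu_0 \ldots \mu_\ell$ and $v_\ell = r(\mu_\ell)$. By essentiality the path from the root to $v_\ell$ is unique, so $C_\mu = C_{[v_\ell]}$ via equation \eqref{eq:stronglyessentialcylinder}. The key observation, which is already implicit in the proof of Proposition \ref{prop:injectivityset}, is that
\[
\pi_\Gamma^{-1}(v_\ell^{\text{ess}}) \;\subset\; C_{[v_\ell]}\;=\;C_\mu,
\]
because if $z \in v_\ell^{\text{ess}}$ and $\widetilde{p} \in \pi_\Gamma^{-1}(z)$, then $z \in \operatorname{cl}(r(p_\ell)) \cap v_\ell^{\text{ess}}$ forces $r(p_\ell) = v_\ell$, and then essentiality forces $p_i = \mu_i$ for every $i \leq \ell$.

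Now $v_\ell^{\text{ess}}$ is non-empty and open by condition \eqref{eq:E2} together with the first step, while each $\bigcup \mathcal{V}_n^{\text{ess}}$ is open and dense in $Z$; Baire's theorem therefore produces a point $z \in v_\ell^{\text{ess}} \cap \bigcap_n \bigcup \mathcal{V}_n^{\text{ess}}$. Any preimage of such a $z$ lies in $C_\mu \cap \mathcal{I}_\Gamma$, proving density. The only real obstacle I anticipate is the containment $\pi_\Gamma^{-1}(v_\ell^{\text{ess}}) \subset C_\mu$, which is the place where essentiality (rather than mere regularity) is used, since without it different initial segments of paths could terminate at $v_\ell$ and we could only conclude that some preimage of $z$ lies in $C_{[v_\ell]}$ instead of all of them lying in the single cylinder $C_\mu$.
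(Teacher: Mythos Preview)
Your proof is correct and follows essentially the same route as the paper's: both use the identification $\mathcal{I}_\Gamma=\pi_\Gamma^{-1}(D)$ with $D=\bigcap_n\bigcup\mathcal{V}_n^{\mathrm{ess}}$, invoke Baire to make $D$ dense in $Z$, and then use essentiality (via $C_\mu=C_{[r(\mu_\ell)]}$ and $\pi_\Gamma^{-1}(r(\mu_\ell)^{\mathrm{ess}})\subset C_{[r(\mu_\ell)]}$) to meet each cylinder. The only cosmetic difference is that the paper obtains the $G_\delta$ property from the general fact that the injectivity set of a continuous map between compact metric spaces is $G_\delta$, whereas you deduce it more concretely from the openness of each $\bigcup\mathcal{V}_n^{\mathrm{ess}}$ together with Proposition~\ref{prop:injectivityset}; both are valid.
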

\begin{proof}
The fact that $\mathcal{I}_{\Gamma}$ is a $G_{\delta}$-subset of $\mathcal{P}_{\Gamma}$ follows from the continuity of $\pi_{\Gamma}$. To show that $\mathcal{I}_{\Gamma}$ is dense we note that every cylinder set satisfies $C_{\mu_0\ldots \mu_{\ell}}=C_{[r(\mu_{\ell})]}$, see equation (\ref{eq:stronglyessentialcylinder}). Then from (\ref{eq: equation of quotient map}) we get that $\text{int}(\pi_{\Gamma}(C_{\mu_0\ldots \mu_{\ell}}))\neq \varnothing$ and hence for every open set $U\subset \mathcal{P}_{\Gamma}$ we have $\text{int}(\pi_{\Gamma}(U))\neq \varnothing$. The Baire Category Theorem guarantees that $D=\bigcap_{n\in \mathbb N}\bigcup  \mathcal{V}_n^{\text{ess}}$ is dense in $Z$, hence if $U\subset \mathcal{P}_{\Gamma}$ is open then $\text{int}(\pi_{\Gamma}(U))\cap D\neq \varnothing$. Therefore, $U\cap \mathcal{I}_{\Gamma}\neq \varnothing$.
\end{proof}

\subsection{Dynamic approximation graphs}\label{sec:Dynamicgraphs}Suppose that $(Z,d)$ admits an expansive homeomorphism $\psi$ with expansivity constant $\varepsilon_Z>0$. We will show how to build an essential approximation graph for $Z$ whose infinite path space provides a symbolic representation for the $\psi$-orbits. 

The first step is to construct a closed cover $\mathcal{V}_1$ of diameter at most $\varepsilon_Z$, where $\bigcup \mathcal{V}_1^{\text{ess}}$ is dense in $Z$. One way to do this is by considering a cover by open sets $\{U_1,\ldots, U_{\ell}\}$ of diameter at most $\varepsilon_Z$, where each $U_k$ is not covered by the closure of the other sets. Then let $u_1=U_1$ and inductively define 
\begin{equation}
u_{i+1}=U_{i+1}\setminus \bigcup_{j=1}^{i}\cl(u_j),
\end{equation}
for $i=1,\ldots \ell-1$. It holds that each $u_k$ is open in $Z$ and if $k\neq l$ then $u_k\cap u_l=\varnothing$. Also, it is immediate that $\bigcup_{k=1}^{\ell}u_k$ is dense in $Z$. Let $\mathcal{V}_1=\{\cl(u_1),\ldots , \cl(u_{\ell})\}$ and we claim that $\bigcup \mathcal{V}_1^{ess} $ is dense in $Z$. Indeed, for the overlapping set $\mathcal{Y}_1$, see equation (\ref{eq:overlappingset}), it holds that $\bigcup \mathcal{Y}_1$ is a closed nowhere-dense subset of $Z$. Therefore, since $\bigcup \mathcal{V}_1^{\text{ess}} \supset \bigcup_{k=1}^{\ell}u_k \setminus \bigcup \mathcal{Y}_1$ the conclusion follows.

The second step is to define a refining sequence $(\mathcal{V}_n)_{n\geq 0}$ of $Z$ using the dynamics on $\mathcal{V}_1$. Let $\mathcal{V}_0=\{Z\}$ and for $n\in \mathbb N$ consider
\begin{equation}\label{eq:dynamicAG}
\mathcal{V}_n=\{v\in \bigvee_{i=1-n}^{n-1}\psi^{-i}(\mathcal{V}_1):\Int(v)\neq \varnothing\}.
\end{equation} 
Each $\mathcal{V}_n$ is a cover since $\bigcup \mathcal{V}_n$ is closed in $Z$ and contains $\bigcup \mathcal{V}_n^{\text{ess}}$ which in a similar fashion is proved to be dense in $Z$. Moreover, from \cite[Theorem 5.21]{Walters} it holds that $\lim\limits_{n\to \infty}\overline{\text{diam}}(\mathcal{V}_n)=0$. Consequently, the sequence $(\mathcal{V}_n)_{n\geq 0}$ is refining and induces an essential approximation graph $\Gamma$. 

Finally, a (left) \textit{shift map} $\sigma_{\Gamma}:\mathcal{P}_{\Gamma}\to \mathcal{P}_{\Gamma}$ which commutes with the quotient map $\pi_{\Gamma}:\mathcal{P}_{\Gamma}\to Z$ is defined as follows. Let $\widetilde{p} \in \mathcal{P}_{\Gamma}$ and each coordinate $p_n$ can be written uniquely in the form $(w_{-n}\ldots w_0 \ldots w_{n},w_{1-n}\ldots w_0 \ldots w_{n-1})$, where each $w_i\in \psi^{-i}(\mathcal{V}_1)$ and every word $w_{-k}\ldots w_0\ldots w_{k}$ corresponds to $\bigcap_{i=-k}^{k}w_i$. Recursively define the path $\widetilde{q} \in \mathcal{P}_{\Gamma}$ with $q_0=(\psi (w_1), Z)$ and for $n\geq 0$, 
\begin{equation}\label{eq:shiftmap}
q_{n+1} = (\psi (w_{-n})\ldots \psi (w_1) \ldots \psi (w_{n+2}),  r(q_n)),
\end{equation}
and let $\sigma_{\Gamma}(\widetilde{p})=\widetilde{q}.$ The map $\sigma_{\Gamma}$ is bijective with an inverse constructed by right shifting and continuous because for every $ \widetilde{p} $ it holds that $\sigma_{\Gamma}(C_{p_0\ldots p_{n}})\subset C_{q_0\ldots q_{n}}$. Since $\mathcal{P}_{\Gamma}$ is compact and Hausdorff the map $\sigma_{\Gamma}$ is a homeomorphism. In this setting, the quotient map $\pi_{\Gamma}$ becomes a \textit{factor map} $\pi_{\Gamma}: (\mathcal{P}_{\Gamma}, \sigma_{\Gamma})\to (Z,\psi)$ since for any $\widetilde{p} \in \mathcal{P}_{\Gamma} $
\begin{align*}
\pi_{\Gamma}(\sigma_{\Gamma}(\widetilde{p}))&= \bigcap_{n\geq 0}\cl({r (q_n)})\\
& = \bigcap_{n\geq 1} \psi (\cl({r(p_{n-1}))})\\
& = \psi (\bigcap_{n\geq 1}  \cl({r(p_{n-1}))})\\
& = \psi (\pi_{\Gamma} (\widetilde{p})).
\end{align*}
\begin{cor}\label{cor:quotientofCantor}
Every expansive dynamical system $(Z,\psi)$ is the quotient of some $(\mathcal{P}_{\Gamma},\sigma_{\Gamma})$, where $\mathcal{P}_{\Gamma}$ is a compact zero-dimensional space constructed as above. If $Z$ does not have isolated points then $\mathcal{P}_{\Gamma}$ is a Cantor space. Moreover, the factor map $\pi_{\Gamma}:(\mathcal{P}_{\Gamma},\sigma_{\Gamma})\to (Z,\psi)$ is injective on a dense $G_{\delta}$-set.
\end{cor}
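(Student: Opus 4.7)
The plan is to assemble the corollary directly from the constructions and propositions built up throughout Subsection 2.6, with a small amount of additional verification.

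First, I would verify that the pair construction in Subsection 2.6 actually produces an essential approximation graph. The initial cover $\mathcal{V}_1 = \{\cl(u_1),\dots,\cl(u_\ell)\}$ is closed of diameter at most $\varepsilon_Z$, and the argument sketched via the overlapping set $\mathcal{Y}_1$ shows $\bigcup\mathcal{V}_1^{\text{ess}}$ is dense. For $\mathcal{V}_n$ as in (\ref{eq:dynamicAG}), refinement is automatic and $\overline{\diam}(\mathcal{V}_n)\to 0$ by \cite[Theorem 5.21]{Walters} applied to the generator $\mathcal{V}_1$; density of $\bigcup\mathcal{V}_n^{\text{ess}}$ follows from a Baire-category argument since $\bigcup\mathcal{Y}_n$ is a finite union of closed nowhere-dense sets. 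Hence $\Gamma$ is essential, and by Subsection 2.4, $\mathcal{P}_\Gamma$ with its cylinder-set topology is a compact Hausdorff zero-dimensional space, and $\pi_\Gamma:\mathcal{P}_\Gamma \to Z$ from (\ref{eq: projection map}) is a continuous surjection, hence a quotient map.

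Second, the Cantor space assertion is immediate from the remark following the definition of $\pi_\Gamma$: $\mathcal{P}_\Gamma$ is a Cantor space iff $Z$ has no isolated points. Under the standing assumption that $Z$ is infinite, if $Z$ additionally has no isolated points then the remark applies directly.

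Third, for the factor-map property, I would appeal to the explicit construction of $\sigma_\Gamma$ in (\ref{eq:shiftmap}). The inclusion $\sigma_\Gamma(C_{p_0\dots p_n})\subset C_{q_0\dots q_n}$ gives continuity, and the fact that $\sigma_\Gamma$ is a homeomorphism follows from the analogous right-shift inverse together with compact-Hausdorff automatic-continuity. The intertwining identity $\pi_\Gamma\circ\sigma_\Gamma = \psi\circ\pi_\Gamma$ is the displayed four-line calculation already in the text, where the key step is pulling $\psi$ out of the nested intersection using continuity of $\psi$.

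Finally, the density and $G_\delta$-ness of the injectivity set $\mathcal{I}_\Gamma$ is precisely Proposition \ref{prop:stronginjectivityset}, applied to the essential graph $\Gamma$ just constructed. The main (minor) obstacle in the whole argument is the verification that each $\mathcal{V}_n^{\text{ess}}$ is dense, i.e.\ that the overlapping set $\bigcup\mathcal{Y}_n$ is nowhere-dense; everything else is routine bookkeeping or a direct citation. Combining these four points yields the corollary.
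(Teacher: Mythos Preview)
Your proposal is correct and follows essentially the same approach as the paper: the corollary is not given a separate proof in the paper but is stated as an immediate consequence of the construction in Subsection~\ref{sec:Dynamicgraphs} together with Proposition~\ref{prop:stronginjectivityset}, and you have correctly identified and assembled exactly those ingredients (essentiality of $\Gamma$, the Cantor-space remark, the factor-map computation, and the dense $G_\delta$ injectivity set).
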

Factor maps have been studied extensively by Adler \cite{Adler}, particularly factor maps which are uniformly bounded to one. We will come back to this fact later. For now we can argue that our construction captures some of the dynamical behaviour of $(Z,\psi)$. 
\begin{prop}\label{thm: irreducibility of shift}
Let $\Gamma$ be an essential approximation graph. Then
\begin{enumerate}[(1)]
\item if $(Z,\psi)$ is irreducible so is $(\mathcal{P}_{\Gamma},\sigma_{\Gamma})$;
\item if $(Z,\psi)$ is mixing so is $(\mathcal{P}_{\Gamma},\sigma_{\Gamma})$.
\end{enumerate}
\end{prop}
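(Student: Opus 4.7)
The plan is to push the recurrence property from $(Z,\psi)$ up to $(\mathcal{P}_\Gamma,\sigma_\Gamma)$ through the factor relation $\pi_\Gamma\circ \sigma_\Gamma=\psi\circ \pi_\Gamma$, handling both cases in parallel. By essentiality, every non-empty basic cylinder of $\mathcal{P}_\Gamma$ equals some $C_{[v]}$ (see (\ref{eq:stronglyessentialcylinder})), so given non-empty open $U,W\subset \mathcal{P}_\Gamma$ it is enough to find $k\in \mathbb N$ (or every $k$ past some threshold, in the mixing case) with $\sigma_\Gamma^k(C_{[v]})\cap C_{[w]}\neq \varnothing$ for arbitrary $v\in \mathcal{V}_n$, $w\in \mathcal{V}_m$ with $C_{[v]}\subset U$ and $C_{[w]}\subset W$.

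The main obstacle is the non-injectivity of $\pi_\Gamma$: a recurrence downstairs does not mechanically lift, since points of $Z$ may have several preimages in $\mathcal{P}_\Gamma$. My remedy is to work with points whose $\pi_\Gamma$-fibre is a singleton. By (\ref{eq: equation of quotient map}) and (\ref{eq:E2}) (the latter being implicit in essentiality for the particular covers in use), $v^{\text{ess}}$ and $w^{\text{ess}}$ are non-empty open subsets of $Z$, so irreducibility (respectively mixing) of $(Z,\psi)$ provides $k$ (respectively every $k\geq K$) with $\psi^k(v^{\text{ess}})\cap w^{\text{ess}}\neq \varnothing$. On the other hand, essentiality together with the Baire category theorem makes $D:=\bigcap_{j\in \mathbb N}\bigcup \mathcal{V}_j^{\text{ess}}$ a dense $G_\delta$ in $Z$; the first half of the proof of Proposition \ref{prop:injectivityset} uses only essentiality and not condition (\ref{eq:E1}), so every point of $D$ has a unique $\pi_\Gamma$-preimage. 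Hence one can pick $z\in \psi^k(v^{\text{ess}})\cap w^{\text{ess}}\cap D$.

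To close the argument I would lift $z$ in two compatible ways and then invoke uniqueness. Since $\psi^{-k}(z)\in v^{\text{ess}}$, essentiality forces every vertex at levels $\ell\geq n$ that contains $\psi^{-k}(z)$ to descend from $v$, so the König's Lemma construction in the proof of surjectivity of $\pi_\Gamma$ produces a path $\widetilde p'\in C_{[v]}$ with $\pi_\Gamma(\widetilde p')=\psi^{-k}(z)$; an analogous construction with $z$ and $w$ in place of $\psi^{-k}(z)$ and $v$ yields $\widetilde q\in C_{[w]}$ with $\pi_\Gamma(\widetilde q)=z$. The factor relation gives $\pi_\Gamma(\sigma_\Gamma^k(\widetilde p'))=\psi^k(\psi^{-k}(z))=z=\pi_\Gamma(\widetilde q)$, and the uniqueness of the preimage of $z\in D$ then forces $\sigma_\Gamma^k(\widetilde p')=\widetilde q$; this common path lies in $\sigma_\Gamma^k(C_{[v]})\cap C_{[w]}$, completing the lift. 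The same chain of inferences yields (1) or (2) depending on whether irreducibility or mixing was assumed downstairs, since the only difference is whether the witness $k$ exists for some index or for all sufficiently large indices.
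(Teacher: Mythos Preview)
Your proof is correct and shares the paper's core insight: the containment $\pi_\Gamma^{-1}(v^{\text{ess}})\subset C_{[v]}$ from (\ref{eq: equation of quotient map}) lets one push recurrence from $Z$ up to $\mathcal{P}_\Gamma$. The paper, however, avoids your detour through the injectivity set $D$ entirely. It argues set-theoretically: since $\sigma_\Gamma$ is a bijection and $\pi_\Gamma\circ\sigma_\Gamma=\psi\circ\pi_\Gamma$, one has $\pi_\Gamma^{-1}(\psi^N(A))=\sigma_\Gamma^N(\pi_\Gamma^{-1}(A))$ for every $A\subset Z$, so
\[
\varnothing\neq \pi_\Gamma^{-1}\bigl(\psi^N(v^{\text{ess}})\cap w^{\text{ess}}\bigr)=\sigma_\Gamma^N\bigl(\pi_\Gamma^{-1}(v^{\text{ess}})\bigr)\cap \pi_\Gamma^{-1}(w^{\text{ess}})\subset \sigma_\Gamma^N(C_{[v]})\cap C_{[w]}.
\]
Your pointwise lift via uniqueness on $D$ works, but is unnecessary: once you have $\widetilde p'\in C_{[v]}$ with $\pi_\Gamma(\widetilde p')=\psi^{-k}(z)$, the factor relation already gives $\pi_\Gamma(\sigma_\Gamma^k(\widetilde p'))=z\in w^{\text{ess}}$, hence $\sigma_\Gamma^k(\widetilde p')\in\pi_\Gamma^{-1}(w^{\text{ess}})\subset C_{[w]}$ directly, without ever constructing $\widetilde q$ or invoking the uniqueness of the fibre over $z$. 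The paper's route is shorter and uses less machinery; yours shows how Propositions \ref{prop:injectivityset} and \ref{prop:stronginjectivityset} could be brought to bear, though they are not needed here.
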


\begin{proof}
We will only prove (1) since (2) is similar. Let $U,W\subset \mathcal{P}_{\Gamma}$ be non-empty and open. We need to find $N\in \mathbb N$ such that $\sigma_{\Gamma}^N(U)\cap W \neq \varnothing.$ We can find small enough cylinder sets $C_{\mu}\subset U, C_{\mu'}\subset W$ and since $\Gamma$ is essential (\ref{eq: equation of quotient map}) implies that $\varnothing \neq \pi_{\Gamma}^{-1}(r(\mu_{\ell})^{\text{ess}})\subset C_{\mu}$ and $\varnothing \neq \pi_{\Gamma}^{-1}(r(\mu_{\ell'})^{\text{ess}})\subset C_{\mu'}$. Since $(Z,\psi)$ is irreducible there is some $N\in \mathbb N$ such that $\psi^N(r(\mu_{\ell})^{\text{ess}})\cap r(\mu_{\ell'})^{\text{ess}}\neq \varnothing$. Then

\begin{align*}
\varnothing &\neq \pi_{\Gamma}^{-1}( \psi^N(r(\mu_{\ell})^{\text{ess}})\cap r(\mu_{\ell'})^{\text{ess}})\\
&= \pi_{\Gamma}^{-1}( \psi^N(r(\mu_{\ell})^{\text{ess}}))\cap \pi_{\Gamma}^{-1}(r(\mu_{\ell'})^{\text{ess}})\\
&= \sigma_{\Gamma}^N(\pi_{\Gamma}^{-1} (r(\mu_{\ell})^{\text{ess}})) \cap \pi_{\Gamma}^{-1}(r(\mu_{\ell'})^{\text{ess}})\\
&\subset \sigma_{\Gamma}^N(C_{\mu})\cap C_{\mu'}\\
&\subset \sigma_{\Gamma}^N(U)\cap W. \qedhere
\end{align*}
\end{proof}

An essential approximation graph provides a combinatorial model for $(Z,\psi)$ that allows us to study its topological structure. However, it is not the right tool to study the geometric or metric properties of $(Z,\psi)$. The reason is that it consists of closed covers with nowhere-dense overlaps and hence all the Lebesgue numbers of the covers are zero. So the idea is that, given such a graph $\Gamma'$, we try to enlarge it to a graph $\Gamma$ consisting of open covers so that $\Gamma'$ is isomorphic to a spanning subgraph of $\Gamma$; that is, a subgraph which contains every vertex of $\Gamma$. 

\begin{definition}\label{def:quasiessentialgraph}
An approximation graph $\Gamma=(\mathcal{V},\mathcal{E})$ of open covers of $Z$ will be called \textit{metrically-essential} if there is an essential approximation graph $\Gamma'=(\mathcal{V}',\mathcal{E}')$ of $Z$ with bijections $$F_n:\mathcal{V}'_n\to \mathcal{V}_n,$$ given by $v'_n\mapsto v_n$ if $v'_n\subset v_n$, such that $\coprod_{n\geq 0}F_n:\Gamma' \to \Gamma$ is a graph homomorphism and the induced map $F:\mathcal{P}_{\Gamma'}\to \mathcal{P}_{\Gamma}$ satisfies $\pi_{\Gamma}\circ F=\pi_{\Gamma'}$. 
\end{definition}
\begin{remark}
A metrically-essential graph is not necessarily regular. Also, an essential graph is metrically essential only if $Z$ is zero-dimensional.
\end{remark}

\section{Geometric approximations}\label{sec:Geometric approximations and embeddings}
\subsection{Geometric approximation graphs} We introduce another class of approximation graphs which now encode geometric properties of their base spaces. Recall the notation (\hyperlink{Q1}{$\text{Q}1$}-\hyperlink{Q5}{$\text{Q}5$}) introduced in Subsection \ref{sec:TDS} and that $\text{C}(v_n)$ denotes the set of descendants of $v_n\in \mathcal{V}_n$. We first define geometric approximation graphs and then discuss their properties.
\begin{definition}\label{def:geometricgraph}
An approximation graph $\Gamma=(\mathcal{V},\mathcal{E})$ of open covers of $Z$ will be called \textit{geometric} if there exist constants $\lambda,\Lambda
>1$ with $\lambda \leq \Lambda$, constants $\eta,\theta>0$ with $\eta \leq \theta$ and $C_{\Gamma},N_{\Gamma}\in \mathbb N$ so that, for all $n\in \mathbb N$,
\begin{enumerate}
\item $\overline{\text{diam}}(\mathcal{V}_n)\leq \lambda^{-n+1}\theta;$
\item $\text{Leb}(\mathcal{V}_n)\geq \Lambda^{-n+1}\eta;$
\item $\# \text{C}(v_n) \leq C_{\Gamma},$ for every $v_n\in \mathcal{V}_n$;
\item $\# \text{N}_{\mathcal{V}_n}(v_n) \leq N_{\Gamma},$ for every $v_n\in \mathcal{V}_n$.
\end{enumerate}
If $\lambda=\Lambda$, the approximation graph $\Gamma$ will be called \textit{homogeneous}. Moreover, a geometric approximation graph which is also metrically-essential (see Definition \ref{def:quasiessentialgraph}) will be called \textit{geometrically-essential}.
\end{definition}

Geometric approximation graphs are related to dimension theory and, in particular, with the following concepts.

From Theorem V.1 of \cite{Nagata} we see that $Z$ has finite covering dimension at most $m$ if and only if there is a sequence of arbitrarily small open covers $(\mathcal{U}_n)_{n\in \mathbb N}$ (not necessarily refining) with multiplicities $\text{m}(\mathcal{U}_n)\leq m+1$, for all $n\in \mathbb N$. Due to condition (4), geometric approximation graphs are related to finite covering dimension. Note though that condition (4) is stronger than having uniformly bounded multiplicities. From the sequence $(\mathcal{U}_n)_{n\in \mathbb N}$ of Theorem V.1 it is possible to obtain a refining sequence by considering a subsequence. However, this increases the cardinality of the covers and the rate of decay of the Lebesgue numbers. Condition (1) can be satisfied though. 

An important example of the above situation is the case where $Z$ admits an expansive homeomorphism. In \cite{Mane}  Ma$\tilde{\text{n}}${\'e} proves that $Z$ has covering dimension at most $(\#\mathcal{V}_1)^{2}-1$ by constructing arbitrarily small open covers with multiplicity at most $(\#\mathcal{V}_1)^{2}.$ It turns out that the missing ingredient, which leads to a geometric approximation graph, is a Markov partition which we introduce in Section \ref{sec:Markovpartitions}. An expansive dynamical system has a Markov partition if and only if it has a local product structure \cite{Fried}.

Assume now that $Z$ admits an expansive homeomorphism $\psi:Z\to Z$ which is also $\Lambda$-bi-Lipschitz. The approximation graph $\Gamma$ induced by a generator in (\ref{eq: dynamic refining sequence}) satisfies conditions (2) and (3). More precisely, $\Leb(\mathcal{V}_n)\geq \Lambda^{-n+1}\Leb(\mathcal{V}_1)$ and $\# \text{C}(v_n)\leq (\#\mathcal{V}_1)^{2}$. Nonetheless, the other conditions are not necessarily satisfied and the upper bounds on $\# \text{C}(v_n)$ may not be sharp.

The concept that is closest to our approximation graphs comes from the Nagata-Assouad dimension \cite{LS}. The main characterisation is that $Z$ will have Nagata-Assouad dimension $\dim_{\text{NA}}Z\leq n$ if and only if there is some $c>0$ such that for every $r>0$ there is a cover $\mathcal{U}_r$ of $Z$ with $\mul(\mathcal{U}_r)\leq n+1$, $\overline{\diam}(\mathcal{U}_r)\leq cr$ and $\Leb(\mathcal{U}_r)\geq r$, see \cite[Proposition 2.2]{BDHM}. Therefore, a homogeneous approximation graph provides a discrete version of the above characterisation but in slightly stronger form that allows us to prove the following proposition. First, we should mention that Assouad dimension is an upper bound for the Nagata-Assouad dimension \cite{DR}.

\begin{prop}\label{prop:geometricembedding}
A compact metric space $(Z,d)$ with a homogeneous approximation graph has finite Assouad dimension.
\end{prop}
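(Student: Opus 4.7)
The plan is to show that $(Z,d)$ is $(C,s)$-homogeneous in the sense of Definition 2.15 with $s = \log_\lambda C_\Gamma$, from which finite Assouad dimension follows at once. Fix a finite subset $Y\subset Z$ with $a\leq d(y,y')\leq b$ for distinct $y,y'\in Y$, together with a basepoint $y_0\in Y$.

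First, I would pick the smallest integer $n$ with $\lambda^{-n+1}\theta<a$ using condition (1) of Definition 3.1. Then every element of $\mathcal{V}_n$ has diameter strictly less than $a$, so the assignment $y\mapsto U_y$, where $U_y\in \mathcal{V}_n$ is any set containing $y$, is injective, and the problem reduces to bounding $\#\{V\in \mathcal{V}_n:V\cap \overline{B}(y_0,b)\neq \varnothing\}$. Next, invoking the homogeneity hypothesis $\Lambda=\lambda$, I would pick the largest integer $m\geq 0$ with $\lambda^{-m+1}\eta\geq 2b$; condition (2) then supplies some $U_0\in \mathcal{V}_m$ with $\overline{B}(y_0,b)\subset U_0$ via the Lebesgue-number property (taking $m=0$ and $U_0=Z$ if no positive $m$ works).

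The heart of the argument will be to walk down the graph from $U_0$ to level $n$, tracking the sets that meet $U_0$. Define $\tilde{D}_k(U_0)=\{V\in \mathcal{V}_{m+k}: V\cap U_0\neq \varnothing\}$. Condition (4) gives $\#\tilde{D}_0(U_0)\leq N_\Gamma$. For the induction, any $V\in \tilde{D}_{k+1}(U_0)$ lies inside some $W\in \mathcal{V}_{m+k}$ by the refining property, and $W$ then meets $U_0$, so $W\in \tilde{D}_k(U_0)$ and $V\in \mathrm{C}(W)$; condition (3) therefore yields $\#\tilde{D}_{k+1}(U_0)\leq C_\Gamma\,\#\tilde{D}_k(U_0)$ and hence $\#\tilde{D}_L(U_0)\leq N_\Gamma C_\Gamma^{L}$ for $L=n-m$. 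The choices of $n$ and $m$ give the uniform bound $L\leq 2+\log_\lambda(2\theta/\eta)+\log_\lambda(b/a)$, and using $C_\Gamma^{\log_\lambda(b/a)}=(b/a)^{\log_\lambda C_\Gamma}$ we arrive at
\begin{equation*}
\#Y \;\leq\; N_\Gamma\, C_\Gamma^{\,2+\log_\lambda(2\theta/\eta)}\,(b/a)^{\log_\lambda C_\Gamma},
\end{equation*}
which is the desired homogeneity estimate.

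The delicate point will be the inductive step: because an approximation graph is not assumed to be regular, the children $\mathrm{C}(U_0)$ need not cover $U_0$, so it is essential to work with the enlarged family $\tilde{D}_k(U_0)$ and to use that every set in $\mathcal{V}_{m+k+1}$ has at least one parent in $\mathcal{V}_{m+k}$ (the refining property, together with the neighbour bound (4) for the base case). The edge case $b\geq \lambda\eta/2$, which forces $m=0$ and $U_0=Z$, is handled by the same arithmetic because in that regime $\log_\lambda(b/a)$ already absorbs the missing Lebesgue layer.
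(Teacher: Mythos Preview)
Your proof is correct and follows essentially the same line as the paper's: choose a level $n$ so that $\overline{\mathrm{diam}}(\mathcal V_n)<a$ separates the points of $Y$, choose a level $m$ so that the Lebesgue number at level $m$ captures $Y$ inside a single $U_0\in\mathcal V_m$, then bound $\#Y$ by the number of depth-$(n-m)$ descendants of the neighbours of $U_0$, obtaining $\#Y\le N_\Gamma C_\Gamma^{\,n-m}$ and $n-m\lesssim \log_\lambda(b/a)$ up to an additive constant. Both arguments arrive at the same exponent $s=\log_\lambda C_\Gamma$.

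The one genuine difference is the treatment of large $b$. The paper restricts first to $b\le \eta$, and for $b>\eta$ covers $Z$ by a fixed number $K_\eta$ of balls of radius $\eta/2$ and applies the small-$b$ estimate to each piece, picking up an extra factor $K_\eta$ in the constant $C$. You instead absorb this case by allowing $m=0$ and $U_0=Z$, and observe that when $b>\eta/2$ one still has $n\le 2+\log_\lambda(\theta/a)\le 2+\log_\lambda(2\theta/\eta)+\log_\lambda(b/a)$, so the same uniform bound on $L=n-m$ goes through. This is a cleaner way to handle the edge case and avoids the extraneous compactness constant $K_\eta$; the paper's splitting is not needed.
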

\begin{proof}
Let $0<a\leq b$ and consider a finite subset $Y\subset Z$ with $a\leq d(y,y')\leq b$ if $y,y' \in Y$ and $y\neq y'$. We claim that there are $s,C\geq 0$ independent of $a,b$ and $Y$ so that $\# Y \leq C(b/a)^s$. Let $\Gamma=(\mathcal{V},\mathcal{E})$ be a homogeneous approximation graph with constants as in Definition (\ref{def:geometricgraph}). First we prove the claim in the case where $b\leq \eta$. Define
\begin{align*}
n_0&=\text{min}\{n\in \mathbb N: \lambda^{-n+1}\theta <a\}\\
m_0&=\text{max}\{n\in \mathbb N: b\leq \lambda^{-n+1}\eta\}
\end{align*}
and an easy computation shows that $n_0=1+\max\{1,\ceil{\log_{\lambda}(\theta/a)}\}$ and $m_0=1+\floor{\log_{\lambda}(\eta/b)}$. Note that since $b\leq \eta$ we cannot have $\theta <a$. We have $m_0< n_0$ since $\lambda^{-n_0+1}\theta < a \leq b \leq \lambda^{-m_0+1}\eta$, meaning that $n_0-m_0>\log_{\lambda}(\theta/\eta)\geq 0.$ From our definition of $n_0$ there cannot be two elements of $Y$ in one element of $\mathcal{V}_{n_0}$ since $\overline{\text{diam}}(\mathcal{V}_{n_0})\leq \lambda^{-n_0+1}\theta <a$. Also since $\text{diam}(Y)\leq b \leq \lambda^{-m_0+1}\eta \leq \text{Leb}(\mathcal{V}_{m_0})$, there is some element $v_{m_0}\in \mathcal{V}_{m_0}$ that contains $Y$. Actually, any element of $\mathcal{V}_{m_0}$ that intersects $Y$ is a neighbour of $v_{m_0}$ and from the definition of the geometric approximation graph there cannot be more than $N_{\Gamma}$ neighbours. The descendants in $\mathcal{V}_{n_0}$ of the neighbours of $v_{m_0}$ are the only ones that cover the whole $Y$, because if $v_{n_0}\in \mathcal{V}_{n_0}$ contains some $y\in Y$, then its ancestor in $\mathcal{V}_{m_0}$ should also contain $y$ and hence be a neighbour of $v_{m_0}$. From condition (3) in the Definition \ref{def:geometricgraph} we conclude that $\# Y\leq N_{\Gamma}C_{\Gamma}^{n_0-m_0}$. If $\theta =a$ then $n_0=2$ and $m_0=1$. If $\theta >a$ then we have 
\begin{align*}
n_0-m_0&= \ceil{\log_{\lambda}(\theta/a)}-\floor{\log_{\lambda}(\eta/b)}\\
&\leq 2+\log_{\lambda}(\theta/a)-\log_{\lambda}(\eta/b)\\
&=2+\log_{\lambda}(\theta /\eta)+\log_{\lambda}(b/a),
\end{align*}
and for simplicity let $c=2+\log_{\lambda}(\theta /\eta)\geq 0$. 

We now have $\# Y\leq N_{\Gamma}C_{\Gamma}^{c}C_{\Gamma}^{\log_{\lambda}(b/a)}=N_{\Gamma}C_{\Gamma}^{c}(b/a)^s$, for $s=1/\log_{C_{\Gamma}}(\lambda)$. In the case where $b>\eta$, let $K_{\eta}$ be the cardinality of a minimal cover $\{B(x_i,\eta/2)\}_{i\in I}$ of $Z$. Then apply the above construction for each $Y\cap B(x_i,\eta/2)$ and in general we get that 
\begin{equation}\label{eq:homogeneousdegree}
\# Y\leq C(b/a)^s,
\end{equation}
for $C=K_{\eta}N_{\Gamma}C_{\Gamma}^{c}$ and $s=1/\log_{C_{\Gamma}}(\lambda)$.
\end{proof}

\section{Smale spaces}\label{sec:Smalespaces}
\subsection{Preliminaries}\label{sec:SmalePrelim}Roughly speaking, a Smale space ($X,\varphi$) is a dynamical system consisting of a homeomorphism $\varphi$ acting on a compact metric space $X$ that is locally hyperbolic under $\varphi$, in the sense that every $x\in X$ has a small neighbourhood that can be decomposed into the product of contracting and expanding sets. 
\begin{definition}[{\cite[Section~7.1]{Ruelle}}]
Let $(X,d)$ be a compact metric space and let $\varphi:X\to X$ be a homeomorphism. The dynamical system ($X,\varphi$) is a \textit{Smale space} if there are constants $\varepsilon_X>0$, $\lambda_X >1$ and a locally defined bi-continuous map, called the \textit{bracket map} 
$$\{ (x,y)\in X\times X: d(x,y)\leq \varepsilon_X\} \mapsto [x,y]\in X$$
that satisfies the axioms:
\begin{align*}
\tag{B1} [x,x]&=x,\\
\tag{B2} [x,[y,z]]&=[x,z],\\
\tag{B3} [[x,y],z]&=[x,z],\\
\tag{B4} \varphi([x,y])&=[\varphi(x),\varphi(y)];
\end{align*}
for any $x,y,z \in X$, whenever both sides are defined. For $x\in X$ and $0<\varepsilon \leq \varepsilon_X$ let 
\begin{align}
X^s(x,\varepsilon)&=\{y\in X: d(x,y)<\varepsilon, [x,y]=y\}\\
X^u(x,\varepsilon)&=\{y\in X: d(x,y)<\varepsilon, [y,x]=y\}
\end{align}
be the \textit{local stable} and \textit{unstable sets}. On these sets we have the \textit{contraction axioms}: 
\begin{align*}
\tag{C1} d(\varphi(y),\varphi(z))&\leq \lambda_X^{-1}d(y,z), \enspace \text{for any} \enspace y,z \in X^s(x,\varepsilon),\\
\tag{C2} d(\varphi^{-1}(y),\varphi^{-1}(z))&\leq \lambda_X^{-1}d(y,z),\enspace \text{for any} \enspace y,z \in X^u(x,\varepsilon).
\end{align*}
\end{definition}

Quite often, we will consider the Lipschitz constants $\Lip(\varphi)$ and $\Lip(\varphi^{-1})$ which are both greater than $\lambda_X>1$ and, in general, are allowed to be infinite. In particular, we will focus on 
\begin{equation}
\ell_X=\min\{\Lip(\varphi),\Lip(\varphi^{-1})\} \enspace \text{and}\enspace \Lambda_X=\max\{\Lip(\varphi),\Lip(\varphi^{-1})\}.
\end{equation}

We say that the bracket map defines a \textit{local product structure} on $X$ because, for any $x\in X$ and $0<\ep \leq \ep_X/2$, the bracket map 
\begin{equation}\label{eq:bracketmap}
[\cdot, \cdot]:X^u(x,\varepsilon)\times X^s(x,\varepsilon)\to X
\end{equation}
is a homeomorphism onto its image (see \cite[Proposition~2.1.8]{Putnam_Book}). Also, due to the uniform continuity, there is a constant $0<\ep_X'\leq \ep_X/2$ such that, if $d(x,y)\leq \ep_X'$, then both $d(x,[x,y]),d(y,[x,y])< \ep_X/2$ and hence 
\begin{equation}\label{eq:uniquebracket}
X^s(x,\ep_X/2)\cap X^u(y,\ep_X/2)=[x,y].
\end{equation}

Equation (\ref{eq:uniquebracket}) together with a bracket independent description of the local stable and unstable sets (see \cite[Subsection~4.1]{Putnam_Lec}) imply that the bracket map is unique on $X$ (but of course depends on $\ep_X$ and $\lambda_X$). Moreover, an important property of Smale spaces is that they are \textit{expansive} \cite[Proposition~2.1.9]{Putnam_Book}. Expansiveness immediately implies finiteness of covering dimension \cite{Mane}, topological entropy \cite[Theorem~3.2]{Walters} and upper box-counting dimension (for certain metrics) \cite{Fathi}. Moreover, Smale spaces (without any assumption on the metric) have finite Hausdorff dimension, see Ruelle's Exercise $1$ in \cite[Chapter 7]{Ruelle}. According to Smale's program \cite{Smale}, the interesting dynamics of a Smale space lie in the non-wandering set which is the closure of its periodic points \cite[Corollary~3.7]{Bowen} and which can be studied through its irreducible and mixing components \cite[Section~7.4]{Ruelle}.
\begin{thm}[Smale's Decomposition Theorem] \label{thm: Smale decomposition}
Assume that the Smale space $(X,\varphi)$ is non-wandering. Then $X$ can be decomposed into a finite disjoint union of clopen, $\varphi$-invariant, irreducible sets $X_0,\ldots , X_{N-1}$. Each of these sets can be decomposed into a finite disjoint union of clopen sets $X_{i0},\ldots ,X_{iN_i}$ that are cyclically permuted by $\varphi$, and where $\varphi^{N_i+1}|_{X_{ij}}$ is mixing, for every $0\leq j\leq N_i$.
\end{thm}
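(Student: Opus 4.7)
The plan is to construct both decompositions by defining equivalence relations on the dense set $\mathrm{Per}(\varphi)$ of periodic points and then taking closures. Density of periodic points in a non-wandering Smale space is exactly the cited Corollary~3.7 of \cite{Bowen}, and the local product structure provided by the bracket map together with the contraction axioms (C1)--(C2) is the main geometric tool; expansiveness will be used to separate different classes.

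For the irreducible decomposition, define $p \sim q$ on $\mathrm{Per}(\varphi)$ iff there exist $x \in W^u(q) \cap W^s(p)$ and $y \in W^u(p) \cap W^s(q)$, where $W^s(p) = \bigcup_{k \ge 0} \varphi^{-k}(X^s(\varphi^k p, \varepsilon_X))$ and similarly for $W^u$. Transitivity of $\sim$ is the core bracket computation: given heteroclinic witnesses $x \in W^u(q) \cap W^s(p)$ and $y \in W^u(r) \cap W^s(q)$, pick large common multiples $n,m$ of the period of $q$ so that $\varphi^{-n}(x) \in W^u_{\mathrm{loc}}(q) \cap W^s(p)$ and $\varphi^m(y) \in W^s_{\mathrm{loc}}(q) \cap W^u(r)$ are both within $\varepsilon_X'/2$ of $q$; then the bracket $z = [\varphi^{-n}(x), \varphi^m(y)]$ lies in $W^s_{\mathrm{loc}}(\varphi^{-n}(x)) \cap W^u_{\mathrm{loc}}(\varphi^m(y)) \subset W^s(p) \cap W^u(r)$ by axioms (B2)--(B4). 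The same bracket shows each class $[p]_{\sim}$ is open in $\mathrm{Per}(\varphi)$, since any periodic $q$ within $\varepsilon_X'$ of $p$ already yields $[p,q] \in W^s(p) \cap W^u(q)$ and $[q,p] \in W^s(q) \cap W^u(p)$. Openness together with compactness of $X$ forces finitely many classes. Setting $X_i = \overline{[p_i]_{\sim}}$, density of $\mathrm{Per}(\varphi)$ gives $\bigcup_i X_i = X$; two distinct closures cannot share a point because periodic points arbitrarily close to such a point would be $\sim$-related by the openness step; hence the $X_i$ form a finite disjoint closed cover of $X$ and are therefore clopen. They are $\varphi$-invariant because $\sim$ is, and irreducibility of $(X_i, \varphi)$ follows by approximating non-empty open $U,V \subset X_i$ by periodic $p \in U$, $q \in V$ and iterating a witness $z \in W^u(q) \cap W^s(p)$ to see that $\varphi^{N+M}(V) \cap U \neq \varnothing$ for suitable $N,M$.

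For the mixing decomposition of a fixed irreducible component $Y = X_i$, pick a periodic point $p \in Y$ and let $R \subset \mathbb{N}$ be the semigroup generated by the near-return times of $p$, namely by those $n \ge 1$ such that $\varphi^n(U) \cap U \neq \varnothing$ for every neighborhood $U$ of $p$. Closure of $R$ under addition reduces to another bracket-gluing: two near-returns of lengths $n$ and $m$ are concatenated by bracketing a point in $W^u_{\mathrm{loc}}(p)$ that witnesses the first with a point in $W^s_{\mathrm{loc}}(p)$ that witnesses the second, producing a near-return of length $n+m$. With $d = \gcd R$, partition $Y$ into $X_{i0}, \dots, X_{i,d-1}$ according to the residue mod $d$ of the first near-return time to a fixed small neighborhood of $p$; this is well-defined on periodic points of $Y$ by the same bracket argument, extends to $Y$ by density, and produces clopen cyclically permuted pieces. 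Mixing of $\varphi^d|_{X_{ij}}$ is immediate from the elementary number-theoretic fact that a sub-semigroup $R \subset \mathbb{N}$ with $\gcd R = d$ contains every sufficiently large multiple of $d$, so for any non-empty open $U,V \subset X_{ij}$ the set $\{n : \varphi^{nd}(U) \cap V \neq \varnothing\}$ contains all sufficiently large $n$. The main obstacle is making the two bracket-gluing steps precise -- transitivity of $\sim$ and closure of $R$ under addition -- both of which require iterating the relevant points into a common region where the bracket is defined (via (C1)--(C2)) while preserving the global stable/unstable relations through axioms (B1)--(B4); once these are in place, the rest of the argument is routine compactness, density, and number theory.
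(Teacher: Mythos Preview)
The paper does not actually prove this theorem: it is stated as a classical result and attributed to \cite[Section~7.4]{Ruelle} (see the sentence immediately preceding the statement). So there is no ``paper's own proof'' to compare against.

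Your outline is the standard Smale--Bowen argument and is essentially correct. A couple of points would need tightening if you were to write it out in full. First, in the mixing step your semigroup $R$ should be defined more carefully: the relevant set is the collection of $n$ for which $W^u(p)$ and $W^s(\varphi^{-n}p)=W^s(p)$ have a transverse intersection near $p$ (equivalently, the homoclinic return times), not merely the $n$ with $\varphi^n(U)\cap U\neq\varnothing$ for all neighbourhoods $U$; the latter description does not by itself exhibit the bracket-gluing structure you need for closure under addition. Second, your definition of the pieces $X_{ij}$ via ``the residue mod $d$ of the first near-return time'' is imprecise as stated and would not obviously be well-defined on all of $Y$; the clean way is to set $X_{ij}=\overline{\{q\in\mathrm{Per}(\varphi)\cap Y: W^u(q)\cap W^s(\varphi^j p)\neq\varnothing\}}$ and check via the bracket that this depends only on $j\bmod d$. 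These are refinements rather than gaps: the architecture of your argument is exactly the classical one in Bowen's lecture notes and in Ruelle's book, and once the two bracket-gluing lemmas are made precise the rest is, as you say, compactness, density, and the Chicken McNugget theorem.
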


As a corollary of Theorem \ref{thm: Smale decomposition} and Proposition \ref{thm:no isolated points} we note the following.

\begin{cor}\label{cor:noisolatedpoints}
Assume that the Smale space $(X,\varphi)$ is non-wandering and all its irreducible parts are infinite. Then $X$ has no isolated points.
\end{cor}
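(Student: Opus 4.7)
The plan is to combine Smale's Decomposition Theorem~\ref{thm: Smale decomposition} with Proposition~\ref{thm:no isolated points} in a fairly direct way. First I would invoke the Decomposition Theorem to write $X=X_0\sqcup\cdots\sqcup X_{N-1}$ as a finite disjoint union of clopen, $\varphi$-invariant, irreducible pieces. The hypothesis says that each $X_i$ is infinite, and as a subspace of the Hausdorff metric space $(X,d)$ each $X_i$ is itself Hausdorff (in fact a compact metric space in its own right).

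Next I would apply Proposition~\ref{thm:no isolated points} to each dynamical system $(X_i,\varphi|_{X_i})$: since $X_i$ is clopen and $\varphi$-invariant, the restriction $\varphi|_{X_i}$ is a homeomorphism of $X_i$, and the system is irreducible by the Decomposition Theorem. Together with the standing assumption of Section~\ref{sec:Approx_metric_spaces} that the ambient space be infinite (here supplied by the hypothesis), Proposition~\ref{thm:no isolated points} yields that $X_i$ has no isolated points in its subspace topology.

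Finally, I would pass from ``no isolated points in each $X_i$'' back to ``no isolated points in $X$''. This is the one place where a small observation is needed: because each $X_i$ is open in $X$, a point $x\in X_i$ is isolated in $X_i$ if and only if it is isolated in $X$, since any singleton open in $X_i$ is also open in $X$ and conversely. Combined with the fact that the $X_i$ cover $X$, this gives the conclusion.

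I do not expect any genuine obstacle: the argument is an essentially formal reduction to the irreducible case already handled by Proposition~\ref{thm:no isolated points}, with the only subtle point being the clopen-ness of the $X_i$ used to transfer the conclusion from each piece to the whole space.
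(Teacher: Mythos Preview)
Your proposal is correct and matches the paper's approach exactly: the paper states this as an immediate corollary of Theorem~\ref{thm: Smale decomposition} and Proposition~\ref{thm:no isolated points} without giving any further argument, and your write-up simply spells out that reduction (including the clopen observation needed to transfer non-isolation from each $X_i$ back to $X$).
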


Every irreducible Smale space admits a distinguished measure, referred to as the Bowen measure (see \cite{Bowen4}, \cite[Section 20]{Katok_Has}), which we denote by $\mu_{\text{B}}$. Roughly speaking, it is exhibited as a limit distribution of periodic orbits and is the unique $\varphi$-invariant probability measure that maximises the topological entropy $\ent(\varphi)$. Moreover, it is compatible with the bracket map \cite{RS}.

Smale spaces are ubiquitous in the theory of expansive dynamical systems, see \cite[Lemma~2]{Fried}. Smale spaces were defined by Ruelle \cite{Ruelle} to give a topological description of the non-wandering sets of differentiable dynamical systems satisfying \textit{Axiom A} \cite{Smale}. Zero dimensional Smale spaces are exactly the \textit{subshifts of finite type} (SFT), see \cite[Section 3]{BS}, \cite[Theorem 2.2.8]{Putnam_Book}. More recently, Wieler characterised the Smale spaces with totally disconnected stable or unstable sets, now called \textit{Wieler solenoids} \cite{Wieler}. Among these, the SFT  play an important role in coding the orbits of a Smale space, see Section \ref{sec:Markovpartitions}. Since any SFT is topologically conjugate to a \textit{topological Markov chain} \cite[Prop. 3.2.1]{BS}, let us introduce the latter. 

\subsection{Topological Markov chains}\label{sec:TMC} We equip $\{1,\ldots , N\}$ with the discrete topology and $\{1,\ldots , N\}^{\mathbb Z}$ with the product topology that makes it a compact Hausdorff space. Let $M$ be a square matrix indexed by $N$, with $0$ and $1$ entries, and consider the closed subspace of \textit{allowable sequences} 
\begin{equation}\label{eq:SFT}
\Sigma_M=\{x=(x_i)_{i\in \mathbb Z}\in \{1,\ldots , N\}^{\mathbb Z}: M_{x_i,x_{i+1}}=1\}.
\end{equation}

The \textit{cylinder sets} 
\begin{equation}\label{eq:cylindersets}
C_{\mu_{-n},\ldots ,\mu_m}=\{x\in \Sigma_M: x_i=\mu_i, \enspace \text{for} \enspace -n\leq i \leq m\}
\end{equation}
form a basis of clopen sets for the product topology on $\Sigma_M$. The number of fixed digits will be called the \textit{rank of the cylinder}, which for the cylinder set in (\ref{eq:cylindersets}) is equal to $m+n+1$. If $m=n$ the cylinders will be called \textit{symmetric}. The metric 
\begin{equation}\label{eq:SFTmetric}
d(x,y)=\text{inf}\{2^{-n}:n\geq 0, \enspace x_i=y_i \enspace \text{for} \enspace |i|<n\}
\end{equation}
induces the product topology on $\Sigma_M$ and it is straightforward to prove that it is actually an ultrametric. Also, with this metric, every symmetric cylinder of rank $2n-1$ is a ball of radius $2^{-n}$ around each of its points.

The shift map $\sigma_M:\Sigma_M\to \Sigma_M$ given by $\sigma_M(x)_{i}=x_{i+1}$ for any $i\in \mathbb Z$, is a homeomorphism and ($\Sigma_M,\sigma_M$) is called a \textit{topological Markov chain}. It admits a Smale space structure with the bracket map defined by
\begin{equation}\label{eq:SFTbracket}
([x,y])_n=
\begin{cases}
y_n, & \text{for} \enspace n\leq 0\\
x_n, & \text{for} \enspace n\geq 1
\end{cases}
\end{equation}
for any $x,y \in \Sigma_M$ such that $d(x,y)\leq 2^{-1}$. The expansivity constant is $\varepsilon_{\Sigma_M}=2^{-1}$ and the contraction constant is $\lambda_{\Sigma_M}=2$.
\begin{remark} \label{thm: Markov property remark}
A word $x_1\ldots x_{n-1}$ can only be concatenated on the right by a letter $x_n\in \{1,\ldots ,N\}$ if the value of $M_{x_{n-1},x_n}$ is $1$. This is called the \textit{Markov property} and it appears in the general setting of Smale spaces.
\end{remark}
We would like to estimate the number of symmetric cylinder sets of rank $2n-1$ for the shift space $\Sigma_M$. Denote this number by $N_M(2n-1)$. The estimation is obtained by replicating the computation for the topological entropy $\ent(\sigma_M)$, see \cite[p. 121]{Katok_Has}. However, we sketch the proof for completeness. 
\begin{lemma} \label{thm: upper bound on cardinality in SFT}
Let $(\Sigma_M,\sigma_M)$ be a topological Markov chain with matrix $M$ where no row contains only $0$'s. There exist constants $C,c>0$ so that for every $\varepsilon \in (0,1)$ there is some $n_0\in \mathbb N$ such that, for every $n\geq n_0$, we have $$ce^{2(\ent(\sigma_M)-\ep) n}< N_M(2n-1)<Ce^{2(\ent(\sigma_M)+\ep) n}.$$
\end{lemma}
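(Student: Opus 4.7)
The plan is to reduce the count $N_M(2n-1)$ to a matrix-power quantity and then invoke the Perron-Frobenius/spectral radius formula, combined with the standard identification $\ent(\sigma_M)=\log\rho(M)$, where $\rho(M)$ is the spectral radius of $M$.

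First I would observe that a symmetric cylinder set of rank $2n-1$ is uniquely determined by an allowable word $x_{-(n-1)}\cdots x_0\cdots x_{n-1}$ of length $2n-1$ in $\{1,\ldots,N\}$, that is, one satisfying $M_{x_i,x_{i+1}}=1$ for $-(n-1)\le i\le n-2$. The number of such words equals
\begin{equation*}
N_M(2n-1)=\sum_{i,j=1}^{N}(M^{2n-2})_{ij},
\end{equation*}
since $(M^{k})_{ij}$ counts allowable words of length $k+1$ starting with $i$ and ending with $j$. From this identity the problem becomes purely spectral.

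Next I would invoke the standard identification $\ent(\sigma_M)=\log\rho(M)$ (see for instance \cite[Section 4.2]{Katok_Has}), so that $e^{\ent(\sigma_M)}=\rho(M)$. Because the hypothesis excludes rows of zeros, every state has at least one outgoing transition; summing the row sums of $M$ against the Perron eigenvector shows $\rho(M)\ge 1$, so $\ent(\sigma_M)\ge 0$. For the upper bound, I would use Gelfand's formula $\lim_{k\to\infty}\|M^k\|^{1/k}=\rho(M)$ applied to the maximum-absolute-row-sum norm: given $\ep\in(0,1)$, choose $\eta>0$ with $\rho(M)+\eta\le e^{\ent(\sigma_M)+\ep}$, which is possible since $\rho(M)\ge 1$; then pick $k_0$ such that $\|M^k\|_\infty\le(\rho(M)+\eta)^k$ for $k\ge k_0$. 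Together with $\sum_{i,j}(M^k)_{ij}\le N\|M^k\|_\infty$ this gives, for all $n\ge n_0:=\lceil(k_0+2)/2\rceil$,
\begin{equation*}
N_M(2n-1)\le N(\rho(M)+\eta)^{2n-2}\le N e^{(2n-2)(\ent(\sigma_M)+\ep)}\le C\,e^{2(\ent(\sigma_M)+\ep)n}
\end{equation*}
with $C:=N$, which is independent of $\ep$ as required.

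For the lower bound I would use the fact that $M$ is a non-negative matrix, so by Perron-Frobenius it admits a non-negative eigenvector $v\ne 0$ with $Mv=\rho(M)v$; then $\|M^k\|_\infty\ge\rho(M)^k$ for every $k\ge 1$, and hence
\begin{equation*}
N_M(2n-1)=\sum_{i,j}(M^{2n-2})_{ij}\ge \|M^{2n-2}\|_\infty\ge\rho(M)^{2n-2}=e^{-2\ent(\sigma_M)}e^{2n\ent(\sigma_M)}.
\end{equation*}
Setting $c:=e^{-2\ent(\sigma_M)}$, the estimate $c\,e^{2(\ent(\sigma_M)-\ep)n}<N_M(2n-1)$ then holds for every $n\ge 1$ and every $\ep>0$ (with strict inequality coming from the extra factor $e^{2\ep n}>1$). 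Assembling the two bounds concludes the proof.

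The only mildly delicate point is ensuring that $C$ and $c$ can be chosen independently of $\ep$ while $n_0$ absorbs the $\ep$-dependence through $\eta$ and $k_0$; the hypothesis that no row of $M$ vanishes, giving $\rho(M)\ge 1$, is what guarantees that the slack $\eta=\rho(M)(e^{\ep}-1)$ stays bounded away from zero uniformly in $\ep\in(0,1)$, so no additional $\ep$-dependence leaks into the multiplicative constant $C$.
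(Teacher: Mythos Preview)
Your proof is correct and follows essentially the same route as the paper: both identify $N_M(2n-1)=\sum_{i,j}(M^{2n-2})_{ij}$, sandwich this quantity between constant multiples of a matrix norm of $M^{2n-2}$, and then invoke the spectral radius formula $\lim_{k}\frac{1}{k}\log\|M^k\|=\ent(\sigma_M)$. Your version is more explicit (giving concrete $C=N$ and $c=e^{-2\ent(\sigma_M)}$, and using the Perron eigenvector to get the lower bound for \emph{all} $n$), though the closing remark that the slack $\eta=\rho(M)(e^{\ep}-1)$ ``stays bounded away from zero uniformly in $\ep\in(0,1)$'' is misstated---$\eta\to 0$ as $\ep\to 0$, but this is harmless since only $n_0$, not $C$, depends on $\eta$.
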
 
\begin{proof}
Let $M=(m_{ij})_{i,j=1}^{N}$. Then $$N_M(2n-1)=\sum_{i,j=1}^{N}m_{ij}^{2n-2}$$ and there is some $C'>0$ such that $N_M(2n-1)<C'\|M^{2n-2}\|.$ Also, since the numbers $m_{ij}^{2n-2}$ are non-negative, there is some $c'>0$ such that $N_M(2n-1)>c'\|M^{2n-2}\|.$ Following the equalities (3.2.3) in \cite[p. 121]{Katok_Has} one obtains that $$\lim_{n\to \infty} \frac{1}{2n-2}\log \|M^{2n-2}\| = \ent(\sigma_M)$$ and the result follows.
\end{proof}
The Bowen measure on a mixing topological Markov chain $(\Sigma_M,\sigma_M)$ has a very nice description, and is known as the Parry measure $\mu_{\text{P}}$ \cite[Chapter 4]{Katok_Has}. Since $(\Sigma_M,\sigma_M)$ is mixing, $M$ is a primitive matrix, meaning that there is some power of $M$ with only positive entries. For primitive matrices the Perron-Frobenius Theorem \cite[Theorem 1.9.11]{Katok_Has} yields a unique (up to a scalar) eigenvector of strictly positive coordinates whose eigenvalue $\lambda_{\max}>0$ is greater than the absolute value of all the other eigenvalues. Let $u,v$ be the Perron-Frobenius eigenvectors for $M$ and $M^{T}$, respectively, which are normalised so that 
\begin{equation}
\sum_{i=1}^{N} v_iu_i=1.
\end{equation}
The distribution $p=(p_1,\ldots , p_N)$, with $p_i=v_iu_i$, induces the Parry measure.
\begin{lemma}\label{lem:Parrymeasure}
Let $(\Sigma_M,\sigma_M)$ be a mixing topological Markov chain. Then there is a constant $D>0$ so that every non-empty symmetric cylinder set has Parry measure $$D^{-1}\lambda^{-2n}_{\max}\leq \mu_{\Par}(C^{-n,\ldots , n}_{\mu_{-n},\ldots ,\mu_n})\leq D\lambda^{-2n}_{\max}.$$
\end{lemma}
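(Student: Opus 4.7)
The plan is to write down the standard explicit formula for the Parry measure on a symmetric cylinder and then bound the ``boundary'' factors uniformly using Perron--Frobenius. Concretely, the Parry measure is the stationary Markov measure on $\Sigma_M$ determined by the stochastic matrix
\[
P_{ij}=\frac{M_{ij}u_j}{\lambda_{\max}u_i}
\]
(well-defined since the entries of $u$ are strictly positive) together with the stationary distribution $\pi_i=v_iu_i$. One checks $\pi P=\pi$ using $v^{T}M=\lambda_{\max}v^{T}$ and normalisation $\sum_i v_iu_i=1$. By $\sigma_M$-invariance of $\mu_{\Par}$, the measure of a symmetric cylinder depends only on the fixed word, and the Markov formula for a non-empty block $\mu_{-n}\mu_{-n+1}\ldots\mu_n$ telescopes:
\[
\mu_{\Par}\bigl(C^{-n,\ldots, n}_{\mu_{-n},\ldots,\mu_n}\bigr)
=\pi_{\mu_{-n}}\prod_{k=-n}^{n-1}P_{\mu_k\mu_{k+1}}
=v_{\mu_{-n}}u_{\mu_{-n}}\cdot\frac{u_{\mu_n}}{u_{\mu_{-n}}}\lambda_{\max}^{-2n}
=v_{\mu_{-n}}u_{\mu_n}\lambda_{\max}^{-2n}.
\]

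With this identity in hand, the rest is immediate. Since $M$ is primitive (as $(\Sigma_M,\sigma_M)$ is mixing), the Perron--Frobenius Theorem guarantees that all coordinates of $u$ and $v$ are strictly positive. Because the alphabet is finite, the quantities
\[
m=\min_{1\le i,j\le N}v_iu_j>0,\qquad M'=\max_{1\le i,j\le N}v_iu_j<\infty
\]
are attained. Choosing $D=\max\{M',\,1/m\}$ gives
\[
D^{-1}\lambda_{\max}^{-2n}\le v_{\mu_{-n}}u_{\mu_n}\lambda_{\max}^{-2n}\le D\lambda_{\max}^{-2n},
\]
which is the asserted two-sided bound.

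There is essentially no obstacle beyond recalling the correct Markov formula and checking that the Perron--Frobenius eigenvectors have strictly positive entries (which is the content of the primitive case of Perron--Frobenius). The telescoping step only requires that the word is admissible, i.e.\ that the cylinder is non-empty, which is exactly the hypothesis of the lemma. Hence the proof is really just \emph{formula plus uniform positivity of boundary terms}, which is why the constant $D$ is absolute (independent of $n$ and of the particular admissible word).
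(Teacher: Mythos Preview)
Your proof is correct and follows essentially the same approach as the paper: both obtain the explicit formula $\mu_{\Par}(C)=v_{\text{left}}\,u_{\text{right}}\,\lambda_{\max}^{-(\text{length}-1)}$ and then invoke strict positivity of the Perron--Frobenius eigenvectors to bound the boundary factors uniformly. The only cosmetic difference is that the paper quotes the formula in the edge-shift representation from \cite{Katok_Has} (yielding an exponent $-2n+2$), whereas you compute it directly in the vertex shift (yielding $-2n$); the discrepancy is absorbed into the constant $D$.
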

\begin{proof}
Following the computations on page 176 of \cite{Katok_Has}, if we look at $(\Sigma_M,\sigma_M)$ as an edge shift then $$\mu_{\Par}(C^{-n,\ldots , n}_{\mu_{-n},\ldots ,\mu_n})=v_iu_j\lambda^{-2n+2}_{\max}$$ where $i=(\mu_{-n},\mu_{-n+1})$ and $j=(\mu_{n-1},\mu_n)$ are edges. Since the coordinates of $u,v$ are all positive the result follows.
\end{proof}
\begin{cor}\label{cor:AhlforsSFT}
Let $(\Sigma_M,\sigma_M)$ be a mixing topological Markov chain equipped with the metric defined in (\ref{eq:SFTmetric}). Its Parry measure $\mu_{\Par}$ is Ahlfors $s_0$-regular and therefore, $$\dim_H\Sigma_M = \dim_B \Sigma_M= \dim_A \Sigma_M=s_0$$ where $s_0=2\ent(\sigma_M)/\log(2).$

\end{cor}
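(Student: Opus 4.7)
The heart of the proof is to transfer the very clean estimate on symmetric cylinders from Lemma \ref{lem:Parrymeasure} to an estimate on closed balls, which is possible because the metric in (\ref{eq:SFTmetric}) is an ultrametric whose closed balls are precisely symmetric cylinders.

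My first step would be to make this identification explicit. Given $x\in \Sigma_M$ and $r\in(0,1]$, set $n_r=\lceil -\log_2 r\rceil$, so that $2^{-n_r}\leq r<2^{-n_r+1}$. Since $d(x,y)\in\{0\}\cup\{2^{-n}:n\geq 0\}$, the condition $d(x,y)\leq r$ is equivalent to $d(x,y)\leq 2^{-n_r}$, which by the discussion after (\ref{eq:SFTmetric}) means $x_i=y_i$ for $|i|<n_r$. Hence
\[
\overline{B}(x,r)=C^{-(n_r-1),\ldots,n_r-1}_{x_{-(n_r-1)},\ldots,x_{n_r-1}},
\]
a (non-empty) symmetric cylinder of rank $2n_r-1$.

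Next, I would apply Lemma \ref{lem:Parrymeasure} (with $n$ there replaced by $n_r-1$) to conclude that there is a constant $D>0$, independent of $x$ and $r$, such that
\[
D^{-1}\lambda_{\max}^{-2(n_r-1)}\leq \mu_{\Par}(\overline{B}(x,r))\leq D\lambda_{\max}^{-2(n_r-1)}.
\]
Now I would compare $\lambda_{\max}^{-2(n_r-1)}$ with $r^{s_0}$. By the classical identity $\ent(\sigma_M)=\log\lambda_{\max}$ for primitive matrices (which is exactly what the argument of Lemma \ref{thm: upper bound on cardinality in SFT} yields together with the Perron--Frobenius theorem, since $\|M^n\|$ grows like $\lambda_{\max}^n$), we have $s_0=2\log\lambda_{\max}/\log 2$. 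Then from $2^{-n_r}\leq r<2^{-n_r+1}$ one obtains $\lambda_{\max}^{-2n_r}\leq r^{s_0}\leq \lambda_{\max}^{-2n_r}\cdot\lambda_{\max}^{2}$, and multiplying by $\lambda_{\max}^{2}$ gives $\lambda_{\max}^{-2(n_r-1)}\asymp r^{s_0}$ with comparison constants depending only on $\lambda_{\max}$. Combining, there is $C>0$ independent of $x$ and $r$ so that
\[
C^{-1}r^{s_0}\leq \mu_{\Par}(\overline{B}(x,r))\leq Cr^{s_0}
\qquad\text{for all } x\in\Sigma_M,\ r\in(0,\diam\Sigma_M),
\]
which is exactly Ahlfors $s_0$-regularity of $\mu_{\Par}$. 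The dimension equalities $\dim_H\Sigma_M=\dim_B\Sigma_M=\dim_A\Sigma_M=s_0$ then follow from Proposition \ref{prop:equalityofdimensions} (together with Proposition \ref{prop:alldimensions} to squeeze the box-counting dimension between the Hausdorff and Assouad dimensions).

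There is no serious obstacle; the only point that requires a moment of care is that the balls $\overline{B}(x,r)$ really are cylinders of the claimed rank, which uses the ultrametric property crucially, and the identification $\ent(\sigma_M)=\log\lambda_{\max}$, which is standard and implicit in Lemma \ref{thm: upper bound on cardinality in SFT}.
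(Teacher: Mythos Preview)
Your argument is correct and is exactly the intended one: the paper states Corollary~\ref{cor:AhlforsSFT} without proof because it follows immediately from the observation (made right after (\ref{eq:SFTmetric})) that closed balls coincide with symmetric cylinders, together with Lemma~\ref{lem:Parrymeasure}, the identity $\ent(\sigma_M)=\log\lambda_{\max}$, and Proposition~\ref{prop:equalityofdimensions}. Your write-up simply makes these steps explicit.
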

\subsection{Wieler solenoids}\label{sec:Wielersol} In \cite{Wieler} Wieler characterised Smale spaces with totally disconnected local stable sets as stationary inverse limits of (eventually) Ruelle expanding dynamical systems \cite[p.138]{Ruelle}. Wieler's results generalise the earlier work of Williams for expanding attractors \cite{Williams} to the topological setting.

Let $(Y,d)$ be a compact metric space and $g:Y\to Y$ be a continuous and surjective map. We say that $(Y,g)$ satisfies \textit{Wieler's axioms} if there are constants $\beta >0 , K\in \mathbb N$ and $\gamma \in (0,1)$ such that for every $x,y,z\in Y$ with $d(x,y)\leq \beta$ and $0<\varepsilon \leq \beta$ we have 
\begin{align*}
\tag{W1} d(g^K(x),g^K(y))&\leq \gamma^Kd(g^{2K}(x),g^{2K}(y))\\
\tag{W2} g^K(B(g^K(z),\varepsilon))&\subset g^{2K}(B(z,\gamma\varepsilon)).
\end{align*}
The \textit{Wieler solenoid} associated with such $(Y,g)$ is the stationary inverse limit $(\widehat{Y},\widehat{g})$ where
\begin{equation}
\widehat{Y}=\{(y_0,y_1,y_2,\ldots ):y_n\in Y,\, y_n=g(y_{n+1}),\, n \geq 0\}
\end{equation}
and 
\begin{equation}
\widehat{g}(y_0,y_1,y_2,\ldots )=(g(y_0),y_0,y_1,\ldots ),
\end{equation}
with metric $\widehat{d}$ given by 
\begin{equation}\label{eq:Wielermetric}
\widehat{d}(x,y)=\sum_{k=0}^{K-1}\gamma^{-k}d'(\widehat{g}^{-k}(x),\widehat{g}^{-k}(y)),
\end{equation}
where $d'(x,y)=\sup \{\gamma^n d(x_n,y_n):n\geq 0\}$. Wieler's main result is the following.

\begin{thm}[{\cite[Theorem A and B]{Wieler}}]
\phantom{newline}
\begin{enumerate}[(A)]
\item Any Wieler solenoid $(\widehat{Y},\widehat{g})$ is a Smale space with totally disconnected stable sets. Moreover, $(\widehat{Y},\widehat{g})$ is irreducible if and only if $(Y,g)$ is non-wandering and has a dense forward orbit.
\item Any irreducible Smale space with totally disconnected stable sets is topologically conjugate to a Wieler solenoid.
\end{enumerate}
\end{thm}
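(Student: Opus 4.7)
The plan is to tackle parts (A) and (B) separately, with (B) being substantially more involved.

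For (A), the first step is to construct the bracket map on $\widehat{Y}$. Given $x=(x_n)_{n\geq 0}$ and $y=(y_n)_{n\geq 0}$ with $\widehat{d}(x,y)$ sufficiently small (thresholded by $\beta$), I would define $[x,y]=z$ where the initial coordinates $z_0,z_1,\ldots$ agree with those of $y$ and the remaining tail is forced by $\widehat{g}$-compatibility to approach $x$. Axiom (W2) is the precise geometric condition that guarantees the existence of a coherent preimage sequence performing this ``switch'': iteratively, one selects preimages of $z_0=y_0$ sitting in small neighbourhoods of the $x_n$'s. Axioms (B1)--(B3) are routine from the definition, while (B4) holds because both sides of $\widehat{g}([x,y])=[\widehat{g}(x),\widehat{g}(y)]$ shift coordinates and apply $g$ identically. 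The contraction axioms (C1) and (C2) follow from (W1) combined with the geometric-series form of $\widehat{d}$ in (\ref{eq:Wielermetric}): iterating $\widehat{g}$ forward on a local stable set (which fixes $z_0$) contracts distances by a factor close to $\gamma$, and symmetrically $\widehat{g}^{-1}$ contracts on local unstable sets. Local stable sets at $x$ biject, up to small error, with neighbourhoods of $x_0$ inside fibers $g^{-n}(g^n(x_0))$ for large $n$; by (W1) such fibers are discrete in $Y$, producing totally disconnected stable sets in $\widehat{Y}$.

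For the irreducibility equivalence, the projection $p_0:\widehat{Y}\to Y$, $p_0(x)=x_0$, is a continuous surjection intertwining $\widehat{g}$ and $g$. If $(\widehat{Y},\widehat{g})$ is irreducible, then $p_0$ transports this structure to non-wandering behaviour of $(Y,g)$ together with a dense forward orbit. Conversely, given $y\in Y$ with dense forward $g$-orbit and $(Y,g)$ non-wandering, one lifts to a dense $\widehat{g}$-orbit using compactness of the fibers of $p_0$ and a diagonal selection argument controlled by axiom (W2).

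For (B), given an irreducible Smale space $(X,\varphi)$ with totally disconnected stable sets, the plan is to realize $(X,\varphi)$ as an inverse limit of a Ruelle-expanding system. Using the local product structure and total disconnectedness, I would quotient $X$ by the equivalence relation generated by local stable equivalence; a careful argument using unstable sets as transversal charts shows that the quotient $Y$ is a compact metric space, with a continuous quotient $\pi:X\to Y$, and that $\varphi$ descends to a continuous surjection $g:Y\to Y$. Axiom (W1) then follows from the Smale contraction constant $\lambda_X$ restricted to unstable sets, which parametrize $Y$ locally, while (W2) is a direct translation of the bracket's local product structure on $(X,\varphi)$ through $\pi$. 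The conjugacy between $(\widehat{Y},\widehat{g})$ and $(X,\varphi)$ is then realised by sending $x\in X$ to the sequence $(\pi(\varphi^{-n}(x)))_{n\geq 0}$, which is well-defined and bijective because expansiveness of $\varphi$ ensures that backward orbits projected to $Y$ determine points uniquely.

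The hard part is (B): ensuring that the quotient $Y$ is Hausdorff and admits a metric for which $g$ satisfies (W1)--(W2) with explicit constants requires balancing the contraction of $\varphi$ on stable sets against expansion on unstable sets, and relies crucially on total disconnectedness of local stable sets to avoid topological pathologies in the quotient. One must choose local unstable transversals compatibly enough to assemble a globally defined metric on $Y$ in which the constants $\gamma$, $\beta$ and $K$ can be extracted from $\lambda_X$ and $\varepsilon_X$, and this is where the bulk of the technical work lies.
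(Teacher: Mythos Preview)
The paper does not prove this theorem: it is stated as a citation of Wieler's Theorems A and B, with no proof given. So there is no ``paper's own proof'' to compare against; the statement is quoted as background and the paper moves directly to Remark~\ref{rem:Wielerremark}.

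That said, your sketch is a reasonable outline of Wieler's actual argument, and the overall architecture is correct. A few points where your proposal is vague or not quite right: for part (A), the bracket $[x,y]$ is not defined by letting initial coordinates agree with $y$ and then ``switching'' to $x$; rather, one uses (W2) to find, for each $n$, a point $z_n$ near $x_n$ with $g^K(z_n)$ close to an appropriate iterate of $y$, and the delicate issue is showing these choices cohere into a single inverse-limit point. Your description of total disconnectedness of stable sets via ``fibers $g^{-n}(g^n(x_0))$ being discrete by (W1)'' is not quite the mechanism --- (W1) gives eventual expansion, not immediate discreteness, and the argument requires iterating (W1) to separate points. For part (B), the quotient you describe is essentially correct in spirit, but Wieler does not quotient by local stable equivalence directly; instead she uses a specific construction involving periodic points and local unstable sets to build $Y$, precisely to sidestep the Hausdorff and metrizability issues you flag as ``the hard part.'' Your identification of where the difficulty lies is accurate, but the resolution requires more than ``choosing transversals compatibly'' --- it needs the full machinery of Wieler's thesis.
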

\begin{remark}\label{rem:Wielerremark}
Following \cite{Wieler}, the contraction constant $\lambda_{\widehat{Y}}$ of $(\widehat{Y},\widehat{g})$ is equal to $\gamma^{-1}$. Also, from the definition of the metric, we see that the map $\widehat{g}^{-1}$ is $\lambda_{\widehat{Y}}$-Lipschitz, while $\widehat{g}$ need not be Lipschitz. However, if $g$ is Lipschitz then $\widehat{g}$ will be too. Moreover, one has $$\widehat{d}(\widehat{g}(x),\widehat{g}(y))=\lambda_{\widehat{Y}}^{-1}\widehat{d}(x,y),$$\enlargethispage{\baselineskip}whenever $y\in \widehat{Y}^s(x,\varepsilon_{\widehat{Y}})$.
\end{remark}

\subsection{Dyadic solenoid}\label{sec:2solenoid} We equip the unit circle $\mathbb T=\{e^{2\pi i\theta}:\theta \in [0,1)\}$ with the (normalised) arc length distance 
\begin{equation}
d(e^{2\pi i\theta_1},e^{2\pi i\theta_2})=\min \{ |\theta_1-\theta_2|,1-|\theta_1-\theta_2|\},
\end{equation}
and consider the \textit{doubling map} $g:\mathbb T\to \mathbb T$ given by $g(z)=z^2$. The map $g$ is expanding because, if $d(z,w)\leq 1/4$ then $d(g(z),g(w))=2d(z,w).$ 

As in the case of Wieler solenoids, we construct the stationary inverse limit $(\widehat{\mathbb T},\widehat{g})$ and equip it with the metric 
\begin{equation}\label{eq:2solenoidmetric}
\widehat{d}(x,y)=\sum_{n=0}^{\infty}2^{-n}d(x_n,y_n),
\end{equation}
which induces the product topology on $\widehat{\mathbb T}$. Note that the metric (\ref{eq:2solenoidmetric}) is different from the metric (\ref{eq:Wielermetric}) defined for Wieler solenoids. This inverse limit is called the \textit{dyadic solenoid} and is topologically conjugate to the Smale-Williams solenoid \cite{BS}. In fact, $(\widehat{\mathbb T},\widehat{g})$ is a Smale space with expansivity and contraction constants $\varepsilon_{\widehat{\mathbb T}}=1/4$ and $\lambda_{\widehat{\mathbb T}}=2$. If $\widehat{d}(x,y)\leq 1/4$ then, in particular, $d(x_0,y_0)\leq 1/4$ and hence there is a unique $|t|\leq 1/4$ such that $x_0=y_0e^{2\pi it}.$ The bracket map for such $x,y$ is defined as the sequence 
\begin{equation}\label{eq:2solenoidbracket}
[x,y]=(y_0e^{2\pi it},y_1e^{\pi it}, y_2e^{\pi i t/2},\ldots )
\end{equation}
which is in $\widehat{\mathbb T}$. The (largest) local stable set around $x\in \widehat{\mathbb T}$ consists of sequences whose $0$-th coordinate is $x_0$. Therefore, local stable sets are Cantor sets and global stable sets are totally disconnected. On the other hand, the global unstable sets are one-dimensional. Moreover, one can check that the doubling map $g$ is $2$-Lipschitz and hence $\widehat{g}$ is $5/2$-Lipschitz. Moreover, the map $\widehat{g}$ is the $2^{-1}$-multiple of an isometry on local stable sets. Finally, the inverse $\widehat{g}^{-1}$ is $2$-Lipschitz and the $2^{-1}$-multiple of an isometry on local unstable sets. 

\subsection{Metrics and smoothing of Smale spaces}\label{sec:LipschitzSmalespaces}
There is an abundance of Smale spaces ($X,\varphi$) where $\varphi$ is a \textit{bi-Lipschitz} homeomorphism, meaning $\Lambda_X<\infty$, where $\Lambda_X=\max\{\Lip(\varphi),\Lip(\varphi^{-1})\}$. The obvious examples are the SFT or the non-wandering sets of Axiom A diffeomorphisms. This is often true for Wieler solenoids, see Remark \ref{rem:Wielerremark}. For such Smale spaces the following holds.
\begin{lemma}[{\cite[p.234-235]{Sakai}}]\label{lem:Lipschitzbracket}
Let $(X,\varphi)$ be a Smale space with $\Lambda_X<\infty$. Then there exists $A_X>0$ with $$A_X\leq\frac{\Lambda_X\lambda_X}{\lambda_X^2-1}$$ such that for any $x,y \in X$ with $d(x,y)\leq \varepsilon_X'$ one has $d(x,[x,y])\leq A_Xd(x,y)$ and $d(y,[x,y])\leq A_Xd(x,y)$.
\end{lemma}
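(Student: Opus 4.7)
Set $z=[x,y]$. The plan is to exploit the commutation $\varphi(z)=[\varphi(x),\varphi(y)]$ granted by axiom (B4), together with the stable and unstable contraction axioms applied in both time directions, to derive a pair of coupled inequalities bounding $d(x,z)$ and $d(y,z)$ by $d(x,y)$ and each other. Solving the resulting $2\times 2$ linear system will produce the stated constant $A_X$.

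First I would push forward by one step. By (B4), $\varphi(z)\in X^s(\varphi(x),\varepsilon_X/2)\cap X^u(\varphi(y),\varepsilon_X/2)$, so (C1) yields $d(\varphi(x),\varphi(z))\leq \lambda_X^{-1}d(x,z)$, while (C2) applied to the pair $(\varphi(y),\varphi(z))$ on a local unstable set at $\varphi(y)$ gives the dual expansion $d(y,z)\leq \lambda_X^{-1}d(\varphi(y),\varphi(z))$. Bounding $d(\varphi(y),\varphi(z))$ by the triangle inequality through $\varphi(x)$ and using the Lipschitz bound $d(\varphi(x),\varphi(y))\leq \Lambda_X d(x,y)$, I obtain
\[
d(y,z)\leq \lambda_X^{-1}\Lambda_X\,d(x,y)+\lambda_X^{-2}\,d(x,z). \qquad (\mathrm{i})
\]

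Next, I would repeat the same reasoning one step backward. Now $\varphi^{-1}(z)=[\varphi^{-1}(x),\varphi^{-1}(y)]$; (C2) contracts along the unstable direction giving $d(\varphi^{-1}(y),\varphi^{-1}(z))\leq \lambda_X^{-1}d(y,z)$, while (C1) read backwards gives the stable expansion $d(x,z)\leq \lambda_X^{-1}d(\varphi^{-1}(x),\varphi^{-1}(z))$. The triangle inequality on $d(\varphi^{-1}(x),\varphi^{-1}(z))$ through $\varphi^{-1}(y)$, combined with $d(\varphi^{-1}(x),\varphi^{-1}(y))\leq \Lambda_X d(x,y)$, produces the mirror inequality
\[
d(x,z)\leq \lambda_X^{-1}\Lambda_X\,d(x,y)+\lambda_X^{-2}\,d(y,z). \qquad (\mathrm{ii})
\]
Substituting (i) into (ii), the $d(x,z)$ terms combine to give $(1-\lambda_X^{-4})\,d(x,z)\leq \lambda_X^{-1}\Lambda_X(1+\lambda_X^{-2})\,d(x,y)$, which simplifies after dividing by $(1-\lambda_X^{-2})(1+\lambda_X^{-2})$ to $d(x,z)\leq \Lambda_X\lambda_X/(\lambda_X^2-1)\,d(x,y)$. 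The bound for $d(y,z)$ follows symmetrically, so $A_X:=\Lambda_X\lambda_X/(\lambda_X^2-1)$ works.

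The one genuine obstacle is bookkeeping: at each iterate one must ensure that the pairs of points still lie in a local stable or unstable set of radius at most $\varepsilon_X$ so that (C1), (C2) can legitimately be invoked. For the forward step, this means guaranteeing $d(\varphi(y),\varphi(z))\leq \varepsilon_X$; since $d(y,z)<\varepsilon_X/2$ only produces $d(\varphi(y),\varphi(z))\leq \Lambda_X\varepsilon_X/2$, which can exceed $\varepsilon_X$ when $\Lambda_X>2$, one must shrink $\varepsilon_X'$, e.g.\ replacing it by $\min\{\varepsilon_X',\,\varepsilon_X/(2\Lambda_X)\}$, so that the bracket and its one-step iterates all fit inside a local product chart. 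This adjustment is uniform in $x,y$ and involves no new ideas, and the argument is then a direct computation.
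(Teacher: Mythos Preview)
The paper does not supply its own proof of this lemma; it is stated with a citation to Sakai \cite{Sakai}. Your argument is correct and is essentially the standard one for such hyperbolic estimates: push $z=[x,y]$ forward and backward one step via (B4), apply (C1) and (C2) to extract the contraction factor $\lambda_X^{-1}$ on the stable and unstable pieces, insert the Lipschitz bound $\Lambda_X$ on the cross term via the triangle inequality, and solve the resulting $2\times 2$ system. The algebra you present is clean and yields exactly $A_X=\Lambda_X\lambda_X/(\lambda_X^2-1)$.

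The bookkeeping caveat you flag is genuine: to apply (C2) to $(\varphi(y),\varphi(z))$ one needs $d(\varphi(y),\varphi(z))\leq\varepsilon_X$, and the a priori bound $d(y,z)<\varepsilon_X/2$ only gives $d(\varphi(y),\varphi(z))\leq\Lambda_X\varepsilon_X/2$. Your fix---shrink the threshold so that $d(y,[x,y])$ (and $d(x,[x,y])$) is at most $\varepsilon_X/\Lambda_X$, which is possible by uniform continuity of the bracket---is the right one and does not affect the constant $A_X$. Strictly speaking this proves the lemma for a possibly smaller $\varepsilon_X'$ than the one fixed in (\ref{eq:uniquebracket}), but since $\varepsilon_X'$ is only specified up to ``small enough'' in the paper, this is harmless.
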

In \cite{Fried}, Fried showed that any expansive dynamical system $(Z,\psi)$ admits a compatible hyperbolic metric $d_{\Fr}$ for which $\psi$ becomes bi-Lipschitz. This means that any Smale space $(X,d,\varphi)$ admits a compatible metric $d_{\Fr}$ for which $\varphi$ is bi-Lipschitz. Now, the new dynamical system $(X,d_{\Fr},\varphi)$ is still a Smale space. Indeed, the existence of a bracket map satisfying axioms (B1)-(B4) is not affected by changing to another compatible metric. Moreover, $d_{\Fr}$ is hyperbolic, meaning that the contraction axioms (C1) and (C2) will still be satisfied (possibly with a different constant). Therefore we obtain the following.

\begin{thm}[\cite{Fried}]\label{lem:Friedlemma}
Any Smale space is topologically conjugate to a Smale space with bi-Lipschitz dynamics.
\end{thm}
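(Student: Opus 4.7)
The plan is to invoke Fried's theorem on expansive dynamical systems and then verify that replacing the metric $d$ with Fried's metric $d_{\Fr}$ preserves the Smale space structure. Since every Smale space $(X,d,\varphi)$ is expansive (as mentioned in \secref{sec:SmalePrelim}), Fried's construction applies and produces a compatible metric $d_{\Fr}$ on $X$ for which $\varphi$ is bi-Lipschitz. Because $d$ and $d_{\Fr}$ induce the same topology, the identity map $\id:(X,d,\varphi)\to (X,d_{\Fr},\varphi)$ is a topological conjugacy, so the only remaining task is to check that $(X,d_{\Fr},\varphi)$ is itself a Smale space.

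First I would verify axioms (B1)--(B4) for $d_{\Fr}$. The bracket map is characterised topologically via the local stable and unstable sets (equivalently, via the bracket-free description of $X^s(x,\ep)$ and $X^u(x,\ep)$ in terms of forward/backward closeness of orbits referenced in equation~(\ref{eq:uniquebracket}) and \cite[Subsection~4.1]{Putnam_Lec}), so it depends only on the homeomorphism $\varphi$ and the topology of $X$, not on the particular choice of compatible metric. Consequently, the bracket map persists with $d_{\Fr}$, possibly on a smaller uniform neighbourhood of the diagonal determined by the modulus of continuity of $\id$ between the two metrics, and it continues to satisfy the algebraic axioms (B1)--(B4), which have no metric content at all.

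Next I would verify the contraction axioms (C1) and (C2). This is the step where the strength of Fried's result is needed: his metric $d_{\Fr}$ is not merely one for which $\varphi$ is bi-Lipschitz, but is \emph{hyperbolic}, meaning that there is some $\lambda_{\Fr}>1$ so that $\varphi$ contracts distances by a factor of at least $\lambda_{\Fr}^{-1}$ along the local stable sets and $\varphi^{-1}$ does so along the local unstable sets. Since these stable and unstable sets are topologically determined by $\varphi$, they are the same sets in both metrics, and Fried's hyperbolicity directly yields (C1) and (C2) for $d_{\Fr}$ with contraction constant $\lambda_{\Fr}$.

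The main obstacle here is really located inside Fried's theorem, which I am assuming: one has to produce a compatible metric that is simultaneously bi-Lipschitz and hyperbolic for $\varphi$. Beyond that black box, the only mildly delicate point in the proof proposal is checking that the neighbourhood on which the new bracket map is defined is large enough to give a genuine Smale space structure, which follows from uniform continuity of $\id:(X,d)\to (X,d_{\Fr})$ together with $\id:(X,d_{\Fr})\to (X,d)$ on the compact space $X$. Once these pieces are assembled, the identity furnishes the desired topological conjugacy.
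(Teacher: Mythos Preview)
Your proposal is correct and follows essentially the same approach as the paper: invoke Fried's result that every expansive system admits a compatible hyperbolic metric for which the map is bi-Lipschitz, note that axioms (B1)--(B4) are purely topological and hence survive the change of metric, and use the hyperbolicity of $d_{\Fr}$ to recover (C1)--(C2) with a possibly different contraction constant. The paper's justification is precisely this paragraph preceding the theorem; you have simply spelled out the verification of the Smale space axioms with a bit more care.
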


We note that Lemma \ref{lem:Lipschitzbracket} and Theorem \ref{lem:Friedlemma} solved a question posed by Ruelle in \cite[Appendix B.7]{Ruelle}. Later, Fathi \cite[Theorem 5.1]{Fathi} showed that Fried's metric $d_{\Fr}$, defined on an expansive $(Z,\psi)$, satisfies an additional property which can be used to obtain upper bounds for $\overline{\dim}_B(Z,d_{\Fr})$. In our case, \textit{Fathi's property} is the following.

\begin{thm}[\cite{Fathi}]\label{lem:Fathilemma}
For any Smale space $(X,d_{\Fr},\varphi)$ equipped with Fried's metric there exist constants $k>1,\xi>0$ such that $$\max\{d_{\Fr}(\varphi(x),\varphi(y)),d_{\Fr}(\varphi^{-1}(x),\varphi^{-1}(y))\}\geq \min \{kd_{\Fr}(x,y),\xi\}$$ for every $x,y\in X$.
\end{thm}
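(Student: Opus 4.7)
The plan is to exploit two structural features of $(X, d_{\Fr}, \varphi)$: first, by \thmref{lem:Friedlemma} this is still a Smale space, so the bracket map, local stable and unstable sets, and the contraction axioms (C1), (C2) are all available for the Fried metric; second, by \lemref{lem:Lipschitzbracket} the bracket is Lipschitz with constant $A_X$, so the decomposition of a nearby pair $(x,y)$ into its stable and unstable components at $z=[x,y]$ is quantitatively controlled. I would first restrict attention to the short-range regime $d_{\Fr}(x,y) < \xi$ for a small $\xi \leq \varepsilon_X'$; the complementary regime is discussed at the end.

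For such $x,y$, set $z := [x,y]$. Axioms (B2) and (B3) place $z \in X^s(x,\varepsilon_X) \cap X^u(y,\varepsilon_X)$, while \lemref{lem:Lipschitzbracket} and the triangle inequality sandwich the components:
$$
\tfrac{1}{2} d_{\Fr}(x,y) \leq \max\{d_{\Fr}(x,z), d_{\Fr}(y,z)\} \leq A_X d_{\Fr}(x,y).
$$
Split into two cases. Suppose the unstable component dominates, i.e.\ $d_{\Fr}(y,z) \geq \tfrac{1}{2} d_{\Fr}(x,y)$. Applying (C2) to $(\varphi(y),\varphi(z))$, which lie on a common local unstable set since $z \in X^u(y,\varepsilon_X)$, yields $d_{\Fr}(\varphi(y),\varphi(z)) \geq \lambda_X d_{\Fr}(y,z)$, while (C1) gives $d_{\Fr}(\varphi(x),\varphi(z)) \leq \lambda_X^{-1} d_{\Fr}(x,z) \leq \lambda_X^{-1} A_X d_{\Fr}(x,y)$. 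Combining via the triangle inequality,
$$
d_{\Fr}(\varphi(x),\varphi(y)) \geq d_{\Fr}(\varphi(y),\varphi(z)) - d_{\Fr}(\varphi(x),\varphi(z)) \geq \bigl(\tfrac{\lambda_X}{2} - \tfrac{A_X}{\lambda_X}\bigr) d_{\Fr}(x,y).
$$
The symmetric case, where the stable component dominates, is treated identically with $\varphi^{-1}$ in place of $\varphi$. Taking the maximum over the two cases produces the required one-step bound.

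The main obstacle is that the constant $k_0 := \tfrac{\lambda_X}{2} - \tfrac{A_X}{\lambda_X}$ is not manifestly larger than $1$: the bound $A_X \leq \Lambda_X \lambda_X/(\lambda_X^2-1)$ of \lemref{lem:Lipschitzbracket} can easily eat the $\lambda_X/2$ term for modestly hyperbolic metrics. To remedy this I would first prove the inequality for a sufficiently high iterate $\varphi^N$, where the contraction constant becomes $\lambda_X^N$ but the bracket map, and hence $A_X$, is unchanged; the argument above then yields an $N$-step expansion by $\tfrac{\lambda_X^N}{2} - A_X \lambda_X^{-N} > 1$ for all $N$ large enough. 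I would then transfer this back to $\varphi$ by replacing $d_{\Fr}$ with an equivalent telescoping metric of the form
$$
d'_{\Fr}(x,y) = \max_{0 \leq j < N} \lambda_X^{-j} d_{\Fr}\bigl(\varphi^j(x), \varphi^j(y)\bigr),
$$
or a symmetrised variant over both $\varphi^{\pm j}$. Under $d'_{\Fr}$ an $N$-step expansion translates into a single-step factor of order $\lambda_X^{1/N} > 1$; the delicate point, and the real heart of Fathi's argument, is verifying that $d'_{\Fr}$ remains a Smale space metric (the bracket Lipschitz estimate and the contraction axioms survive with only slightly perturbed constants) so that the case analysis above can be reapplied on $d'_{\Fr}$. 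Finally, the long-range regime $d_{\Fr}(x,y) \geq \xi$ is covered by compactness together with expansivity of $\varphi$, which together force some forward or backward iterate of the pair $(x,y)$ to be separated by at least a fixed threshold, so that choosing $\xi$ small enough absorbs this into the one-step statement.
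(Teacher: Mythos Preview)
The paper does not prove this theorem; it is quoted from Fathi \cite{Fathi} as a known result about Fried's metric. What the paper \emph{does} include, in the paragraph immediately following the statement, is a direct verification of Fathi's inequality for an arbitrary bi-Lipschitz Smale space metric under the extra hypothesis $\lambda_X > 2A_X$. That argument is close in spirit to yours but is organised more efficiently and yields a better constant: rather than splitting into stable/unstable-dominant cases and subtracting via the triangle inequality, the paper starts from $d(x,y) \leq d(x,[x,y]) + d(y,[x,y])$, applies the contraction axioms in reverse (so that $d(x,[x,y]) \leq \lambda_X^{-1} d(\varphi^{-1}(x),[\varphi^{-1}(x),\varphi^{-1}(y)])$, and symmetrically with $\varphi$ for the second summand), and then invokes \lemref{lem:Lipschitzbracket} at the iterated points. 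This gives $k = \lambda_X/(2A_X)$ directly, which is sharper than your $k_0 = \lambda_X/2 - A_X/\lambda_X$ and avoids the case split altogether.

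Your remedy for the situation $k_0 \leq 1$ has a genuine gap relative to the stated theorem. Passing to a telescoping metric $d'_{\Fr}$ changes the metric, so at best you would prove that \emph{some} compatible metric satisfies Fathi's inequality, not that Fried's specific metric $d_{\Fr}$ does. In fact the telescoping construction you sketch is essentially Artigue's construction of the self-similar metric $d_{\Ar}$, which the paper describes immediately after this theorem; so you are re-deriving the existence of a Fathi-type metric rather than establishing the property for $d_{\Fr}$ itself. For the downstream applications in this paper that is enough (only the existence of such a metric is ever used), but it does not prove the theorem as formulated.
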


Fathi's property implies that the contraction axioms (C1) and (C2) of the Smale space $(X,d_{\Fr},\varphi)$, hold more globally than just on local stable or unstable sets. This complicated statement will be extremely useful in the sequel. At this point we should say that Fried's metric $d_{\Fr}$ is not concretely related to the original metric $d$ of $(X,d,\varphi)$, even if $(X,d,\varphi)$ has bi-Lipschitz dynamics. However, in the latter case it may be still possible to obtain Fathi's property for $(X,d,\varphi)$ without changing the metric $d$. 

More precisely, suppose that $\Lambda_X<\infty$ for the Smale space $(X,d,\varphi)$. Also, assume that the contraction constant satisfies $\lambda_X>2A_X$, where $A_X>0$ is obtained from Lemma \ref{lem:Lipschitzbracket}. Let $0<\widetilde{\varepsilon_X}\leq \varepsilon_X'$ be small enough so that if $d(x,y)\leq \widetilde{\varepsilon_X}$ then $d(\varphi^i(x),\varphi^i(y))\leq \varepsilon_X'$, for $i\in \{-1,1\}.$ For $x,y\in X$ with $d(x,y)\leq \widetilde{\varepsilon_X}$ one has
\begin{align*}
d(x,y)&\leq d(x,[x,y])+d(y,[x,y])\\
&\leq \frac{1}{\lambda_X}(d(\varphi^{-1}(x),\varphi^{-1}([x,y]))+d(\varphi(y),\varphi([x,y])))\\
&=\frac{1}{\lambda_X}(d(\varphi^{-1}(x),[\varphi^{-1}(x),\varphi^{-1}(y)])+d(\varphi(y),[\varphi(x),\varphi(y)]))\\
&\leq \frac{A_X}{\lambda_X}(d(\varphi^{-1}(x),\varphi^{-1}(y))+d(\varphi(x),\varphi(y)))\\
&\leq \frac{2A_X}{\lambda_X}\max\{d(\varphi(x),\varphi(y)),d(\varphi^{-1}(x),\varphi^{-1}(y))\}.
\end{align*}
Consequently, we can choose $k=\lambda_X/(2A_X)$. Moreover, for every $x,y\in X$ we have $\max\{d(\varphi(x),\varphi(y)),d(\varphi^{-1}(x),\varphi^{-1}(y))\} \geq \Lambda_X^{-1}d(x,y)$ and hence we can choose $\xi=\Lambda_X^{-1}\widetilde{\varepsilon_X}.$ 

\begin{remark}
The computations above give a lower bound for $A_X$. In particular, given a Smale space $(X,d,\varphi)$ with $\Lambda_X<\infty$ (no other restriction) one has $$ A_X\geq \frac{\lambda_X}{2\Lambda_X}.$$
\end{remark}

Finding the best such $k$ is crucial for obtaining good estimates for the Hausdorff and box-counting dimensions, see Section \ref{sec:SemiconformalSmalespaces}. The computations above indicate that for estimating $k$ one should first try to estimate the constant $A_X$ of Lemma \ref{lem:Lipschitzbracket}. In general though, for $\lambda_X>2A_X$ to be true it suffices to restrict to Smale spaces with $\lambda_X\in (1+\sqrt{2},\infty)$ and $\Lambda_X\in [\lambda_X, (\lambda_X^2-1)/2).$

Returning to the discussion of Fried's metric, Artigue \cite{Artigue} recently constructed compatible metrics on expansive dynamical systems for which the systems exhibit self-similarity. We describe his construction in the context of Smale spaces. 

Let $(X,d,\varphi)$ be a Smale space. One can construct the Smale space $(X,d_{\Fr},\varphi)$ for which Fathi's property in Theorem \ref{lem:Fathilemma} is satisfied for some $k>1,\xi>0$. Then Artigue defines the metric $d_{\Ar}$ given by 
\begin{equation}
d_{\Ar}(x,y)=\max\limits_{n\in \mathbb Z} \frac{d_{\Fr}(\varphi^n(x),\varphi^n(y))}{k^{|n|}}
\end{equation}
and proves that 
\begin{equation}\label{eq:metricinequality}
d_{\Fr}\leq d_{\Ar}\leq c (d_{\Fr})^{\gamma},
\end{equation}
where $\gamma=\log_{\Lambda_{X,\Fr}}(k)\in (0,1)$ ($\Lambda_{X,\Fr}$ is the maximum of the two Lipschitz constants for the metric $d_{\Fr}$) and $c>0$. Most importantly, the new contraction constant and the new Lipschitz constants of $\varphi,\varphi^{-1}$ for the Smale space $(X,d_{\Ar},\varphi)$ are equal to $k>1$. 

\begin{definition}\label{def:selfsimilarSmalespace}
A Smale space $(X,\varphi)$ is called \textit{self-similar} if $\lambda_X=\Lambda_X$, where $\lambda_X>1$ is its contraction constant and $\Lambda_X=\max\{\Lip(\varphi),\Lip(\varphi^{-1})\}$. 
\end{definition}

\begin{remark}\label{rem:Artigueargument}
One can observe that Artigue's construction works for any metric $d$ that satisfies Fathi's property and hence the metric inequalities (\ref{eq:metricinequality}) hold with $d$ in the place of $d_{\Fr}$. For instance, we proved that this is true for Smale spaces with $\Lambda_X<\infty$ and $\lambda_X>2A_X$. This fact will be used in Corollary \ref{cor:Hausdorffdimension}.
\end{remark}

As we can see from the next lemma, there is a plethora of self-similar Smale spaces.

\begin{lemma}\label{lem:Artiguelemma}
Any Smale space is topologically conjugate to a self-similar Smale space.
\end{lemma}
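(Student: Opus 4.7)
The strategy is to combine Theorem \ref{lem:Friedlemma} with the Artigue construction already sketched above. The topological conjugacy will simply be the identity map on $X$; all the work lies in producing a new compatible metric that is self-similar in the sense of Definition \ref{def:selfsimilarSmalespace}. Since the bracket map and the axioms (B1)--(B4) depend only on the topology and the dynamics of $\varphi$, the Smale space structure persists under any metric change that preserves the topology, provided the contraction axioms (C1), (C2) can be re-established for the new metric with equal contraction and Lipschitz constants.

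First, I apply Fried's Theorem \ref{lem:Friedlemma} to the given Smale space $(X,d,\varphi)$ to obtain a compatible metric $d_{\Fr}$ such that $(X,d_{\Fr},\varphi)$ is a Smale space with $\Lambda_{X,\Fr}<\infty$. Next, I invoke Fathi's Theorem \ref{lem:Fathilemma} to obtain constants $k>1$ and $\xi>0$ such that
\[
\max\{d_{\Fr}(\varphi(x),\varphi(y)),d_{\Fr}(\varphi^{-1}(x),\varphi^{-1}(y))\}\geq \min\{kd_{\Fr}(x,y),\xi\}
\]
for all $x,y\in X$. Define Artigue's metric by
\[
d_{\Ar}(x,y)=\max_{n\in\mathbb Z}\frac{d_{\Fr}(\varphi^n(x),\varphi^n(y))}{k^{|n|}}.
\]
Using Fathi's property iteratively in both time directions, one shows that for each pair $(x,y)$ the supremum above is attained at some finite $n$ (once the orbit distance exceeds $\xi$ the inequalities force the ratio to decay), so $d_{\Ar}$ is well-defined. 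Verifying that $d_{\Ar}$ is a metric is routine: symmetry and the triangle inequality pass through the pointwise maximum, and positivity follows from $d_{\Ar}\geq d_{\Fr}$.

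The metric inequalities (\ref{eq:metricinequality}) already recorded in the excerpt, $d_{\Fr}\leq d_{\Ar}\leq c(d_{\Fr})^{\gamma}$ with $\gamma=\log_{\Lambda_{X,\Fr}}(k)\in(0,1)$, immediately imply that $d_{\Ar}$ and $d_{\Fr}$ induce the same topology, hence $d_{\Ar}$ is compatible with the original metric $d$ on $X$. Consequently the identity map $\id:(X,d,\varphi)\to(X,d_{\Ar},\varphi)$ is a homeomorphism that intertwines $\varphi$ with itself, i.e.\ a topological conjugacy. It remains to confirm that $(X,d_{\Ar},\varphi)$ is itself a Smale space and that it is self-similar. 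The bracket map and axioms (B1)--(B4) transfer verbatim. By construction $d_{\Ar}(\varphi(x),\varphi(y))\leq k\, d_{\Ar}(x,y)$ and $d_{\Ar}(\varphi^{-1}(x),\varphi^{-1}(y))\leq k\, d_{\Ar}(x,y)$, so $\Lip(\varphi),\Lip(\varphi^{-1})\leq k$; conversely, the defining max ensures the contraction axioms (C1), (C2) hold on local stable and unstable sets with constant exactly $k$. Hence $\lambda_X=\Lambda_X=k$ for $(X,d_{\Ar},\varphi)$, proving self-similarity.

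The main obstacle is the well-definedness of the supremum in $d_{\Ar}$ and the verification of the sharp equality $\lambda_X=\Lambda_X=k$. These are precisely the points at which Fathi's property is indispensable: without it the supremum could diverge, and without the matching lower and upper bounds on orbit-separation growth one could not collapse the two Lipschitz constants to coincide with the contraction constant. Everything else reduces to bookkeeping once Fried's and Artigue's constructions are invoked.
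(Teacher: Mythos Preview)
Your proposal is correct and follows precisely the approach the paper takes: the paper treats Lemma~\ref{lem:Artiguelemma} as an immediate consequence of the Fried--Fathi--Artigue construction described in the paragraphs preceding it, and does not give a separate formal proof. Your write-up in fact supplies more detail than the paper does, spelling out why the supremum defining $d_{\Ar}$ is finite and why both Lipschitz constants collapse to $k$.
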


Obvious examples are the SFT. Self-similarity means that the dynamics is very tight (see \cite[Remark 2.22]{Artigue} and compare with the case of SFT, Wieler solenoids and Lemma \ref{lem:Lipschitzbracket}). Note that in a self-similar Smale space, $\varphi$ acts on the local stable and unstable sets as the $\lambda_X^{-1}$-multiple of an isometry.

\section{Markov partitions}\label{sec:Markovpartitions}
Roughly speaking, for an irreducible Smale space $(X,\varphi)$ a Markov partition is a partition of the space $X$ into closed subsets that have a local product structure and which overlap only on their boundaries. Such partitions yield a dynamically defined refining sequence, as in Subsection \ref{sec:Dynamicgraphs}, which induces an essential approximation graph $\Pi$ of $X$. From Proposition \ref{prop:stronginjectivityset}, the factor map $\pi_{\Pi}:(\mathcal{P}_{\Pi},\sigma_{\Pi})\to (X,\varphi)$ is injective on a dense $G_{\delta}$-set and since $(X,\varphi)$ is irreducible, $(\mathcal{P}_{\Pi},\sigma_{\Pi})$ will be too. This would be the end of the story if the so-called \textit{shadowing property} did not hold for Smale spaces \cite[Prop. 3.6]{Bowen}. This property sets up such a partition of $X$ which in addition satisfies the crucial \textit{Markov property}, see Remark \ref{thm: Markov property remark}. In this case, $(\mathcal{P}_{\Pi},\sigma_{\Pi})$ is a topological Markov chain and this is actually where the story begins! 

Except for being essential and providing a combinatorial model for $(X,\varphi)$, the graph $\Pi$ satisfies conditions (1), (3) and (4) in the Definition \ref{def:geometricgraph} of geometric approximation graphs. However, it does not satisfy condition (2) since the overlaps in each cover of the refining sequence are nowhere-dense and hence the Lebesgue numbers are zero. Therefore, our plan is to recursively $\delta$-fatten the closed covers in $\Pi$ for some carefully chosen $\delta >0$. In this way we will keep all the nice properties of $\Pi$ and obtain a geometrically-essential approximation graph $\Pi^{\delta}$. The Markov property makes this possible. The dynamical systems that admit Markov partitions have been characterised in \cite{Fried} and are known as \textit{finitely presented dynamical systems}. These contain sofic systems and pseudo-Anosov homeomorphisms. It is quite possible that the method we described can be generalised to this larger class of dynamical systems.

\subsection{Basics on Markov partitions} We deal with  the classical Markov partitions and not with topological partitions, as defined in \cite{Adler, Putnam_Lec}. This means that we also consider the boundaries of the partitions. However, the ideas are of the same nature. Also, we focus on irreducible (infinite) Smale spaces, but most of the statements still hold for non-wandering Smale spaces due to Smale's Decomposition Theorem \ref{thm: Smale decomposition}.

First, let $(X,\varphi)$ be a Smale space and recall the definition of $0<\varepsilon_X'\leq \varepsilon_X/2$ from (\ref{eq:uniquebracket}), and similarly let $0<\varepsilon_X''<\varepsilon_X'/12$ be so small that whenever $d(x,y)\leq \varepsilon_X''$, we have
\begin{equation}\label{eq:double_epsilon}
d(\varphi^i(x),\varphi^i(y))\leq \varepsilon_X'/2, \enspace d([x,y],y)\leq \varepsilon_X'/4, \enspace d([x,y],x)\leq \varepsilon_X'/4,
\end{equation}
for every $|i|\leq 2$.
\begin{definition}\label{def:rectangles}
A non-empty subset $R\subset X$ is called a \textit{rectangle} if $\text{diam}(R)\leq \varepsilon_X'$ and $[x,y]\in R$, for any $x,y \in R$.
\end{definition}

If $R$ is a rectangle and $x\in R$, let 
\begin{equation}
X^s(x,R)= X^s(x,2\varepsilon_X')\cap R \enspace \text{and} \enspace X^u(x,R)= X^u(x,2\varepsilon_X')\cap R.
\end{equation}
From the bracket axioms and the definition of rectangles it holds that
\begin{equation}\label{eq:localproduct2}
R=[X^u(x,R),X^s(x,R)].
\end{equation}
In fact, the local product structure on $X$ (see (\ref{eq:bracketmap})) implies that 
\begin{equation}\label{eq:productstructuretopology}
\Int(R)=[\Int(X^u(x,R)),\Int(X^s(x,R))] \enspace \text{and} \enspace \cl(R)=[\cl(X^u(x,R)),\cl(X^s(x,R))],
\end{equation}
where the interiors and the closures are taken in $X^u(x,2\varepsilon_X')$ and $X^s(x,2\varepsilon_X')$. Define the \textit{stable boundary} to be 
\begin{equation}
\partial^sR=\{x\in R:  X^s(x,R)\cap \text{int}(R)=\varnothing\}
\end{equation}
and the \textit{unstable boundary} to be
\begin{equation}
\partial^uR=\{x\in R:  X^u(x,R)\cap \text{int}(R)=\varnothing\}.
\end{equation}

It follows that 
\begin{equation}\label{eq: boundaries of a rectangle}
\partial^sR= [\partial X^u(x,R), X^s(x,R)]\enspace \text{and} \enspace \partial^uR= [X^u(x,R), \partial X^s(x,R)],
\end{equation}
where $\partial X^u(x,R)$ and $\partial X^s(x,R)$ are the boundaries of $X^u(x,R)$ and $X^s(x,R)$ as subsets of $X^u(x,2\varepsilon_X')$ and $X^s(x,2\varepsilon_X')$, respectively. Also \cite[Lemma 3.11]{Bowen} states that 
\begin{equation}
\partial R= \partial^sR\cup \partial^u R.
\end{equation}
\begin{lemma}\label{thm:closure of rectangles}
For any rectangles $R,R'\subset X$ with $R\cap R'\neq \varnothing$ and diameter small enough, the set $$[R,R']= \{[r,r']:r\in R,\enspace r'\in R'\}$$ 
\begin{enumerate}[(1)]
\item is a rectangle;
\item $\Int([R,R'])=[\Int(R),\Int(R')]$ and $\cl([R,R'])=[\cl(R),\cl(R')]$.
\end{enumerate}
\end{lemma}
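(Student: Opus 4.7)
The plan is to fix a base point $x \in R \cap R'$ and use the local product decompositions based at $x$ provided by equation (\ref{eq:localproduct2}). Writing $R = [X^u(x, R), X^s(x, R)]$ and $R' = [X^u(x, R'), X^s(x, R')]$, I would first establish the identity
$$[R, R'] = [X^u(x, R), X^s(x, R')].$$
The inclusion $\supset$ is immediate since $X^u(x, R) \subset R$ and $X^s(x, R') \subset R'$. For the reverse inclusion, pick $r \in R$ and $r' \in R'$ and decompose $r = [u, s]$, $r' = [u', s']$ with $u \in X^u(x, R)$, $s \in X^s(x, R)$, $u' \in X^u(x, R')$, $s' \in X^s(x, R')$. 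Applying axiom (B3) and then (B2) yields
$$[r, r'] = [[u, s], [u', s']] = [u, [u', s']] = [u, s'] \in [X^u(x, R), X^s(x, R')].$$

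Once this identity is in hand, claim (1) follows at once: the right-hand side has the exact form of a locally product rectangle centred at $x$. Closure under the bracket is verified by the same axiomatic manipulation applied to two arbitrary elements, and the diameter bound $\mathrm{diam}([R,R']) \leq \varepsilon_X'$ follows from uniform continuity of the bracket map via (\ref{eq:double_epsilon}): if the diameters of $R$ and $R'$ are small enough that the distance between any $r \in R$ and $r' \in R'$ is at most $\varepsilon_X''$, then $d([r,r'], r) \leq \varepsilon_X'/4$, which controls diameters of $[R,R']$ by the triangle inequality.

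For claim (2), I would apply the local product characterisation (\ref{eq:productstructuretopology}) to the rectangle $[R, R'] = [X^u(x, R), X^s(x, R')]$ to obtain
$$\Int([R, R']) = [\Int(X^u(x, R)), \Int(X^s(x, R'))], \quad \cl([R, R']) = [\cl(X^u(x, R)), \cl(X^s(x, R'))],$$
and apply the same characterisation to $R$ and $R'$ separately. Substituting these four descriptions into $[\Int(R), \Int(R')]$ and reducing the resulting iterated bracket by (B3) and (B2) — exactly as in the computation above — gives $[\Int(R), \Int(R')] = [\Int(X^u(x, R)), \Int(X^s(x, R'))] = \Int([R, R'])$, and likewise for the closures.

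The main technical obstacle is ensuring that every bracket we write down is actually defined, i.e., that the relevant pairs lie within distance $\varepsilon_X$ throughout. This requires choosing the diameter bound on $R$ and $R'$ small enough (in terms of $\varepsilon_X''$) so that the triangle inequality through the common point $x \in R \cap R'$, combined with the estimates $d([a,b], a), d([a,b], b) \leq \varepsilon_X'/4$ from (\ref{eq:double_epsilon}), keeps intermediate points such as $[u', s']$ and the iterated bracket $[[u,s],[u',s']]$ inside the domain of definition of $[\cdot, \cdot]$. This is a routine but genuine bookkeeping step, and is precisely what the phrase \emph{diameter small enough} in the statement is hiding.
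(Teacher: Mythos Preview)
Your proof is correct and follows essentially the same approach as the paper: both establish the key identity $[R,R'] = [X^u(x,R), X^s(x,R')]$ via the bracket axioms (B2) and (B3) together with the local product decomposition (\ref{eq:localproduct2}), and then read off both claims from (\ref{eq:productstructuretopology}). You supply considerably more detail than the paper's terse argument (including the diameter bookkeeping that the paper leaves implicit), but the underlying strategy is identical.
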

\begin{proof}
Part (1) follows from the bracket axioms (B2) and (B3). For part (2) let $x\in R\cap R'$ and then $[R,R']=[X^u(x,R),X^s(x,R')]$ using bracket axioms (B2), (B3) and equation (\ref{eq:localproduct2}). The local product structure implies that $$\Int([R,R'])=[\Int(X^u(x,R)),\Int(X^s(x,R'))]=[\Int(R),\Int(R')]$$ using axioms (B2), (B3) and equation (\ref{eq:productstructuretopology}). The proof for the closures is similar.
\end{proof}
A rectangle $R$ is called \textit{proper} if it is closed and $R=\cl(\Int(R))$.
\begin{definition}[{\cite[Section 3]{Bowen2}}]\label{def:Definition of Markov partition}
A \textit{Markov partition} is a finite covering $\mathcal{R}_1=\{R_1,\ldots , R_{\ell}\}$ of $X$ by non-empty proper rectangles such that 
\begin{enumerate}[(1)]
\item $\text{int}(R_i)\cap \text{int}(R_j)=\varnothing$ for $i\neq j$;
\item $\varphi (X^u(x,R_i))\supset X^u(\varphi(x), R_j) $ and 
\item $\varphi (X^s(x,R_i))\subset X^s(\varphi(x), R_j) $ when $x\in \text{int}(R_i)\cap\varphi^{-1}(\text{int}(R_j))$.
\end{enumerate}
\end{definition}
The following is Bowen's seminal theorem \cite[Theorem 12]{Bowen2}.
\begin{thm}[Bowen's Theorem]
If the Smale space $(X,\varphi)$ is irreducible, then it has Markov partitions of arbitrarily small diameter.
\end{thm}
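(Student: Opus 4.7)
My plan is to follow Bowen's original strategy, which proceeds via the shadowing property. The first step is to invoke the shadowing lemma for Smale spaces: for every $\varepsilon\in(0,\varepsilon_X'']$ there exists $\alpha>0$ such that every $\alpha$-pseudo-orbit $(x_n)_{n\in\mathbb Z}$ (meaning $d(\varphi(x_n),x_{n+1})<\alpha$) is $\varepsilon$-shadowed by a true orbit of $\varphi$, which is unique by expansiveness. Given a target diameter $\varepsilon>0$, I would pick the corresponding $\alpha$, fix a finite $\alpha$-dense subset $P=\{p_1,\ldots,p_r\}\subset X$, and form the topological Markov chain $(\Sigma_M,\sigma_M)$ of Subsection~\ref{sec:TMC} with matrix $M_{ij}=1$ iff $d(\varphi(p_i),p_j)<\alpha$. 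The shadowing lemma then produces a continuous surjection $\theta\colon\Sigma_M\to X$ intertwining $\sigma_M$ with $\varphi$: continuity follows from uniqueness of shadowing plus compactness, while surjectivity uses that every true $\varphi$-orbit is itself an $\alpha$-pseudo-orbit through points of $P$.

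The second step is to build preliminary rectangles $T_i=\theta(\{s\in\Sigma_M:s_0=p_i\})$. The key technical point I would establish is the bracket compatibility $\theta([s,t])=[\theta(s),\theta(t)]$, which holds because the coordinate-wise bracket of two pseudo-orbits is itself a pseudo-orbit whose unique shadow must coincide with the bracket of the two shadows. This forces each $T_i$ to be bracket-closed, hence a rectangle in the sense of Definition~\ref{def:rectangles}, with diameter at most $2\varepsilon$ and with the expected local product decomposition $T_i=[T_i^u,T_i^s]$. Since $\varepsilon$ was arbitrary, this is where the ``arbitrarily small diameter'' conclusion will come from.

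The main obstacle is that the $T_i$ may overlap with non-empty interior intersections, so I must refine them into a cover with pairwise disjoint interiors. My plan here is to apply Bowen's cutting procedure: whenever $T_i\cap T_j\neq\varnothing$, Lemma~\ref{thm:closure of rectangles} guarantees that $[T_i,T_j]$ and $[T_j,T_i]$ are rectangles inside $T_i$, and I would split each $T_i$ according to subsets $J^\pm\subset\{1,\ldots,r\}$ defined by
\[
T_i(J^+,J^-)=\{x\in T_i : j\in J^+ \Leftrightarrow [T_j,x]\in T_i,\ j\in J^-\Leftrightarrow [x,T_j]\in T_i\},
\]
closing up interiors and discarding the ones with empty interior, to obtain a finite family $\mathcal{R}_1$ of proper rectangles with pairwise disjoint interiors of diameter $\leq 2\varepsilon$. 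The hardest step, and the one I expect to be the main obstacle, is verifying the Markov conditions (2) and (3) of Definition~\ref{def:Definition of Markov partition}: for $x\in\Int(R_i)\cap\varphi^{-1}(\Int(R_j))$ one must show $\varphi(X^u(x,R_i))\supset X^u(\varphi(x),R_j)$ and $\varphi(X^s(x,R_i))\subset X^s(\varphi(x),R_j)$. This reduces to proving that the stable/unstable boundary stratification described in (\ref{eq: boundaries of a rectangle}) is invariant under $\varphi$ and $\varphi^{-1}$ respectively, which the cutting procedure is specifically designed to achieve but which requires careful bookkeeping because the stable and unstable boundaries of the refined pieces come from \emph{both} the boundaries of the original $T_i$ and from the cuts along $[T_j,x]$, $[x,T_j]$. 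Irreducibility enters only to guarantee that at least one piece of each type survives with non-empty interior so that $\mathcal{R}_1$ genuinely covers $X$.
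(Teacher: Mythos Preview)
The paper does not give its own proof of this statement: Bowen's Theorem is simply quoted with a citation to \cite[Theorem~12]{Bowen2}, and the subsequent material (Theorem~\ref{thm:Bowen factor map} onward) takes the existence of Markov partitions as input. So there is no proof in the paper to compare your attempt against.

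That said, your outline is a faithful sketch of Bowen's original argument: shadowing gives a factor map from a topological Markov chain built on an $\alpha$-dense set, the images of $0$-cylinders are rectangles by bracket compatibility, and Bowen's cutting procedure refines these into a genuine Markov partition. One small inaccuracy: irreducibility is not really what makes the refined pieces cover $X$. The covering follows because the preliminary rectangles $T_i$ already cover $X$ (every point lies on a pseudo-orbit through $P$) and the cutting procedure partitions each $T_i$; properness of the pieces uses that the boundary strata are nowhere dense, which in turn relies on the local product structure and density of periodic points (non-wandering), not irreducibility per se. Bowen's construction in \cite{Bowen2} is carried out on basic sets, and the paper restricts to the irreducible case for other reasons (existence of the Bowen measure, irreducibility of the coding shift in Theorem~\ref{thm:Bowen factor map}(5)), not because the partition construction itself requires it.
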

Consequently, if the Smale space $(X,\varphi)$ is irreducible, then it is a factor of a topological Markov chain. More precisely, let $\mathcal{R}_1$ be a Markov partition for $(X,\varphi)$ and $M$ be the transition matrix given by 
$$M_{i,j}=
\begin{cases}
1, &\text{if}\enspace \text{int}(R_i) \cap \varphi^{-1}(\text{int}(R_j))\neq \varnothing; \\
0, & \text{otherwise}
\end{cases}
$$
for $1\leq i,j\leq \ell$. The following theorem is due to Bowen, see  \cite[Theorem 3.18]{Bowen}, \cite[Prop. 30]{Bowen2} and \cite[Prop. 10]{Bowen3}. Recall the Bowen measure $\mu_{\Bow}$ from Subsection \ref{sec:SmalePrelim}.
\begin{thm}\label{thm:Bowen factor map}
Define the map $\pi_M:(\Sigma_M,\sigma_M)\to (X,\varphi)$ by $$(x_i)_{i\in \mathbb Z}\mapsto \bigcap_{i\in \mathbb Z}\varphi^{-i}(R_{x_i}).$$ Then $\pi_M$ is 
\begin{enumerate}[(1)]
\item a factor map;
\item injective on the residual set $X\setminus \bigcup_{i\in \mathbb Z}\varphi^{i}(\partial\mathcal{R}_1)$, where $\partial \mathcal{R}_1= \bigcup_{R\in \mathcal{R}_1} \partial R$;
\item a metric isomorphism between $(\Sigma_M,\sigma_M,\mu_{\Bow})$ and $(X,\varphi, \mu_{\Bow})$.
\item For every $x\in X$, the pre-image $\pi_M^{-1}(x)$ has at most $(\#\mathcal{R}_1)^2$ elements.
\item $(\Sigma_M,\sigma_M)$ is irreducible (or mixing if $(X,\varphi)$ is mixing). 
\end{enumerate}
\end{thm}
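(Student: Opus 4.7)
The plan is to verify the five claims in the order (1), (2), (4), (5), (3), since the hard analytic content concentrates in the measure-theoretic statement (3), which benefits from having the topological structure of $\pi_M$ in place first.

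For part (1), I would first check that for every allowable sequence $(x_i)_{i\in\mathbb{Z}}\in\Sigma_M$ the intersection $\bigcap_{i\in \mathbb Z}\varphi^{-i}(R_{x_i})$ is a single point. Non-emptiness comes from the Markov property: since $M_{x_i,x_{i+1}}=1$, one may inductively apply conditions (2) and (3) of Definition \ref{def:Definition of Markov partition} to see that each finite intersection $\bigcap_{|i|\le n}\varphi^{-i}(R_{x_i})$ is a non-empty rectangle obtained as a bracket of a shrinking unstable slice in $R_{x_{-n}}$ with a shrinking stable slice in $R_{x_n}$; compactness yields a non-empty bi-infinite intersection. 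That the intersection is a single point is an immediate consequence of expansivity of $(X,\varphi)$, since the diameters shrink to $0$. The same shrinking gives continuity of $\pi_M$; surjectivity follows by choosing any $x_i$ with $\varphi^i(x)\in R_{x_i}$; and $\pi_M\circ\sigma_M=\varphi\circ\pi_M$ is a direct check.

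For part (2), if $x\notin\bigcup_{i\in\mathbb Z}\varphi^i(\partial\mathcal{R}_1)$, then for every $i$ there is a \emph{unique} $x_i$ with $\varphi^i(x)\in\Int(R_{x_i})$, which forces $\pi_M^{-1}(x)=\{(x_i)\}$. Because each $R$ is proper, $\partial\mathcal{R}_1$ is nowhere dense, so its countable union under the homeomorphism $\varphi$ has residual complement by the Baire category theorem. For part (4), let $x\in X$ and suppose $(x_i),(y_i)\in\pi_M^{-1}(x)$. Then $x\in R_{x_0}\cap R_{y_0}$, and by \eqref{eq:localproduct2} together with Markov conditions (2)--(3) of Definition \ref{def:Definition of Markov partition}, the forward coordinates $(x_i)_{i\ge 0}$ are determined by the $X^s$-class of $x$ in $R_{x_0}$, while the backward coordinates $(x_i)_{i<0}$ are determined by the $X^u$-class of $x$ in $R_{x_0}$. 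Consequently the sequence is determined by a pair (rectangle containing $x$ used for forward coding, rectangle containing $x$ used for backward coding), giving the bound $(\#\mathcal{R}_1)^2$. Part (5) then transfers by an open-set argument: given non-empty open $U,V\subset\Sigma_M$, choose cylinder sets $C_\mu\subset U$, $C_\nu\subset V$; by \eqref{eq: equation of quotient map}-style reasoning, $\pi_M(C_\mu)$ and $\pi_M(C_\nu)$ contain non-empty open sets of $X$, so irreducibility (resp.\ mixing) of $(X,\varphi)$ produces $n$ with $\varphi^n(\pi_M(C_\mu))\cap\pi_M(C_\nu)\ne\varnothing$; lifting along $\pi_M$ via equivariance gives $\sigma_M^n(U)\cap V\ne\varnothing$.

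The main obstacle is part (3), where the key technical step is that the $\varphi$-invariant set $B=\bigcup_{i\in\mathbb{Z}}\varphi^i(\partial\mathcal{R}_1)$ has Bowen measure zero. I would proceed by splitting $\partial\mathcal{R}_1=\partial^s\mathcal{R}_1\cup\partial^u\mathcal{R}_1$ (using Bowen's decomposition of the boundary) and showing, via conditions (2)--(3) of Definition \ref{def:Definition of Markov partition}, that $\varphi(\partial^s\mathcal{R}_1)\subset\partial^s\mathcal{R}_1$ and $\varphi^{-1}(\partial^u\mathcal{R}_1)\subset\partial^u\mathcal{R}_1$. Hence $B^s=\bigcup_{i\ge 0}\varphi^{-i}(\partial^s\mathcal{R}_1)$ and $B^u=\bigcup_{i\ge 0}\varphi^{i}(\partial^u\mathcal{R}_1)$ are $\varphi$-invariant closed proper subsets of $X$, and by the variational characterisation of Bowen's measure as the unique measure of maximal entropy, together with irreducibility, any proper $\varphi$-invariant closed set has $\mu_{\Bow}$-measure strictly less than $1$; iterating this argument on the countable exhaustion shows $\mu_{\Bow}(B)=0$. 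Once this null-set statement is in hand, parts (1) and (2) give that $\pi_M\colon\Sigma_M\setminus\pi_M^{-1}(B)\to X\setminus B$ is a continuous bijection between full-measure Borel sets; the pushforward $(\pi_M)_*\mu_{\Par}$ is a $\varphi$-invariant probability measure on $X$ with $\ent_{(\pi_M)_*\mu_{\Par}}(\varphi)=\ent_{\mu_{\Par}}(\sigma_M)=\ent(\sigma_M)=\ent(\varphi)$ (the last equality because $\pi_M$ is a finite-to-one factor map by (4), so preserves topological entropy), hence $(\pi_M)_*\mu_{\Par}=\mu_{\Bow}$ by uniqueness, yielding the desired measure-theoretic isomorphism. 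The delicate point here, which I expect to be the genuine obstacle, is the bookkeeping showing that the boundary sets $B^s$ and $B^u$ are proper, closed and $\varphi$-invariant, and that the uniqueness of the measure of maximal entropy on an irreducible Smale space forces them to be null.
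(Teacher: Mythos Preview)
The paper does not prove this theorem: it is stated as a result of Bowen, with explicit citations to \cite[Theorem 3.18]{Bowen}, \cite[Prop.~30]{Bowen2} and \cite[Prop.~10]{Bowen3}, and no argument is supplied. Your sketch is essentially the classical Bowen argument, so in that sense you are aligned with what the paper is invoking, not diverging from it.

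That said, there is a genuine soft spot in your treatment of part~(3). You assert that $B^s=\bigcup_{i\ge 0}\varphi^{-i}(\partial^s\mathcal{R}_1)$ is a \emph{closed} $\varphi$-invariant proper subset, and then argue that any proper closed invariant set has $\mu_{\Bow}$-measure strictly less than~$1$, concluding $\mu_{\Bow}(B)=0$ by ``iterating on the countable exhaustion''. Two problems: first, $B^s$ is an increasing countable union of closed sets and need not be closed; second, knowing each stage has measure $<1$ does not by itself force the union to be null. The correct mechanism is ergodicity of $\mu_{\Bow}$: the set $\partial^s\mathcal{R}_1$ is closed, nowhere dense and satisfies $\varphi(\partial^s\mathcal{R}_1)\subset\partial^s\mathcal{R}_1$, hence is essentially invariant for the measure-preserving system; ergodicity forces $\mu_{\Bow}(\partial^s\mathcal{R}_1)\in\{0,1\}$, and since $\mu_{\Bow}$ has full support on the irreducible $X$ while $\partial^s\mathcal{R}_1$ has empty interior, the value must be~$0$. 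Then $\mu_{\Bow}(B)=0$ follows by countable subadditivity. Your argument for part~(4) is also somewhat nonstandard; the usual route (and the one the paper itself later exploits in Lemma~\ref{lem:neighbours}) is the ``no diamonds'' argument of Adler \cite[Lemma 6.9]{Adler}, which gives the $(\#\mathcal{R}_1)^2$ bound directly.
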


\subsection{Approximation graph of Markov partitions}
Let $(X,\varphi)$ be an irreducible Smale space and $\mathcal{R}_1$ be a Markov partition with $\overline{\text{diam}}(\mathcal{R}_1)\leq \varepsilon_X''$. We consider the sequence $(\mathcal{R}_n)_{n\geq 0}$  given by $\mathcal{R}_0=\{X\}$ and 
\begin{equation}\label{eq:Markov refining sequence}
\mathcal{R}_n=\{R\in \bigvee_{i=1-n}^{n-1}\varphi^{-i}(\mathcal{R}_1): \text{int}(R)\neq \varnothing\},
\end{equation}
for $n\in \mathbb N$. In order to show that each $\mathcal{R}_n$ covers $X$, let $\mathcal{R}_n^{o}=\{\Int (R): R\in \mathcal{R}_n\}$ and observe that $\bigcup \mathcal{R}_1^{o}$ is dense in $X$. Moreover, since
\begin{equation}
\mathcal{R}_n^{o}=\bigvee_{i=1-n}^{n-1}\varphi^{-i}(\mathcal{R}_1^{o})
\end{equation}
we obtain that $\bigcup \mathcal{R}_n^{o}$ is dense in $X$. Therefore, $\mathcal{R}_n$ covers $X$. From \cite[Theorem 5.21]{Walters} we obtain that $(\mathcal{R}_n)_{n\geq 0}$ is a refining sequence and the corresponding graph $\Pi=(\mathcal{R},\mathcal{A})$ has no sources and no sinks. Therefore, $\Pi$ is an essential approximation graph (see Lemma \ref{lem:Markovapproximationgraph1}) and $(X,\varphi)$ is a factor of $(\mathcal{P}_{\Pi},\sigma_{\Pi})$. Also, $(\mathcal{P}_{\Pi},\sigma_{\Pi})$ is irreducible since $(X,\varphi)$ is irreducible. This means that $\mathcal{P}_{\Pi}$ is a Cantor space, see Subsection \ref{sec:Essentialgraphs}.

Now we present the main result of this section. We note that part (1) will be proved in a slightly more general setting in Lemma \ref{lem:sequenceofdiameters}, hence we defer the proof until then. Recall the notation (\hyperlink{Q1}{$\text{Q}1$}-\hyperlink{Q5}{$\text{Q}5$}) introduced in Subsection \ref{sec:TDS} and we have the following.

\begin{prop}\label{prop:mainresultMarkovpartitions}
For every $n\in \mathbb N$, the cover $\mathcal{R}_n$ in (\ref{eq:Markov refining sequence}) is a Markov partition. Moreover, there exists constants $\theta \in(0, \varepsilon_X]$ and $C,c>0$ so that
\begin{enumerate}[(1)]
\item $\overline{\diam}(\mathcal{R}_n)\leq \lambda_X^{-n+1}\theta;$
\item $\mul(\mathcal{R}_n)\leq \#(\mathcal{R}_1)^2;$
\item the number of neighbouring rectangles is uniformly bounded, meaning $$\sup\limits_n \max\limits_{R\in \mathcal{R}_n}\#\num_{\mathcal{R}_n}(R)<\infty;$$
\item for every $\varepsilon \in (0,1)$, there is some $n_0\in \mathbb N$ such that, for $n\geq n_0$, we have $$ce^{2(\ent(\varphi)-\varepsilon) n}< \#\mathcal{R}_n <Ce^{2(\ent(\varphi)+\varepsilon) n};$$
\item the approximation graph $\Pi=(\mathcal{R},\mathcal{A})$, associated to the refining sequence of Markov partitions $(\mathcal{R}_n)_{n\geq 0}$, is essential, and $$(\mathcal{P}_{\Pi},\sigma_{\Pi})=(\Sigma_M,\sigma_M),$$ where $M$ is the transition matrix of $\mathcal{R}_1$.
\end{enumerate} 
\end{prop}

The proof of Proposition \ref{prop:mainresultMarkovpartitions} will be achieved by establishing the following lemmas.

\begin{lemma}\label{thm:Markov property rectangles}
For the Markov partition $\mathcal{R}_1$ with $\overline{\text{diam}}(\mathcal{R}_1)\leq \varepsilon_X''$, the following hold.
\begin{enumerate}[(1)]
\item $\varphi(R),\varphi^{-1}(R)$ are rectangles, for any $R\in \mathcal{R}_1$.
\item For any $R_i,R_j\in \mathcal{R}_1$ such that $\Int(R_i)\cap \varphi^{-1}(\Int(R_j))\neq \varnothing$ we have 
\begin{equation}
[\varphi^{-1}(R_j), R_i]=R_i\cap \varphi^{-1}(R_j),
\end{equation}
and hence $R_i\cap \varphi^{-1}(R_j)$ is a proper rectangle. We will refer to this condition as the Markov property.
\item If $R_i,R_j,R_k\in \mathcal{R}_1$ with $\varphi(\Int(R_i))\cap \Int(R_j)\cap \varphi^{-1}(\Int(R_k))\neq \varnothing$ then we have 
\begin{equation}
[\varphi^{-1}(R_k),\varphi (R_i)]=\varphi (R_i) \cap R_j \cap \varphi^{-1}(R_k).
\end{equation}
Consequently, $\varphi (R_i) \cap R_j \cap \varphi^{-1}(R_k)$ is a proper rectangle that is equal to $\varphi (R_i) \cap \varphi^{-1}(R_k).$ 
\end{enumerate}
\end{lemma}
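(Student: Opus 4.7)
The plan is to handle the three parts in order. Part (1) is quick: the diameter bound on $\varphi^{\pm 1}(R)$ is immediate from the uniform-continuity estimates packed into the choice of $\ep_X''$ (see (\ref{eq:double_epsilon})), while closure under the bracket is a one-line application of axiom (B4), since for $x = \varphi(a), y = \varphi(b) \in \varphi(R)$ one has $[x,y] = \varphi([a,b]) \in \varphi(R)$; the case $\varphi^{-1}(R)$ is symmetric.

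For part (2), I would fix $x \in \Int(R_i) \cap \varphi^{-1}(\Int(R_j))$ and exploit the local product decomposition through $x$. By part (1), $\varphi^{-1}(R_j)$ is a rectangle sharing $x$ with $R_i$, so \lemref{thm:closure of rectangles} identifies $[\varphi^{-1}(R_j), R_i] = [X^u(x,\varphi^{-1}(R_j)), X^s(x,R_i)]$, and the task becomes to show this equals $R_i \cap \varphi^{-1}(R_j)$. The contraction axiom (C2) together with $\diam(R_j) \leq \ep_X''$ yields the key technical identity
\[
X^u(x,\varphi^{-1}(R_j)) = \varphi^{-1}(X^u(\varphi(x), R_j)),
\]
whence Markov axiom (2) gives $X^u(x,\varphi^{-1}(R_j)) \subset X^u(x, R_i) \subset R_i$, and Markov axiom (3) gives $X^s(x,R_i) \subset \varphi^{-1}(R_j)$. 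Both components of the bracket therefore lie in both rectangles (which are bracket-closed), establishing the ``$\subset$'' inclusion. For the reverse, for $y \in R_i \cap \varphi^{-1}(R_j)$ I would set $a = [y,x] \in X^u(x,\varphi^{-1}(R_j))$ and $b = [x,y] \in X^s(x, R_i)$; the bracket algebra $[[y,x],[x,y]] = [y,[x,y]] = [y,y] = y$, obtained from axioms (B2)--(B3), closes the argument. Properness is inherited via \lemref{thm:closure of rectangles}(2).

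Part (3) will come from iterating part (2). Applying it to $(R_j, R_k)$ and $(R_i, R_j)$, and pushing the second identity forward by $\varphi$ via (B4), yields $R_j \cap \varphi^{-1}(R_k) = [\varphi^{-1}(R_k), R_j]$ and $\varphi(R_i) \cap R_j = [R_j, \varphi(R_i)]$. Picking $y$ in the triple interior intersection and using \lemref{thm:closure of rectangles} once more, $L := [\varphi^{-1}(R_k), \varphi(R_i)]$ is identified with $[X^u(y,\varphi^{-1}(R_k)), X^s(y,\varphi(R_i))]$. Two cascaded chains
\[
X^u(y,\varphi^{-1}(R_k)) \subset X^u(y, R_j) \subset \varphi(R_i), \qquad X^s(y,\varphi(R_i)) \subset X^s(y, R_j) \subset \varphi^{-1}(R_k),
\]
obtained by invoking the Markov axioms on the two pairs $(R_j, R_k)$ and $(R_i, R_j)$ (with the key identity above supplying the first inclusion in each chain), give $L \subset \varphi(R_i) \cap R_j \cap \varphi^{-1}(R_k) \subset \varphi(R_i) \cap \varphi^{-1}(R_k)$. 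The reverse inclusion $\varphi(R_i) \cap \varphi^{-1}(R_k) \subset L$ reuses the bracket-algebra trick from part (2), with $a = [z, y]$ and $b = [y, z]$ for $z$ in the double intersection. Sandwiching forces equality of all three sets and proves both claims of part (3) simultaneously.

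The main obstacle is securing the technical identity $X^u(x,\varphi^{-1}(R_j)) = \varphi^{-1}(X^u(\varphi(x), R_j))$ (and its stable analogue), which drives the cascades in both parts (2) and (3). It requires $\overline{\diam}(\mathcal{R}_1) \leq \ep_X''$ to be small enough that $\varphi^{\pm 1}$-images of rectangles still live inside the local-product chart of size $\ep_X'$; this is precisely the role of the choice in (\ref{eq:double_epsilon}). Once this is in place, the Markov axioms cascade cleanly and the remainder is pure bracket arithmetic.
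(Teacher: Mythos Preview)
Your proof is correct. Parts (1) and (2) are essentially the paper's argument: the key identity $X^u(x,\varphi^{-1}(R_j)) = \varphi^{-1}(X^u(\varphi(x),R_j))$ and the two Markov inclusions are exactly what the paper uses (your bracket-algebra verification of the ``easy'' inclusion is more explicit than the paper's ``clear'', but not different in substance).

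Part (3) is where you diverge. The paper pulls back by $\varphi$, first proving $R_i\cap \varphi^{-1}(R_j)\cap \varphi^{-2}(R_k)=[\varphi^{-1}(R_j)\cap\varphi^{-2}(R_k),R_i]$ via part (2), and then reaches the target equality through a chain of nested bracket reductions
\[
\varphi[\varphi^{-1}(R_j)\cap\varphi^{-2}(R_k),R_i]=\varphi[[\varphi^{-2}(R_k),\varphi^{-1}(R_j)],R_i]=\varphi[\varphi^{-2}(R_k),R_i]=[\varphi^{-1}(R_k),\varphi(R_i)],
\]
using axioms (B2)--(B4). You instead stay at the symmetric level, localise $[\varphi^{-1}(R_k),\varphi(R_i)]$ through a point $y$ in the triple interior, and run two cascaded Markov inclusions $X^u(y,\varphi^{-1}(R_k))\subset X^u(y,R_j)\subset \varphi(R_i)$ (and the stable analogue) to land simultaneously in all three sets. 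Your route is more geometric and makes the role of the middle rectangle $R_j$ transparent as a conduit between the two Markov steps; the paper's route is more algebraic and has the advantage that the equality $\varphi(R_i)\cap\varphi^{-1}(R_k)=\varphi(R_i)\cap R_j\cap\varphi^{-1}(R_k)$ falls out of the bracket identity rather than from a separate sandwich. Both arguments rely on the same diameter control from (\ref{eq:double_epsilon}) to keep every bracket in play.
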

\begin{proof}
Part (1) follows from the fact that $\varepsilon_X''$ is so small that $\text{diam}(\varphi^{\pm 1}(\mathcal{R}_1))\leq \varepsilon_X'/2$ and the $\varphi$-invariance of the bracket map. For part (2) it is clear that $ R_i\cap \varphi^{-1}(R_j)\subset [\varphi^{-1}(R_j), R_i]$. 

For the reverse inclusion let $x\in \text{int}(R_i)\cap \varphi^{-1}(\text{int}(R_j))$ and then $[\varphi^{-1}(R_j),R_i]=[X^u(x,\varphi^{-1}(R_j)),X^s(x,R_i)].$ First we claim that 
\begin{equation}\label{eq:localunstablepart}
X^u(x,\varphi^{-1}(R_j))=\varphi^{-1}(X^u(\varphi(x),R_j)).
\end{equation}
Indeed, since $\varphi(x)\in R_j$ write $R_j=[X^u(\varphi(x),R_j),X^s(\varphi(x),R_j)]$ and hence $$\varphi^{-1}(R_j)=[\varphi^{-1}(X^u(\varphi(x),R_j),\varphi^{-1}(X^s(\varphi(x),R_j))],$$ where $\varphi^{-1}(X^u(\varphi(x),R_j)\subset X^u(x,\varepsilon_X)$ and $\varphi^{-1}(X^s(\varphi(x),R_j))\subset X^s(x,\varepsilon_X)$. Also, $$\varphi^{-1}(R_j)=[X^u(x,\varphi^{-1}(R_j)),X^s(x,\varphi^{-1}(R_j))].$$\enlargethispage{\baselineskip} The claim follows since the bracket map is bijective around $x$. 

Using conditions (2) and (3) in the definition of the Markov partition, it is easy to show that $$[\varphi^{-1}(X^u(\varphi(x),R_j)),X^s(x,R_i)]\subset R_i\cap \varphi^{-1}(R_j).$$ We have $X^s(x,R_i)\subset R_i$ and $$\varphi^{-1}(X^u(\varphi(x),R_j))\subset X^u(x,R_i)\subset R_i.$$ Therefore, $$[\varphi^{-1}(X^u(\varphi(x),R_j)),X^s(x,R_i)]\subset [R_i,R_i]=R_i,$$ since $R_i$ is a rectangle. In the same way, $$\varphi(X^s(x,R_i))\subset X^s(\varphi(x),R_j)\subset R_j,$$ which gives $X^s(x,R_i)\subset \varphi^{-1}(R_j)$. Also $\varphi^{-1}(X^u(\varphi(x),R_j))\subset \varphi^{-1}(R_j)$, and one gets $$[\varphi^{-1}(X^u(\varphi(x),R_j)),X^s(x,R_i)]\subset [\varphi^{-1}(R_j),\varphi^{-1}(R_j)]=\varphi^{-1}(R_j).$$ This shows that $[\varphi^{-1}(R_j), R_i]\subset R_i\cap \varphi^{-1}(R_j)$, and hence $[\varphi^{-1}(R_j), R_i]= R_i\cap \varphi^{-1}(R_j).$ Consequently, $R_i\cap \varphi^{-1}(R_j)$ is proper since using Lemma \ref{thm:closure of rectangles} we obtain $$\text{int}([\varphi^{-1}(R_j), R_i])=[\varphi^{-1}(\text{int}(R_j)),\text{int}(R_i)]$$ and $\cl(\text{int}([\varphi^{-1}(R_j), R_i]))=[\cl(\varphi^{-1}(\text{int}(R_j))),\cl(\text{int}(R_i))]=[\varphi^{-1}(R_j),R_i]$. 

Part (3) is proved in a similar fashion. First we observe that $$R_i\cap \varphi^{-1}(R_j)\cap \varphi^{-2}(R_k) = [\varphi^{-1}(R_j)\cap \varphi^{-2}(R_k), R_i].$$ The inclusion $$R_i\cap \varphi^{-1}(R_j)\cap \varphi^{-2}(R_k) \subset [\varphi^{-1}(R_j)\cap \varphi^{-2}(R_k), R_i]$$ should be clear. For the reverse inclusion let $x\in \varphi^{-1}(R_j)\cap \varphi^{-2}(R_k)$ and $y\in R_i$. In particular, $x\in \varphi^{-1}(R_j)$ and using part (2) we obtain that $[x,y]\in [\varphi^{-1}(R_j), R_i]= R_i\cap \varphi^{-1}(R_j).$ Also, $\varphi (x) \in R_j \cap \varphi^{-1}(R_k)$ and $\varphi ([x,y]) \in R_j$. Therefore, \begin{align*}
[x,y]=\varphi^{-1}[\varphi (x),\varphi [x,y]]&\in \varphi^{-1} [R_j \cap \varphi^{-1}(R_k), R_j]\\
& = \varphi^{-1}[[\varphi^{-1}(R_k),R_j],R_j]\\
& = \varphi^{-1}(R_j \cap \varphi^{-1}(R_k))\\
& \subset \varphi^{-2}(R_k)
\end{align*}
and consequently $[x,y]\in R_i\cap \varphi^{-1}(R_j)\cap \varphi^{-2}(R_k).$ 

We now  have that 
\begin{align*}
\varphi (R_i) \cap R_j \cap \varphi^{-1}(R_k)&= \varphi (R_i\cap \varphi^{-1}(R_j)\cap \varphi^{-2}(R_k))\\
&= \varphi [\varphi^{-1}(R_j)\cap \varphi^{-2}(R_k), R_i]\\
&= \varphi [\varphi^{-1}[\varphi^{-1}(R_k),R_j],R_i]\\
&=\varphi [ [ \varphi^{-2}(R_k),\varphi^{-1}(R_j)],R_i]\\
&= \varphi [\varphi^{-2}(R_k), R_i]\\
&= [\varphi^{-1}(R_k),\varphi (R_i)],
\end{align*}
where the fourth equality makes sense because $\overline{\text{diam}}(\mathcal{R}_1)\leq \varepsilon_X''$. The properness of the rectangle follows as in part (2) and $\varphi (R_i) \cap R_j \cap \varphi^{-1}(R_k)= \varphi (R_i) \cap \varphi^{-1}(R_k)$ because $\varphi (R_i) \cap \varphi^{-1}(R_k)\subset [\varphi^{-1}(R_k),\varphi (R_i)]$.
\end{proof}
\begin{remark}\label{rem: stable / unstable boundary of refinement}
Let $R=\varphi (R_i) \cap \varphi^{-1}(R_k)=[\varphi^{-1}(R_k),\varphi (R_i)]$ be a rectangle as in part (2) of Proposition \ref{thm:Markov property rectangles}. Then it is not hard to see that 
\begin{equation}\label{eq:Markovboundaries}
\partial^s R\subset \partial^s(\varphi^{-1}(R_k))\enspace \text{and} \enspace \partial^u R\subset \partial^u(\varphi (R_i)).
\end{equation}

Similarly as in equation (\ref{eq:localunstablepart}) the local product structure of the space implies that $X^u(x, R)= X^u(x, \varphi^{-1}(R_k))$, for $x\in R$. Consequently, it holds that
\begin{align*}
\partial^s R&=\partial^s [X^u(x, R),  X^s(x, R)]\\
&=[ \partial X^u(x, R), X^s(x, R)]\\
&\subset [ \partial X^u(x, \varphi^{-1}(R_k)), X^s(x, \varphi^{-1}(R_k))]\\
&=\partial^s(\varphi^{-1}(R_k)).
\end{align*}
The unstable case is proved similarly.
\end{remark}

The next lemma will allow us to apply Lemma \ref{thm:Markov property rectangles} inductively.

\begin{lemma}\label{lem:inductive step in Markov partitions}
For every $n\in \mathbb N$ one has $$\mathcal{R}_{n+1}=\{R\in \varphi (\mathcal{R}_n)\vee \mathcal{R}_n \vee \varphi^{-1}(\mathcal{R}_n): \Int(R)\neq \varnothing\}.$$
\end{lemma}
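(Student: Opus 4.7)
The plan is to prove both inclusions by unfolding both sides as joint covers and identifying them index-by-index. Unpacking the definition, $\mathcal{R}_{n+1}$ consists of the nonempty-interior elements of $\bigvee_{i=-n}^{n}\varphi^{-i}(\mathcal{R}_1)$, i.e.\ sets of the form $R=\bigcap_{i=-n}^{n}\varphi^{-i}(R_{j_i})$ with $R_{j_i}\in\mathcal{R}_1$. On the other hand, $\varphi(\mathcal{R}_n)$ has its indices shifted to $\{-n,\ldots,n-2\}$ and $\varphi^{-1}(\mathcal{R}_n)$ to $\{2-n,\ldots,n\}$, so an element of $\varphi(\mathcal{R}_n)\vee\mathcal{R}_n\vee\varphi^{-1}(\mathcal{R}_n)$ naturally carries labels at every $k\in\{-n,\ldots,n\}$, with overlaps on the middle indices. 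The task is to show these two descriptions coincide when one restricts to nonempty interiors.

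For the inclusion $\mathcal{R}_{n+1}\subseteq\{R\in\varphi(\mathcal{R}_n)\vee\mathcal{R}_n\vee\varphi^{-1}(\mathcal{R}_n):\Int(R)\neq\varnothing\}$, I would start with $R=\bigcap_{i=-n}^{n}\varphi^{-i}(R_{j_i})\in\mathcal{R}_{n+1}$ and re-index the labels into three overlapping blocks: define $R^{(1)}=\bigcap_{i=1-n}^{n-1}\varphi^{-i}(R_{j_{i-1}})$, $R^{(2)}=\bigcap_{i=1-n}^{n-1}\varphi^{-i}(R_{j_i})$, and $R^{(3)}=\bigcap_{i=1-n}^{n-1}\varphi^{-i}(R_{j_{i+1}})$. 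A direct re-indexing shows $\varphi(R^{(1)})$, $R^{(2)}$, $\varphi^{-1}(R^{(3)})$ all contain $R$, so $R=\varphi(R^{(1)})\cap R^{(2)}\cap\varphi^{-1}(R^{(3)})$. Since $R$ has nonempty interior and is contained in each of these three sets, each $R^{(t)}$ has nonempty interior, and hence belongs to $\mathcal{R}_n$.

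For the reverse inclusion, which is the more delicate half, I would take $R=\varphi(R^{(1)})\cap R^{(2)}\cap\varphi^{-1}(R^{(3)})$ with $\Int(R)\neq\varnothing$ and $R^{(t)}\in\mathcal{R}_n$, writing $R^{(t)}=\bigcap_{i=1-n}^{n-1}\varphi^{-i}(R_{a_i^{(t)}})$. Expanding, $R=\bigcap_{k=-n}^{n}\varphi^{-k}(T_k)$, where $T_k$ is an intersection of one, two, or three elements of $\mathcal{R}_1$ depending on how the three index ranges overlap at position $k$. The main obstacle is to show that these $T_k$ collapse to a single $R_{j_k}\in\mathcal{R}_1$, i.e.\ that the overlapping labels $a^{(1)}_{k+1}$, $a^{(2)}_k$, $a^{(3)}_{k-1}$ coincide. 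For this, I would use the fact that iterated application of Lemma~\ref{thm:Markov property rectangles}(2) gives $\Int(R^{(t)})=\bigcap_{i=1-n}^{n-1}\varphi^{-i}(\Int(R_{a_i^{(t)}}))$ for each $R^{(t)}\in\mathcal{R}_n$ (established by induction on $n$ using the properness proved there). Since $\Int(R)\subseteq\Int(\varphi(R^{(1)}))\cap\Int(R^{(2)})\cap\Int(\varphi^{-1}(R^{(3)}))$ is nonempty, any point $x$ in this triple intersection satisfies, for every $k$ in the relevant overlap, $\varphi^k(x)\in\Int(R_{a_{k+1}^{(1)}})\cap\Int(R_{a_k^{(2)}})\cap\Int(R_{a_{k-1}^{(3)}})$. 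The interiors of the elements of $\mathcal{R}_1$ are pairwise disjoint by condition (1) of the Markov partition, so the three labels at each such $k$ must coincide, yielding a well-defined $j_k$ with $T_k=R_{j_k}$.

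Putting this together, $R=\bigcap_{k=-n}^{n}\varphi^{-k}(R_{j_k})$ is an element of $\bigvee_{i=-n}^{n}\varphi^{-i}(\mathcal{R}_1)$ with nonempty interior, hence $R\in\mathcal{R}_{n+1}$. The key technical point throughout is the interior-of-intersection equality for proper rectangles built from a Markov partition, which converts the combinatorial disjointness of $\Int(R_j)$'s into forced agreement of the coding at every index, and thereby identifies the two descriptions of $\mathcal{R}_{n+1}$.
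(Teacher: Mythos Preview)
Your proof is correct and follows essentially the same approach as the paper: split each $R\in\mathcal{R}_{n+1}$ into three overlapping blocks for the forward inclusion, and for the reverse inclusion use that $\Int(R)\neq\varnothing$ forces the overlapping $\mathcal{R}_1$-labels to coincide via pairwise disjointness of interiors. One minor point: your appeal to Lemma~\ref{thm:Markov property rectangles}(2) for the identity $\Int(R^{(t)})=\bigcap_i\varphi^{-i}(\Int(R_{a_i^{(t)}}))$ is unnecessary, since this holds for elementary reasons (a finite intersection of open sets is open and contained in $R^{(t)}$, giving $\supset$, while $\subset$ is automatic); the paper uses exactly this observation without invoking the Markov property.
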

\begin{proof}
To prove that $\mathcal{R}_{n+1}\subset \{R\in \varphi (\mathcal{R}_n)\vee \mathcal{R}_n \vee \varphi^{-1}(\mathcal{R}_n): \Int(R)\neq \varnothing\}$ we just need the fact that the interior of a finite intersection is the intersection of the interiors. For the reverse inclusion it suffices to observe that if $R,S\in \mathcal{R}_1$ with $\Int(R)\cap \Int(S)\neq \varnothing$ then $R=S$.
\end{proof}
\begin{lemma}\label{lem:inductiveMarkovpartition}
For every $n\in \mathbb N$ the cover $\mathcal{R}_n$ is a Markov partition.
\end{lemma}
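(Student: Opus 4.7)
The natural approach is induction on $n$, with base case $n=1$ given by hypothesis. For the inductive step, I would assume $\mathcal{R}_n$ is a Markov partition whose diameter is at most $\varepsilon_X''$ (this is ensured by part (1) of Proposition~\ref{prop:mainresultMarkovpartitions}, which is proved separately and independently, since $\lambda_X>1$ implies $\overline{\diam}(\mathcal{R}_n)\leq \overline{\diam}(\mathcal{R}_1)\leq \varepsilon_X''$). The key observation is that Lemma~\ref{thm:Markov property rectangles} only uses the defining properties of a Markov partition together with a smallness condition on the diameter; it therefore applies verbatim with $\mathcal{R}_n$ playing the role of $\mathcal{R}_1$.

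By Lemma~\ref{lem:inductive step in Markov partitions}, each $R\in\mathcal{R}_{n+1}$ has the form $R=\varphi(S_1)\cap S_2\cap \varphi^{-1}(S_3)$ with $S_1,S_2,S_3\in\mathcal{R}_n$ and $\Int(R)\neq\varnothing$. Applying (the analogue of) Lemma~\ref{thm:Markov property rectangles}(3) with $S_1,S_2,S_3$ in place of $R_i,R_j,R_k$ shows that $R=[\varphi^{-1}(S_3),\varphi(S_1)]$ is a proper rectangle. Finiteness of $\mathcal{R}_{n+1}$ is immediate, and pairwise disjointness of interiors follows because if two distinct elements $R=\varphi(S_1)\cap S_2\cap\varphi^{-1}(S_3)$ and $R'=\varphi(S_1')\cap S_2'\cap\varphi^{-1}(S_3')$ had $\Int(R)\cap\Int(R')\neq\varnothing$, then the triples would agree in each coordinate by the disjoint-interior property of the Markov partition $\mathcal{R}_n$ (using that $\Int$ commutes with finite intersections here), forcing $R=R'$.

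What remains is to verify the Markov axioms (2) and (3) of Definition~\ref{def:Definition of Markov partition} for $\mathcal{R}_{n+1}$, and this is the step I expect to be the main obstacle. Suppose $R,R'\in\mathcal{R}_{n+1}$ and $x\in\Int(R)\cap\varphi^{-1}(\Int(R'))$. Writing $R$ and $R'$ in the form above, one reads off that $x\in\Int(S_2)\cap\varphi^{-1}(\Int(S_2'))$, so the Markov property of $\mathcal{R}_n$ yields the containments
\begin{equation*}
\varphi(X^s(x,S_2))\subset X^s(\varphi(x),S_2'),\qquad \varphi(X^u(x,S_2))\supset X^u(\varphi(x),S_2').
\end{equation*}
One then has to pass from $S_2,S_2'$ down to $R,R'$. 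The rectangle identity $R=\varphi(S_1)\cap S_2\cap\varphi^{-1}(S_3)=[\varphi^{-1}(S_3),\varphi(S_1)]$, applied as in the proof of Lemma~\ref{thm:Markov property rectangles} (see in particular the identity $X^u(x,\varphi^{-1}(R_j))=\varphi^{-1}(X^u(\varphi(x),R_j))$ and its stable counterpart), lets one describe $X^s(x,R)$ and $X^u(x,R)$ as intersections of the local leaves of $S_2$ with pullbacks of leaves of $S_1,S_3$; similarly for $R'$. The required inclusions then follow by iterating the Markov property of $\mathcal{R}_n$ once more, applied to the pairs $(S_1,S_1'),(S_2,S_2'),(S_3,S_3')$, exactly as in the refinement computation in part (3) of Lemma~\ref{thm:Markov property rectangles}.

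The bookkeeping in this last paragraph is the only nontrivial part of the argument: one must carefully identify which local leaves of elements of $\mathcal{R}_n$ correspond to the local leaves of elements of $\mathcal{R}_{n+1}$ under the bracket decomposition, and then chain the Markov property of $\mathcal{R}_n$ across the three factors $\varphi(S_1)$, $S_2$, $\varphi^{-1}(S_3)$. Once this is done, both axioms hold for $\mathcal{R}_{n+1}$ and the induction closes.
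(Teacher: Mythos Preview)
Your proposal is correct and follows essentially the same inductive route as the paper: write elements of $\mathcal{R}_{n+1}$ via Lemma~\ref{lem:inductive step in Markov partitions}, use Lemma~\ref{thm:Markov property rectangles}(3) for properness, and chain the Markov property of $\mathcal{R}_n$ to verify axioms (2) and (3). One small sharpening worth making explicit: if $x\in\Int(R)\cap\varphi^{-1}(\Int(R'))$ with $R=\varphi(S_1)\cap S_2\cap\varphi^{-1}(S_3)$ and $R'=\varphi(S_1')\cap S_2'\cap\varphi^{-1}(S_3')$, the disjoint-interior property of $\mathcal{R}_n$ forces $S_1'=S_2$ and $S_2'=S_3$, so the two triples overlap as $(R_\ell,R_i,R_j)$ and $(R_i,R_j,R_k)$ in the paper's notation; this is what makes the ``chaining'' go through with only two applications of the Markov property of $\mathcal{R}_n$ rather than three independent ones.
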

\begin{proof}
The statement holds for $n=1$ since $\mathcal{R}_1$ is a Markov partition by definition. Assume that $\mathcal{R}_n$ is a Markov partition and the claim is that $\mathcal{R}_{n+1}$ is one too. Let $R\in \mathcal{R}_{n+1}$ and we certainly have $\text{diam}(R)\leq \varepsilon_X''.$ Using Lemma \ref{lem:inductive step in Markov partitions} and parts (1) and (3) of Proposition \ref{thm:Markov property rectangles} we deduce that $R$ is a proper rectangle. The fact that the interiors of any two rectangles in $\mathcal{R}_{n+1}$ are mutually disjoint follows from $\varphi$ being a homeomorphism. 

For conditions (2) and (3) of Definition \ref{def:Definition of Markov partition}, again by Lemma \ref{lem:inductive step in Markov partitions}, it is enough to consider an arbitrary $$x\in \Int (\varphi(R_{\ell})\cap R_i\cap \varphi^{-1}(R_j)\cap \varphi^{-1}(\varphi(R_i)\cap R_j \cap \varphi^{-1}(R_k)))$$ for some $R_{\ell},R_i,R_j,R_k\in \mathcal{R}_n$. 

\noindent Then one has 
\begin{align*}
X^u(\varphi(x),\varphi(R_i)\cap R_j\cap \varphi^{-1}(R_k)) &\subset X^u(\varphi(x),R_j)\cap \varphi(R_i)\\
&\subset \varphi (X^u(x,R_i))\cap \varphi(R_i) \cap R_j \\
&\subset \varphi (\varphi(X^u(\varphi^{-1}(x),R_{\ell})))\cap \varphi(R_i) \cap R_j\\
&=\varphi(X^u(x,\varphi(R_{\ell})\cap R_i\cap \varphi^{-1}(R_j))),
\end{align*}
where the second inclusion holds because $x\in \text{int}(R_i)\cap \varphi^{-1}(\text{int}(R_j))$ and the third because $x\in \varphi(\text{int}(R_{\ell}))\cap \text{int}(R_i)$. For the last equality use the same argument as in (\ref{eq:localunstablepart}). 
\enlargethispage{\baselineskip}
For the stable part, we have 
\begin{align*}
\varphi(X^s(x,\varphi(R_{\ell})\cap R_i\cap \varphi^{-1}(R_j))) &\subset \varphi(X^s(x,R_i))\cap R_j\\
& \subset X^s(\varphi(x),R_j)\cap \varphi(R_i)\cap R_j\\
&\subset \varphi^{-1}(X^s(\varphi^2(x),R_k))\cap \varphi(R_i)\cap R_j\\
&= X^s(\varphi(x),\varphi(R_i)\cap R_j \cap \varphi^{-1}(R_k)),
\end{align*}
where the second inclusion holds because $x\in \text{int}(R_i)\cap \varphi^{-1}(\text{int}(R_j))$, the third because $\varphi(x)\in \text{int}(R_j) \cap \varphi^{-1}(\text{int}(R_k))$, and finally the last equality is true for the same reason as in the unstable case.
\end{proof}
\begin{lemma}\label{lem:Markovapproximationgraph1}
The approximation graph $\Pi=(\mathcal{R},\mathcal{A})$ is essential. Also $(\mathcal{P}_{\Pi},\sigma_{\Pi})=(\Sigma_M,\sigma_M)$, where $M$ is the transition matrix of $\mathcal{R}_1$.
\end{lemma}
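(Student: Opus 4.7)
My plan is to establish the two assertions of the lemma separately: essentiality of $\Pi$ and the identification with the topological Markov chain.

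For essentiality, recall by Lemma \ref{lem:inductiveMarkovpartition} that each $\mathcal{R}_n$ is a Markov partition whose elements are proper closed rectangles with pairwise disjoint interiors, and by Lemma \ref{lem:inductive step in Markov partitions} that every $R' \in \mathcal{R}_{n+1}$ lies in a unique element of $\mathcal{R}_n$. For condition (E1), given $R \in \mathcal{R}_n$, observe that for $x \in \Int(R)$, any $R' \in \mathcal{R}_{n+1}$ containing $x$ must satisfy $R' \subset R$ (otherwise $x$ would lie in $R \cap R''$ for some $R'' \neq R$ in $\mathcal{R}_n$, forcing $x \in \partial R$ by disjointness of interiors). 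Therefore $\Int(R) \subset \bigcup\{R' \in \mathcal{R}_{n+1}: R' \subset R\}$, and since the right-hand side is closed and $R = \cl(\Int(R))$ by properness, equality follows. For condition (E2), note that $\bigcup \mathcal{Y}_n \subset \bigcup_{R \in \mathcal{R}_n} \partial R$, because two distinct proper rectangles with disjoint interiors can only intersect along their boundaries. Each $\partial R$ has empty interior (as $R$ is proper), so $\bigcup \mathcal{Y}_n$ is closed and nowhere-dense, leaving $R^{\text{ess}} = \Int(R) \setminus \bigcup \mathcal{Y}_n$ non-empty. The density of $\bigcup \mathcal{R}_n^{\text{ess}}$ follows since $\bigcup_R \Int(R)$ is dense in $X$ (proved just before Proposition \ref{prop:mainresultMarkovpartitions}) and $\bigcup \mathcal{Y}_n$ is closed and nowhere-dense.

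For the identification $(\mathcal{P}_{\Pi},\sigma_{\Pi}) = (\Sigma_M, \sigma_M)$, I would construct the natural map $\Phi \colon \Sigma_M \to \mathcal{P}_{\Pi}$ sending $x = (x_i)_{i\in \mathbb{Z}}$ to the path whose level-$n$ vertex is $R^{(n)}_x = \bigcap_{i=1-n}^{n-1} \varphi^{-i}(R_{x_i})$, with root $X$ at level zero. The allowability conditions $M_{x_i,x_{i+1}} = 1$ translate to $\Int(R_{x_i}) \cap \varphi^{-1}(\Int(R_{x_{i+1}})) \neq \varnothing$, and iterating parts (2) and (3) of Lemma \ref{thm:Markov property rectangles} (combined with Lemma \ref{lem:inductive step in Markov partitions}) shows each $R^{(n)}_x$ is a proper rectangle lying in $\mathcal{R}_n$. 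Nestedness $R^{(n+1)}_x \subset R^{(n)}_x$ is immediate. Injectivity of $\Phi$ follows by choosing any $p \in \Int(R^{(n)}_x)$ and observing, via the Markov property and the local product structure, that $\varphi^i(p)$ belongs to the interior of the unique element $R_{x_i}$ of $\mathcal{R}_1$ for each $|i|\leq n-1$. Surjectivity uses that every vertex at level $n$ is uniquely labelled by an allowable word, and nested vertices along an infinite path supply consistent coordinates for a bi-infinite allowable sequence. Matching the bases of symmetric cylinders $C^{1-n,\ldots,n-1}_{x_{1-n},\ldots,x_{n-1}} \leftrightarrow C_{[R^{(n)}_x]}$ makes $\Phi$ a homeomorphism, and shift-equivariance $\Phi \circ \sigma_M = \sigma_{\Pi} \circ \Phi$ is verified by unpacking (\ref{eq:shiftmap}): the level-$n$ vertex of $\Phi(\sigma_M x)$ equals $\bigcap_{j=2-n}^{n} \varphi^{-(j-1)}(R_{x_j}) = \varphi(\bigcap_{j=2-n}^{n} \varphi^{-j}(R_{x_j}))$, which coincides with the corresponding vertex of $\sigma_{\Pi}(\Phi(x))$.

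The main technical hurdle is the inductive verification that $R^{(n)}_x$ has non-empty interior for every allowable sequence $(x_i)$. This propagation relies on iterating parts (2) and (3) of Lemma \ref{thm:Markov property rectangles} starting from the Markov partition $\mathcal{R}_{n-1}$ and incorporating the new factors $\varphi^{-(n-1)}(R_{x_{n-1}})$ and $\varphi^{n-1}(R_{x_{1-n}})$; the bracket identities allow the resulting triple intersections to be rewritten as pairwise brackets, preserving both properness and non-emptiness of the interior. A secondary subtlety is verifying that the label-to-rectangle map at each level is a bijection, which reduces to the inclusion $\Int(R^{(n)}_x) \subset \bigcap_{i=1-n}^{n-1} \varphi^{-i}(\Int(R_{x_i}))$; this too is a consequence of the Markov property together with the disjointness of interiors in $\mathcal{R}_1$.
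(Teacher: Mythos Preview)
Your proposal is substantially correct and follows the same route as the paper's two-sentence proof (properness plus disjoint interiors for essentiality; the correspondence between $\mathcal{R}_n$ and symmetric cylinders for the identification with $(\Sigma_M,\sigma_M)$). You supply the details the paper omits, and your treatment of (E2), density of $\bigcup\mathcal{R}_n^{\text{ess}}$, and the bijection $\Phi$ is sound.

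There is one slip in your (E1) argument. From $x\in\Int(R)\cap R''$ with $R''\neq R$ in $\mathcal{R}_n$ and $\Int(R)\cap\Int(R'')=\varnothing$, what follows is $x\in\partial R''$, not $x\in\partial R$; so this does not contradict $x\in\Int(R)$, and the pointwise implication ``$x\in R'\Rightarrow R'\subset R$'' is not established for an arbitrary $R'\in\mathcal{R}_{n+1}$ containing $x$ (such an $R'$ may touch $\Int(R)$ only along its own boundary). A clean repair: the open dense set $\bigcup_{R'\in\mathcal{R}_{n+1}}\Int(R')$ meets $\Int(R)$ in a dense subset; for $y$ in this intersection, the element $R'\in\mathcal{R}_{n+1}$ with $y\in\Int(R')$ satisfies $\Int(R')\cap\Int(R)\neq\varnothing$, and since $\Int(R')\subset\Int(R'')$ for its ancestor $R''\in\mathcal{R}_n$, disjointness of interiors forces $R''=R$, i.e.\ $R'\subset R$. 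Hence $\Int(R)\subset\cl\bigl(\bigcup_{R'\subset R}\Int(R')\bigr)\subset\bigcup_{R'\subset R}R'$, and properness of $R$ gives (E1).
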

\begin{proof}
The graph $\Pi$ is essential because the rectangles in $\mathcal{R}_{n}$ are proper with mutually disjoint interiors. Finally, $(\mathcal{P}_{\Pi},\sigma_{\Pi})=(\Sigma_M,\sigma_M)$ following the definition of $\mathcal{R}_n$ and the cylinder sets of $\Sigma_M$.
\end{proof}

We now estimate the growth rate of $\# \mathcal{R}_n$ with respect to $n$. Recall that an irreducible Smale space has the Bowen measure which maximises the topological entropy. 
\begin{lemma}[{\cite[Theorem 33]{Bowen2}}]\label{thm:same entropy}
For a topological Markov chain $(\Sigma_M,\sigma_M)$ that is induced by a Markov partition of an irreducible Smale space $(X,\varphi)$ one has $\ent(\varphi)=\ent(\sigma_M)$. 
\end{lemma}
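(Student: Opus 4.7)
The identity is Bowen's classical result \cite[Theorem 33]{Bowen2}, and the plan is to route both entropies through the common combinatorial quantity $\#\mathcal{R}_n = N_M(2n-1)$. The first observation, following the Markov property (Lemma \ref{thm:Markov property rectangles}(2), applied inductively), is that allowable words $\mu_{1-n}\mu_{2-n}\ldots\mu_{n-1}$ of length $2n-1$ in $\Sigma_M$ are in bijection with non-empty rectangles $\bigcap_{i=1-n}^{n-1}\varphi^{-i}(R_{\mu_i})\in \mathcal{R}_n$, giving $\#\mathcal{R}_n = N_M(2n-1)$.

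For the shift space, the rank-$1$ clopen cylinder partition of $\Sigma_M$ is a generator in the sense of Definition \ref{def:generator} (any bi-infinite sequence of cylinders intersects in at most one point), and its diameter is at most $2^{-1}=\varepsilon_{\Sigma_M}$. Applying the standard entropy computation for expansive systems on a generator and mirroring the reasoning at the end of the proof of Lemma \ref{thm: upper bound on cardinality in SFT}, one obtains
\[
\ent(\sigma_M) = \lim_{n\to\infty} \frac{1}{2n}\log N_M(2n-1) = \lim_{n\to\infty}\frac{1}{2n}\log \#\mathcal{R}_n.
\]
For the Smale space, since $\varphi$ is expansive with constant $\varepsilon_X$, one has $\ent(\varphi) = \ent(\varphi,\mathcal{U})$ for any finite open cover $\mathcal{U}$ with $\overline{\diam}(\mathcal{U}) < \varepsilon_X$. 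I would take $\mathcal{U}$ to be a small open $\eta$-thickening of $\mathcal{R}_1$, with $\eta>0$ chosen so that $\overline{\diam}(\mathcal{U})<\varepsilon_X$ and so that thickened rectangles intersect only where the original rectangles were neighbours. Each $R\in\mathcal{R}_n$ lies inside a unique element of $\bigvee_{i=1-n}^{n-1}\varphi^{-i}(\mathcal{U})$, giving $\num(\bigvee_{i=1-n}^{n-1}\varphi^{-i}(\mathcal{U}))\leq \#\mathcal{R}_n$. Conversely, each non-empty joint-cover element corresponds to either an actual allowable word or a spurious one arising from the thickening, and a uniform neighbour bound caps the spurious contribution by a sub-exponential factor. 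Thus $\ent(\varphi) = \lim_{n\to\infty}(1/2n)\log\#\mathcal{R}_n$, matching the shift-side limit.

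The main obstacle is the thickening step: controlling the effect of $\eta>0$ on the number of non-empty joint-cover intersections, which requires a uniform bound on the number of neighbouring rectangles in $\mathcal{R}_n$ --- a consequence of the Neighbouring Rectangles Lemma \ref{lem:neighbours}, established independently of the present result. A cleaner alternative route is via the variational principle and the Abramov--Rokhlin formula: since $\pi_M$ is a factor map with uniformly bounded fibres (Theorem \ref{thm:Bowen factor map}(4)), the Abramov--Rokhlin relative entropy vanishes, so $h_\nu(\sigma_M) = h_{(\pi_M)_*\nu}(\varphi)$ for every $\sigma_M$-invariant probability measure $\nu$, and the variational principle then yields $\ent(\sigma_M) = \ent(\varphi)$.
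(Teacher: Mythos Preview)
The paper does not prove this lemma at all; it simply records Bowen's result with a citation to \cite[Theorem~33]{Bowen2}. So there is no ``paper's proof'' to match, and your task was effectively to reconstruct an argument.

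Your second route is correct and is the standard one: $\pi_M$ is a factor map, so $\ent(\varphi)\leq\ent(\sigma_M)$; and since $\pi_M$ is uniformly bounded-to-one (Theorem~\ref{thm:Bowen factor map}(4)), the fibre entropy vanishes and Bowen's inequality (or the relative variational principle) gives $\ent(\sigma_M)\leq\ent(\varphi)$. This is clean and self-contained.

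Your first route, via the thickened cover $\mathcal{U}$, has two imprecisions. First, ``each $R\in\mathcal{R}_n$ lies inside a \emph{unique} element of $\bigvee_{i=1-n}^{n-1}\varphi^{-i}(\mathcal{U})$'' is false: since the thickened rectangles overlap, $R$ will typically sit in several such elements. What is true (and sufficient for $\num\leq\#\mathcal{R}_n$) is that it lies in at least one, namely the thickening of $R$ itself. Second, your ``conversely'' step is phrased as an upper bound on the number of non-empty joint-cover elements, but that is the same direction you already have; what you need is a \emph{lower} bound on $\num$. The Neighbouring Rectangles Lemma does give this, but the correct formulation is: any single element of the thickened joint cover meets at most a bounded number $C$ of rectangles in $\mathcal{R}_n$, so any subcover needs at least $\#\mathcal{R}_n/C$ elements. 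Also note that invoking Lemma~\ref{lem:neighbours} here is a forward reference relative to the paper's ordering (though not logically circular, since that lemma does not use the present one).
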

\begin{lemma}\label{lem:upper bound for number of rectangles}
There exist constants $C,c>0$ such that for every $\varepsilon \in (0,1)$ there is some $n_0\in \mathbb N$ so that, for $n\geq n_0$, we have $$ce^{2(\ent(\varphi)-\varepsilon) n}< \#\mathcal{R}_n <Ce^{2(\ent(\varphi)+\varepsilon) n}.$$
\end{lemma}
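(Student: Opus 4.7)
The plan is to establish the identity $\#\mathcal{R}_n = N_M(2n-1)$, where $N_M(2n-1)$ is the number of non-empty symmetric cylinder sets of rank $2n-1$ in the topological Markov chain $(\Sigma_M,\sigma_M)$ associated to $\mathcal{R}_1$, and then to invoke Lemma~\ref{thm: upper bound on cardinality in SFT} together with Lemma~\ref{thm:same entropy}.

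The key step is to show that the assignment
\[
\vec{\mu}=(\mu_{1-n},\ldots,\mu_{n-1})\ \longmapsto\ R_{\vec{\mu}}:=\bigcap_{i=1-n}^{n-1}\varphi^{-i}(R_{\mu_i})
\]
restricts to a bijection from the set of allowable symmetric words of length $2n-1$ (equivalently, non-empty symmetric cylinders of rank $2n-1$, using that $M$ is irreducible) onto $\mathcal{R}_n$. If $\vec{\mu}$ is not allowable, say $M_{\mu_i,\mu_{i+1}}=0$, then $\Int(R_{\mu_i})\cap \varphi^{-1}(\Int(R_{\mu_{i+1}}))=\varnothing$ by definition of $M$, and this forces $\Int(R_{\vec{\mu}})=\varnothing$, so $R_{\vec{\mu}}\notin \mathcal{R}_n$. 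Conversely, if $\vec{\mu}$ is allowable, iterating the Markov property of Lemma~\ref{thm:Markov property rectangles}(2) (cleanly packaged in Lemma~\ref{lem:inductiveMarkovpartition}, which shows that each $\mathcal{R}_n$ is itself a Markov partition) yields that $R_{\vec{\mu}}$ is a proper rectangle, so $\Int(R_{\vec{\mu}})\neq\varnothing$ and $R_{\vec{\mu}}\in\mathcal{R}_n$. For injectivity, if $R_{\vec{\mu}}=R_{\vec{\mu}'}\in \mathcal{R}_n$ for allowable words $\vec{\mu},\vec{\mu}'$, then $\varnothing\neq\Int(R_{\vec{\mu}})\subseteq \varphi^{-i}(\Int(R_{\mu_i}))\cap\varphi^{-i}(\Int(R_{\mu_i'}))$ for each $|i|\leq n-1$, so $\Int(R_{\mu_i})\cap \Int(R_{\mu_i'})\neq\varnothing$; the disjoint-interiors clause of Definition~\ref{def:Definition of Markov partition} then forces $\mu_i=\mu_i'$.

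Since $(X,\varphi)$ is irreducible, Theorem~\ref{thm:Bowen factor map}(5) gives that $(\Sigma_M,\sigma_M)$ is irreducible, and in particular $M$ has no row consisting entirely of zeros (explicitly, $\varphi(\Int(R_i))$ is a non-empty open set meeting $\Int(R_j)$ for some $j$, giving $M_{ij}=1$). Therefore Lemma~\ref{thm: upper bound on cardinality in SFT} applies to $(\Sigma_M,\sigma_M)$ and yields constants $C,c>0$ such that for every $\varepsilon\in(0,1)$ there is $n_0\in\mathbb N$ with
\[
ce^{2(\ent(\sigma_M)-\varepsilon)n}< N_M(2n-1)< Ce^{2(\ent(\sigma_M)+\varepsilon)n},\qquad n\geq n_0.
\]
Substituting $\#\mathcal{R}_n=N_M(2n-1)$ and using Lemma~\ref{thm:same entropy} to replace $\ent(\sigma_M)$ by $\ent(\varphi)$ completes the argument. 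The only delicate step is the injectivity of the word-to-rectangle map, but it reduces directly to the disjoint-interiors axiom of Markov partitions combined with $\varphi$ being a homeomorphism.
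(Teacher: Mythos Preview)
Your proof is correct and follows essentially the same route as the paper: both establish the identity $\#\mathcal{R}_n = N_M(2n-1)$ and then invoke Lemma~\ref{thm: upper bound on cardinality in SFT} together with Lemma~\ref{thm:same entropy}. The only difference is packaging: the paper obtains the bijection by passing through the essential approximation graph $\Pi$ and the identification $(\mathcal{P}_\Pi,\sigma_\Pi)=(\Sigma_M,\sigma_M)$ from Lemma~\ref{lem:Markovapproximationgraph1}, whereas you give the bijection between allowable words and elements of $\mathcal{R}_n$ directly via the disjoint-interiors axiom and the Markov property---a slightly more elementary but equivalent argument.
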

\begin{proof}
For any rectangle $R\in \mathcal{R}_n$ its essential part is given by $R^{\text{ess}}=\Int(R)$. Now (\ref{eq: equation of quotient map}) together with the essentiality of $\Pi$ imply that $R^{\text{ess}}\subset \pi_{\Pi}(C_{R})\subset R$, where $C_R$ is the cylinder set of $R$ in $\mathcal{P}_{\Pi}$. Since $R$ is proper and $\pi_{\Pi}$ is a closed map, we get $\pi_{\Pi}(C_{R})=R$. Now, let $(\Sigma_M,\sigma_M)$ be the topological Markov chain that is induced by $\mathcal{R}_1$ and recall the definition of the factor map $\pi_M:(\Sigma_M,\sigma_M)\to (X,\varphi)$ where $\pi_M=\pi_{\Pi}$ from Lemma \ref{lem:Markovapproximationgraph1}. Then every $R\in \mathcal{R}_n$ is the $\pi_M$-image of a cylinder set of rank $2n-1$. Actually, $\#\mathcal{R}_n=N_M(2n-1)$; the number of non-empty cylinder sets of $\Sigma_M$ of rank $2n-1$. From Lemma \ref{thm: upper bound on cardinality in SFT}, there exist constants $C,c > 0$ such that for every $\varepsilon \in (0,1)$ there is some $n_0\in \mathbb N$ so that for $n\geq n_0$ we have $$ce^{2(\ent(\sigma_M)-\varepsilon) n}< \#\mathcal{R}_n <Ce^{2(\ent(\sigma_M)+\varepsilon) n}.$$ Using Lemma \ref{thm:same entropy} we obtain the result.
\end{proof}
\begin{lemma}\label{lem:bounded multiplicity}
For every $n\geq 0$ we have $\mul(\mathcal{R}_n)\leq (\#\mathcal{R}_1)^2$.
\end{lemma}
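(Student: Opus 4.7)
The strategy is to transfer the multiplicity bound from $\mathcal{R}_n$ through the factor map $\pi_M : (\Sigma_M, \sigma_M) \to (X, \varphi)$ of Theorem \ref{thm:Bowen factor map}, whose fibres have cardinality at most $(\#\mathcal{R}_1)^2$ by part (4) of that theorem. Recall from Lemma \ref{lem:Markovapproximationgraph1} that $\pi_M$ coincides with $\pi_\Pi$ on $\mathcal{P}_\Pi = \Sigma_M$.

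First, I will attach to each $R \in \mathcal{R}_n$ the rank-$(2n-1)$ symmetric cylinder set $C_R \subset \Sigma_M$ determined by the unique word $(R_{1-n}, \ldots, R_{n-1}) \in \mathcal{R}_1^{2n-1}$ with $R = \bigcap_{i=1-n}^{n-1}\varphi^{-i}(R_i)$. The proof of Lemma \ref{lem:upper bound for number of rectangles} already establishes that $\pi_M(C_R) = R$. The assignment $R \mapsto C_R$ is injective: if two words gave the same rectangle $R$ with $\Int(R) \neq \varnothing$, then $\Int(R) \subset \varphi^{-i}(\Int(R_i)) \cap \varphi^{-i}(\Int(R'_i))$ for every $i$, forcing $\Int(R_i) \cap \Int(R'_i) \neq \varnothing$, and thus $R_i = R'_i$ by condition (1) of Definition \ref{def:Definition of Markov partition}.

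Now suppose $y \in X$ lies in $k$ distinct rectangles $R^{(1)}, \ldots, R^{(k)} \in \mathcal{R}_n$. Since $\pi_M(C_{R^{(j)}}) = R^{(j)} \ni y$ for each $j$, I may choose $\xi^{(j)} \in C_{R^{(j)}} \cap \pi_M^{-1}(y)$. The cylinders $C_{R^{(j)}}$ are pairwise disjoint (distinct rank-$(2n-1)$ cylinders in $\Sigma_M$), so the selected $\xi^{(j)}$ are $k$ pairwise distinct elements of $\pi_M^{-1}(y)$. Theorem \ref{thm:Bowen factor map}(4) then yields $k \leq \#\pi_M^{-1}(y) \leq (\#\mathcal{R}_1)^2$, as required.

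The only step that demands any care is the injectivity of $R \mapsto C_R$, but this follows immediately from the disjoint-interior property of Markov partitions; the rest is a clean counting argument, and no induction on $n$ or delicate combinatorics is needed. The main conceptual input is Theorem \ref{thm:Bowen factor map}(4), which is doing all the heavy lifting by bounding fibres uniformly in $n$.
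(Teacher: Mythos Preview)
Your proof is correct and follows essentially the same approach as the paper's own proof, which merely cites the argument of Lemma~\ref{lem:upper bound for number of rectangles} (identifying each $R\in\mathcal{R}_n$ with a rank-$(2n-1)$ cylinder mapped onto it by $\pi_M$) together with part~(4) of Theorem~\ref{thm:Bowen factor map}. You have simply unpacked these references into an explicit counting argument; the injectivity of $R\mapsto C_R$ and the disjointness of distinct symmetric cylinders of the same rank are exactly the ingredients the paper is tacitly invoking.
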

\begin{proof}
It follows from the same argument used in the proof of Lemma \ref{lem:upper bound for number of rectangles} and part (4) of Theorem \ref{thm:Bowen factor map}.
\end{proof}

For proving condition (3) of Proposition \ref{prop:mainresultMarkovpartitions} we will use the \textit{diamond trick} found in \cite{Adler}. More precisely, the map $\pi_M$ would have a \textit{diamond} if there were two sequences $x=(x_i)_{i\in \mathbb Z}$ and $y=(y_i)_{i\in \mathbb Z}$ with $\pi_M(x)=\pi_M(y)$ for which there exist indices $k<l<m$ such that $x_k=y_k,x_l\neq y_l$ and $x_m=y_m$. However, $\pi_M$ does not have a diamond since $\overline{\text{diam}}(\mathcal{R}_1)<\varepsilon_X/2$, see \cite[Lemma 6.9]{Adler}. So our strategy for proving condition (3) is by contradiction. We will assume that it does not hold and then we will be able to create a diamond for $\pi_M$.

We say that $R,S\in \mathcal{R}_n$ have a \textit{common neighbour} if they intersect some $T\in \mathcal{R}_n$. Also, recall that for any $m\geq n$ and $R\in \mathcal{R}_m$ there is a unique $R'\in \mathcal{R}_n$  such that $R\subset R'$. We call $R'$ an \textit{ancestor} of $R$. The next lemma will follow from the Markov property. 
\begin{lemma}\label{lem:normalityconstant}
There is some $N\in \mathbb N$ such that for every $R,S\in \mathcal{R}_{n+N}$ with a common neighbour, their ancestors in $\mathcal{R}_n$ intersect.
\end{lemma}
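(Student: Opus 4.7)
The plan is to proceed by contradiction using the no-diamond property of the factor map $\pi_M\colon (\Sigma_M,\sigma_M)\to(X,\varphi)$. Recall that a diamond is a pair $\bar u,\bar v\in\Sigma_M$ with $\pi_M(\bar u)=\pi_M(\bar v)$ and indices $k<l<m$ satisfying $u_k=v_k$, $u_l\neq v_l$, $u_m=v_m$; Adler's Lemma 6.9 of \cite{Adler} gives that $\pi_M$ has no diamond, since $\overline{\diam}(\mathcal R_1)\leq\varepsilon_X''<\varepsilon_X/2$. A direct consequence is that for any two lifts $\bar u,\bar v\in\pi_M^{-1}(x)$ of a common point $x\in X$, the disagreement set $\{i\in\mathbb Z:u_i\neq v_i\}$ is empty or a union of at most two outer rays $(-\infty,M^-]\cup[M^+,\infty)$; equivalently, the agreement set is a single (possibly empty or infinite) interval in $\mathbb Z$.

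Suppose the lemma fails. Then for each $j\in\mathbb N$ I can choose $n_j$ and rectangles $R_j,S_j,T_j\in\mathcal R_{n_j+j}$ with $R_j\cap T_j,\,S_j\cap T_j\neq\varnothing$ while the ancestors $R_j',S_j'\in\mathcal R_{n_j}$ are disjoint. Pick $x_j\in R_j\cap T_j$ and $y_j\in S_j\cap T_j$. By part (1) of Proposition \ref{prop:mainresultMarkovpartitions}, $d(x_j,y_j)\leq\diam(T_j)\leq\lambda_X^{-n_j-j+1}\theta\to 0$. If $(n_j)$ were bounded, after extracting a subsequence with $n_j$ constant and a further one on which $(R_j',S_j')$ is constant (using the finiteness of $\mathcal R_{n_j}^2$), I would obtain two fixed compact disjoint sets $R_*',S_*'$ containing the $x_j,y_j$, contradicting $d(x_j,y_j)\to 0$. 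Hence I may assume $n_j\to\infty$.

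Using Theorem \ref{thm:Bowen factor map} I lift symbolically: pick $\bar r_j,\bar t_j^{x}\in\pi_M^{-1}(x_j)$ and $\bar s_j,\bar t_j^{y}\in\pi_M^{-1}(y_j)$ whose $i$-th coordinates, for $|i|\leq n_j+j-1$, follow the words of $R_j,T_j,S_j,T_j$ respectively. By compactness of $\Sigma_M$, pass to a subsequence so that all four converge. Since $\bar t_j^{x}$ and $\bar t_j^{y}$ coincide with $T_j$'s word on a window tending to $\mathbb Z$, their limits must agree in a single sequence $\bar t_\infty$; write $\bar r_\infty,\bar s_\infty$ for the other two limits. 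Setting $x_*=\lim x_j=\lim y_j$, all three limits lie in the finite set $\pi_M^{-1}(x_*)$.

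The main obstacle, and the heart of the proof, is to derive a contradiction at a finite stage. Applying no-diamond to the limiting pairs $(\bar r_\infty,\bar t_\infty)$ and $(\bar s_\infty,\bar t_\infty)$ produces fixed outer-ray disagreement sets. Transporting this back to large $j$ via the window-wise uniform convergence in $\Sigma_M$, one should conclude that either the finite-stage agreement intervals of $(\bar r_j,\bar t_j^{x})$ and $(\bar s_j,\bar t_j^{y})$ eventually contain the whole ancestor window $[-(n_j-1),n_j-1]$ — which would force $r_j=t_j=s_j$ on that window, and hence $R_j'=S_j'$, contradicting $R_j'\cap S_j'=\varnothing$ — or else a genuine diamond appears in one of the pre-limit pairs, contradicting Adler's lemma. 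The delicate step is matching the fixed limiting rays with the ancestor window $[-(n_j-1),n_j-1]$ as both $j$ and $n_j$ tend to infinity; here the uniform fibre bound $|\pi_M^{-1}(\cdot)|\leq(\#\mathcal R_1)^2$ from part (4) of Theorem \ref{thm:Bowen factor map} provides the combinatorial rigidity needed to pin down the position of the rays.
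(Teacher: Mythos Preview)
Your argument has a genuine gap in the unbounded case $n_j\to\infty$, which you acknowledge with ``one should conclude'' and ``The delicate step is.'' The core difficulty is a scale mismatch: passing to limits $\bar r_\infty,\bar s_\infty,\bar t_\infty$ in $\Sigma_M$ controls any \emph{fixed} finite window, but the disagreement between $R_j'$ and $S_j'$ sits somewhere in the \emph{growing} window $[-(n_j-1),n_j-1]$ and may drift to infinity, invisible to the limits. Even if, say, $\bar r_\infty=\bar t_\infty$, this only forces $\bar r_j$ and $\bar t_j^x$ to agree on bounded windows, not on $[-(n_j-1),n_j-1]$; applying no-diamond to the pair $(\bar r_j,\bar t_j^x)$ directly tells you the agreement set is an interval, but gives no lower bound on its length and no information on its position relative to the ancestor window. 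The uniform fibre bound $(\#\mathcal R_1)^2$ constrains the number of preimages of a fixed point, not the location of agreement intervals, so it does not supply the ``rigidity'' you invoke. Your bounded-$(n_j)$ case is correct, but it is the easy part.

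The paper's proof is entirely different and more elementary: it never takes a limit and never invokes no-diamond (that tool is reserved for the next lemma). Instead it fixes $N$ once, at level~$1$, by compactness and normality---choose $N$ so that any two disjoint $R,S\in\mathcal R_1$ satisfy $\num_{\mathcal R_{1+N}}(R)\cap\num_{\mathcal R_{1+N}}(S)=\varnothing$---and then propagates this same $N$ to all levels by induction on $n$ using the Markov decomposition. If $R,S\in\mathcal R_{n+1}$ are disjoint, write $R=\varphi(R_i)\cap\varphi^{-1}(R_j)$ and $S=\varphi(S_i)\cap\varphi^{-1}(S_j)$ with $R_i,R_j,S_i,S_j\in\mathcal R_n$; disjointness of the brackets forces $R_i\cap S_i=\varnothing$ or $R_j\cap S_j=\varnothing$, and applying the inductive hypothesis to whichever pair is disjoint separates their $\mathcal R_{n+N}$-neighbourhoods, which in turn separates the $\mathcal R_{n+1+N}$-neighbourhoods of $R$ and $S$.
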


Before proceeding to the proof let us make an observation. The above statement is equivalent to the one saying that there is some $N\in \mathbb N$ such that for every two disjoint $R,S\in \mathcal{R}_n$, all their descendants in $\mathcal{R}_{n+N}$ do not have a common neighbour. Another way to rephrase this is to say that, for every two disjoint $R,S\in \mathcal{R}_n$ we have $\text{N}_{\mathcal{R}_{n+N}}(R)\cap \text{N}_{\mathcal{R}_{n+N}}(S)=\varnothing.$ 

Of course, any two disjoint $R,S\in \mathcal{R}_n$ can be separated by neighbourhoods since they are closed sets and $X$ is normal. What Lemma \ref{lem:normalityconstant} says is that \textit{Smale spaces are normal in a highly controlled way.}
\begin{proof}[Proof of Lemma \ref{lem:normalityconstant}]
We will use induction on $n$. First, let $N\in \mathbb N$ be the smallest number that $$\text{N}_{\mathcal{R}_{1+N}}(R)\cap \text{N}_{\mathcal{R}_{1+N}}(S)=\varnothing,$$ for every disjoint $R,S\in \mathcal{R}_1$. This can be done since $(\mathcal{R}_n)_{n\geq 0}$ is refining and $X$ is normal. Assume now that $$\text{N}_{\mathcal{R}_{n+N}}(R)\cap \text{N}_{\mathcal{R}_{n+N}}(S)=\varnothing,$$ for every disjoint $R,S\in \mathcal{R}_{n}$ and we claim that the same holds for $n+1$. 

Let $R,S\in \mathcal{R}_{n+1}$ be disjoint. From Lemma \ref{lem:inductive step in Markov partitions} one can write $R=\varphi(R_i)\cap \varphi^{-1}(R_j)$ and $S=\varphi(S_i)\cap \varphi^{-1}(S_j)$, for $R_i,R_j,S_i,S_j\in \mathcal{R}_n$. Actually $R=[\varphi^{-1}(R_j),\varphi(R_i)]$ and $S=[\varphi^{-1}(S_j),\varphi(S_i)]$, and since they are disjoint we have either $\varphi^{-1}(R_j)\cap \varphi^{-1}(S_j)=\varnothing$ or $\varphi(R_i)\cap \varphi(S_i)=\varnothing$. Then, from the inductive step,
\begin{align*}
\text{N}_{\varphi(\mathcal{R}_{n+N})}(\varphi(R_i))&\cap \text{N}_{\varphi(\mathcal{R}_{n+N})}(\varphi(S_i))=\varnothing\\
\intertext{or}
\text{N}_{\varphi^{-1}(\mathcal{R}_{n+N})}(\varphi^{-1}(R_j))&\cap \text{N}_{\varphi^{-1}(\mathcal{R}_{n+N})}(\varphi^{-1}(S_j))=\varnothing.
\end{align*}

It follows that $\text{N}_{\mathcal{R}_{n+N+1}}(R)\cap \text{N}_{\mathcal{R}_{n+N+1}}(S)=\varnothing$. Indeed, assume there is some $T=\varphi^{-1}(T_j)\cap \varphi(T_i) \in \text{N}_{\mathcal{R}_{n+N+1}}(R)\cap \text{N}_{\mathcal{R}_{n+N+1}}(S)$ with $T_i,T_j\in \mathcal{R}_{n+N}$ then, 
\begin{align*}
\varphi(T_i) \in \text{N}_{\varphi(\mathcal{R}_{n+N})}(\varphi(R_i)) &\cap \text{N}_{\varphi(\mathcal{R}_{n+N})}(\varphi(S_i))\\
\intertext{and}
\varphi^{-1}(T_j)\in \text{N}_{\varphi^{-1}(\mathcal{R}_{n+N})}(\varphi^{-1}(R_j)) &\cap \text{N}_{\varphi^{-1}(\mathcal{R}_{n+N})}(\varphi^{-1}(S_j)).
\end{align*}
\enlargethispage{\baselineskip}
But at least one of the two intersections is empty leading to a contradiction.
\end{proof}

Let $M_N=\max \{\#\text{N}_{\mathcal{R}_n}(R): 0\leq n\leq N+1, R\in \mathcal{R}_n\}$. 
\begin{lemma}[Neighbouring Rectangles]\label{lem:neighbours}
For $n\geq N+2$ and every $R\in \mathcal{R}_n$ we have $$\#\num_{\mathcal{R}_n}(R)\leq (\#\mathcal{R}_1)^{2(N+1)}.$$ Consequently, $$\#\num_{\mathcal{R}_n}(R)\leq \max\{ (\#\mathcal{R}_1)^{2(N+1)},M_N\}$$ for every $n\in\mathbb N$ and $R\in \mathcal{R}_n$.
\end{lemma}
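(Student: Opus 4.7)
The plan is to establish the first inequality by showing that every neighbour of $R$ in $\mathcal{R}_n$ is a descendant of the same level-$(n-N-1)$ ancestor; granting this containment, the bound follows from an elementary descendant count. Write $R = \bigcap_{i=1-n}^{n-1}\varphi^{-i}(R_i)$ with $R_i \in \mathcal{R}_1$, and let $R^* \in \mathcal{R}_{n-N-1}$ be the level-$(n-N-1)$ ancestor of $R$ obtained by retaining only the central coordinates (possible since $n-N-1 \geq 1$). Passing from $\mathcal{R}_{n-N-1}$ to $\mathcal{R}_n$ introduces $2(N+1)$ new coordinate factors, each drawn from $\mathcal{R}_1$; hence $R^*$ has at most $(\#\mathcal{R}_1)^{2(N+1)}$ descendants in $\mathcal{R}_n$.

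To verify the containment, I would argue by contradiction using the diamond strategy highlighted in the discussion before Lemma \ref{lem:normalityconstant}. Recall that $\pi_M$ admits no diamond, because $\overline{\diam}(\mathcal{R}_1) \leq \varepsilon_X'' < \varepsilon_X/2$ (\cite[Lemma 6.9]{Adler}). Suppose $S \in \num_{\mathcal{R}_n}(R)$ with level-$(n-N-1)$ ancestor $S^* \neq R^*$. Then $R \cap S \neq \varnothing$ gives $R^* \cap S^* \neq \varnothing$, while the inequality $S^* \neq R^*$ furnishes an index $i_0$ with $|i_0| \leq n-N-2$ at which $R_{i_0} \neq S_{i_0}$. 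Picking $x \in R \cap S$ and preimages $y \in C_R$, $z \in C_S$ in $\Sigma_M$ with $\pi_M(y) = \pi_M(z) = x$, I obtain $y_{i_0} \neq z_{i_0}$. Lemma \ref{lem:normalityconstant} (applicable since $n \geq N+2$) then provides $x' \in R^\dagger \cap S^\dagger$ at level $n-N$ with preimages $y' \in C_{R^\dagger}$, $z' \in C_{S^\dagger}$.

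Next, I would exploit that $\pi_M$ commutes with the bracket maps on $\Sigma_M$ and $X$: because $x, x' \in R^* \cap S^*$ satisfy $d(x,x') \leq 2\,\overline{\diam}(\mathcal{R}_{n-N-1}) \leq 2\varepsilon_X'' < \varepsilon_X$, the bracket $[x,x']$ is defined, and $w := [y,y']$ together with $v := [z,z']$ are both preimages of $[x,x']$. Using the formula for the bracket in $\Sigma_M$ from (\ref{eq:SFTbracket}), the agreement set of $(w,v)$ equals
\[
\{i \in \mathbb{Z} : w_i = v_i\} \;=\; \{i \leq 0 : y'_i = z'_i\} \cup \{i \geq 1 : y_i = z_i\},
\]
which omits $i_0$. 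By exploiting the non-emptiness of both $R^\dagger \cap S^\dagger$ and $R \cap S$, together with the flexibility to choose preimages by extending appropriately into the tails of $\Sigma_M$ via its Markov structure, I can arrange for the agreement set to contain points on both sides of $i_0$. This makes the agreement set non-convex in $\mathbb{Z}$, producing a diamond for $\pi_M$ --- the desired contradiction.

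The main obstacle is this final construction: guaranteeing agreement of the bracketed pair on \emph{both} sides of $i_0$. On the ``inner'' side, if $R_0 = S_0$ then $0$ itself witnesses agreement; otherwise, the matching coordinates must be sought further out, where the Markov constraints on $\Sigma_M$ permit enough flexibility to locate them, or a symmetric variant of the argument (swapping the roles of the half-lines) can be invoked. Once the principal claim is secured, the ``Consequently'' clause is immediate: for $n \leq N+1$ the inequality $\#\num_{\mathcal{R}_n}(R) \leq M_N$ is the defining property of $M_N$, while for $n \geq N+2$ one applies the bound just proved; taking the maximum yields the uniform estimate for all $n \in \mathbb{N}$.
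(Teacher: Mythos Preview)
Your central containment claim --- that every $S\in\num_{\mathcal{R}_n}(R)$ descends from the \emph{same} level-$(n-N-1)$ ancestor $R^*$ of $R$ --- is false in general, and this is a genuine gap. If $R^*$ and $S^*$ are two distinct rectangles in $\mathcal{R}_{n-N-1}$ that share a boundary (which happens whenever the Smale space is not totally disconnected), then the $\mathcal{R}_n$-descendants of $R^*$ cover $R^*$ and those of $S^*$ cover $S^*$, so along the shared boundary there must be a descendant of $R^*$ that meets a descendant of $S^*$. Hence neighbours of $R$ can certainly have ancestors different from $R^*$, and your descendant count does not bound $\#\num_{\mathcal{R}_n}(R)$.

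Your attempted contradiction also cannot close this gap: given only that $S^*\neq R^*$ and $S^*\cap R^*\neq\varnothing$, there is no diamond to be found --- two distinct Markov rectangles intersecting on a boundary is completely ordinary, and no amount of bracketing will turn a single point $x\in R\cap S$ into a diamond (your own discussion of ``the main obstacle'' effectively concedes this). The paper's argument is structurally different: it assumes $\#\num_{\mathcal{R}_n}(R)>(\#\mathcal{R}_1)^{2(N+1)}$ and applies pigeonhole to the $2(N+1)$ outermost $\mathcal{R}_1$-coordinates of the neighbours, obtaining two \emph{distinct} neighbours $S,T$ of $R$ that agree on all $2(N+1)$ boundary coordinates. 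Lemma \ref{lem:normalityconstant} then gives a point in the intersection of their $\mathcal{R}_{n-N}$-ancestors $S',T'$; since $S'$ and $T'$ share both endpoints (by the pigeonhole matching) but differ in the middle (since $S\neq T$ and they agree on the outer blocks), any two bi-infinite extensions through that common point yield a diamond for $\pi_M$. The pigeonhole step is what guarantees agreement on both sides of the differing index --- precisely the ingredient you were unable to supply.
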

\begin{proof}
Assume to the contrary that there is some $n\geq N+2$ and $R\in \mathcal{R}_n$ such that $$\#\text{N}_{\mathcal{R}_n}(R)\geq (\#\mathcal{R}_1)^{2(N+1)}+1.$$ Let $$(R_{-n+1}^{(k)},\ldots , R_{0}^{(k)}, \ldots , R_{n-1}^{(k)}),$$ where $1\leq k\leq \#\text{N}_{\mathcal{R}_n}(R)$, be the sequences that denote the different elements of $\text{N}_{\mathcal{R}_n}(R)$, where each term is a rectangle in $\mathcal{R}_1$. Since there can only be up to $(\#\mathcal{R}_1)^2$ different pairs $(R_i^{(k)},R_j^{(k)})$, from the pigeonhole principle there are two sequences $S=(R_{-n+1}^{(k)},\ldots , R_{0}^{(k)}, \ldots , R_{n-1}^{(k)})$ and $T=(R_{-n+1}^{(j)},\ldots , R_{0}^{(j)}, \ldots , R_{n-1}^{(j)})$ for which 
\begin{equation}\label{eq:nrl}
R_{-n+1+i}^{(k)}=R_{-n+1+i}^{(j)}\enspace \text{and}\enspace R_{n-1-i}^{(k)}=R_{n-1-i}^{(j)},
\end{equation}
for every $0\leq i\leq N$. Since $S$ and $T$ have a common neighbour, that is $R$, by Lemma \ref{lem:normalityconstant} their ancestors $$S'=(R_{-n+1+N}^{(k)},\ldots , R_{0}^{(k)}, \ldots , R_{n-1-N}^{(k)})$$ and $$T'=(R_{-n+1+N}^{(k)},\ldots , R_{0}^{(j)}, \ldots , R_{n-1-N}^{(k)})$$ intersect. Observe that due to (\ref{eq:nrl}) both end-terms of $S'$ and $T'$ agree. At this point we should note that since $n\geq N+2$ the sequences $S'$ and $T'$ have at least three terms. Therefore, since $S\neq T$ again by (\ref{eq:nrl}) we conclude $S'\neq T'$ and hence there is some index $i$ in between such that $R_i^{(k)}\neq R_i^{(j)}$.

Choose a point $x\in S'\cap T'$ and we can find bi-infinite sequences $$(\ldots, R_{-n+1+N}^{(k)},\ldots , R_i^{(k)},\ldots, R_{n-1-N}^{(k)},\ldots )$$ and $$(\ldots, R_{-n+1+N}^{(k)},\ldots , R_i^{(j)},\ldots, R_{n-1-N}^{(k)},\ldots )$$ which both map to $x$ under Bowen's factor map, see Theorem \ref{thm:Bowen factor map}. This means the factor map has a diamond, which is a contradiction. 
\end{proof}

\section{Geometric approximations of Smale spaces}\label{sec:Geometric approximations of Smale spaces}

Let $(X,\varphi)$ be an irreducible Smale space and $\mathcal{R}_1$ be a Markov partition with $\overline{\diam}(\mathcal{R}_1)\leq \varepsilon_X''/2$ (see (\ref{eq:double_epsilon}) for the definition of $\varepsilon_X''$). Proposition \ref{prop:mainresultMarkovpartitions} yields the refining sequence of Markov partitions $(\mathcal{R}_n)_{n\geq 0}$ and the induced approximation graph $\Pi=(\mathcal{R},\mathcal{A})$. Our goal is to modify $\Pi$ so that we obtain a geometrically-essential approximation graph for $(X,\varphi)$.

For now, consider an arbitrary
\begin{equation}\label{eq:delta assumption 1}
0< \delta \leq \varepsilon_X''/4,
\end{equation}
and for every $R\in \mathcal{R}_1$ define its $\delta$-\textit{fattening} to be 
\begin{equation}\label{eq:deltafattening}
R^{\delta}=[R\cup (\partial^sR)^{\delta},R\cup (\partial^uR)^{\delta}],
\end{equation}
where
\begin{equation}
(\partial^sR)^{\delta}=\bigcup \{X^u(z,\delta):z\in \partial^sR\} \enspace \text{and}\enspace (\partial^uR)^{\delta}=\bigcup \{X^s(w,\delta):w\in \partial^uR\}.
\end{equation}
This is a well-defined rectangle with $\diam(R^{\delta})\leq \varepsilon_X',$ since 
\begin{equation}
\diam (R\cup (\partial^sR)^{\delta}\cup (\partial^uR)^{\delta})\leq  \diam(R)+2\delta\leq \varepsilon_X'',
\end{equation}
and a triangle inequality yields $\diam(R^{\delta})\leq \varepsilon_X'/2 +2\delta+\diam(R)< 7\varepsilon_X'/12$.

For $n\geq 2$ and $R\in \mathcal{R}_n$, written uniquely as $\bigcap_{i=1-n}^{n-1}\varphi^{-i}(R_{x_i})$ with $R_{x_i}\in \mathcal{R}_1$, define its $\delta$-fattening by 
\begin{equation}\label{eq:deltafattening2}
R^{\delta}=\bigcap_{i=1-n}^{n-1}\varphi^{-i}(R^{\delta}_{x_i}).
\end{equation}
As a result, for every $n\geq 0$ we obtain the covers
\begin{equation}\label{eq: fattened covers}
\mathcal{R}_n^{\delta}=\{R^{\delta}:R\in \mathcal{R}_n\}.
\end{equation}

For $n\in \mathbb N$, each cover $\mathcal{R}_{n+1}^{\delta}$ refines $\mathcal{R}_n^{\delta}$ since for $R\in \mathcal{R}_{n+1}$ and $S\in \mathcal{R}_n$ such that $R\subset S$ it holds $R^{\delta}\subset S^{\delta}$. Indeed, writing $R=\bigcap_{i=-n}^{n}\varphi^{-i}(R_{x_i})$ and $S=\bigcap_{i=1-n}^{n-1}\varphi^{-i}(S_{y_i})$ for unique $R_{x_i},S_{y_i}\in \mathcal{R}_1$, since $R\subset S$, we have $R_{x_i}=S_{y_i}$ for $|i|\leq n-1$ and hence $R^{\delta}\subset S^{\delta}$. Finally, with the same arguments that we used for $(\mathcal{R}_n)_{n\geq 0}$, we can show that the sequence $(\mathcal{R}_n^{\delta})_{n\geq 0}$ is refining and induces the approximation graph $\Pi^{\delta}$.
\begin{remark}
By choosing $\delta>0$ small enough we can make $\mathcal{R}_1^{\delta}$ to behave like a Markov partition. Using the Markov property we will prove that this behaviour passes on each $\mathcal{R}_n^{\delta}$, since the latter are inductively defined by $\mathcal{R}_1^{\delta}$. Moreover, the inductive definition of the refining sequence $(\mathcal{R}_n^{\delta})_{n\geq 0}$ will allow us to estimate the rate of decay of the Lebesgue covering numbers and the diameters of the covers $\mathcal{R}_n^{\delta}$.
\end{remark}

Recall that $\ell_X=\min \{\Lip (\varphi),\Lip(\varphi^{-1})\}$ and $\Lambda_X=\max \{\Lip (\varphi),\Lip (\varphi^{-1})\}$.  The key tool in this paper is the following.

\begin{thm}\label{thm:theoremgraphSmalespaces}
For every $n\in \mathbb N$, the open cover $\mathcal{R}_n^{\delta}$ in (\ref{eq: fattened covers}) behaves like the Markov partition $\mathcal{R}_n$, given that $\delta$ is sufficiently small. More precisely, there exist constants $\theta \in(0, \varepsilon_X]$ and $C,c>0$ and $\delta_1\in(0, \varepsilon_X''/4]$ that depend on $\mathcal{R}_1$ so that, for every $\delta \in (0,\delta_1]$, we have that
\begin{enumerate}[(1)]
\item $\overline{\diam}(\mathcal{R}_n^{\delta})\leq \lambda_X^{-n+1}\theta;$
\item $\mul(\mathcal{R}_n^{\delta})\leq (\#\mathcal{R}_1)^2;$
\item the number of neighbouring rectangles is uniformly bounded, meaning $$\sup\limits_n \max\limits_{R^{\delta}\in \mathcal{R}_n^{\delta}}\#\num_{\mathcal{R}_n^{\delta}}(R^{\delta})<\infty;$$
\item for every $\varepsilon \in (0,1)$, there is some $n_0\in \mathbb N$ such that, for $n\geq n_0$, we have $$ce^{2(\ent(\varphi)-\varepsilon) n}< \#\mathcal{R}_n^{\delta} <Ce^{2(\ent(\varphi)+\varepsilon) n};$$
\item the approximation graph $\Pi^{\delta}$, associated to the refining sequence of $\delta$-fat Markov partitions $(\mathcal{R}^{\delta}_n)_{n\geq 0}$, is metrically-essential.
\end{enumerate} 
If either $\varphi$ or $\varphi^{-1}$ are Lipschitz, there is some $\zeta\in (0,\theta]$ (independent of $\delta$) so that
\begin{enumerate}[resume]
\item $\underline{\text{diam}}(\mathcal{R}_n^{\delta})\geq \underline{\text{diam}}(\mathcal{R}_n)\geq \ell_X^{-n+1} \zeta.$
\end{enumerate}
If in addition $\varphi$ is bi-Lipschitz, for every $\delta \in(0, \delta_1]$, there is some $\eta\in (0,\theta]$ so that
\begin{enumerate}[resume]
\item $\Leb(\mathcal{R}_n^{\delta})\geq \Lambda_X^{-n+1}\eta$.
\end{enumerate}
\end{thm}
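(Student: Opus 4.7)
The plan is to transfer essentially all of Proposition~\ref{prop:mainresultMarkovpartitions} directly from $\Pi$ to $\Pi^\delta$ by choosing $\delta_1$ small enough that the combinatorial skeleton of the Markov partition is preserved, reserving genuinely new work for parts (6) and (7). Concretely, I would first fix $\delta_1\in(0,\varepsilon_X''/4]$ small enough that for $R,S\in\mathcal{R}_1$ the equivalence $R^\delta\cap S^\delta\ne\varnothing \Leftrightarrow R\cap S\ne\varnothing$ holds; this is possible because disjoint rectangles of the finite partition $\mathcal{R}_1$ are at strictly positive distance. Via the defining formula $R^\delta=\bigcap_i\varphi^{-i}(R_{x_i}^\delta)$, the same equivalence propagates coordinate-wise at every level, so parts (2), (3), and (4) follow immediately from Lemma~\ref{lem:bounded multiplicity}, Lemma~\ref{lem:neighbours}, and Lemma~\ref{lem:upper bound for number of rectangles} (the cardinality is automatic since $R\mapsto R^\delta$ is a bijection on vertices).

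For part (1), I would exploit $\varphi^{n-1}(R)\subset R_{x_{n-1}}$ and $\varphi^{-(n-1)}(R)\subset R_{x_{-(n-1)}}$, so that $R^\delta$ sits in both $\varphi^{-(n-1)}(R_{x_{n-1}}^\delta)$ and $\varphi^{n-1}(R_{x_{-(n-1)}}^\delta)$. Writing $d(y_1,y_2)\le d(y_1,[y_1,y_2])+d([y_1,y_2],y_2)$ as a stable plus an unstable piece and using that $\varphi^{-1}$ expands stable distances by $\lambda_X$ (resp.\ $\varphi$ expands unstable) via axioms (C1), (C2), each piece is bounded by $\lambda_X^{-(n-1)}\diam(R_{x_{\pm(n-1)}}^\delta)$, giving $\overline{\diam}(\mathcal{R}_n^\delta)\le\lambda_X^{-n+1}\theta$ for $\theta$ depending only on $\varepsilon_X$ and $\delta_1$. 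For (5), the bijection $F_n:R\mapsto R^\delta$ with $R\subset R^\delta$ is a graph homomorphism $\Pi\to\Pi^\delta$ whose induced map on infinite paths is compatible with the projections (since $\bigcap_n\cl(R_n^\delta)=\bigcap_n\cl(R_n)$ once diameters vanish), so the essentiality of $\Pi$ (Lemma~\ref{lem:Markovapproximationgraph1}) verifies Definition~\ref{def:quasiessentialgraph} for $\Pi^\delta$.

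Part (6) is a dynamical blow-up. Iterating the Markov property of Definition~\ref{def:Definition of Markov partition} yields $\varphi^{n-1}(X^u(x,R))=X^u(\varphi^{n-1}(x),R_{x_{n-1}})$, i.e.\ the image is the \emph{full} unstable slice of a rectangle of $\mathcal{R}_1$, whose diameter is bounded below by the uniform positive constant $\zeta_1:=\min_{R_0\in\mathcal{R}_1,\,y\in R_0}\diam X^u(y,R_0)$. Symmetrically one obtains $\zeta_2>0$ for stable slices. If $\Lip(\varphi)=\ell_X$, then $\diam(R)\ge\diam(X^u(x,R))\ge\ell_X^{-(n-1)}\zeta_1$, and the case $\Lip(\varphi^{-1})=\ell_X$ is handled symmetrically via $\varphi^{-(n-1)}$ and the stable slice; taking $\zeta=\min(\zeta_1,\zeta_2)$ concludes.

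The hardest part will be (7). I would reduce it to a single local engulfing lemma at level one: there exists $\eta\in(0,\theta]$ (depending on $\delta$ and $\mathcal{R}_1$) such that $B(x,\eta)\subset R^\delta$ for all $R\in\mathcal{R}_1$ and $x\in R$. Granting this, for $R\in\mathcal{R}_n$ containing $x$ and any $y\in B(x,\Lambda_X^{-n+1}\eta)$, bi-Lipschitzness forces $d(\varphi^i(x),\varphi^i(y))<\eta$ for $|i|\le n-1$, so $\varphi^i(y)\in R_{x_i}^\delta$ and hence $y\in R^\delta$. The engulfing lemma itself I would prove via the bracket decomposition $y=[[y,x],[x,y]]$: Lemma~\ref{lem:Lipschitzbracket} (applicable because $\Lambda_X<\infty$) places $[y,x]\in X^u(x,(1+A_X)\eta)$ and $[x,y]\in X^s(x,A_X\eta)$, and a case analysis on whether $x$ lies in $\Int(R)$, $\partial^sR$, or $\partial^uR$ shows these two components land in $R\cup(\partial^sR)^\delta$ and $R\cup(\partial^uR)^\delta$ respectively, provided $(1+A_X)\eta\le\delta$. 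Uniformity over $x\in R\in\mathcal{R}_1$ then follows from compactness of $X$ and finiteness of $\mathcal{R}_1$.
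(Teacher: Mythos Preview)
Your overall architecture matches the paper's, and parts (1), (6), (7) are handled correctly; in fact your direct argument for (7) (push $y$ through $\varphi^i$ for $|i|\le n-1$ and use a level-one engulfing) is cleaner than the paper's inductive version, and the engulfing $B(x,\eta)\subset R^\delta$ for $x\in R\in\mathcal{R}_1$ follows immediately from compactness of $R$ inside the open set $R^\delta$ --- the bracket case analysis you propose is unnecessary.

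There is, however, a genuine gap in how you pass combinatorics from $\Pi$ to $\Pi^\delta$. Your choice of $\delta_1$ enforces only the pairwise equivalence $R^\delta\cap S^\delta\neq\varnothing\Leftrightarrow R\cap S\neq\varnothing$ at level~$1$; this is the paper's $\delta_0$. First, the claim that this ``propagates coordinate-wise'' to level $n$ is not automatic: knowing $R_{x_i}\cap S_{y_i}\neq\varnothing$ for each $i$ does \emph{not} formally give $\bigcap_i\varphi^{-i}(R_{x_i})\cap\bigcap_i\varphi^{-i}(S_{y_i})\neq\varnothing$; one must invoke the Markov bracket identity $R=[\varphi^{-1}(R_j),\varphi(R_i)]$ and induct, as the paper does in Lemma~\ref{thm: fattening map multiplicity}. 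Second, and more seriously, the intersection equivalence does \emph{not} yield injectivity of $F_n^\delta:R\mapsto R^\delta$. If $R\neq S$ in $\mathcal{R}_n$ share boundary, nothing in your hypothesis prevents $R\subset S^\delta$ or even $R^\delta=S^\delta$ at deep levels. The paper handles this by a second, finer shrinking of $\delta$ (to $\delta_1$, equation \eqref{eq: delta assumption 3}) based on a decomposition of $\num_{\mathcal{R}_n}(R)\setminus\{R\}$ into \emph{stable} and \emph{unstable neighbours}, and an inductive Lemma~\ref{lem: inductive lemma for injectivity of fat map} showing that the stable (resp.\ unstable) slice of $R$ never fits inside that of $S^\delta$. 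Without this, you cannot conclude the lower bound in (4), nor that the $F_n^\delta$ are bijections as required for (5).
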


From Theorem \ref{thm:theoremgraphSmalespaces} and Lemma \ref{lem:Artiguelemma} we obtain the following.

\begin{cor}\label{cor:topconjgraph}
The irreducible Smale space $(X,\varphi)$ is topologically conjugate to a Smale space $(Y,\psi)$ which admits a refining sequence that satisfies all conditions of Theorem \ref{thm:theoremgraphSmalespaces} with $\lambda_{Y}=\ell_{Y}=\Lambda_{Y}.$
\end{cor}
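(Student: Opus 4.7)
The plan is a direct combination of Lemma \ref{lem:Artiguelemma} (Artigue's self-similar metric construction) with Theorem \ref{thm:theoremgraphSmalespaces}. First I would apply Lemma \ref{lem:Artiguelemma} to the given irreducible Smale space $(X,\varphi)$ to obtain a topologically conjugate Smale space $(Y,\psi)$ which is self-similar, i.e.\ $\lambda_Y=\Lambda_Y$. Since topological conjugacy preserves the recurrence properties from Definition \ref{def:Recurrence} (irreducibility being a purely topological dynamical notion), the space $(Y,\psi)$ is still irreducible, so Theorem \ref{thm:theoremgraphSmalespaces} is applicable to it once we verify the Lipschitz hypothesis.

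Next, I would observe that self-similarity of $(Y,\psi)$ forces bi-Lipschitz dynamics. Indeed, by definition $\lambda_Y=\Lambda_Y<\infty$, so both $\psi$ and $\psi^{-1}$ are Lipschitz with Lipschitz constant equal to $\lambda_Y$. Combining this with the always-valid chain $\lambda_Y\le \ell_Y\le \Lambda_Y$ (the lower bound coming from the contraction axioms (C1), (C2) and the upper bound coming from the definitions of $\ell_Y$ and $\Lambda_Y$) yields the string of equalities $\lambda_Y=\ell_Y=\Lambda_Y$. This is precisely the numerical identification appearing in the statement of the corollary.

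Finally, I would choose any Markov partition $\mathcal{R}_1$ of $(Y,\psi)$ with $\overline{\diam}(\mathcal{R}_1)\le \varepsilon_Y''/2$, which exists by Bowen's Theorem since $(Y,\psi)$ is irreducible, and form the associated refining sequence $(\mathcal{R}_n^{\delta})_{n\ge 0}$ of $\delta$-fat covers from equation (\ref{eq: fattened covers}) for any $\delta\in(0,\delta_1]$. Since $\psi$ is bi-Lipschitz, \emph{all} seven conditions of Theorem \ref{thm:theoremgraphSmalespaces} hold, and when their statement is unwound using $\lambda_Y=\ell_Y=\Lambda_Y$ the contraction rate in (1), the lower bound in (6), and the Lebesgue-number bound in (7) are governed by a single common parameter. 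There is no real obstacle to overcome here; the content of the corollary is that Artigue's metric and our geometric construction are mutually compatible, so the only care needed is to check that the hypothesis $\Lambda_Y<\infty$ (required to invoke conditions (6) and (7) of Theorem \ref{thm:theoremgraphSmalespaces}) is automatic from self-similarity.
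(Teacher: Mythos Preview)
Your proposal is correct and follows exactly the same approach as the paper, which simply cites Lemma \ref{lem:Artiguelemma} and Theorem \ref{thm:theoremgraphSmalespaces}. You have merely made explicit the routine verifications (preservation of irreducibility under conjugacy, and the chain $\lambda_Y\le\ell_Y\le\Lambda_Y$ collapsing to equalities) that the paper leaves implicit.
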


\begin{example}
Suppose that $(X,\varphi)$ is an irreducible topological Markov chain equipped with the ultrametric (\ref{eq:SFTmetric}). Also, assume that the Markov partition $\mathcal{R}_1$ consists of sufficiently small symmetric cylinder sets, and hence each $\mathcal{R}_n$ consists of smaller symmetric cylinder sets. Then, for every $0<\delta \leq \varepsilon_X''/4$, the $\delta$-fattening process of $(\mathcal{R}_n)_{n\geq 0}$ is trivial since $\mathcal{R}_n^{\delta}=\mathcal{R}_n$, for every $n\in \mathbb N$. More precisely, for every $R\in \mathcal{R}_1$ we have that $\partial^sR=\partial^uR=\varnothing$, and hence $$(\partial^sR)^{\delta}=(\partial^uR)^{\delta}=\varnothing.$$ Therefore, from (\ref{eq:deltafattening}) we have $R^{\delta}=[R,R]=R$. In order to see how Theorem \ref{thm:theoremgraphSmalespaces} applies, recall that $\lambda_X=\ell_X=\Lambda_X=2$. Finally, the multiplicities of the covers are clearly equal to one, and condition (4) follows from Lemma \ref{thm: upper bound on cardinality in SFT}.
\end{example}

\begin{example}
Suppose now that $(X,\varphi)$ is the dyadic solenoid of Subsection \ref{sec:2solenoid}. From \cite[Prop. 2.3.4]{BS} we see that it is mixing. In order to build a Markov partition $\mathcal{R}_1$, consider the decomposition of the unit circle $\mathbb T$ into the closed upper half $e_0$ and the closed lower half $e_1$. Then, let 
\begin{align*}
E_0&=\{x\in X:x_1\in e_0\}\\
E_1&=\{x\in X:x_1\in e_1\},
\end{align*}
and one can observe that $\Int (\varphi^{-1}(E_i)\cap E_j)\neq \varnothing$, for all $0\leq i,j\leq 1$. Each of the four sets corresponds to one of the quadrants of the circle in a clear way. With a bit more effort one can show that the cover $E=\{E_0,E_1\}$ yields a $2\texttt{-}\text{to}\texttt{-}1$ factor map from the full-two shift to $(X,\varphi)$. Roughly speaking, this map encodes $x\in X$ by a bi-infinite sequence containing all encodings for the dyadic expansions of the coordinates of $x$. However, the sets $E_0,E_1$ are not quite rectangles since their diameter is large. But for sufficiently large $m\in \mathbb N$ we get the Markov partition $$\mathcal{R}_1=\bigvee_{j=1-m}^{m-1}\varphi^{-j}(E),$$ on which we can apply the $\delta$-fattening process. Since the local stable sets are Cantor sets, for every $R\in \mathcal{R}_1$ it holds that $\partial^uR=\varnothing$. As a result, the $\delta$-fattening happens only on the local unstable sets and $R^{\delta}=[R\cup (\partial^sR)^{\delta},R]$. The proof of Lemma \ref{thm: fat rectangle} helps to visualise the rectangles.

Finally, let us understand some important parts of Theorem \ref{thm:theoremgraphSmalespaces} in this case. From Subsection \ref{sec:2solenoid} we have that $\lambda_X=2, \ell_X=2$ and $\Lambda_X\leq 5/2$. Therefore, the diameters $\overline{\diam}(\mathcal{R}_n^{\delta})\sim 2^{-n}$, the Lebesgue numbers $\Leb(\mathcal{R}_n^{\delta})\gtrsim (5/2)^{-n}$ and the cardinalities $\#\mathcal{R}_n^{\delta} \sim 2^{2n}$. Moreover, from the construction of the factor map we have $\mul(\mathcal{R}_n^{\delta})\leq 2$. In fact, $\mul(\mathcal{R}_n^{\delta})= 2$ for all $n\in \mathbb N$, since the two fixed points of the shift space are mapped down to the fixed point $(1,1,1,\ldots)\in E_0\cap E_1$. Finally, since $(\mathcal{R}_n^{\delta})_{n\geq 0}$ is a refining sequence of open covers, it holds that $\dim X \leq 1$ and because $X$ is connected we get the already known fact that $\dim X=1$.  
\end{example}

\subsection{Proof of conditions (1)\texttt{-}(5) of Theorem \ref{thm:theoremgraphSmalespaces}} Their proof consists of the following lemmas and corollaries. For the rest of this subsection we consider the refining sequences $(\mathcal{R}_n^{\delta})_{n\geq 0}$ constructed in (\ref{eq: fattened covers}), for $\delta \in (0,\varepsilon_X''/4].$ But first, recall that if $S$ is a rectangle in $X$ and $x\in S$, the set $X^s(x,2\varepsilon_X')\cap S$ is denoted by $X^s(x,S)$ and similarly $X^u(x,2\varepsilon_X')\cap S$ is denoted by $X^u(x,S)$.
\begin{lemma}\label{thm: fat rectangle}
For $\delta \in (0,\varepsilon_X''/4]$ and $n\in \mathbb N$, the cover $\mathcal{R}_n^{\delta}$ consists of open rectangles.
\end{lemma}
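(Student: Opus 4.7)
The plan is to proceed by induction on $n$, with the substantive content in the base case $n=1$, and the inductive step following routinely by taking intersections.

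For the base case, I would fix $R \in \mathcal{R}_1$ and a basepoint $x_0 \in R$, and use the local product structure (\ref{eq:bracketmap}) to set up coordinates via the homeomorphism $[\cdot,\cdot] : X^u(x_0, \varepsilon) \times X^s(x_0, \varepsilon) \to W_0$ onto an open neighborhood $W_0$ of $x_0$, with $\varepsilon$ chosen large enough that $R^\delta \subset W_0$ (possible since $\diam(R^\delta) < 7\varepsilon_X'/12$ is already established in the discussion preceding the lemma and $x_0 \in R^\delta$). A direct computation using axioms (B3) then (B2) yields that in these coordinates the bracket acts as $[(a,b),(a',b')] = (a, b')$. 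Writing $A = X^u(x_0, R)$ and $B = X^s(x_0, R)$, so that $R$ corresponds to $A \times B$, one then obtains the product form
\[
R^\delta \;\longleftrightarrow\; A^\delta \times B^\delta, \qquad A^\delta = \pi_u\bigl(R \cup (\partial^s R)^\delta\bigr), \quad B^\delta = \pi_s\bigl(R \cup (\partial^u R)^\delta\bigr),
\]
where $\pi_u, \pi_s$ denote the coordinate projections. The rectangle property of $R^\delta$ is now transparent from this product form together with the diameter bound: closure under the bracket reduces, via $[(a,b),(a',b')] = (a,b')$, to the fact that one can mix the unstable coordinate from $A^\delta$ with the stable coordinate from $B^\delta$. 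For openness, the crucial observation is that for each $z \in \partial^s R$ the local unstable set $X^u(z, \delta)$ lies in a single coordinate slice $\{(\cdot,\pi_s(z))\}$ and projects under $\pi_u$ to an open neighborhood of $\pi_u(z) \in \partial A$ in $X^u(x_0, \varepsilon)$. It follows that $A^\delta$ equals $\Int(A)$ together with an open neighborhood of $\partial A$, hence is open; symmetrically for $B^\delta$. Therefore $R^\delta$ corresponds to a product of open sets and is itself open.

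For the inductive step with $n \geq 2$, the definition (\ref{eq:deltafattening2}) gives $R^\delta = \bigcap_{i=1-n}^{n-1}\varphi^{-i}(R_{x_i}^\delta)$, a finite intersection of open sets by the base case and the fact that $\varphi$ is a homeomorphism. The inclusion $R^\delta \subset R_{x_0}^\delta$ gives the diameter bound $\diam(R^\delta) < \varepsilon_X'$, and for $x, y \in R^\delta$ the $\varphi$-equivariance of the bracket (B4) gives $\varphi^i([x,y]) = [\varphi^i(x), \varphi^i(y)] \in R_{x_i}^\delta$ for all $|i| \leq n-1$ by the base-case rectangle property; hence $[x,y] \in R^\delta$.

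The main obstacle I anticipate is the base-case openness argument: specifically, identifying $X^u(z,\delta)$ as a single horizontal slice in the local coordinates at $x_0$, and verifying that its projection onto the unstable factor is an open neighborhood of $\pi_u(z)$. This rests on the bi-continuity of the bracket together with the decoupling of unstable and stable directions encoded in axioms (B2) and (B3). The geometric picture is that $\delta$-fattening along $\partial^s R$ extends $R$ past its unstable boundary, turning the closed $A$ into an open $A^\delta$; symmetric fattening along $\partial^u R$ does the same in the stable direction.
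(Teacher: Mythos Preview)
Your proposal is correct and close in spirit to the paper's proof, but the organization differs in a way worth noting. The paper fixes $x\in R$ and defines a \emph{local} fattened product $R_x^{\delta}=[X^u(x,R)\cup(\partial X^u(x,R))^{\delta},\,X^s(x,R)\cup(\partial X^s(x,R))^{\delta}]$, observes that this is open (for the same reason you identify: each factor is $\Int$ together with open $\delta$-thickenings of boundary points in the appropriate leaf), and then proves $R^{\delta}=\bigcup_{x\in R}R_x^{\delta}$. The point the paper stresses is that, unlike the un-fattened case where $R_x=R$ for all $x$, the fattened $R_x^{\delta}$ genuinely depends on $x$: the size of $X^u(z,\delta)$ in a given unstable slice varies with the slice, so no single $R_x^{\delta}$ recovers all of $R^{\delta}$.

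Your approach absorbs this basepoint-dependence by working in one fixed chart at $x_0$ and taking the global projections $A^{\delta}=\pi_u(R\cup(\partial^s R)^{\delta})$, $B^{\delta}=\pi_s(R\cup(\partial^u R)^{\delta})$; in effect your $A^{\delta}$ is the union over all stable slices of the paper's unstable factors, so your single product $A^{\delta}\times B^{\delta}$ equals the paper's $\bigcup_x R_x^{\delta}$. Your argument is a bit more streamlined; the paper's makes explicit the geometric reason why a naive one-basepoint fattening (using only the slice through $x_0$) would be too small. Both rest on exactly the observation you flag as the main obstacle: that $X^u(z,\delta)$ sits in a single horizontal slice and, because the slice-to-slice holonomy $\pi_u$ is a homeomorphism, projects to an open neighbourhood of $\pi_u(z)$ in the unstable factor. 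Your inductive step for $n\geq 2$ is fine and is left implicit in the paper.
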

\begin{proof}
We just need to prove that every $R^{\delta}\in \mathcal{R}_1^{\delta}$ is open in $X$, and in the process we will demonstrate how it differs from $R\in \mathcal{R}_1$. For any $x\in R$ define 
\begin{equation}\label{eq:stableunstablefattening}
(\partial X^u(x,R))^{\delta}= \bigcup_{z\in \partial X^u(x,R)}X^u(z,\delta) \enspace \text{and} \enspace (\partial X^s(x,R))^{\delta}= \bigcup_{w\in \partial X^s(x,R)}X^s(w,\delta),
\end{equation}
along with the open rectangle 
\begin{equation}\label{eq:fatlocalisedrectangle}
R_x^{\delta}:= [X^u(x,R)\cup (\partial X^u(x,R))^{\delta}, X^s(x,R)\cup (\partial X^s(x,R))^{\delta}].
\end{equation}
The claim is that $R^{\delta}=\bigcup_{x\in R} R^{\delta}_x$ and hence $R^{\delta}$ is open. Note here that if we let $R_x=[X^u(x,R),X^s(x,R)]$ for $x\in R$, then $R_x=R=R_y$ for any $x,y\in R$. However, in the $\delta$-fattened version we may get $R_x^{\delta}\neq R_y^{\delta}$, for some $x\neq y\in R$, because there is no more control of the local stable and the unstable sets as soon as they get outside of $R$. 

First we prove that $$R\cup (\partial^sR)^{\delta}= \bigcup_{x\in R} X^u(x,R)\cup (\partial X^u(x,R))^{\delta},$$ where it is straightforward to see that $R=\bigcup_{x\in R} X^u(x,R)$. The claim is that
\begin{equation}
(\partial^sR)^{\delta}= \bigcup_{x\in R} (\partial X^u(x,R))^{\delta}.
\end{equation}
To see this, let $x\in R$ and if $z\in \partial X^u(x,R)$ then $z\in \partial^s R$ since, following the equations (\ref{eq: boundaries of a rectangle}), it holds that $z\in X^s(z,R)=[z,X^s(x,R)]\subset [\partial X^u(x,R), X^s(x,R)]=\partial^s R.$ Therefore, $\bigcup_{x\in R} (\partial X^u(x,R))^{\delta} \subset (\partial^sR)^{\delta}$. For the other inclusion, note that if $z\in \partial^s R$, then $z\in \partial X^u(z,R)$, for if $z\in \text{int}(X^u(z,R))$ then $X^s(x,R)\cap \Int(R)\neq \varnothing.$ Similarly, it holds that $$R\cup (\partial^u R)^{\delta}= \bigcup_{y\in R} X^s(y,R)\cup (\partial X^s(y,R))^{\delta}.$$

As a result, 
\begin{align*}
R^{\delta}&= \bigcup_{x,y \in R} [X^u(x,R)\cup (\partial X^u(x,R))^{\delta}, X^s(y,R)\cup (\partial X^s(y,R))^{\delta}]\\
&= \hspace{-0.7mm} \bigcup_{[y,x] \in R} [X^u([y,x],R)\cup (\partial X^u([y,x],R))^{\delta}, X^s([y,x],R)\cup (\partial X^s([y,x],R))^{\delta}]\\
&= \hspace{-0.7mm} \bigcup_{[y,x] \in R} R_{[y,x]}^{\delta}\\
&= \hspace{1.3mm} \bigcup_{x\in R} R_x^{\delta}. \qedhere
\end{align*}
\end{proof}

\begin{lemma}\label{lem:sequenceofdiameters}
For $\delta \in (0,\varepsilon_X''/4]$ and $n\in \mathbb N$, it holds $\overline{\diam}(\mathcal{R}_n^{\delta})\leq \min\{\lambda_X^{-n+1}\varepsilon_X,\varepsilon_X'\}$.
\end{lemma}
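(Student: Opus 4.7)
The plan is to establish the two bounds in $\min\{\lambda_X^{-n+1}\varepsilon_X,\varepsilon_X'\}$ by independent arguments. The uniform bound $\varepsilon_X'$ comes for free: the sequence $(\mathcal{R}_n^\delta)_{n\geq 0}$ is refining, so every $R^\delta\in\mathcal{R}_n^\delta$ sits inside some $S^\delta\in\mathcal{R}_1^\delta$, and the paragraph immediately following equation (\ref{eq:deltafattening}) already shows $\diam(S^\delta)\leq 7\varepsilon_X'/12<\varepsilon_X'$.

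For the dynamical bound I would fix $R^\delta=\bigcap_{i=1-n}^{n-1}\varphi^{-i}(R_{x_i}^\delta)\in\mathcal{R}_n^\delta$ and two arbitrary points $x,y\in R^\delta$. Since $d(x,y)\leq \diam(R_{x_0}^\delta)<\varepsilon_X$, the bracket $z=[x,y]$ is defined, and axioms (B2), (B3) place it in $X^s(x,\cdot)\cap X^u(y,\cdot)$. The key observation is that the whole orbit segment of $z$ stays inside the defining rectangles of $R^\delta$: by axiom (B4), $\varphi^i(z)=[\varphi^i(x),\varphi^i(y)]$, and by Lemma \ref{thm: fat rectangle} each $R_{x_i}^\delta$ is a rectangle containing the pair $\varphi^i(x),\varphi^i(y)$, so $\varphi^i(z)\in R_{x_i}^\delta$ for every $|i|\leq n-1$.

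Now I would apply the contraction axioms to bound $d(x,z)$ and $d(z,y)$ separately. Since $z\in X^s(x,\cdot)$, iterating (C1) backward $n-1$ times (permissible because all intermediate pairs stay within rectangles of diameter below $\varepsilon_X$) yields $d(x,z)\leq \lambda_X^{-(n-1)}d(\varphi^{-(n-1)}(x),\varphi^{-(n-1)}(z))\leq \lambda_X^{-(n-1)}\varepsilon_X/2$, where the last inequality uses $\diam(R_{x_{-(n-1)}}^\delta)\leq 7\varepsilon_X'/12<\varepsilon_X/2$. A symmetric use of (C2), this time forward along the local unstable set of $y$, gives $d(z,y)\leq \lambda_X^{-(n-1)}\varepsilon_X/2$. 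The triangle inequality then produces $d(x,y)\leq \lambda_X^{-n+1}\varepsilon_X$, as required.

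The only delicate point is the legitimacy of iterating the contraction axioms $n-1$ times without leaving the domain where they apply: at every stage the two points being compared must lie in a common local stable (respectively unstable) set of radius at most $\varepsilon_X$. This is precisely guaranteed by the trapping of the orbit segment $\{\varphi^i(x),\varphi^i(z),\varphi^i(y):|i|\leq n-1\}$ inside the small rectangles $R_{x_i}^\delta$, which in turn is exactly the content of the observation $z\in R^\delta$ above, i.e.\ that the $\delta$-fattening is compatible with the bracket map.
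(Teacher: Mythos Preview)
Your proof is correct and follows essentially the same route as the paper's own argument: both form the bracket $z=[x,y]$, use (B4) to track $\varphi^{i}(z)$ through the rectangles $R_{x_i}^{\delta}$, iterate the contraction axioms (C1) and (C2) along the backward and forward orbits respectively to obtain $d(x,z),d(z,y)\leq \lambda_X^{-n+1}\varepsilon_X/2$, and finish with the triangle inequality. Your treatment is in fact slightly more explicit than the paper's in justifying why the iterated application of (C1), (C2) is legitimate at each step.
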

\begin{proof}
Let $n\in \mathbb N$ and $R^{\delta}\in \mathcal{R}_n^{\delta}$ with some $x,y\in R^{\delta}$. From the definition of $R^{\delta}$ we have $d(\varphi^i(x),\varphi^i(y))\leq \varepsilon_X'$, for all $|i|\leq n-1$. In particular, using the bracket axiom (B4), we have
$$[\varphi^{-n+1}(x),\varphi^{-n+1}(y)]=\varphi^{-n+1}([x,y])$$
and
$$d(\varphi^{-n+1}(x),[\varphi^{-n+1}(x),\varphi^{-n+1}(y)])\leq \varepsilon_X/2.$$
Therefore, $\varphi^{-n+1}([x,y])\in X^s(\varphi^{-n+1}(x),\varepsilon_X/2)$ and hence $[x,y]\in X^s(x,\lambda_X^{-n+1}\varepsilon_X/2)$. Similarly, $[x,y]\in X^u(y,\lambda_X^{-n+1}\varepsilon_X/2)$ and from triangle inequality, $d(x,y)\leq \lambda_X^{-n+1}\varepsilon_X.$
\end{proof}
We want to show that our method produces covers which are closely related to the Markov partitions. First we study all the possible overlaps that can occur between elements of $\mathcal{R}_n^{\delta}$, for a given $\delta \in (0,\varepsilon_X''/4]$. For a finite cover $\mathcal{U}=\{U_1,\ldots , U_{\ell}\}$ of $X$ let 
\begin{equation}\label{eq:overlapsofcover}
M(\mathcal{U})=\{\{U_i\}_{i\in I}: I\subset \{1,\ldots , \ell\} \enspace \text{and} \enspace \bigcap_{i\in I}U_i \neq \varnothing\}
\end{equation}
be the nerve of $\mathcal{U}$ and $M^{\#}(\mathcal{U})=\{\#I:\{U_i\}_{i\in I}\in M(\mathcal{U})\}.$ It should be clear that $\text{max} (M^{\#}(\mathcal{U}))$ is the multiplicity of $\mathcal{U}$. For $\delta \in (0,\varepsilon_X''/4]$ and $n\in \mathbb N$, consider the map
\begin{equation}\label{eq:overlapmap}
M_n^{\delta}: M(\mathcal{R}_n)\to M(\mathcal{R}_n^{\delta}), \enspace \{R_i\}_{i\in I}\mapsto \{R_i^{\delta}\}_{i\in I} 
\end{equation}
and we aim to show that for small enough $\delta$, if $\bigcap_{i \in I}R_i^{\delta}\neq \varnothing$ then $\bigcap_{i \in I}R_i\neq \varnothing$, meaning that the map $M_n^{\delta}$ is surjective.

Let $\mathcal{R}_1 = \{R_1,\ldots , R_{\ell}\}$ and $E(\mathcal{R}_1)=\{I \subset \{1,\ldots , \ell\}: \bigcap_{i\in I}R_i=\varnothing\}.$ Then, there is $\delta_0\in (0,\varepsilon_X''/4]$ such that for every $I\in E(\mathcal{R}_1)$ it holds $\bigcap_{i\in I}R_i^{\delta_0}=\varnothing$. Indeed, let $I\in E(\mathcal{R}_1)$ and then for every $i\in I$ there is some $\delta'_{I,i}>0$ such that the $\delta'_{I,i}$-neighbourhood of $R_i$, denoted by $B(R_i, \delta'_{I,i})$, satisfies $B(R_i, \delta'_{I,i})\cap B(\bigcap_{j\in I, j\neq i} R_j, \delta'_{I,i})=\varnothing.$ Also, there is $0< \delta_{I,i}< \delta'_{I,i}$ such that $R^{\delta_{I,i}}_i \subset B(R_i, \delta'_{I,i})$ and $\bigcap_{j\in I, j\neq i} R_j^{\delta_{I,i}}\subset B(\bigcap_{j\in I, j\neq i} R_j, \delta'_{I,i}).$ So choosing
\begin{equation}\label{eq: delta assumption 2}
\delta_0= \text{min}\{\varepsilon_X''/4, \text{min} \{\delta_{I,i}: I\in E(\mathcal{R}_1), i\in I\}\}
\end{equation}
has the required property.
\begin{lemma}\label{thm: fattening map multiplicity}
For every $\delta\in (0,\delta_0]$ and $n\in \mathbb N$, the map $M_n^{\delta}$ defined in (\ref{eq:overlapmap}) is surjective.
\end{lemma}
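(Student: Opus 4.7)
The plan is to reduce the multi-intersection question at level $n$ to the solved base case $n=1$ via the $\varphi$-equivariant decomposition of the $\delta$-fattening, and then assemble a point in the global intersection from the coordinate-wise intersections using the Markov property. Suppose $\{R^{(j)}\}_{j\in J}\subset\mathcal{R}_n$ satisfies $\bigcap_{j\in J}(R^{(j)})^{\delta}\neq\varnothing$, and pick $y$ in this intersection. Writing each $R^{(j)}=\bigcap_{i=1-n}^{n-1}\varphi^{-i}(R_i^{(j)})$ with $R_i^{(j)}\in\mathcal{R}_1$, the defining formula for the fattening gives $(R^{(j)})^{\delta}=\bigcap_i\varphi^{-i}((R_i^{(j)})^{\delta})$. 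Hence for every time index $i$, the point $\varphi^i(y)$ lies in $\bigcap_{j\in J}(R_i^{(j)})^{\delta}$. Since $\delta\leq\delta_0$ and $\delta_0$ was chosen exactly so that $\bigcap_{k\in I}R_k^{\delta_0}=\varnothing$ whenever $I\in E(\mathcal{R}_1)$, the collection $\{R_i^{(j)}\}_{j\in J}$ is not in $E(\mathcal{R}_1)$; equivalently $S_i:=\bigcap_{j\in J}R_i^{(j)}\neq\varnothing$ for every $i$, and each $S_i$ is a closed rectangle.

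It remains to produce $z\in\bigcap_{j\in J}R^{(j)}=\bigcap_i\varphi^{-i}(S_i)$, and this is where I expect the main obstacle. My approach is to fix any $j_0\in J$, use that $R^{(j_0)}\neq\varnothing$ encodes the allowability of the sequence $(R_i^{(j_0)})_i$ via the Markov property (Lemma \ref{thm:Markov property rectangles}), and then iteratively refine $R^{(j_0)}$ by cutting each time slice down to the sub-rectangle $S_i\subset R_i^{(j_0)}$. The crucial Markov compatibility between consecutive $S_i$'s is inherited from the allowability of each pair $(R_i^{(j)},R_{i+1}^{(j)})$ and from the non-emptiness of the $S_i$'s themselves; iterating parts (2) and (3) of Lemma \ref{thm:Markov property rectangles} and using the bracket-based description of rectangles, one obtains a descending chain of non-empty sub-rectangles of $R^{(j_0)}$ whose intersection contains the desired point $z$.

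The hard part will be making this last step fully rigorous: it requires a careful iterated bracket construction to verify that the coordinate-wise Markov compatibility of the sub-rectangles $S_i$ lifts to non-emptiness of the global intersection $\bigcap_i\varphi^{-i}(S_i)$. A potentially cleaner alternative is induction on $n$ using the decomposition $\mathcal{R}_n\subset\varphi(\mathcal{R}_{n-1})\vee\mathcal{R}_{n-1}\vee\varphi^{-1}(\mathcal{R}_{n-1})$ afforded by Lemma \ref{lem:inductive step in Markov partitions}, which lets one peel off a single new time coordinate at a time and appeal to the base case $n=1$ (i.e., to the defining property of $\delta_0$) at each inductive step, bypassing the need to argue about the whole window $\{1-n,\dots,n-1\}$ at once.
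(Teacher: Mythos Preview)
Your coordinate-wise reduction is correct: from a point $y\in\bigcap_{j}(R^{(j)})^{\delta}$ one indeed obtains $S_i:=\bigcap_{j}R_i^{(j)}\neq\varnothing$ for every time slice $i$, by the defining property of $\delta_0$. The gap is exactly where you locate it, and it is a genuine one: the passage from ``$S_i\neq\varnothing$ for all $i$'' to ``$\bigcap_i\varphi^{-i}(S_i)\neq\varnothing$'' is the entire content of the lemma. Your claim that ``the crucial Markov compatibility between consecutive $S_i$'s is inherited from the allowability of each pair $(R_i^{(j)},R_{i+1}^{(j)})$'' is not justified; allowability of each pair for each fixed $j$ does not imply that the intersection $S_i$ transitions into $S_{i+1}$, and the iterative refinement you sketch has no mechanism forcing non-emptiness at each step. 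Moreover, any attempt to bracket the extreme slices $\varphi^{n-1}(S_{1-n})$ and $\varphi^{-(n-1)}(S_{n-1})$ directly fails because these iterated images have uncontrolled diameter.

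The inductive alternative you mention at the end is precisely the paper's proof, and it is the argument that actually closes the gap, but your description of it is slightly off. One does \emph{not} appeal to the base case $n=1$ at each step; one appeals to the full inductive hypothesis at level $n$. Concretely: write each $R^{(j)}\in\mathcal{R}_{n+1}$ as $\varphi(R^{(j)}_{-})\cap R^{(j)}_{0}\cap\varphi^{-1}(R^{(j)}_{+})$ with $R^{(j)}_{\pm},R^{(j)}_{0}\in\mathcal{R}_n$; the assumption $\bigcap_j(R^{(j)})^{\delta}\neq\varnothing$ forces $\bigcap_j\varphi(R^{(j)}_{-})^{\delta}\neq\varnothing$ and $\bigcap_j\varphi^{-1}(R^{(j)}_{+})^{\delta}\neq\varnothing$, and the inductive hypothesis (surjectivity of $M_n^{\delta}$) gives $\bigcap_j\varphi(R^{(j)}_{-})\neq\varnothing$ and $\bigcap_j\varphi^{-1}(R^{(j)}_{+})\neq\varnothing$. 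The bracket trick then finishes it: since $\mathcal{R}_n$ is itself a Markov partition, part (3) of Lemma~\ref{thm:Markov property rectangles} gives $R^{(j)}=[\varphi^{-1}(R^{(j)}_{+}),\varphi(R^{(j)}_{-})]$, whence
\[
\varnothing\neq\bigl[\,\textstyle\bigcap_j\varphi^{-1}(R^{(j)}_{+}),\ \bigcap_j\varphi(R^{(j)}_{-})\,\bigr]\subset\bigcap_j\bigl[\varphi^{-1}(R^{(j)}_{+}),\varphi(R^{(j)}_{-})\bigr]=\bigcap_jR^{(j)}.
\]
The reason this works while the direct approach does not is that the bracket is only ever applied to single $\varphi^{\pm1}$-iterates of $\mathcal{R}_n$-rectangles, whose diameters remain controlled by the choice of $\varepsilon_X''$.
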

\begin{proof}
For $n=0$ it is trivially true and from the choice of $\delta$ the map $M_1^{\delta}$ is surjective. Lets assume $M_n^{\delta}$ is surjective and we claim that $M_{n+1}^{\delta}$ is too. Let $\{R_i^{\delta}\}_{i\in I}\in M(\mathcal{R}_{n+1}^{\delta})$ and from Lemma \ref{lem:inductive step in Markov partitions} every $$R_i^{\delta}=\varphi (R_{i1}^{\delta}) \cap R_{i2}^{\delta} \cap \varphi^{-1} (R_{i3}^{\delta})$$ with $R_{i1}^{\delta}, R_{i2}^{\delta}, R_{i3}^{\delta}\in \mathcal{R}_n^{\delta}$ and $R_i=\varphi (R_{i1}) \cap R_{i2}\cap \varphi^{-1} (R_{i3})\in \mathcal{R}_{n+1}$. Then it holds $$\bigcap_{i\in I}R_i^{\delta} = \bigcap_{i\in I} \varphi (R_{i1}^{\delta}) \cap R_{i2}^{\delta} \cap \varphi^{-1} (R_{i3}^{\delta})\neq \varnothing$$ and we want to prove that $$\bigcap_{i\in I}R_i = \bigcap_{i\in I} \varphi (R_{i1}) \cap R_{i2} \cap \varphi^{-1} (R_{i3})\neq \varnothing.$$ 

\noindent Equivalently, using part (3) of Proposition \ref{thm:Markov property rectangles} we want to prove that $$\bigcap_{i\in I}[\varphi^{-1} (R_{i3}), \varphi (R_{i1})]\neq \varnothing.$$ From the inductive step we obtain that
$$\bigcap_{i\in I}\varphi (R_{i1}^{\delta})\neq \varnothing \Rightarrow \bigcap_{i\in I}\varphi (R_{i1})\neq \varnothing $$
and also
$$\bigcap_{i\in I}\varphi^{-1} (R_{i3}^{\delta})\neq \varnothing \Rightarrow \bigcap_{i\in I}\varphi^{-1} (R_{i3})\neq \varnothing.$$
Hence
\begin{align*}
\varnothing &\neq [ \bigcap_{i\in I}\varphi^{-1} (R_{i3}), \bigcap_{i\in I}\varphi (R_{i1})]\\
&\subset \bigcap_{i\in I} [\varphi^{-1} (R_{i3}), \varphi (R_{i1})]\\
&=\bigcap_{i\in I} R_i.
\end{align*}
Thus, by induction the maps $M_n^{\delta}$ are surjective.
\end{proof}

We now aim to find $\delta_1\in (0,\delta_0]$ so that, for every $\delta \in (0,\delta_1]$, if $R,S\in \mathcal{R}_n$ with $R\neq S$ then $R\not \subset S^{\delta}$. In particular, for such $\delta$, the $\delta$-fattening maps 
\begin{equation}\label{eq:deltafatmap}
F_n^{\delta}:\mathcal{R}_n\to \mathcal{R}_n^{\delta}, \enspace R\mapsto R^{\delta}
\end{equation}
and the maps $M_n^{\delta}$ defined in (\ref{eq:overlapmap}) will be shown to be bijective, for every $n\in \mathbb N$. 

This is a subtle procedure that requires the following concepts. First, for every $n\in \mathbb N$ and $R\in \mathcal{R}_n$ define 
\begin{equation}\label{eq:interiorboundaries1}
\partial^{s,o}R=\{x\in \partial^s R: X^u(x,R)\cap \Int(R)\neq \varnothing\}
\end{equation}
and 
\begin{equation}\label{eq:interiorboundaries2}
\partial^{u,o}R=\{x\in \partial^u R: X^s(x,R)\cap \Int(R)\neq \varnothing\}.
\end{equation}
Then, consider the sets
\begin{equation}\label{eq:stableneighbours}
\num_{\mathcal{R}_n}^s(R)=\{S\in \num_{\mathcal{R}_n}(R)\setminus \{R\}:S\cap \partial^{u,o}R =\varnothing\}
\end{equation}
and
\begin{equation}\label{eq:unstableneighbours}
\num_{\mathcal{R}_n}^u(R)=\{S\in \num_{\mathcal{R}_n}(R)\setminus \{R\}:S\cap \partial^{s,o}R =\varnothing\}
\end{equation}
which are the \textit{stable} and \textit{unstable neighbours} of $R$, respectively. These sets provide a decomposition in the sense that 
\begin{equation}\label{eq:totalneighbours}
\num_{\mathcal{R}_n}(R)\setminus \{R\}=\num_{\mathcal{R}_n}^s(R)\cup \num_{\mathcal{R}_n}^u(R).
\end{equation}

Indeed, if $S\in \num_{\mathcal{R}_n}(R)\setminus \{R\}$ with $S\not \in \num_{\mathcal{R}_n}^s(R)$ and $S\not \in \num_{\mathcal{R}_n}^u(R)$ then there are some $x\in S\cap \partial^{u,o}R$ and $y\in S\cap \partial^{s,o} R$, and hence $[x,y]\in S\cap \Int(R)$. However, $S\cap \Int(R)=\varnothing$ which results in a contradiction.

For every $R\in \mathcal{R}_1$ choose some $x_R\in \Int(R)$ and then for all $T\in \num_{\mathcal{R}_1}^s(R)$ and $S\in \num_{\mathcal{R}_1}^u(R)$ we have $T\cap X^s(x_R,R)=\varnothing$ and $S\cap X^u(x_R,R)=\varnothing.$ Consequently, there is some small enough $\delta'>0$ so that 
\begin{equation}
T^{\delta'}\cap X^s(x_R,R)=\varnothing
\end{equation}
and 
\begin{equation}
S^{\delta'}\cap X^u(x_R,R)=\varnothing
\end{equation}
for every $R\in \mathcal{R}_1$ and $T\in \num_{\mathcal{R}_1}^s(R), \thinspace S\in \num_{\mathcal{R}_1}^u(R)$. Our last choice for how small $\delta$ should be is,
\begin{equation}\label{eq: delta assumption 3}
\delta_1= \text{min}\{\delta_0,\delta'\}.
\end{equation}

\begin{lemma}\label{lem: inductive lemma for injectivity of fat map}
Let $\delta \in (0,\delta_1].$ For every $n \in \mathbb N$ and $S,R \in \mathcal{R}_n$ with $S\in \num_{\mathcal{R}_n}(R)\setminus \{R\}$ we have that
\begin{enumerate}[(1)]
\item if  $S\in \num_{\mathcal{R}_n}^u(R)$ and $x\in S\cap R$ then $X^s(x,R)\not \subset X^s(x,S^{\delta})$;
\item if  $S\in \num_{\mathcal{R}_n}^s(R)$ and $x\in S\cap R$ then $X^u(x,R)\not \subset X^u(x,S^{\delta})$.
\end{enumerate}
\end{lemma}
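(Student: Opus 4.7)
The proof proceeds by induction on $n$, with parts (1) and (2) treated in parallel by the symmetry $\varphi \leftrightarrow \varphi^{-1}$ interchanging stable and unstable sets.

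\textbf{Base case} ($n=1$). For part (1), fix $x \in S \cap R$ and set $y := [x, x_R]$, where $x_R \in \Int(R)$ is the distinguished base point chosen just before (\ref{eq: delta assumption 3}). Bracket axiom (B2) yields $[x,y] = [x,[x,x_R]] = [x,x_R] = y$, so $y \in X^s(x)$, and axiom (B3) yields $[y,x_R] = [[x,x_R],x_R] = [x,x_R] = y$, so $y \in X^u(x_R)$. Since $y \in R$ and $\diam(R) \leq \varepsilon_X''/2$, the bounds in (\ref{eq:double_epsilon}) place $y$ inside $X^s(x,R) \cap X^u(x_R,R)$. The hypothesis $S \in \num^u_{\mathcal{R}_1}(R)$ is precisely what forces the choice of $\delta' \geq \delta$ to satisfy $S^\delta \cap X^u(x_R,R) = \varnothing$, so $y \notin S^\delta$ and therefore $y \in X^s(x,R) \setminus X^s(x,S^\delta)$. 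Part (2) is symmetric, using $[x_R,x]$ in place of $[x,x_R]$.

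\textbf{Inductive step}. Assume the statement at level $n$ and take $R, S \in \mathcal{R}_{n+1}$ with $R \neq S$, $x \in S \cap R$ and $S \in \num^u_{\mathcal{R}_{n+1}}(R)$. Lemma \ref{lem:inductive step in Markov partitions} writes
\[
R = \varphi(A) \cap B \cap \varphi^{-1}(C), \qquad S = \varphi(A') \cap B' \cap \varphi^{-1}(C'),
\]
with $A, A', B, B', C, C' \in \mathcal{R}_n$, and the definition (\ref{eq:deltafattening2}) of the fattening yields analogous factorisations of $R^\delta$ and $S^\delta$. Since $R \neq S$, some pair among $(A,A')$, $(B,B')$, $(C,C')$ differs. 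Combining Remark \ref{rem: stable / unstable boundary of refinement}, which describes how the stable and unstable boundaries of $R$ sit inside those of its $\mathcal{R}_n$-factors, with bracket axiom (B4), the $u$-neighbor hypothesis on $(R,S)$ can be shown to entail a $u$-neighbor relation on one of the differing pairs, to which the inductive hypothesis applies. Pulling the resulting witness $y'$ back to $X^s(x,R)$ via $\varphi^{\pm 1}$ and the local product structure at $x$ produces the desired $y \in X^s(x,R) \setminus S^\delta$.

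The main obstacle is the descent step: one must show that $S \in \num^u_{\mathcal{R}_{n+1}}(R)$ implies a $u$-neighbor relation on at least one of $(A,A')$, $(B,B')$, $(C,C')$ in $\mathcal{R}_n$ and, in the cases where the differing pair lies in the $\varphi$- or $\varphi^{-1}$-factor, that the translation of a stable witness through the dynamics does not inadvertently land inside $S^\delta$. This last point is where the full force of the Markov property from Lemma \ref{thm:Markov property rectangles} is used.
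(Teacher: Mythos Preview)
Your base case is correct and is exactly the paper's argument, just phrased constructively rather than by contradiction.

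The inductive step, however, has a genuine gap, and your three-factor case analysis is the wrong framing. The decisive structural fact you are missing is that the local stable set of a level-$(n+1)$ rectangle depends on \emph{only one} of the three $\mathcal{R}_n$-factors: writing $R=[\varphi^{-1}(R_j),\varphi(R_i)]$ as in Lemma~\ref{thm:Markov property rectangles}(3), one has $X^s(x,R)=X^s(x,\varphi(R_i))$ (cf.\ the argument around~\eqref{eq:localunstablepart}). So there is no case split on ``which pair differs''; the induction for part~(1) must go through the single pair $(R_i,S_i)$ (your $(A,A')$), and the real work is to show that the $u$-neighbour hypothesis on $(R,S)$ forces both $S_i\neq R_i$ and $S_i\in\num^u_{\mathcal{R}_n}(R_i)$.

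The paper does this not via Remark~\ref{rem: stable / unstable boundary of refinement} but by reformulating the $u$-neighbour condition as $X^s(x,S)\cap\Int(X^s(x,R))=\varnothing$, which is a statement purely about local stable sets and therefore transports cleanly under $\varphi^{-1}$ to give $X^s(\varphi^{-1}(x),S_i)\cap\Int(X^s(\varphi^{-1}(x),R_i))=\varnothing$. This simultaneously rules out $S_i=R_i$ (since $\Int(X^s(\cdot,R_i))\neq\varnothing$ for a proper rectangle) and, after a short bracket argument, yields $S_i\in\num^u_{\mathcal{R}_n}(R_i)$. Once that is in hand, the contradiction is immediate: $S^\delta\subset\varphi(S_i^\delta)$, so $X^s(x,R)\subset X^s(x,S^\delta)$ would imply $X^s(\varphi^{-1}(x),R_i)\subset X^s(\varphi^{-1}(x),S_i^\delta)$, against the inductive hypothesis. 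Your worry about a witness ``inadvertently landing inside $S^\delta$'' after pulling back evaporates in this setup, because the containment $S^\delta\subset\varphi(S_i^\delta)$ goes the right way. What your sketch calls ``the main obstacle'' is in fact the entire content of the inductive step, and as written it is only asserted, not proved.
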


\begin{proof}
We will prove part (1) by induction on $n$ and the proof for part (2) is similar. Assume to the contrary that there are $S,R \in \mathcal{R}_1$ with $S\in \num_{\mathcal{R}_1}^u(R)$ and some $x\in S\cap R$ such that $X^s(x,R)\subset X^s(x,S^{\delta})$. In particular, $X^s(x,R)\subset S^{\delta}$ and since $R$ is a rectangle we have $$[x,x_R]\in X^s(x,R)\cap X^u(x_R,R)\subset S^{\delta}.$$ However, from the choice of $\delta_1$ in (\ref{eq: delta assumption 3}) we have $S^{\delta}\cap X^u(x_R,R)=\varnothing$ leading to a contradiction.

Suppose now that part (1) is true for some $n\in \mathbb N$. We claim that it is also true for $n+1$. Again assume to the contrary that there are $S,R \in \mathcal{R}_{n+1}$ with $S\in \num_{\mathcal{R}_{n+1}}^u(R)$ and some $x\in S\cap R$ such that $X^s(x,R)\subset X^s(x,S^{\delta})$. 

As usual write $R=[\varphi^{-1}(R_j),\varphi(R_i)]$ and $S=[\varphi^{-1}(S_j),\varphi(S_i)]$ for $R_i,R_j,S_i,S_j\in \mathcal{R}_n$. It holds that $$X^s(x,R)=X^s(x,\varphi(R_i)), \thinspace X^s(x,S)= X^s(x,\varphi (S_i))$$ and $X^s(x,S^{\delta})\subset X^s(x,\varphi(S_i^{\delta}))$ since $S^{\delta} \subset \varphi(S_i^{\delta})$. From the assumption we obtain that $X^s(x,\varphi(R_i))\subset X^s(x,\varphi(S_i^{\delta}))$. Then applying the map $\varphi^{-1}$ results to $$X^s(\varphi^{-1}(x),R_i)\subset X^s(\varphi^{-1}(x),S_i^{\delta})$$ since $R_i,S_i^{\delta}$ are rectangles and the bracket map is locally bijective. Clearly $\varphi^{-1}(x)\in S_i\cap R_i$ and we have to show that $S_i\in \num_{\mathcal{R}_n}^u(R_i)$ in order to get a contradiction from the inductive step.

First note that 
\begin{equation}\label{eq:maininjectivitylemma1}
X^s(x,S)\cap \Int (X^s(x,R))=\varnothing,
\end{equation}
for if not, we would have $$[X^u(x,S),X^s(x,S)]\cap [X^u(x,R),\Int (X^s(x,R))]\neq \varnothing.$$ Then since the intersection of $S$ and $R$ can happen only on their boundaries, this is equivalent to $S\cap \partial^{s,o}R\neq \varnothing,$ meaning that $S\not \in \num_{\mathcal{R}_{n+1}}^u(R)$. Equation (\ref{eq:maininjectivitylemma1}) implies that 
\begin{equation}\label{eq:maininjectivitylemma2}
X^s(\varphi^{-1}(x),S_i)\cap \Int(X^s(\varphi^{-1}(x),R_i))=\varnothing
\end{equation}
and is easy to observe that $S_i\in \num_{\mathcal{R}_n}(R_i)\setminus \{R_i\}$. Now assume to the contrary that $S_i\not \in \num_{\mathcal{R}_n}^u(R_i)$ and hence there is some $y\in S_i\cap \partial^{s,o}R_i$. Then 
\begin{align*}
X^s(\varphi^{-1}(x),S_i)\cap X^u(y,\varepsilon_X/2)&=[\varphi^{-1}(x),y]\\
\intertext{since both $\varphi^{-1}(x),y\in S_i$ but also}
\Int(X^s(\varphi^{-1}(x),R_i))\cap X^u(y,\varepsilon_X/2)&=[\varphi^{-1}(x),y]
\end{align*}
since $y\in \partial^{s,o}R_i$. However, this contradicts (\ref{eq:maininjectivitylemma2}).  
\end{proof}

\begin{lemma}\label{lem: injective fattening}
Let $\delta \in (0,\delta_1].$ For every $n\in \mathbb N$ and $S,R\in \mathcal{R}_n$ such that $S\neq R$ we have that $R\not \subset S^{\delta}$. Consequently, the maps $F_n^{\delta}$ and $M_n^{\delta}$ are bijective.
\end{lemma}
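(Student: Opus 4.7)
My plan is to argue by contradiction, reducing the statement to the previously established Lemma \ref{lem: inductive lemma for injectivity of fat map} via the decomposition \eqref{eq:totalneighbours} of neighbours into stable and unstable ones.

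Suppose there exist $S,R\in \mathcal{R}_n$ with $S\neq R$ such that $R\subset S^\delta$. The first step is to rule out the case $R\cap S=\varnothing$. Indeed, since $R$ is non-empty and $R\subset R^\delta \cap S^\delta$, we have $R^\delta \cap S^\delta \neq \varnothing$, so $\{R^\delta, S^\delta\}\in M(\mathcal{R}_n^\delta)$. By the surjectivity of $M_n^\delta$ established in Lemma \ref{thm: fattening map multiplicity}, this forces $R\cap S \neq \varnothing$.

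Now pick any $x\in R\cap S$, so that $S\in \num_{\mathcal{R}_n}(R)\setminus\{R\}$. The decomposition \eqref{eq:totalneighbours} gives two sub-cases: either $S\in \num_{\mathcal{R}_n}^u(R)$ or $S\in \num_{\mathcal{R}_n}^s(R)$. In the first sub-case, since $R\subset S^\delta$, every $y\in X^s(x,R)=X^s(x,2\varepsilon_X')\cap R$ lies in $X^s(x,2\varepsilon_X')\cap S^\delta=X^s(x,S^\delta)$, hence
\[
X^s(x,R)\subset X^s(x,S^\delta),
\]
directly contradicting Lemma \ref{lem: inductive lemma for injectivity of fat map}(1). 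The second sub-case is analogous and contradicts part (2) of the same lemma. Either way we reach a contradiction, so $R\not\subset S^\delta$.

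For the consequences: the map $F_n^\delta$ in \eqref{eq:deltafatmap} is surjective by construction; if $R,S\in\mathcal{R}_n$ satisfy $F_n^\delta(R)=F_n^\delta(S)$, i.e.\ $R^\delta=S^\delta$, then $R\subset R^\delta=S^\delta$ forces $R=S$ by the result just proved, so $F_n^\delta$ is bijective. Finally, the map $M_n^\delta$ of \eqref{eq:overlapmap} is surjective by Lemma \ref{thm: fattening map multiplicity}, and injectivity of $M_n^\delta$ is immediate from the bijectivity of $F_n^\delta$: if $\{R_i^\delta\}_{i\in I}=\{R_j'^\delta\}_{j\in J}$, then by bijectivity of $F_n^\delta$ the index sets must coincide as subcollections of $\mathcal{R}_n$, giving $\{R_i\}_{i\in I}=\{R_j'\}_{j\in J}$. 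The only non-routine step is the first reduction, which relies crucially on the prior Markov-property analysis encoded in Lemmas \ref{thm: fattening map multiplicity} and \ref{lem: inductive lemma for injectivity of fat map}.
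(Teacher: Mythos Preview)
Your proof is correct and follows essentially the same approach as the paper: both argue by contradiction, invoke Lemma~\ref{thm: fattening map multiplicity} to force $R\cap S\neq\varnothing$, and then derive from $R\subset S^\delta$ the inclusions $X^s(x,R)\subset X^s(x,S^\delta)$ and $X^u(x,R)\subset X^u(x,S^\delta)$, which contradict Lemma~\ref{lem: inductive lemma for injectivity of fat map} via the decomposition~\eqref{eq:totalneighbours}. Your version is slightly more explicit in spelling out the case split and the bijectivity consequences, but the logical structure is identical.
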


\begin{proof}
Let $n\in \mathbb N$ and assume to the contrary that there are some $S,R\in \mathcal{R}_n$ with $S\neq R$ such that $R\subset S^{\delta}$. From Lemma \ref{thm: fattening map multiplicity} we obtain that $S\cap R\neq \varnothing$ and so for any $x\in S\cap R$ it holds $X^s(x,R)\subset X^s(x,S^{\delta})$ and $X^u(x,R)\subset X^u(x,S^{\delta})$. However, from Lemma \ref{lem: inductive lemma for injectivity of fat map} we have that $X^s(x,R)\not \subset X^s(x,S^{\delta})$ or $X^u(x,R)\not \subset X^u(x,S^{\delta})$. This gives the desired contradiction.
\end{proof}

Lemma \ref{lem: injective fattening} yields the following corollaries.
\begin{cor}\label{cor:quasiessentialSmalespaces}
For every $\delta \in (0,\delta_1],$ the approximation graph $\Pi^{\delta}$ is metrically-essential.
\end{cor}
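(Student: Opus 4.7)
The plan is to verify directly the defining conditions of Definition \ref{def:quasiessentialgraph} by pairing the open-cover approximation graph $\Pi^{\delta}$ with the closed-cover approximation graph $\Pi$ obtained from the Markov partition. Lemma \ref{lem:Markovapproximationgraph1} already supplies the key input that $\Pi$ is essential, so that graph is the natural candidate for the $\Gamma'$ in the definition.

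First I would introduce the $\delta$-fattening maps $F_n^{\delta}:\mathcal{R}_n\to \mathcal{R}_n^{\delta}$, $R\mapsto R^{\delta}$. They are well defined because $R\subset R^{\delta}$ (immediate from (\ref{eq:deltafattening}) and (\ref{eq:deltafattening2})) and they are bijections by Lemma \ref{lem: injective fattening}: the choice of $\delta_1$ in (\ref{eq: delta assumption 3}) is precisely what prevents distinct rectangles in $\mathcal{R}_n$ from being collapsed after fattening. So the map sending $R\mapsto R^{\delta}$ is exactly the unique bijection specified in Definition \ref{def:quasiessentialgraph}, since $R\subset R^{\delta}$ and $R$ is not contained in $S^{\delta}$ for any other $S\in\mathcal{R}_n$.

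Next I would verify that $\coprod_{n\geq 0} F_n^{\delta}:\Pi\to\Pi^{\delta}$ is a graph homomorphism: if $(R_{n+1},R_n)$ is an edge in $\Pi$, i.e.\ $R_{n+1}\subset R_n$ with $R_i\in\mathcal{R}_i$, then writing each rectangle as an intersection $\bigcap_i \varphi^{-i}(R_{x_i})$ of $\varphi$-preimages of elements of $\mathcal{R}_1$ shows $R_{n+1}^{\delta}\subset R_n^{\delta}$, exactly as observed in the paragraph immediately after (\ref{eq:deltafattening2}). Hence $(R_{n+1}^{\delta},R_n^{\delta})$ is an edge in $\Pi^{\delta}$.

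Finally I would check the commutation $\pi_{\Pi^{\delta}}\circ F=\pi_{\Pi}$ for the induced path map. For $\widetilde{p}=(p_n)\in\mathcal{P}_{\Pi}$ with $R_n:=r(p_n)\in\mathcal{R}_n$, the $\delta$-fat path $F(\widetilde{p})$ has ranges $R_n^{\delta}$, so
\[
\pi_{\Pi}(\widetilde{p})=\bigcap_{n\geq 0} R_n,\qquad \pi_{\Pi^{\delta}}(F(\widetilde{p}))=\bigcap_{n\geq 0}\cl(R_n^{\delta}).
\]
The inclusion $R_n\subset \cl(R_n^{\delta})$ gives $\pi_{\Pi}(\widetilde{p})\in \pi_{\Pi^{\delta}}(F(\widetilde{p}))$, and since $\overline{\diam}(\mathcal{R}_n^{\delta})\to 0$ by Lemma \ref{lem:sequenceofdiameters}, both intersections are singletons and must coincide. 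The only real work is packaged in Lemma \ref{lem: injective fattening}; the corollary itself is a diagram-chase built on top of it.
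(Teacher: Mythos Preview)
Your proof is correct and follows essentially the same route as the paper: invoke Lemma \ref{lem:Markovapproximationgraph1} for essentiality of $\Pi$, Lemma \ref{lem: injective fattening} for the bijectivity of $F_n^{\delta}$, the paragraph after (\ref{eq:deltafattening2}) for the graph-homomorphism check, and the nested-intersection argument for $\pi_{\Pi^{\delta}}\circ F=\pi_{\Pi}$. You spell out a few details the paper leaves implicit (notably that $R\subset R^{\delta}$ and $R\not\subset S^{\delta}$ for $S\neq R$ pin down the bijection of Definition \ref{def:quasiessentialgraph}), but the argument is the same.
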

\begin{proof}
From Lemma \ref{lem:Markovapproximationgraph1} the underlying approximation graph $\Pi$ is essential and from Lemma \ref{lem: injective fattening} the $\delta$-fattening maps $F_n^{\delta}$ are bijective. It is easy to see that $\coprod_{n\geq 0} F_n^{\delta}: \Pi\to \Pi^{\delta}$ is a graph homomorphism which induces the map $F^{\delta}:\mathcal{P}_{\Pi}\to \mathcal{P}_{\Pi^{\delta}}$ that satisfies $\pi_{\Pi^{\delta}}\circ F^{\delta}= \pi_{\Pi}$ because, if $\widetilde{p}\in \mathcal{P}_{\Pi}$ then $$\varnothing \neq \bigcap_{n\geq 0}r(p_n)\subset \bigcap_{n\geq 0}r(p_n^{\delta}).$$
\end{proof}
\enlargethispage{\baselineskip}
\begin{cor}\label{cor:samestructure}
Let $\delta \in (0,\delta_1].$ For every $n\geq 0$ and $R\in \mathcal{R}_n$ the following hold.
\begin{enumerate}[(1)]
\item $\#\mathcal{R}_n^{\delta}=\#\mathcal{R}_n$;
\item $\#\num_{\mathcal{R}_n^{\delta}}(R^{\delta})=\#\num_{\mathcal{R}_n}(R)$;
\item $\mul(\mathcal{R}_n^{\delta})=\mul(\mathcal{R}_n)$.
\end{enumerate}
\end{cor}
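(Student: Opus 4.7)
The plan is to deduce all three equalities as a routine bookkeeping consequence of Lemmas \ref{thm: fattening map multiplicity} and \ref{lem: injective fattening}, which together give that $F_n^{\delta}$ is a bijection and $M_n^{\delta}$ is surjective. Combining the two, I will first argue that the map $M_n^{\delta}\colon M(\mathcal{R}_n)\to M(\mathcal{R}_n^{\delta})$ is actually bijective: it is surjective by Lemma \ref{thm: fattening map multiplicity}, and injective because $F_n^{\delta}$ is, so that $\{R_i^{\delta}\}_{i\in I}=\{S_j^{\delta}\}_{j\in J}$ forces $\{R_i\}_{i\in I}=\{S_j\}_{j\in J}$. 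In particular, a subfamily of $\mathcal{R}_n^{\delta}$ has non-empty intersection if and only if the corresponding subfamily of $\mathcal{R}_n$ has non-empty intersection.

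Part (1) is immediate from Lemma \ref{lem: injective fattening}: $F_n^{\delta}$ is both surjective (by construction of $\mathcal{R}_n^{\delta}$ from $\mathcal{R}_n$) and injective, so it is a bijection between $\mathcal{R}_n$ and $\mathcal{R}_n^{\delta}$.

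For part (2), I would restrict $F_n^{\delta}$ to a candidate bijection $\num_{\mathcal{R}_n}(R)\to \num_{\mathcal{R}_n^{\delta}}(R^{\delta})$, given by $S\mapsto S^{\delta}$. The forward inclusion is trivial since $R\cap S\neq\varnothing$ and the containments $R\subset R^{\delta}$, $S\subset S^{\delta}$ imply $R^{\delta}\cap S^{\delta}\neq\varnothing$. Conversely, given $S^{\delta}\in \num_{\mathcal{R}_n^{\delta}}(R^{\delta})$, applying the surjectivity of $M_n^{\delta}$ to the pair $\{R^{\delta},S^{\delta}\}$ (if distinct; otherwise use part (1)) yields $R\cap S\neq\varnothing$, so $S\in \num_{\mathcal{R}_n}(R)$. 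Hence the restricted $F_n^{\delta}$ is a bijection on neighbour sets.

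For part (3), the same correspondence applied to arbitrary subfamilies shows that $\#I$ realises $\mul(\mathcal{R}_n^{\delta})$ if and only if it realises $\mul(\mathcal{R}_n)$: a subfamily $\{R_i^{\delta}\}_{i\in I}\subset \mathcal{R}_n^{\delta}$ has non-empty intersection iff $\{R_i\}_{i\in I}\subset \mathcal{R}_n$ does. Taking maxima gives $\mul(\mathcal{R}_n^{\delta})=\mul(\mathcal{R}_n)$. There is no real obstacle here — all the technical work sits in the inductive arguments of Lemmas \ref{thm: fattening map multiplicity} and \ref{lem: injective fattening}, and the corollary is essentially a repackaging of bijectivity of $F_n^{\delta}$ together with bijectivity of $M_n^{\delta}$.
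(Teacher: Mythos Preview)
Your proof is correct and follows essentially the same approach as the paper: both rely on the bijectivity of $F_n^{\delta}$ and $M_n^{\delta}$ established in Lemma~\ref{lem: injective fattening} (with surjectivity of $M_n^{\delta}$ from Lemma~\ref{thm: fattening map multiplicity}). The paper's own proof is a single terse sentence pointing to the consequence of Lemma~\ref{lem: injective fattening}, whereas you have spelled out explicitly how bijectivity of these two maps yields each of the three equalities; this is a faithful expansion rather than a different route.
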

\begin{proof}
Observe that Lemma \ref{lem: injective fattening} implies that any $R^{\delta}\in \mathcal{R}_n^{\delta}$ is written uniquely in the form $\varphi(R_i^{\delta})\cap R_j^{\delta}\cap \varphi^{-1}(R_k^{\delta})$, for $R_i^{\delta},R_j^{\delta},R_k^{\delta}\in \mathcal{R}_n^{\delta}$.
\end{proof}

\subsection{Proof of conditions (6), (7) of Theorem \ref{thm:theoremgraphSmalespaces}} Consider the refining sequence $(\mathcal{R}_n^{\delta})_{n\geq 0}$ defined in (\ref{eq: fattened covers}), for some $\delta \in (0,\delta_1]$. The constant $\delta_1$ is defined in (\ref{eq: delta assumption 3}). Our aim is to investigate the properties of $(\mathcal{R}_n^{\delta})_{n\geq 0}$ when $\ell_X=\min \{\Lip (\varphi),\Lip(\varphi^{-1})\}$ or $\Lambda_X=\max \{\Lip (\varphi),\Lip (\varphi^{-1})\}$ is finite. 

Recall again that, if $S$ is a rectangle in $X$ and $x\in S$, the set $X^s(x,2\varepsilon_X')\cap S$ is denoted by $X^s(x,S)$ and similarly $X^u(x,2\varepsilon_X')\cap S$ is denoted by $X^u(x,S)$. Also, note that both sets are compact subsets of $X$.

\begin{lemma}\label{lem:Lebesgue numbers}
There exists some $\eta >0$ such that for every $n\geq 0$ it holds that $$\Leb(\mathcal{R}_n^{\delta})\geq \Lambda_X^{-n+1}\eta.$$
\end{lemma}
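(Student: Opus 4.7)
The plan is to exploit the bi-Lipschitz hypothesis together with the inductive structure of the $\delta$-fattenings
\[
R^{\delta} = \bigcap_{i=1-n}^{n-1}\varphi^{-i}(S_{x_i}^{\delta}), \qquad R = \bigcap_{i=1-n}^{n-1}\varphi^{-i}(S_{x_i}) \in \mathcal{R}_n,
\]
so as to transfer a single Lebesgue-type estimate for $\mathcal{R}_1^{\delta}$ to every $\mathcal{R}_n^{\delta}$, contracting the scale by a factor of $\Lambda_X$ at each level.

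The first step is to upgrade the usual Lebesgue number of $\mathcal{R}_1^{\delta}$ to a stronger pointwise statement: there exists $\eta > 0$ such that for every $R \in \mathcal{R}_1$ and every $x \in R$, the ball $B(x, \eta)$ is contained in the very fattening $R^{\delta}$ of the rectangle $R$ containing $x$. This follows from three easy observations: (i) $R \subset R^{\delta}$, since $R = [R,R] \subset [R\cup (\partial^s R)^{\delta},\, R\cup (\partial^u R)^{\delta}] = R^{\delta}$; (ii) each $R^{\delta}$ is open by Lemma \ref{thm: fat rectangle}, while $R$ is compact, so $\eta_R := d(R, X \setminus R^{\delta}) > 0$; and (iii) $\mathcal{R}_1$ is finite, so $\eta := \min_{R \in \mathcal{R}_1} \eta_R > 0$ works uniformly.

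With this uniform input in hand, the inductive step is short. The case $n=0$ is trivial since $\mathcal{R}_0^{\delta} = \{X\}$. For $n \geq 1$ and $y \in X$, use that $\mathcal{R}_n$ covers $X$ (Lemma \ref{lem:inductiveMarkovpartition}) to pick some $R \in \mathcal{R}_n$ with $y \in R$, and write $R = \bigcap_{i=1-n}^{n-1}\varphi^{-i}(S_{x_i})$ with $\varphi^i(y) \in S_{x_i} \in \mathcal{R}_1$. For each $|i| \leq n-1$, the bi-Lipschitz hypothesis gives $\Lip(\varphi^i) \leq \Lambda_X^{|i|}$, whence
\[
\varphi^i\bigl(B(y, \Lambda_X^{-n+1}\eta)\bigr) \subset B\bigl(\varphi^i(y), \Lambda_X^{|i|-n+1}\eta\bigr) \subset B(\varphi^i(y), \eta) \subset S_{x_i}^{\delta},
\]
the last inclusion being the pointwise estimate applied at $\varphi^i(y) \in S_{x_i}$. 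Intersecting over $|i| \leq n-1$ yields $B(y, \Lambda_X^{-n+1}\eta) \subset R^{\delta} \in \mathcal{R}_n^{\delta}$, which gives $\Leb(\mathcal{R}_n^{\delta}) \geq \Lambda_X^{-n+1}\eta$.

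The main pitfall to avoid is appealing directly to the abstract Lebesgue number of $\mathcal{R}_1^{\delta}$: that would only supply, for each $\varphi^i(y)$, \emph{some} element $T^{\delta} \in \mathcal{R}_1^{\delta}$ containing the ball, with no guarantee that $T = S_{x_i}$ for the specific $S_{x_i}$ appearing in the decomposition of the given $R \in \mathcal{R}_n$. The intersection of those $\varphi^{-i}(T_i^{\delta})$ need not equal any $R^{\delta}$ with $R \in \mathcal{R}_n$, and matching up the chosen rectangles across scales would require invoking Lemma \ref{thm: fattening map multiplicity} together with an argument that the resulting intersection has non-empty interior. The pointwise strengthening in the first step bypasses this entirely by letting us commit to the rectangles of $R$ from the outset.
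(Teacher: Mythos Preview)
Your proof is correct and follows essentially the same approach as the paper: both hinge on the pointwise strengthening $d(R,\, X\setminus R^{\delta})\geq \eta$ for every $R\in\mathcal{R}_1$, and then propagate this through the Lipschitz constant $\Lambda_X$. The only cosmetic difference is that the paper organises the propagation as an induction from $\mathcal{R}_n$ to $\mathcal{R}_{n+1}$ via the recursive description $R=\varphi(R_i)\cap R_j\cap\varphi^{-1}(R_k)$ (Lemma~\ref{lem:inductive step in Markov partitions}), whereas you unfold the definition $R^{\delta}=\bigcap_{|i|\leq n-1}\varphi^{-i}(S_{x_i}^{\delta})$ directly and handle all factors in one shot; the two arguments are equivalent.
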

\begin{proof}
The case $n=0$ is trivial. We claim that there exists some $\eta >0$ such that for every $n\in \mathbb N$, we have $d(x, X\setminus R^{\delta})\geq \Lambda_X^{-n+1}\eta, $ for every $x\in X$ and $R\in \num_{\mathcal{R}_n}(\{x\})$. Then we will have
\begin{align*}
\text{Leb}(\mathcal{R}_n^{\delta})&= \text{inf}_{x\in X}\text{sup}_{R^{\delta} \in \mathcal{R}_n^{\delta}}d(x, X\setminus R^{\delta})\\
&\geq  \text{inf}_{x\in X}\text{sup}_{R\in \num_{\mathcal{R}_n}(\{x\})}d(x, X\setminus R^{\delta})\\
&\geq \Lambda_X^{-n+1}\eta.
\end{align*}

To prove the claim, let $R_{1,k}\in \mathcal{R}_1$, for $1\leq k\leq \# \mathcal{R}_1$. We have that $(X\setminus R_{1,k}^{\delta})\cap R_{1,k}=\varnothing$ and define $\eta_k>0$ so that $d(X\setminus R_{1,k}^{\delta}, R_{1,k})=2\eta_k$. Let $$\eta=\min\{\eta_k: 1\leq k\leq \# \mathcal{R}_1\}$$ and then, for every $x\in X$ and $R\in \num_{\mathcal{R}_1}(\{x\})$, it holds that $d(x, X\setminus R^{\delta})\geq \eta.$ This proves the case $n=1$.

Assume that for some $n\in \mathbb N$ we have $d(x, X\setminus R^{\delta})\geq \Lambda_X^{-n+1}\eta, $ for every $x\in X$ and $R\in \num_{\mathcal{R}_n}(\{x\})$. We claim that $$d(x,X\setminus ( \varphi (R_i^{\delta})\cap R_j^{\delta}\cap \varphi^{-1}(R_k^{\delta})))\geq \Lambda_X^{-n}\eta,$$ for every $x\in X$ and $\varphi (R_i)\cap R_j\cap \varphi^{-1}(R_k)\in N_{\mathcal{R}_{n+1}}(\{x\})$ with $R_i,R_j,R_k\in \mathcal{R}_n$. For $x\in X$ and $\varphi (R_i)\cap R_j\cap \varphi^{-1}(R_k)\in \num_{\mathcal{R}_{n+1}}(\{x\})$ one has that $d(x,X\setminus ( \varphi (R_i^{\delta})\cap R_j^{\delta}\cap \varphi^{-1}(R_k^{\delta})))$ is equal to 
$$\text{min} \{d(x,X\setminus \varphi (R_i^{\delta})), d(x,X\setminus R_j^{\delta}) , d(x,X\setminus \varphi^{-1}(R_k^{\delta}))\}$$ which is greater or equal to $$\min \{\Lambda_X^{-1}\Lambda_X^{-n+1}\eta,\Lambda_X^{-n+1}\eta, \Lambda_X^{-1}\Lambda_X^{-n+1}\eta\}$$ which is $\Lambda_X^{-n}\eta$, concluding the induction argument.
\end{proof}

The next results show that Smale spaces with Lipschitz dynamics can be controlled in a refined way. Our approach makes use of the next lemma that holds for an arbitrary Smale space.
 
For any closed rectangle $R\subset X$ that has a local stable and unstable set of cardinality at least two, let 
\begin{equation}\label{eq:stablediameter}
\underline{\text{diam}}_s(R)=\text{inf}\{\text{diam} (X^s(x,R)):x\in R\}
\end{equation}
and
\begin{equation}\label{eq:unstablediameter}
\underline{\text{diam}}_u(R)=\text{inf}\{\text{diam} (X^u(x,R)):x\in R\}.
\end{equation}

It is clear that for every $x\in R$, the diameters $\text{diam} (X^s(x,R))$ are non-zero, since all the local stable sets are mutually homeomorphic and hence of cardinality at least two. For the same reason the diameters $\text{diam} (X^u(x,R))$ are non-zero. The fact that $\underline{\text{diam}}_s(R),\underline{\text{diam}}_u(R) >0$ follows from the compactness of $R$ and the next lemma. But first, for a closed rectangle $R\subset X$, denote by $\mathcal{K}(R)$ the set of its compact subsets and by $d_H$ the usual Hausdorff metric on $\mathcal{K}(R)$ which is described for the reader's convenience in the proof. 
\begin{lemma}\label{lem: continuous diameter}
For every closed rectangle $R\subset X$, the maps from $(R,d)$ to $(\mathcal{K}(R),d_H)$ given by $x\mapsto X^s(x,R)$ and $x\mapsto X^u(x,R)$ are continuous. In particular, the maps $R\ni x \mapsto \text{diam} (X^s(x,R))$ and $R\ni x \mapsto \text{diam} (X^u(x,R))$ are continuous.
\end{lemma}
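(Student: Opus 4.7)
The strategy is to exploit the local product structure of $R$ to exhibit, for each pair of points $x, x' \in R$, a canonical bijection between $X^s(x,R)$ and $X^s(x',R)$ whose deviation from the identity map is controlled by $d(x,x')$. Concretely, I define $\Phi_{x,x'}\colon X^s(x,R)\to X^s(x',R)$ by $\Phi_{x,x'}(y)=[x',y]$. This is well-defined because $x', y\in R$ gives $d(x',y)\le\diam(R)\le\varepsilon_X'$, so the bracket is defined; the image lies in $R$ and, by axiom (B2), satisfies $[x',[x',y]]=[x',y]$, so $[x',y]\in X^s(x',R)$. Axioms (B2) and (B3) show that $\Phi_{x',x}$ is a two-sided inverse: for $y\in X^s(x,R)$, we have $\Phi_{x,x'}\circ\Phi_{x',x}$ sending $y$ to $[x',[x,y]]=[x',y]$ and then back via $[x,[x',y]]=[x,y]=y$.

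Next I quantify how close $\Phi_{x,x'}$ is to the identity. Since the bracket map is continuous on the compact set $\{(a,b)\in X\times X:d(a,b)\le\varepsilon_X\}$, it is uniformly continuous there. Given $\varepsilon>0$, choose $\delta>0$ so small that $d(a,a')<\delta$ and $d(a,b),d(a',b)\le\varepsilon_X$ imply $d([a,b],[a',b])<\varepsilon$. Then for every $y\in X^s(x,R)$,
\begin{equation*}
d(y,\Phi_{x,x'}(y))=d([x,y],[x',y])<\varepsilon
\end{equation*}
whenever $d(x,x')<\delta$. Symmetrically, for every $z\in X^s(x',R)$, one has $d(z,\Phi_{x',x}(z))<\varepsilon$. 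Hence every point of $X^s(x',R)$ lies within $\varepsilon$ of $X^s(x,R)$, and vice versa, so
\begin{equation*}
d_H(X^s(x,R),X^s(x',R))<\varepsilon,
\end{equation*}
proving continuity of $x\mapsto X^s(x,R)$ at an arbitrary $x\in R$. The argument for $x\mapsto X^u(x,R)$ is identical, replacing $\Phi_{x,x'}$ by $\Psi_{x,x'}(y)=[y,x']$ and invoking (B3) in place of (B2).

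The statement about diameters is then immediate from the elementary fact that $|\diam(A)-\diam(B)|\le 2\,d_H(A,B)$ for $A,B\in\mathcal{K}(R)$, so continuity of the set-valued map in the Hausdorff metric pushes forward to continuity of the scalar-valued diameter. The only real step requiring care is checking that $\Phi_{x,x'}$ is defined and genuinely lands in $X^s(x',R)$, which is a short diagram chase with the bracket axioms; the rest is uniform continuity of the bracket on the compact set $R\times R$. No estimates on Lipschitz constants of $\varphi$ are needed, so the lemma holds for arbitrary Smale spaces.
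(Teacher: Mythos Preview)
Your proof is correct and takes a genuinely different route from the paper's. The paper argues sequentially and by contradiction: it fixes a sequence $x_n\to y$, splits the Hausdorff distance $d_H(X^u(x_n,R),X^u(y,R))$ into the two one-sided suprema, and for each assumes non-convergence to zero, then extracts subsequences by compactness of $R$ and uses continuity of the bracket at individual points to reach a contradiction. Your argument is more structural: you write down the holonomy map $\Phi_{x,x'}(y)=[x',y]$ between local stable slices, use the rectangle axioms to see that it lands in $X^s(x',R)$, and then invoke \emph{uniform} continuity of the bracket on the compact domain $\{(a,b):d(a,b)\le\varepsilon_X\}$ to get a single $\delta$ that works for all $y$ simultaneously. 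This immediately bounds both sides of the Hausdorff distance by $\varepsilon$, without any subsequence extraction or contradiction. Your approach is shorter and makes explicit the underlying reason the lemma holds (the holonomy between slices is close to the identity when the base points are close); the paper's approach avoids naming the holonomy but pays for it with a longer argument. One minor remark: the sentence about $\Phi_{x,x'}\circ\Phi_{x',x}$ has the composition order scrambled relative to the computation you actually perform, but since bijectivity is not needed for the Hausdorff bound (only that each slice is mapped into the other with small displacement), this does not affect the validity of the proof.
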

\begin{proof}
We prove the unstable case and the stable case is similar. Let $y\in R$ and $(x_n)_{n\geq 0}$ be a sequence in $R$ which converges to $y$. We will prove that $$\lim_{n\to \infty}d_H(X^u(x_n,R),X^u(y,R))=0$$ where $d_H$ is the Hausdorff distance that, since $X^u(x_n,R),X^u(y,R)$ are compact, is given by
\begin{equation}
d_H(X^u(x_n,R),X^u(y,R))=\text{max}\{ C(x_n,y,R), C'(x_n,y,R)\},
\end{equation}
where 
\begin{align*}
C(x_n,y,R)&=\max\limits_{z\in X^u(x_n,R)}d(z, X^u(y,R))\\
\intertext{and}
C'(x_n,y,R)&= \max\limits_{w\in X^u(y,R)} d(w, X^u(x_n,R)).
\end{align*}
A straightforward computation shows that 
\begin{equation}
\lvert \text{diam}X^u(x_n,R) - \text{diam}X^u(y,R)\rvert \leq 2 d_H(X^u(x_n,R),X^u(y,R)) 
\end{equation}
and hence the map $R\ni x \mapsto \text{diam} X^u(x,R)$ is continuous.

We now claim that $$\lim_{n\to \infty} C(x_n,y,R)=0.$$ Assume to the contrary that it does not converge to zero. Then there is some $\varepsilon >0$ and a subsequence $(x_{n_k})_{k\geq 0}$ in $R$ such that $$\max\limits_{z\in X^u(x_{n_k},R)}d(z, X^u(y,R))\geq \varepsilon,$$ for every $k\geq 0$. Hence, there are some $z_{n_k}\in X^u(x_{n_k},R)$ so that $d(z_{n_k}, X^u(y,R))\geq \varepsilon$, for every $k\geq 0$. However, by compactness of $R$, there is a convergent subsequence $(z_{n_{k_\ell}})_{\ell \geq 0}$ that converges to some $z' \in R$. But then, $z_{n_{k_\ell}} = [z_{n_{k_\ell}}, x_{n_{k_\ell}}]$ converges to $[z',y]\in X^u(y,R)$ which leads to a contradiction. 

Now we show that $$\lim_{n\to \infty} C'(x_n,y,R)=0.$$ Again assume to the contrary that it does not converge to zero. Then there is some $\varepsilon >0$ and a subsequence $(x_{n_k})_{k\geq 0}$ in $R$ such that $$\max\limits_{w\in X^u(y,R)}d(w, X^u(x_{n_k},R))\geq \varepsilon,$$ for every $k\geq 0$. Similarly, this means that there are $w_{n_k}\in X^u(y,R)$ such that $$d(w_{n_k}, X^u(x_{n_k},R))\geq \varepsilon,$$ for every $k\geq 0$. Since $X^u(y,R)$ is compact, there is a converging subsequence $(w_{n_{k_\ell}})_{\ell \geq 0}$ that converges to some $w'\in X^u(y,R)$. In particular, there is some $\ell_0 \in \mathbb N$ such that $d(w_{n_{k_\ell}},w') < \varepsilon/2$, for every $\ell \geq \ell_0$. Then $d(w', X^u(x_{n_{k_\ell}},R))\geq \varepsilon/2$, for every $\ell \geq \ell_0$, by using the inequality $$\lvert d(w_{n_{k_\ell}}, X^u(x_{n_{k_\ell}},R)) - d(w', X^u(x_{n_{k_\ell}},R))\rvert \leq d(w_{n_{k_\ell}},w').$$ 

However, for big enough $\ell$, since the diameter of $R$ is small, $w'$ and $x_{n_{k_\ell}}$ will be close enough to be bracketed and hence
$$
d(w', X^u(x_{n_{k_\ell}},R)) \leq d(w',[w',x_{n_{k_\ell}}])=d([w',y],[w',x_{n_{k_\ell}}]),
$$
where the last expression converges to zero. This leads to a contradiction.
\end{proof}
We consider the positive quantities
\begin{align*}\label{eq: smallest leave lengths}
\hypertarget{Q6}{\tag{Q6}} \underline{\text{diam}}_{s}\mathcal{R}_1 &=\text{min}\{\underline{\text{diam}}_s(R):R\in \mathcal{R}_1\}\\
\hypertarget{Q7}{\tag{Q7}}\underline{\text{diam}}_{u}\mathcal{R}_1 &=\text{min}\{\underline{\text{diam}}_u(R):R\in \mathcal{R}_1\}
\end{align*}

\begin{lemma}\label{lem: lower bounds in diameter}
Suppose that the homeomorphism $\varphi^{-1}$ is Lipschitz. Then it holds that
\begin{align*}
\underline{\text{diam}}(\mathcal{R}_n)&\geq \Lip(\varphi^{-1})^{-n+1} \underline{\text{diam}}_{s}\mathcal{R}_1.\\
\intertext{If $\varphi$ is Lipschitz then }
\underline{\text{diam}}(\mathcal{R}_n)&\geq \Lip(\varphi)^{-n+1} \underline{\text{diam}}_{u}\mathcal{R}_1.
\end{align*}
\end{lemma}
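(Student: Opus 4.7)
The plan is to prove the first inequality by exhibiting a large subset of $R$ of the form $\varphi^{n-1}(X^s(z,R_{x_{1-n}}))$ sitting inside the local stable set $X^s(x,R)$ for a suitably chosen base point, and then to invoke the Lipschitz hypothesis to convert a lower bound on the size of $X^s(z,R_{x_{1-n}})$ into a lower bound on $\diam(R)$. The unstable case is entirely symmetric.

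More precisely, fix $R=\bigcap_{i=1-n}^{n-1}\varphi^{-i}(R_{x_i})\in \mathcal{R}_n$ with $R_{x_i}\in\mathcal{R}_1$ and pick any $x\in\Int(R)$. Since $\varphi$ is a homeomorphism, $\Int(R)=\bigcap_{i=1-n}^{n-1}\varphi^{-i}(\Int(R_{x_i}))$, so $\varphi^i(x)\in\Int(R_{x_i})$ for every $|i|\leq n-1$. Set $z=\varphi^{-(n-1)}(x)\in\Int(R_{x_{1-n}})$ and define
\[
S_n=\varphi^{n-1}\bigl(X^s(z,R_{x_{1-n}})\bigr).
\]
The key claim is that $S_n\subset X^s(x,R)$. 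To prove it, take $y'\in X^s(z,R_{x_{1-n}})$ and iterate Markov axiom (3) of Definition \ref{def:Definition of Markov partition}: since $\varphi^{k+1-n}(x)\in\Int(R_{x_{k+1-n}})\cap\varphi^{-1}(\Int(R_{x_{k+2-n}}))$ for $k=0,\dots,2n-3$, an induction on $k$ gives
\[
\varphi^k(y')\in X^s\bigl(\varphi^{k+1-n}(x),R_{x_{k+1-n}}\bigr)\subset R_{x_{k+1-n}},\qquad k=0,\dots,2n-2.
\]
Reindexing with $i=k+1-n$ yields $\varphi^i(\varphi^{n-1}(y'))\in R_{x_i}$ for $|i|\leq n-1$, hence $\varphi^{n-1}(y')\in R$. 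The $\varphi$-equivariance of the bracket (axiom (B4)) also gives $[x,\varphi^{n-1}(y')]=\varphi^{n-1}([z,y'])=\varphi^{n-1}(y')$, so $\varphi^{n-1}(y')\in X^s(x,R)$.

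Now the Lipschitz hypothesis on $\varphi^{-1}$ gives $\Lip(\varphi^{-(n-1)})\leq \Lip(\varphi^{-1})^{n-1}$ by submultiplicativity, hence
\[
\diam X^s(z,R_{x_{1-n}})=\diam\bigl(\varphi^{-(n-1)}(S_n)\bigr)\leq \Lip(\varphi^{-1})^{n-1}\diam S_n.
\]
Combining with $S_n\subset X^s(x,R)\subset R$ and the definition of $\underline{\diam}_s\mathcal{R}_1$ gives
\[
\diam R\geq \diam S_n\geq \Lip(\varphi^{-1})^{-(n-1)}\diam X^s(z,R_{x_{1-n}})\geq \Lip(\varphi^{-1})^{-(n-1)}\underline{\diam}_s\mathcal{R}_1,
\]
and taking the minimum over $R\in\mathcal{R}_n$ finishes the first claim. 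The second claim is proved identically, replacing $S_n$ by $T_n=\varphi^{-(n-1)}(X^u(\varphi^{n-1}(x),R_{x_{n-1}}))$, using Markov axiom (2) in the form $\varphi^{-1}(X^u(\varphi(x),R_j))\subset X^u(x,R_i)$ to push forward inductively, and using $\Lip(\varphi^{n-1})\leq\Lip(\varphi)^{n-1}$.

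The only delicate point is the iterated use of the Markov property, which is clean here because the interiority of $x$ in $R$ automatically propagates to interiority of $\varphi^i(x)$ in each $R_{x_i}$; once this is noted, the rest is an exercise in bookkeeping and applying (B4) to transport the stable/unstable structure through the iterate of $\varphi$.
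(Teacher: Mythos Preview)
Your proof is correct and rests on the same core idea as the paper's: the local stable set of $R\in\mathcal{R}_n$ contains the $\varphi^{n-1}$-image of a local stable set from $\mathcal{R}_1$, so the Lipschitz bound on $\varphi^{-1}$ controls the shrinkage of diameters. The organization differs, however. The paper proceeds by induction on $n$: writing $R\in\mathcal{R}_{n+1}$ as $[\varphi^{-1}(R_j),\varphi(R_i)]$ with $R_i,R_j\in\mathcal{R}_n$ (which uses that each $\mathcal{R}_n$ is itself a Markov partition, Lemma~\ref{lem:inductiveMarkovpartition}), it establishes the exact identity $X^s(x,R)=\varphi(X^s(\varphi^{-1}(x),R_i))$ via the local product structure, and thereby proves the stronger pointwise statement $\diam X^s(x,R)\geq \Lip(\varphi^{-1})^{-n+1}\underline{\diam}_s\mathcal{R}_1$ for \emph{every} $x\in R$. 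Your direct iteration of the $\mathcal{R}_1$-level Markov axiom (3) is slightly more elementary in that it bypasses Lemma~\ref{lem:inductiveMarkovpartition}; it yields only the inclusion $S_n\subset X^s(x,R)$ for a chosen interior point $x$, but that is all the lemma requires.
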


\begin{proof}
We just prove the first inequality and the second is similar. Note that the case $n=0$ is trivial. Instead of proving the statement directly we will prove, by induction on $n$, that for every $R\in \mathcal{R}_n$ and every $x\in R$ we have
\begin{align*}
\text{diam}(X^s(x,R))&\geq \Lip(\varphi^{-1})^{-n+1}\underline{\text{diam}}_{s}\mathcal{R}_1.\\
\intertext{For $n=1$ the above inequality is true from the definition of $\underline{\text{diam}}_{s}\mathcal{R}_1$. Assume that it holds for some $n\in\mathbb N$ and we claim that for every $R\in \mathcal{R}_{n+1}$ and every $x\in R$ it holds that }
\text{diam}(X^s(x,R))&\geq \Lip(\varphi^{-1})^{-n}\underline{\text{diam}}_{s}\mathcal{R}_1.
\end{align*}

Write $R=[\varphi^{-1}(R_j),\varphi(R_i)]$, for $R_i,R_j\in \mathcal{R}_n$, and with a similar argument as in equation \ref{eq:localunstablepart} one has $X^s(x,R)=\varphi(X^s(\varphi^{-1}(x),R_i)).$ Then
\begin{align*}
\text{diam}(X^s(x,R))&= \text{diam}(\varphi(X^s(\varphi^{-1}(x),R_i)))\\
& \geq \Lip(\varphi^{-1})^{-1}\text{diam}(X^s(\varphi^{-1}(x),R_i))\\
& \geq \Lip(\varphi^{-1})^{-1}\Lip(\varphi^{-1})^{-n+1}\underline{\text{diam}}_{s}\mathcal{R}_1\\
&= \Lip(\varphi^{-1})^{-n}\underline{\text{diam}}_{s}\mathcal{R}_1
\end{align*}
where the first inequality is true because $\varphi^{-1}$ is Lipschitz. 
\end{proof}

From Remark \ref{rem:Wielerremark} we obtain the following.

\begin{cor}\label{cor:Wielerdiameters}
For any irreducible Wieler solenoid $(X,\varphi)$ it holds that
$$\underline{\text{diam}}(\mathcal{R}_n)\geq \lambda_X^{-n+1}\underline{\text{diam}}_{s}\mathcal{R}_1,$$ where $\lambda_X$ is the contraction constant.
\end{cor}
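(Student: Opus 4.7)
The plan is to invoke Lemma \ref{lem: lower bounds in diameter} directly, after identifying the relevant Lipschitz constant from Remark \ref{rem:Wielerremark}. Recall that an irreducible Wieler solenoid is (up to topological conjugacy) of the form $(\widehat{Y},\widehat{g})$, equipped with the metric $\widehat{d}$ defined in \eqref{eq:Wielermetric}. The key structural fact I would use is the observation in Remark \ref{rem:Wielerremark} that, with respect to this metric, the inverse map $\widehat{g}^{-1}$ is $\lambda_{\widehat{Y}}$-Lipschitz, where $\lambda_{\widehat{Y}}=\gamma^{-1}$ is the contraction constant of the Smale space. In our notation this says precisely that $\Lip(\varphi^{-1})\leq \lambda_X$.

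Next I would note the trivial reverse inequality $\Lip(\varphi^{-1})\geq \lambda_X$, which holds for any Smale space because, by the contraction axiom (C2), the map $\varphi^{-1}$ expands distances on local unstable sets by a factor of at least $\lambda_X$; equivalently, $\varphi$ contracts distances on local stable sets by at most $\lambda_X^{-1}$, forcing $\Lip(\varphi^{-1})\geq \lambda_X$. Combining the two inequalities yields $\Lip(\varphi^{-1})=\lambda_X$.

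With this identification in hand, the conclusion is immediate: since $\varphi^{-1}$ is Lipschitz, Lemma \ref{lem: lower bounds in diameter} applies and gives
\[
\underline{\text{diam}}(\mathcal{R}_n)\geq \Lip(\varphi^{-1})^{-n+1}\,\underline{\text{diam}}_{s}\mathcal{R}_1 = \lambda_X^{-n+1}\,\underline{\text{diam}}_{s}\mathcal{R}_1,
\]
which is the claimed bound. There is no genuine obstacle in this argument; the only subtlety worth highlighting is that the estimate of Lemma \ref{lem: lower bounds in diameter} is stated in terms of $\Lip(\varphi^{-1})$, which in general can be strictly larger than $\lambda_X$, so without the special metric structure of Wieler solenoids one could not replace $\Lip(\varphi^{-1})$ by $\lambda_X$. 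The content of the corollary is therefore the recognition that Wieler's metric achieves this optimal value.
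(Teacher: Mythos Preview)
Your proof is correct and follows the same approach as the paper, which simply cites Remark \ref{rem:Wielerremark} (giving $\Lip(\varphi^{-1})=\lambda_X$) and then applies Lemma \ref{lem: lower bounds in diameter}. Your additional justification of the reverse inequality $\Lip(\varphi^{-1})\geq \lambda_X$ is not strictly needed (since $\Lip(\varphi^{-1})\leq \lambda_X$ already yields $\Lip(\varphi^{-1})^{-n+1}\geq \lambda_X^{-n+1}$), but it does no harm.
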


\section{Semi-conformal Smale spaces and Ahlfors regularity}\label{sec:SemiconformalSmalespaces}

In this section we study regularity properties of the Bowen measure and derive dimension estimates for Smale spaces. In particular we focus on the following class of Smale spaces.

\begin{definition}\label{def:semiconformalSmalespace}
A Smale space $(X,\varphi)$ is called \textit{semi-conformal} if $\lambda_X=\ell_X$, where $\lambda_X>1$ is its contraction constant and $\ell_X=\min\{\Lip(\varphi),\Lip(\varphi^{-1})\}$.
\end{definition} 

By definition a self-similar Smale space is semi-conformal. Also, any Wieler solenoid is semi-conformal, see Remark \ref{rem:Wielerremark}. Note that if $\varphi^{-1}$ is $\lambda_X$-Lipschitz then $\varphi$ acts as the $\lambda_X^{-1}$-multiple of an isometry on local stable sets. The dual happens if $\varphi$ is $\lambda_X$-Lipschitz. In what follows $(\mathcal{R}_n)_{n\geq 0}$ will be a refining sequence of Markov partitions (see (\ref{eq:Markov refining sequence})) for an irreducible or mixing Smale space, with $\diam(\mathcal{R}_1)\leq \varepsilon''_X/2$.

\begin{prop}\label{prop:semiconformalmeasure}
Let $(X,\varphi)$ be a mixing semi-conformal Smale space. There is a constant $K>0$ such that, for every $n\in \mathbb N$ and $R\in \mathcal{R}_n$, the Bowen measure satisfies $$K^{-1}\diam(R)^{s_0}\leq \mu_{\Bow}(R)\leq K\diam(R)^{s_0}$$ where $s_0=2\ent(\varphi)/\log(\lambda_X)$.
\end{prop}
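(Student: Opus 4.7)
The plan is to pull the Bowen measure back through Bowen's factor map to the shift, compute it there using Parry's formula, and match the resulting estimate against the two-sided diameter control supplied by semi-conformality.

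First I would combine conditions (1) and (6) of Theorem \ref{thm:theoremgraphSmalespaces}. Semi-conformality says $\ell_X = \lambda_X$, so either $\varphi$ or $\varphi^{-1}$ is $\lambda_X$-Lipschitz, and both hypotheses are satisfied with the same base $\lambda_X$. Hence there exist constants $\theta \geq \zeta > 0$ such that
\[
\lambda_X^{-n+1}\zeta \leq \underline{\diam}(\mathcal{R}_n) \leq \overline{\diam}(\mathcal{R}_n) \leq \lambda_X^{-n+1}\theta
\]
for every $n \in \mathbb{N}$; in particular $\diam(R)$ is comparable to $\lambda_X^{-n}$, uniformly in $R \in \mathcal{R}_n$. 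This step is crucial: without semi-conformality, the lower and upper rates differ and the two-sided bound collapses.

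Second, I would transfer the problem to the shift via Bowen's factor map $\pi_M:(\Sigma_M,\sigma_M)\to(X,\varphi)$ from Theorem \ref{thm:Bowen factor map}. Writing $R = \bigcap_{|i|\leq n-1}\varphi^{-i}(R_{x_i})$ with $R_{x_i}\in \mathcal{R}_1$, the rectangle $R$ coincides, off the $\varphi$-orbit of $\partial\mathcal{R}_1$, with $\pi_M(C_R)$, where $C_R$ is the symmetric cylinder of rank $2n-1$ cut out by $x_{-n+1},\ldots,x_{n-1}$. Since $\pi_M$ is a measure-theoretic isomorphism of the Bowen measures and the boundary orbit is $\mu_{\Bow}$-null, this yields $\mu_{\Bow}(R) = \mu_{\Par}(C_R)$, using that on the mixing shift the Bowen measure is the Parry measure.

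Third, Lemma \ref{lem:Parrymeasure} supplies a constant $D$ with $D^{-1}\lambda_{\max}^{-2n} \leq \mu_{\Par}(C_R) \leq D\lambda_{\max}^{-2n}$, where $\lambda_{\max}$ is the Perron-Frobenius eigenvalue of the transition matrix $M$. The classical identity $\ent(\sigma_M) = \log\lambda_{\max}$ together with Lemma \ref{thm:same entropy} gives $\lambda_{\max} = e^{\ent(\varphi)}$, hence $\mu_{\Bow}(R)$ is comparable to $e^{-2n\ent(\varphi)}$. By the definition $s_0 = 2\ent(\varphi)/\log\lambda_X$ one has $\lambda_X^{-ns_0} = e^{-2n\ent(\varphi)}$, so the first step shows that $\diam(R)^{s_0}$ is likewise comparable to $e^{-2n\ent(\varphi)}$. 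Matching the two comparabilities produces the desired constant $K$. The only mildly delicate point is the identity $\mu_{\Bow}(R) = \mu_{\Par}(C_R)$: one has $\pi_M(C_R)\subseteq R$ automatically, and equality after excising the iterates of $\partial\mathcal{R}_1$, a set which Theorem \ref{thm:Bowen factor map}(2)--(3) guarantees is $\mu_{\Bow}$-null. Everything else is bookkeeping of constants.
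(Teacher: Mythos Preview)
Your proposal is correct and follows essentially the same route as the paper: obtain the two-sided diameter bound $\lambda_X^{-n+1}\zeta \leq \diam(R) \leq \lambda_X^{-n+1}\theta$ from semi-conformality via Theorem~\ref{thm:theoremgraphSmalespaces}, transfer $\mu_{\Bow}(R)$ to $\mu_{\Par}(C_R)$ using the measure-theoretic isomorphism in Theorem~\ref{thm:Bowen factor map}, apply Lemma~\ref{lem:Parrymeasure} to get $\mu_{\Par}(C_R)\asymp \lambda_{\max}^{-2n}$, and match exponents using $s_0=\log_{\lambda_X}(\lambda_{\max}^2)$. Your added remark that $\pi_M(C_R)=R$ off the $\mu_{\Bow}$-null boundary orbit is a welcome clarification of a point the paper leaves implicit.
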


\begin{proof}
Since $(X,\varphi)$ is mixing, the corresponding topological Markov chain $(\Sigma_M,\sigma_M)$ will be mixing, too. Recall that the Bowen measure on $(\Sigma_M,\sigma_M)$ is the Parry measure $\mu_{\Par}$. 

Moreover, since $(X,\varphi)$ is semi-conformal, from Theorem \ref{thm:theoremgraphSmalespaces} and Lemma \ref{lem: lower bounds in diameter}, we obtain constants $\theta \geq \zeta >0$ such that 
\begin{equation}\label{eq:semiconformalinequality}
\lambda_X^{-n+1}\zeta \leq \diam(R)\leq \lambda_X^{-n+1}\theta,
\end{equation}
for every $n\in \mathbb N$ and $R\in \mathcal{R}_n$.

Let $R\in \mathcal{R}_n$ and $C\in \Sigma_M$  be the symmetric cylinder set of rank $2n-1$ such that $\pi_M(C)=R$. Theorem \ref{thm:Bowen factor map} says that $\pi_M$ is a metric isomorphism between $(\Sigma_M,\sigma_M,\mu_{\Par})$ and $(X,\varphi, \mu_{\Bow})$, hence $\mu_{\Bow}(R)=\mu_{\Par}(C)$. From Lemma \ref{lem:Parrymeasure} there is $D>0$ so that $$D^{-1}\lambda^{-2n}_{\max}\leq \mu_{\Par}(C)\leq D\lambda^{-2n}_{\max}$$ where $\lambda_{\max}$ is the Perron-Frobenius eigenvalue. Then using the inequality (\ref{eq:semiconformalinequality}) together with $$K=\max \{ D (\theta \lambda_X)^{s_0}, D (\zeta \lambda_X)^{-s_0}\}$$ and  $$s_0=2\ent(\varphi)/\log(\lambda_X)=\log_{\lambda_X}(\lambda_{\max}^2)$$ we obtain the result. 
\end{proof}

\begin{remark}\label{rem:MoranMarkov}
Results similar to Proposition \ref{prop:semiconformalmeasure} have been obtained in the setting of Moran constructions \cite[Def. 2.2]{KR} built from iterated function systems on complete metric spaces \cite{KR,KV}. Roughly speaking, a Moran construction $\mathcal{M}$ on a complete metric space $Z$, is a countable poset (by inclusion) of closed, bounded subsets of $Z$ with positive diameter, that has a unique maximum, the infimum of every chain is a point in $Z$, and where the elements of $\mathcal{M}$ correspond to finite words occurring in an one-sided subshift on finitely many symbols. Each Moran construction on $Z$ describes a limit set in $Z$, and one aims to control the diameter of the sets in $\mathcal{M}$. Measures that satisfy the inequality of Proposition \ref{prop:semiconformalmeasure} are called semi-conformal \cite{KV}. We note that the aforementioned notion of Moran constructions, although it has similarities, is different than the one used in \cite{Barreira, Pesin} that works in the Euclidean setting. 

We believe that refining sequences of Markov partitions on Smale spaces correspond to some kind of inverse limits of Moran constructions, as these refining sequences produce two-sided subshifts. We intend to investigate this connection in a future project. 
\end{remark}

We now introduce a homogeneity property for Smale spaces. It is related to the \textit{uniform finite clustering property} (UFCP) for Moran constructions used in \cite{KR,KV}, but is adjusted in the setting of refining sequences of Markov partitions.

\begin{definition}\label{def:UFCP}
A refining sequence of Markov partitions $(\mathcal{R}_n)_{n\geq 0}$ for a Smale space $(X,\varphi)$ satisfies the \textit{uniform finite clustering property} (UFCP) if 
$$
\sup\limits_{x\in X} \sup\limits_{r}\#\num_{\mathcal{R}_{n_r}}(\overline{B}(x,r))<\infty,
$$ 
where $r$ takes values in $(0,\diam(X))$ and $n_r=\min\{n\in \mathbb N: \overline{\diam}( \mathcal{R}_n) \leq r\}$. 
\end{definition}

We are interested in semi-conformal Smale spaces which admit refining sequences of Markov partitions that satisfy UFCP. In particular the following holds.

\begin{prop}\label{prop:UFCP}
Any refining sequence of Markov partitions $(\mathcal{R}_n)_{n\geq 0}$ for an irreducible self-similar Smale space $(X,\varphi)$ satisfies UFCP.
\end{prop}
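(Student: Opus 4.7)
The plan is to leverage the exact rescaling of self-similarity to reduce the counting problem to a bounded combinatorial count, via a bracket decomposition of the ball and a product count inside a parent rectangle from $\mathcal{R}_1$. Self-similarity gives $\lambda_X = \ell_X = \Lambda_X$, and combining Theorem \ref{thm:theoremgraphSmalespaces}(1) with both halves of Lemma \ref{lem: lower bounds in diameter} yields constants $\zeta \leq \theta$ with $\lambda_X^{-n+1}\zeta \leq \underline{\diam}(\mathcal{R}_n) \leq \overline{\diam}(\mathcal{R}_n) \leq \lambda_X^{-n+1}\theta$ for every $n$. The defining condition $\overline{\diam}(\mathcal{R}_{n_r-1}) > r \geq \overline{\diam}(\mathcal{R}_{n_r})$ then forces $r \asymp \lambda_X^{-n_r+1}$ with absolute constants, so every $R\in\mathcal{R}_{n_r}$ has $\diam(R) \asymp r$.

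Fix $x$ and $r$ with $n=n_r$. For $r$ comparable to $\diam(X)$ the index $n_r$ is bounded and UFCP is trivial, so I would restrict to small $r$. Every $R\in\mathcal{R}_n$ meeting $\overline{B}(x,r)$ satisfies $R\subset\overline{B}(x,2r)$ by the diameter bound. Lemma \ref{lem:Lipschitzbracket}, which applies since $\Lambda_X<\infty$, then expresses each $y\in\overline{B}(x,2r)$ as $y=[y^u,y^s]$ with $y^u\in X^u(x,2A_Xr)$ and $y^s\in X^s(x,2A_Xr)$, so that
$$
\overline{B}(x,2r) \subset B^+ := [X^u(x,2A_Xr),\, X^s(x,2A_Xr)],
$$
a bracket rectangle of diameter comparable to $r$. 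For small enough $r$, only boundedly many rectangles of $\mathcal{R}_1$ meet $B^+$ by Lemma \ref{lem:neighbours} applied at level one; fix a single such parent $R_\ast\in\mathcal{R}_1$ and reduce to bounding the number of descendants $R\in\mathcal{R}_n$ of $R_\ast$ with $R\cap B^+\cap R_\ast\neq\varnothing$.

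Inside $R_\ast$, every descendant has the product form $R=[X^u(y,R),X^s(y,R)]$. Identifying stable and unstable slices across $R_\ast$ via the bracket, self-similarity forces each reference stable slice (resp.\ unstable slice) of a descendant to have diameter comparable to $\lambda_X^{-(n-1)}\asymp r$, while the stable and unstable projections of $B^+\cap R_\ast$ have diameter $\asymp r$ by Lemma \ref{lem:Lipschitzbracket}. The count of descendants meeting $B^+\cap R_\ast$ then factorises into (number of reference stable slices meeting the stable projection) times (number of reference unstable slices meeting the unstable projection), times the multiplicity penalty $\mul(\mathcal{R}_n)\leq(\#\mathcal{R}_1)^2$ from Proposition \ref{prop:mainresultMarkovpartitions} to absorb boundary ambiguity. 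Each one-directional slice count is bounded by an absolute constant via a chaining argument: if two reference stable slices both meet the stable projection, pairing each with a fixed compatible reference unstable slice (whose existence follows from the Markov property inside $R_\ast$) lifts them to rectangles in $\mathcal{R}_n$ that are mutual neighbours, and the Neighbouring Rectangles Lemma \ref{lem:neighbours} bounds the number of such lifts uniformly. The main obstacle is precisely this lifting step, since reference slices are not themselves members of a Markov partition on a lower-dimensional object; the resolution is to show that a uniform choice of compatible unstable slice is available from the Markov transitions inherited from $R_\ast$ and that the resulting rectangles genuinely intersect, which together with the Markov neighbour bound closes the argument. Multiplying the slice counts and summing over the boundedly many parents $R_\ast$ then produces the uniform upper bound on $\#\num_{\mathcal{R}_{n_r}}(\overline{B}(x,r))$.
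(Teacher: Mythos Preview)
Your approach is genuinely different from the paper's, and the gap you yourself flag as ``the main obstacle'' is real: the lifting step does not work as written. If you pair two reference stable slices $S_1,S_2$ with a common unstable slice $U$ to obtain rectangles $R_i=[U,S_i]$, then by injectivity of the bracket on $X^u(x_*,R_*)\times X^s(x_*,R_*)$ one has $R_1\cap R_2\neq\varnothing$ \emph{if and only if} $S_1\cap S_2\neq\varnothing$. So the lifting buys you nothing: the ``mutual neighbour'' relation among the $R_i$ is exactly the neighbour relation among the $S_i$, which is what you were trying to establish. The Neighbouring Rectangles Lemma~\ref{lem:neighbours} bounds the neighbours of a fixed element of $\mathcal{R}_n$, not the number of elements meeting an arbitrary set of comparable diameter such as your $B^+$ or its stable projection; without a volume-type packing argument (which the paper explicitly avoids, since $X$ need not be Euclidean) there is no a priori reason many slices of diameter $\asymp r$ cannot all meet a set of diameter $\asymp r$ while being pairwise disjoint. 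There is also a secondary issue: a single $U$ compatible with \emph{all} relevant $S_i$ (so that each $[U,S_i]$ actually lies in $\mathcal{R}_n$) need not exist, and your appeal to ``Markov transitions inherited from $R_*$'' does not produce one.

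The paper's proof sidesteps the whole slice discussion by using the $\delta$-fattened covers of Theorem~\ref{thm:theoremgraphSmalespaces}. Because $(X,\varphi)$ is self-similar, conditions (1) and (7) of that theorem give both $\overline{\diam}(\mathcal{R}_n^\delta)\leq\lambda_X^{-n+1}\theta$ and $\operatorname{Leb}(\mathcal{R}_n^\delta)\geq\lambda_X^{-n+1}\eta$ with the \emph{same} base $\lambda_X$. The Lebesgue number places $\overline{B}(x,r)$ inside an actual element $T^\delta\in\mathcal{R}_{\ell_r}^\delta$ at a level $\ell_r$ that differs from $m_r\geq n_r$ by at most $2+\log_{\lambda_X}(2\theta/\eta)$. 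Now Lemma~\ref{lem:neighbours} (via condition (3) of Theorem~\ref{thm:theoremgraphSmalespaces}) applies directly to $T^\delta$, and the bounded-depth descendant count (condition (4) and finite entropy) finishes the bound. The essential idea you are missing is that the positive Lebesgue number of the open covers replaces $B^+$ by a genuine Markov rectangle at a controlled level, which is exactly what makes the neighbouring-rectangles bound applicable.
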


\begin{proof}
Let $x\in X$ and $0<r<\diam(X)$. We claim that $\#\num_{\mathcal{R}_{n_r}}(\overline{B}(x,r))$ is bounded above by a constant which does not depend on $x$ and $r$. From Theorem \ref{thm:theoremgraphSmalespaces}, for sufficiently small $\delta >0$, we obtain the $\delta$-fattening $(\mathcal{R}^{\delta}_n)_{n\geq 0}$ and constants $0<\eta \leq \theta$ such that $$\overline{\diam}(\mathcal{R}_n^{\delta})\leq \lambda_X^{-n+1}\theta\enspace \text{and}\enspace  \Leb(\mathcal{R}_n^{\delta})\geq \lambda_X^{-n+1}\eta$$ for every $n\in \mathbb N$. Note that it suffices to prove the statement for $0<r < \eta/2$. Fix such $r$ and let $$m_r=\min \{n \in \mathbb N: \lambda_X^{-n+1}\theta \leq r\}.$$ It is easy to check that $m_r=1+\ceil{\log_{\lambda_X}(\theta /r)}$ and that $m_r\geq n_r$. Therefore, $\#\num_{\mathcal{R}_{n_r}}(\overline{B}(x,r))\leq \#\num_{\mathcal{R}_{m_r}}(\overline{B}(x,r))$. Moreover, from Lemma \ref{lem: injective fattening} the map $$\num_{\mathcal{R}_{m_r}}(\overline{B}(x,r))\to \num_{\mathcal{R}^{\delta}_{m_r}}(\overline{B}(x,r))$$ given by $R\to R^{\delta}$ is injective. In particular, 
$$\num_{\mathcal{R}_{m_r}}(\overline{B}(x,r))\leq \num_{\mathcal{R}^{\delta}_{m_r}}(\overline{B}(x,r)).
$$

Define $\ell_r =1+ \floor{\log_{\lambda_X}(\eta/(2r))}$ and one has $2r\leq \lambda_X^{-\ell_r+1}\eta \leq \Leb(\mathcal{R}_{\ell_r}^{\delta}).$ Then there is some $T^{\delta}\in \mathcal{R}_{\ell_r}^{\delta}$ which contains $\overline{B}(x,r)$ and clearly 
$$
\num_{\mathcal{R}^{\delta}_{\ell_r}}(\overline{B}(x,r))\subset \num_{\mathcal{R}^{\delta}_{\ell_r}}(T^{\delta}),
$$
where $\# \num_{\mathcal{R}^{\delta}_{\ell_r}}(T^{\delta}) \leq N_{\Pi}$, the uniform upper bound in the number of neighbouring rectangles, see Theorem \ref{thm:theoremgraphSmalespaces}.

We will work in a similar fashion as in Proposition \ref{prop:geometricembedding}. First we note that $m_r\geq \ell_r$ and every $R^{\delta}\in \num_{\mathcal{R}^{\delta}_{m_r}}(\overline{B}(x,r))$ is a descendant of depth $m_r-\ell_r$ of some element in $\num_{\mathcal{R}^{\delta}_{\ell_r}}(\overline{B}(x,r))$. We have $m_r-\ell_r \leq 2+\log_{\lambda_X}(2\theta / \eta)$ and since there is also a uniform upper bound $C_{\Pi}$ (see Theorem \ref{thm:theoremgraphSmalespaces}) on the number of descendants, due to the finite entropy, it holds 
$$
\# \num_{\mathcal{R}^{\delta}_{m_r}}(\overline{B}(x,r))\leq N_{\Pi}C_{\Pi}^{2+\log_{\lambda_X}(2\theta / \eta)}.
$$
As a result, 
$$
\#\num_{\mathcal{R}_{n_r}}(\overline{B}(x,r))\leq N_{\Pi}C_{\Pi}^{2+\log_{\lambda_X}(2\theta / \eta)}. \qedhere
$$
\end{proof}

We are now in a position to prove one of the main results of this paper. The power of this can be seen in Corollary \ref{cor:TopconjAhlfors}, where we obtain a sweeping result for all mixing Smale spaces.

\begin{thm}\label{thm:AhflorsregularitySmalespaces}
Let $(X,\varphi)$ be a mixing semi-conformal Smale space. Assume there is a refining sequence of Markov partitions that satisfies the UFCP. Then the Bowen measure is Ahlfors $s_0$-regular and therefore, $$\dim_H X= \dim_B X= \dim_A X=s_0$$ where $s_0=2\ent(\varphi)/\log(\lambda_X).$ Moreover, the $s_0$-dimensional Hausdorff measure is strictly positive.
\end{thm}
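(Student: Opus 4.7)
The plan is to prove Ahlfors $s_0$-regularity of the Bowen measure $\mu_{\Bow}$ directly by matching upper and lower bounds on $\mu_{\Bow}(\overline{B}(x,r))$, with the rectangles in $\mathcal{R}_{n_r}$ playing the role of a ``distinguished scale'' adapted to $r$. All dimensional consequences and the strict positivity of $\mathcal{H}^{s_0}$ are then immediate from Proposition \ref{prop:equalityofdimensions} and Remark \ref{rem:Ahlforsrem}. Throughout, I will fix a refining sequence of Markov partitions $(\mathcal{R}_n)_{n\geq 0}$ satisfying UFCP with $\overline{\diam}(\mathcal{R}_1)\leq \varepsilon''_X/2$, and use the constants $\theta \geq \zeta >0$ from Theorem \ref{thm:theoremgraphSmalespaces} and Lemma \ref{lem: lower bounds in diameter}, which by semi-conformality give
$$
\lambda_X^{-n+1}\zeta \leq \underline{\diam}(\mathcal{R}_n) \leq \overline{\diam}(\mathcal{R}_n) \leq \lambda_X^{-n+1}\theta
$$
for every $n\in \mathbb N$. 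I will also use the constant $K>0$ from Proposition \ref{prop:semiconformalmeasure}, and write $N$ for the uniform upper bound provided by UFCP.

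For the upper bound, fix $x\in X$ and $r\in(0,\diam(X))$ small enough that $n_r\geq 1$; the finitely many larger scales can be absorbed into the final constant since $\mu_{\Bow}$ is a probability measure. By UFCP, the ball $\overline{B}(x,r)$ meets at most $N$ rectangles of $\mathcal{R}_{n_r}$, and by definition of $n_r$ each such rectangle $R$ satisfies $\diam(R)\leq \overline{\diam}(\mathcal{R}_{n_r})\leq r$. Combining these observations with Proposition \ref{prop:semiconformalmeasure} yields
$$
\mu_{\Bow}(\overline{B}(x,r)) \;\leq\; \sum_{R\in \num_{\mathcal{R}_{n_r}}(\overline{B}(x,r))} \mu_{\Bow}(R) \;\leq\; N K r^{s_0}.
$$

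For the lower bound, pick any $R_0\in \mathcal{R}_{n_r}$ containing $x$, which exists because $\mathcal{R}_{n_r}$ covers $X$. Since $\diam(R_0)\leq r$, we immediately get $R_0\subset \overline{B}(x,r)$. The key step is to produce a uniform lower bound on $\diam(R_0)$ in terms of $r$. By minimality of $n_r$ and Theorem \ref{thm:theoremgraphSmalespaces}(1),
$$
r \;<\; \overline{\diam}(\mathcal{R}_{n_r-1}) \;\leq\; \lambda_X^{-n_r+2}\theta,
$$
so $\lambda_X^{-n_r+1}>r/(\lambda_X\theta)$, and semi-conformality through Lemma \ref{lem: lower bounds in diameter} gives
$$
\diam(R_0) \;\geq\; \underline{\diam}(\mathcal{R}_{n_r}) \;\geq\; \lambda_X^{-n_r+1}\zeta \;>\; \frac{\zeta}{\lambda_X\theta}\,r.
$$
Proposition \ref{prop:semiconformalmeasure} then yields
$$
\mu_{\Bow}(\overline{B}(x,r)) \;\geq\; \mu_{\Bow}(R_0) \;\geq\; K^{-1}\Bigl(\frac{\zeta}{\lambda_X\theta}\Bigr)^{s_0} r^{s_0}.
$$
Combining the two bounds and adjusting constants to absorb the large-$r$ regime proves Ahlfors $s_0$-regularity of $\mu_{\Bow}$. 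The equalities $\dim_H X = \dim_B X = \dim_A X = s_0$ then follow from Propositions \ref{prop:alldimensions} and \ref{prop:equalityofdimensions}, while strict positivity of $\mathcal{H}^{s_0}$ follows from Remark \ref{rem:Ahlforsrem}.

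I expect the only nontrivial step is the lower bound: it hinges on the semi-conformal assumption being precisely what is needed to upgrade the qualitative lower diameter bound of Lemma \ref{lem: lower bounds in diameter} to one with the \emph{same} exponent $\lambda_X$ as in the upper bound, so that $\underline{\diam}(\mathcal{R}_n)$ and $\overline{\diam}(\mathcal{R}_n)$ are comparable up to a multiplicative constant. Without this comparability one would only get doubling of $\mu_{\Bow}$, not Ahlfors regularity. The UFCP hypothesis is then the exact ingredient required on the upper-bound side, guaranteeing that the distinguished scale $n_r$ sees only boundedly many rectangles near $x$ regardless of how the metric distorts the product structure. Everything else is bookkeeping around the relation between $n_r$ and $r$.
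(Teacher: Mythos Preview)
Your proof is correct and follows essentially the same approach as the paper: the upper bound via UFCP and Proposition~\ref{prop:semiconformalmeasure} is identical, and for the lower bound the paper finds a rectangle inside $\overline{B}(x,r)$ by descending along a path in the approximation graph $\Pi$ until the first level where the rectangle fits, whereas you go directly to level $n_r$ and use minimality of $n_r$ to bound $\underline{\diam}(\mathcal{R}_{n_r})$ from below --- a slightly more streamlined but equivalent argument (just note you need $n_r\geq 2$, not merely $n_r\geq 1$, for the step $\overline{\diam}(\mathcal{R}_{n_r-1})\leq \lambda_X^{-n_r+2}\theta$ to invoke Theorem~\ref{thm:theoremgraphSmalespaces}(1), which is again absorbed by restricting to small $r$).
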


\begin{proof}
Suppose $(\mathcal{R}_n)_{n\geq 0}$ is a refining sequence of Markov partitions that satisfies the UFCP and let $M>0$ such that $\#\num_{\mathcal{R}_{n_r}}(\overline{B}(x,r))\leq M$ for every $x\in X$ and $0<r<\diam(X)$. Now fix some $x\in X$ and we want to estimate the measure $\mu_{\Bow}(\overline{B}(x,r))$ for $0<r<\diam(X)$, but since $\diam(X)<\infty$ it suffices to consider $0<r < \underline{\diam}(\mathcal{R}_1)/2$. Using the constant $K>0$ from Proposition \ref{prop:semiconformalmeasure} we obtain
\begin{align*}
\mu_{\Bow}(\overline{B}(x,r))&\leq \mu_{\Bow}(\bigcup \num_{\mathcal{R}_{n_r}}(\overline{B}(x,r)))\\
&\leq \sum_{R} \mu_{\Bow}(R)\\
&\leq \sum_{R} K\diam(R)^{s_0}\\
&\leq M K r^{s_0}
\end{align*}
where the sum is taken over all $R\in \num_{\mathcal{R}_{n_r}}(\overline{B}(x,r))$.

For the lower bound, take an infinite path in the approximation graph $\mathcal{P}_{\Pi}$ of $(\mathcal{R}_n)_{n\geq 0}$ that converges to $x$. From this path let $R$ be the first rectangle that is contained in $\overline{B}(x,r)$. Then its first ancestor $\widehat{R}$ will not be contained and hence $\diam(\widehat{R})>r$. Now we observe that $R\in \mathcal{R}_n$ for some $n\geq 2$, for if $R\in \mathcal{R}_1$ then $\underline{\diam}(\mathcal{R}_1)\leq \diam(R)\leq 2r <\underline{\diam}(\mathcal{R}_1).$ This means $\widehat{R}\in \mathcal{R}_{n-1}$ and $n-1\geq 1$. Therefore, we can apply inequality (\ref{eq:semiconformalinequality}) to obtain that $\diam(R)\geq c \diam(\widehat{R})$ for some $c\leq \zeta /(\lambda_X \theta)$. 

As a result, $$\mu_{\Bow}(\overline{B}(x,r))\geq \mu_{\Bow}(R)\geq K^{-1}\diam(R)^{s_0}\geq K^{-1}c^{s_0}r^{s_0},$$ and hence the Bowen measure is Ahlfors $s_0$-regular. The rest follows from Proposition \ref{prop:equalityofdimensions} and Remark \ref{rem:Ahlforsrem}.
\end{proof}

\begin{remark}\label{rem:Hausdorff_equals_box}
We now explain Theorem \ref{thm:AhflorsregularitySmalespaces} by focusing on a mixing self-similar Smale space $(X,d,\varphi)$. First, we should note that the coincidence of the Hausdorff and box-counting dimensions can be equivalently obtained using Barreira's techniques. Specifically, one can use Theorem 3.15 in \cite{Barreira}, which concerns the dimension theory of Smale spaces with bi-Lipschitz local product structure and with asymptotically conformal dynamics on stable and unstable sets. However, Barreira's result cannot be related to Ahlfor regularity or Assouad dimension. 

The stable and unstable sets of $(X,d,\varphi)$ have asymptotically conformal dynamics in a strong sense. Moreover, using part (4) of Proposition \ref{prop:mainresultMarkovpartitions} and the proof of Lemma \ref{lem: lower bounds in diameter}, we obtain the dimensions of local stable and unstable sets as the roots $r_s=r_u=\ent(\varphi)/\log(\lambda_X)$ of Bowen's equation for the topological pressure. 

What needs to be observed is that the local product structure is bi-Lipschitz. Then, we can add the dimensions and obtain $\dim_H X=\dim_B X=2\ent(\varphi)/\log(\lambda_X).$ From \cite[Lemma 4.3]{Artigue}, every stable and unstable holonomy map in a sufficiently small rectangle is Lipschitz. In particular, for every $\varepsilon>0$ there is $c>0$ so that, for all $y\in X$ and $x,x' \in X^u(y,c),\, z,z' \in X^s(y,c)$, it holds that 
\begin{equation}\label{eq:Hausdorff_equals_box_1}
\begin{split}
d([x,z],[x',z])&\leq (1+\varepsilon) d(x,x')\\
d([x,z],[x,z'])&\leq (1+\varepsilon) d(z,z').
\end{split}
\end{equation}
Moreover, the self-similar version of Lemma \ref{lem:Lipschitzbracket} (see \cite[Remark 2.22]{Artigue}) is that for a (possibly) smaller $c>0$, if $z,w\in X$ with $d(z,w)\leq c$, then 
\begin{equation}\label{eq:Hausdorff_equals_box_2}
\begin{split}
d(z,[z,w])&\leq (1+\varepsilon)d(z,w)\\
d(w,[z,w])&\leq (1+\varepsilon)d(z,w).
\end{split}
\end{equation}
Fix $\varepsilon>0$ and let $c>0$ be small enough so that both (\ref{eq:Hausdorff_equals_box_1}) and (\ref{eq:Hausdorff_equals_box_2}) hold. Then, it is straightforward to see that, for every $y\in X$, if $X^u(y,c)\times X^s(y,c)$ is equipped with the product metric, the bracket map $[\cdot , \cdot]: X^u(y,c)\times X^s(y,c)\to X$ is bi-Lipschitz onto its image, with a small constant depending on $\varepsilon$. 

Further, from Theorem \ref{thm:AhflorsregularitySmalespaces} we obtain that the Assouad dimension of a self-similar Smale space $(X,d,\varphi)$ is finite. Note that this weaker result can also be obtained by Theorem \ref{thm:theoremgraphSmalespaces} and Proposition \ref{prop:geometricembedding}. Now, Assouad's Theorem \ref{thm:Assouad} asserts that for every $\varepsilon \in (0,1)$, the snowflaked metric space $(X,d^{\varepsilon})$ is bi-Lipschitz embeddable in a Euclidean space. Note that $(X,d^{\varepsilon},\varphi)$ is still a self-similar Smale space. However, it is not clear whether the embedding is a Smale space, because the contraction axioms (C1) and (C2) depend on the Lipschitz constant of the embedding. But even if it were a Smale space, the embedding may no longer be self-similar or even conformal in a broader sense, so that Pesin's techniques \cite{Pesin} on Ahlfors regularity could be applied. One would require some sort of isometric embedding in the Euclidean space, and this seems extremely difficult, if not unlikely. But even if such a fine embedding would exist, the methods in \cite{Pesin} would apply only on the snowflaked version $(X,d^{\varepsilon},\varphi)$. 
\end{remark}

Theorem \ref{thm:AhflorsregularitySmalespaces}, together with Proposition \ref{prop:UFCP} and  Lemma \ref{lem:Artiguelemma}, yields the following result.

\begin{cor}\label{cor:TopconjAhlfors}
Any mixing Smale space is topologically conjugate to a mixing Smale space on which the Bowen measure is Ahlfors regular.
\end{cor}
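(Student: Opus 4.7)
The strategy is essentially a clean composition of three previously established results: Artigue's smoothing (Lemma \ref{lem:Artiguelemma}), the self-similar case of UFCP (Proposition \ref{prop:UFCP}), and the main Ahlfors regularity theorem (Theorem \ref{thm:AhflorsregularitySmalespaces}). So the proof will be essentially a matter of verifying that the hypotheses of Theorem \ref{thm:AhflorsregularitySmalespaces} can be arranged via a topological conjugacy.

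First I would take the given mixing Smale space $(X,d,\varphi)$ and invoke Lemma \ref{lem:Artiguelemma} to obtain a topologically conjugate Smale space $(X,d_{\Ar},\varphi)$ that is self-similar, in the sense that $\lambda_X = \Lambda_X = \max\{\Lip(\varphi),\Lip(\varphi^{-1})\}$ with respect to the new metric. Next I would observe that mixing is a purely topological property (see Definition \ref{def:Recurrence}) and is therefore preserved under topological conjugacy, so $(X,d_{\Ar},\varphi)$ is still mixing.

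Then I would note the trivial but crucial point that any self-similar Smale space is in particular semi-conformal in the sense of Definition \ref{def:semiconformalSmalespace}, since $\lambda_X = \ell_X = \Lambda_X$. Because $(X,d_{\Ar},\varphi)$ is mixing it is in particular irreducible, so Proposition \ref{prop:UFCP} applies and provides a refining sequence of Markov partitions satisfying the UFCP. At this stage every hypothesis of Theorem \ref{thm:AhflorsregularitySmalespaces} is in place for $(X,d_{\Ar},\varphi)$, and the theorem yields that the Bowen measure of maximal entropy on $(X,d_{\Ar},\varphi)$ is Ahlfors $s_0$-regular with $s_0 = 2\ent(\varphi)/\log(\lambda_X)$.

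There is really no main obstacle here since the heavy lifting has already been done: Artigue's construction handles the delicate issue of obtaining a metric that behaves self-similarly under the dynamics, and Theorem \ref{thm:AhflorsregularitySmalespaces} absorbs the combinatorial work on Markov partitions. The only minor point worth flagging explicitly in the write-up is that the Bowen measure is intrinsic to the topological system (it is the unique measure of maximal entropy), so when we transfer it through the topological conjugacy we really are discussing the ``same'' measure. The proof itself should then fit in a short paragraph: conjugate via Artigue, preserve mixing, invoke Proposition \ref{prop:UFCP}, and apply Theorem \ref{thm:AhflorsregularitySmalespaces}.
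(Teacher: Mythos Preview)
Your proposal is correct and follows exactly the same route as the paper, which simply records that Corollary \ref{cor:TopconjAhlfors} is a direct consequence of Lemma \ref{lem:Artiguelemma}, Proposition \ref{prop:UFCP}, and Theorem \ref{thm:AhflorsregularitySmalespaces}. Your additional remarks (that mixing is preserved under conjugacy and that the Bowen measure is an intrinsic topological invariant) are valid and helpful clarifications, but the core argument is identical.
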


\begin{remark}\label{rem:non_Ahlfors_regular}
Not all mixing Smale spaces have Ahlfors regular measures. Such Smale spaces exist in the context of non-conformal hyperbolic dynamical systems, where the Hausdorff and box-counting dimensions may not agree. An example of a Smale space whose dimensions do not coincide can be found in the work of Pollicott and Weiss \cite{PolW} who studied the dimension theory of certain linear horseshoes in $\mathbb R^3$. We note that a horseshoe in $\mathbb R^3$ is constructed in a way similar to the classical Smale's horseshoe in $\mathbb R^2$. For a specific definition of the diffeomorphism in $\mathbb R^3$ we refer to \cite{SimonSol}. 
\enlargethispage{\baselineskip}
Pollicott and Weiss considered a linear horseshoe $(\Lambda,f)$ so that $\Lambda=F\times E$, where $F$ is a certain self-affine limit set in the plane and $E$ is a uniform Cantor set. The limit set $F$ is constructed by two affine contractions $A_0, A_1$ on the unit square $I$, where $A_0(I), A_1(I)$ are disjoint rectangles in $I$ placed in the lower left corner and the upper right corner of $I$, respectively, each having height $\lambda_1<1/2$ and width $\lambda_2$ equal to the reciprocal of the golden mean. The horseshoe $(\Lambda,f)$ is a Smale space (after considering a Lipschitz equivalent adapted metric \cite[Prop. 5.2.2]{BS}) and is topologically conjugate to the full-two shift $(\Sigma_2,\sigma_2)$. 

Now, the specific construction of $F$ gives that $\dim_H F < \dim_B F$, and since $\dim_H \Lambda= \dim_H F+\dim_H E,\, \dim_B \Lambda= \dim_B F+\dim_B E$ and $\dim_H E=\dim_B E,$ we obtain that $\dim_H \Lambda <\dim_B \Lambda$. This argument is independent of $\lambda_1<1/2$, and hence there is a family of horseshoes indexed by an open interval, whose Hausdorff and box-counting dimensions do not coincide. Therefore, every such $\Lambda$ does not have Ahlfors regular measures. In \cite{PolW} one can find several interesting linear horseshoes whose dimension depends on fine number theoretic properties of the contraction coefficients. Also, from \cite[Section 16]{Pesin} one can build linear horseshoes in $\mathbb R^4$ whose Hausdorff dimension is strictly smaller than the box-counting dimension.

According to Theorem \ref{thm:AhflorsregularitySmalespaces}, in order to make the Bowen measure on the example horseshoe $(\Lambda,f)$ Ahlfors regular, it suffices to change the metric of $\Lambda$ to a self-similar one. Since $(\Lambda,f)$ is topologically conjugate to $(\Sigma_2,\sigma_2)$, one possibility is to equip $\Lambda$ with a self-similar ultrametric of $\Sigma_2$. Another approach is to see whether $(\Lambda,f)$, equipped with its original metric, satisfies Fathi's property (Theorem \ref{lem:Fathilemma}) and then follow the method discussed in Subsection \ref{sec:LipschitzSmalespaces}. This depends on the Lipschitz and contraction constants of $f,f^{-1}$. Finally, one can follow the philosophy of Fried \cite{Fried} and reconstruct a metric on $(\Lambda,f)$ that will satisfy Fathi's property with parameters obtained from the original geometry of $\Lambda$. This approach is more abstract since it requires to view $\Lambda$ as a uniform space. However, we believe it can produce natural metrics on $\Lambda$. In a future project we aim to construct natural self-similar metrics for specific classes of Smale spaces.
\end{remark}

From Theorem \ref{thm:AhflorsregularitySmalespaces}, Proposition \ref{prop:UFCP}, Remark \ref{rem:Artigueargument} and the metric inequalities (\ref{eq:metricinequality}) in Subsection \ref{sec:LipschitzSmalespaces} we obtain the following dimension estimates. Recall that $\Lambda_X=\max\{\Lip(\varphi),\Lip(\varphi^{-1})\}$.

\begin{cor}\label{cor:Hausdorffdimension}
Let $(X,\varphi)$ be a mixing Smale space with $\Lambda_X<\infty$. Suppose that $\lambda_X >2 A_X$, where $A_X>0$ is the constant obtained in Lemma \ref{lem:Lipschitzbracket}. Then it holds $$\frac{2\ent(\varphi)}{\log \Lambda_X}\leq \dim_H X \leq \underline{\dim}_B X\leq \overline{\dim}_B X\leq \frac{2\ent(\varphi)}{\log \lambda_X- \log(2A_X)}.$$
\end{cor}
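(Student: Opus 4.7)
The middle inequalities $\dim_H X \leq \underline{\dim}_B X \leq \overline{\dim}_B X$ are immediate from Proposition \ref{prop:alldimensions}, so I focus on the outer two bounds.

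The plan is to reduce to the self-similar case via a change of metric. By hypothesis $\Lambda_X < \infty$ and $\lambda_X > 2A_X$, so the computation in Subsection \ref{sec:LipschitzSmalespaces} shows that the original metric $d$ already satisfies Fathi's property with constants $k = \lambda_X/(2A_X) > 1$ and $\xi = \Lambda_X^{-1}\widetilde{\varepsilon_X}$. By Remark \ref{rem:Artigueargument}, Artigue's construction therefore applies directly to $d$, producing a compatible metric $d_{\Ar}$ on $X$ such that $(X,d_{\Ar},\varphi)$ is a self-similar Smale space with contraction/Lipschitz constant $k$, and
\begin{equation*}
d \;\leq\; d_{\Ar} \;\leq\; c\, d^{\gamma}, \qquad \gamma \;=\; \log_{\Lambda_X}(k) \;=\; \frac{\log \lambda_X - \log(2A_X)}{\log \Lambda_X} \in (0,1),
\end{equation*}
for some $c>0$.

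Next I apply Theorem \ref{thm:AhflorsregularitySmalespaces} to $(X,d_{\Ar},\varphi)$. Being self-similar, this Smale space is semi-conformal, and by Proposition \ref{prop:UFCP} any refining sequence of Markov partitions satisfies UFCP. Since mixing is a topological property preserved by changing to a compatible metric, Theorem \ref{thm:AhflorsregularitySmalespaces} yields
\begin{equation*}
\dim_H(X,d_{\Ar}) \;=\; \overline{\dim}_B(X,d_{\Ar}) \;=\; \frac{2\ent(\varphi)}{\log k} \;=\; \frac{2\ent(\varphi)}{\log \lambda_X - \log(2A_X)}.
\end{equation*}

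It remains to transfer these equalities back to $d$. The inequality $d \leq d_{\Ar}$ means the identity map $(X,d_{\Ar}) \to (X,d)$ is $1$-Lipschitz, so every $d_{\Ar}$-cover is also a $d$-cover of the same diameter. This gives $\dim_H(X,d) \leq \dim_H(X,d_{\Ar})$ and $\overline{\dim}_B(X,d) \leq \overline{\dim}_B(X,d_{\Ar})$, establishing the upper bound for $\overline{\dim}_B X$. For the lower bound on $\dim_H X$, the inequality $d_{\Ar} \leq c\, d^{\gamma}$ means the identity $(X,d) \to (X,d_{\Ar})$ is H\"older of exponent $\gamma$. By the standard scaling behaviour of Hausdorff dimension under H\"older maps, $\dim_H(X,d_{\Ar}) \leq \gamma^{-1}\dim_H(X,d)$, hence
\begin{equation*}
\dim_H(X,d) \;\geq\; \gamma \cdot \frac{2\ent(\varphi)}{\log k} \;=\; \frac{\log k}{\log \Lambda_X}\cdot\frac{2\ent(\varphi)}{\log k} \;=\; \frac{2\ent(\varphi)}{\log \Lambda_X},
\end{equation*}
completing the proof. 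The only non-routine step is verifying that $d$ itself (not just $d_{\Fr}$) satisfies Fathi's property, but this is exactly the content of the computation in Subsection \ref{sec:LipschitzSmalespaces} under the hypothesis $\lambda_X > 2A_X$; everything else is bookkeeping with Lipschitz/H\"older scalings applied to the self-similar result of Theorem \ref{thm:AhflorsregularitySmalespaces}.
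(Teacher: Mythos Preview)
Your argument is correct and follows precisely the route indicated in the paper: verify Fathi's property for the original metric $d$ via the computation in Subsection \ref{sec:LipschitzSmalespaces}, apply Artigue's construction (Remark \ref{rem:Artigueargument}) to obtain the self-similar metric $d_{\Ar}$ with the inequalities (\ref{eq:metricinequality}), compute the dimension of $(X,d_{\Ar})$ using Theorem \ref{thm:AhflorsregularitySmalespaces} and Proposition \ref{prop:UFCP}, and then transfer the bounds back through the Lipschitz/H\"older comparison. You have simply made explicit the H\"older scaling step that the paper leaves to the reader.
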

We should point out again that since $A_X\leq (\Lambda_X \lambda_X)/(\lambda_X^2-1),$ for $\lambda_X >2 A_X$ to be true in general, it suffices to restrict $\lambda_X\in (1+\sqrt{2}, \infty)$ and $\Lambda_X\in [\lambda_X, (\lambda_X^2-1)/2)$. Then, considering the behaviour of the Hausdorff and box dimensions with H{\"o}lder equivalent metrics, it is possible to obtain upper and lower bounds for Smale spaces with contraction/expansion constants in $(1,1+\sqrt{2}]$. However, the goal should be to estimate $A_X$. The Assouad dimension is not included in the inequality since it does not behave well with arbitrary H{\"o}lder transformations \cite{Heinonen}. 

Secondly, the upper bound can also be obtained from \cite[Theorem 5.3]{Fathi} and the discussion in Subsection \ref{sec:LipschitzSmalespaces}. Finally, the lower bound $2\ent(\varphi)/\log \Lambda_X \leq \dim_H X$ enhances the previous bound $\ent(\varphi)/\log \Lambda_X \leq \underline{\dim}_B X$ obtained in \cite[Theorem 5.6]{Fathi}.


\begin{thebibliography}{9}
\bibitem{Adler}
R. L. Adler; Symbolic dynamics and Markov partitions, \textit{Bull. Amer. Math. Soc.} \textbf{35} (1998), 1\texttt{-}56.

\bibitem{AKM}
R. L. Adler, A. G. Konheim, M. H. McAndrew;  Topological entropy, \textit{Trans. Amer. Math. Soc.} \textbf{114} (1965), 309\texttt{-}319.

\bibitem{Ahlfors}
L. Ahlfors; Zur Theorie der Uberlagerungsflachen, \textit{Acta Math.} \textbf{65} (1935), 157\texttt{-}194.

\bibitem{AT}
L. Ambrosio, P. Tilli; \textit{Topics on analysis in metric spaces}, Oxford Univ. Press, Oxford, 2004.

\bibitem{AEG}
M. Amini, G. A. Elliott, N. Golestani; The category of Bratteli diagrams, \textit{Canad. J. Math.} \textbf{67} (2015), 990\texttt{-}1023.

\bibitem{Artigue}
A. Artigue; Self-similar hyperbolicity, \textit{Erg. Th. Dyn. Sys.} \textbf{38} (2018), 2422\texttt{-}2446.


\bibitem{Assouad1}
P. Assouad; {\'E}tude d'une dimension m{\'e}trique li{\'e}e {\`a}l la possibilit{\'e} de plongements dans $\mathbb R^n$, \textit{C. R. Acad. Sci. Paris} \textbf{288} (1979), 731\texttt{-}734.

\bibitem{Assouad2}
P. Assouad; Pseudodistances, facteurs et dimension m{\'e}trique, S{\'e}minaire d'Analyse Harmonique 1979-1980, \textit{Publ. Math. Orsay} \textbf{80} (1980), 1\texttt{-}33.

\bibitem{Assouad3}
P. Assouad; Plongements Lipschitziens dans $\mathbb R^n$, \textit{Bull. Soc. Math. France} \textbf{111} (1983), 429\texttt{-}448.

\bibitem{Barreira}
L. Barreira; A non-additive thermodynamic formalism and applications to
dimension theory of hyperbolic dynamical systems, \textit{Erg. Th. Dyn. Sys.} \textbf{16} (1996), 871\texttt{-}927.

\bibitem{Barreira_DR}
L. Barreira; \textit{Dimension and Recurrence in Hyperbolic Dynamics},     Birkh{\"a}user, Basel, 2008. 

\bibitem{Barreira_TF}
L. Barreira; \textit{Thermodynamic Formalism and Applications to Dimension Theory}, Birkh{\"a}user, Basel, 2011.




\bibitem{Bowen}
R. Bowen; \textit{Equilibrium States and the Ergodic Theory of Anosov diffeomorphisms}, Springer, Berlin, 2008.

\bibitem{Bowen2}
R. Bowen; Markov partitions for Axiom A diffeomorphisms, \textit{Amer. J. Math.} \textbf{92} (1970), 725\texttt{-}747.

\bibitem{Bowen3}
R. Bowen; Markov partitions and minimal sets for Axiom A diffeomorphisms, \textit{Amer. J. Math.} \textbf{92} (1970), 907\texttt{-}918. 

\bibitem{Bowen4}
R. Bowen; Periodic points and measures for Axiom A diffeomorphisms, \textit{Trans. Amer. Math. Soc.} \textbf{154} (1971), 377\texttt{-}397.

\bibitem{Bowen5}
R. Bowen;  Hausdorff  dimension  of  quasi-circles,  \textit{Inst.  Hautes  {\'E}tudes  Sci.  Publ.  Math.} \textbf{50} (1979), 259\texttt{-}273.

\bibitem{BS}
M. Brin, G. Stuck; \textit{Introduction to Dynamical Systems}, Cambridge Univ. Press, Cambridge, 2002.

\bibitem{BDHM}
N. Brodskiy, J. Dydak, J. Higes, A. Mitra; Assouad-Nagata dimension via Lipschitz extensions, \textit{Israel J. Math.} \textbf{171} (2009), 405\texttt{-}423.



\bibitem{CI}
E. Christensen, C. Ivan; Sums of two dimensional spectral triples, \textit{Math. Scand.} \textbf{100}
(2007), 35\texttt{-}60.



\bibitem{Connes_Book}
A. Connes; \textit{Noncommutative Geometry}, Academic Press Inc., London and San Diego, 1994.


\bibitem{Coornaert}
M. Coornaert; Mesures de Patterson-Sullivan sure le bord d'un espace hyperbolique au sensde Gromov, \textit{Pac. J. Math.} \textbf{159} (1993), 241\texttt{-}270.

\bibitem{Cuntz}
J. Cuntz; Simple $C^*$-algebras generated by isometries, \textit{Comm. Math. Phys.} \textbf{57} (1977), 173\texttt{-}185.






\bibitem{Diestel}
R. Diestel; \textit{Graph Theory}, Springer, New York, 2005. 

\bibitem{DR}
E. Le Donne, T. Rajala; Assouad dimension, Nagata dimension, and uniformly close metric tangents, \textit{Indiana Univ. Math. J.} \textbf{64} (2015), 21\texttt{-}54.





\bibitem{Falconer}
K. J. Falconer; \textit{Fractal Geometry: Mathematical Foundations and Applications}, John Wiley \& Sons, New Jersey, 2014.


\bibitem{Fathi}
A. Fathi; Expansiveness, hyperbolicity and Hausdorff dimension, \textit{Comm. Math. Phys.} \textbf{126} (1989), 249\texttt{-}262.

\bibitem{Fraser1}
J. M. Fraser; Assouad type dimensions and homogeneity of fractals, \textit{Trans. Amer. Math. Soc.} \textbf{366} (2014), 6687\texttt{-}6733.


\bibitem{Fraser3}
J. M. Fraser; Regularity of Kleinian limit sets and Patterson-Sullivan measures, \textit{Trans. Amer. Math. Soc.} \textbf{372} (2019), 4977\texttt{-}5009.

\bibitem{Fraser_book}
J. M. Fraser; \textit{Assouad Dimension and Fractal Geometry}, Cambridge Univ. Press, Cambridge, 2020.

\bibitem{Fried}
D. Fried; Finitely presented dynamical systems, \textit{Erg. Th. Dyn. Sys.} \textbf{7} (1987), 489\texttt{-}507.


\bibitem{GM2}
M. Goffeng, B. Mesland; \textit{Spectral triples on $\mathcal{O}_N$}, Conference proceedings, MATRIX-program "Refining $C^*$-algebraic invariants for dynamics using KK-theory" in Creswick, Australia, 2016.

\bibitem{Hamenstadt}
U. Hamenst{\"a}dt; A new description of the Bowen-Margulis measure, \textit{Erg. Th. Dyn. Sys.} \textbf{9} (1989), 455\texttt{-}464.

\bibitem{Hasselblatt}
B. Hasselblatt; A new construction of the Margulis measure for Anosov flows, \textit{Erg. Th. Dyn. Sys.} \textbf{9} (1989), 465\texttt{-}468.

\bibitem{Heinonen}
J. Heinonen; \textit{Lectures on Analysis on Metric Spaces}, Springer, New York, 2001.

\bibitem{HPS}
R. H. Herman, I. F. Putnam, C. F. Skau; Ordered Bratteli diagrams, dimension
groups and topological dynamics, \textit{Internat. J. Math.} \textbf{3} (1992), 827\texttt{-}864.


\bibitem{JKS}
A. Julien, J. Kellendonk, J. Savinien; \textit{On the noncommutative geometry of tilings}, in Mathematics of Aperiodic Order, Birkh{\"a}user/Springer, Basel, 2015.



\bibitem{KR}
A. K{\"a}enm{\"a}ki, E. Rossi; Weak separation condition,  Assouad dimension,  and Furstenberg homogeneity, \textit{Ann. Acad. Sci. Fenn. Math.} \textbf{41} (2016), 465\texttt{-}490.

\bibitem{KV}
A. K{\"a}enm{\"a}ki, M. Vilppolainen; Separation conditions on controlled Moran constructions, \textit{Fund. Math.} \textbf{200} (2008), 69\texttt{-}100.

\bibitem{Kaimanovich}
V. A. Kaimanovich; Bowen-Margulis and Patterson measures on negatively curved compact manifolds, \textit{Adv. Ser. Dyn. Sys.} \textbf{9} (1991), 223\texttt{-}232.




\bibitem{KPW} 
J. Kaminker, I. F. Putnam, M. F. Whittaker; K-theoretic duality for hyperbolic dynamical systems, \textit{J. Reine Angew. Math.} \textbf{730} (2017), 263\texttt{-}299. 


\bibitem{Katok_Has}
A. Katok, B. Hasselblatt; \textit{Introduction to the modern theory of dynamical systems}, Cambridge Univ. Press, Cambridge, 1995. 




\bibitem{LS}
U. Lang, T. Schlichenmaier; Nagata dimension, quasisymmetric embeddings, and Lipschitz extensions, \textit{Internat. Math. Res. Notic.} \textbf{58} (2005), 3625\texttt{-}3655.


\bibitem{Luk}
J. Luukkainen; Assouad dimension: antifractal metrization, porous sets, and homogeneous measures, \textit{J. Korean Math. Soc.} \textbf{35} (1998), 23\texttt{-}76.

\bibitem{Mackay}
J. M. Mackay; Assouad dimension of self-affine carpets, \textit{Conform. Geom. Dyn.} \textbf{15} (2011), 177\texttt{-}187.

\bibitem{MT}
J. M. Mackay, J. T. Tyson; \textit{Conformal dimension. Theory and application}, Amer. Math. Soc., Providence, RI, 2010.

\bibitem{Mane}
R. Ma$\tilde{\text{n}}${\'e}; Expansive homeomorphisms and topological dimension, \textit{Trans. Amer. Math.
Soc.} \textbf{252} (1979), 313\texttt{-}319.

\bibitem{Mane2}
R. Ma$\tilde{\text{n}}${\'e}; The Hausdorff dimension of horseshoes of diffeomorphisms of surfaces, \textit{Bol. Soc. Brasil. Mat.} \textbf{20} (1990), 1\texttt{-}24.

\bibitem{MM}
H. McCluskey, A. Manning; Hausdorff dimension for horseshoes, \textit{Erg. Th. Dyn. Sys.} \textbf{3} (1983), 251\texttt{-}260.




\bibitem{Nagata}
J. Nagata; \textit{Modern dimension theory}, Heldermann Verlag, Berlin, 1983.

\bibitem{NN}
A. Naor, O. Neiman; Assouad's theorem with dimension independent of the snowflaking, \textit{Rev. Mat. Iberoamer.} \textbf{28} (2012), 1123\texttt{-}1142.

\bibitem{Nekr}
V. Nekrashevych; Hyperbolic groupoids: metric and measure, \textit{Groups Geom. Dyn.} \textbf{8} (2014), 883\texttt{-}932.

\bibitem{Olsen}
L. Olsen; On the Assouad dimension of graph directed Moran fractals, \textit{Fractals} \textbf{19} (2011), 221\texttt{-}226.

\bibitem{PViana}
J. Palis, M. Viana; On the continuity of Hausdorff dimension and limit capacity for horseshoes, \textit{Lecture Notes in Math.} Vol. 1331 (1988), Springer-Verlag, New York, 150\texttt{-}160.

\bibitem{Palmer}
I. Palmer; \textit{Riemannian geometry of compact metric spaces}, Ph.D. Thesis, Georgia Inst. Tech., 2010.

\bibitem{Parry}
W. Parry; Intrinsic Markov chains, \textit{Trans. Amer. Math. Soc.} \textbf{112} (1964), 55\texttt{-}66.


\bibitem{Patterson}
S. J. Patterson; The limit set of a Fuchsian group, \textit{Acta Math.} \textbf{136} (1976), 241\texttt{-}273.

\bibitem{BP}
J. Pearson, J. Bellissard; Noncommutative Riemannian geometry and diffusion on ultrametric Cantor sets, \textit{J. Noncomm. Geom.} \textbf{3} (2009), 447\texttt{-}480.

\bibitem{Pesin}
Y. Pesin; \textit{Dimension Theory in Dynamical Systems: Contemporary Views and Applications}, Chicago Univ. Press, Chicago, 1997. 

\bibitem{PW}
Y. Pesin, H. Weiss; Multifractal Analysis of Equilibrium Measures for Conformal Expanding Maps and Moran-like Geometric Constructions, \textit{J. Stat. Phys.} \textbf{86} (1997), 233\texttt{-}275.

\bibitem{PolW}
M. Pollicott, H. Weiss; The dimensions of some self\texttt{-}affine limit sets in the plane and hyperbolic sets, \textit{J. Stat. Phys.} \textbf{77} (1994), 841\texttt{-}866.


\bibitem{Valerio}
V. Proietti; A note on homology for Smale spaces, \textit{Groups Geom. Dyn.} \textbf{14} (2020), 813-836.

\bibitem{Putnam_Lec}
I. F. Putnam; \textit{Smale Spaces and $C^*$-Algebras}, Lecture Notes, Univ. of Victoria, 2006.

\bibitem{Putnam_Book}
I. F. Putnam; A Homology Theory for Smale Spaces, \textit{Mem. Amer. Math. Soc.} \textbf{232} (2014), viii+122.

\bibitem{Putnam_algebras}
I. F. Putnam; $C^*$-algebras from Smale spaces, \textit{Canad. J. Math.} \textbf{48} (1996), 175\texttt{-}195.

\bibitem{PS}
I. F. Putnam, J. Spielberg; The Structure of $C^*$-Algebras Associated with Hyperbolic Dynamical Systems, \textit{J. Func. Analysis} \textbf{163} (1999), 279\texttt{-}299.



\bibitem{Robinson}
J. Robinson; \textit{Dimensions, Embeddings, and Attractors}, Cambridge Univ. Press, Cambridge, 2010.

\bibitem{Ruelle2}
D. Ruelle; Repellers for real analytic maps, \textit{Erg. Th. Dyn. Sys.} \textbf{2} (1982), 99\texttt{-}108.

\bibitem{Ruelle} 
D. Ruelle; \textit{Thermodynamic Formalism}, Cambridge Univ. Press, Cambridge, 2004.

\bibitem{RS}
D. Ruelle, D. Sullivan; Currents, flows, and diffeomorphisms, \textit{Topology} \textbf{14} (1975), 319\texttt{-}327.



\bibitem{Sakai}
K. Sakai; Shadowing properties of $L$-hyperbolic homeomorphisms, \textit{Topology Appl.} \textbf{112} (2001), 229\texttt{-}243. 

\bibitem{SKM}
S. G. Samko, A. A. Kilbas, O. I. Marichev; \textit{Fractional Integrals and Derivatives: Theory and Applications}, Gordon and Breach Sci. Publ., New York and London, 1993.

\bibitem{Schoenfeld}
A. H. Schoenfeld; Continuous surjections from Cantor sets to compact metric spaces, \textit{Proc. Amer. Math. Soc.} \textbf{46} (1974), 141\texttt{-}142.

\bibitem{Semmes}
S. Semmes; On the nonexistence of bilipschitz parametrizations and geometric problems about $A^{\infty}$-weights, \textit{Rev. Mat. Iberoamer.} \textbf{12} (1996), 337\texttt{-}410.

\bibitem{SimonSol}
K. Simon, B. Solomyak; Hausdorff dimension for horseshoes in $\mathbb R^3$, \textit{Erg. Th. Dyn. Sys.} \textbf{19} (1999), 1343\texttt{-}1363.


\bibitem{Smale}
S. Smale; Differentiable dynamical systems, \textit{Bull. Amer. Math. Soc.} \textbf{73} (1967), 747\texttt{-}817.

\bibitem{Sullivan}
D. Sullivan; Entropy, Hausdorff measures old and new, and limit sets of geometrically finite Kleinian groups, \textit{Acta Math.} \textbf{153} (1984), 259\texttt{-}277.

\bibitem{Takens}
F. Takens; Limit capacity on dynamically defined Cantor sets, \textit{Lecture Notes in Math.} Vol. 1331 (1988), Springer-Verlag, New York, 196\texttt{-}212. 



\bibitem{Walters}
P. Walters; \textit{An Introduction to Ergodic Theory}, Springer, New York, 1982.




\bibitem{Wieler}
S. Wieler; \textit{Smale spaces with totally disconnected local stable sets}, Ph.D. Thesis, Univ. of Victoria, 2012.

\bibitem{Williams}
R. F. Williams; Expanding attractors, \textit{Inst. Hautes {\'E}tudes Sci. Publ. Math.} \textbf{43} (1974), 169\texttt{-}203.


\bibitem{Zahle}
M. Z{\"a}hle; Harmonic calculus on fractals, a measure geometric approach. II, \textit{Trans. Amer. Math. Soc.} \textbf{357} (2005), 3407\texttt{-}3423.

\end{thebibliography}
\end{document}